\tikzstyle{uStyle}=[shape = circle, minimum size = 4pt, inner sep = 1pt,
\tikzstyle{uBstyle}=[shape = circle, minimum size = 9pt, inner sep = 1pt,
\tikzstyle{lStyle}=[shape = circle, minimum size = 5pt, inner sep =
\tikzstyle{vlStyle}=[shape = circle, minimum size = 4pt, inner sep =
\newtheorem{theorem}{Theorem}[section]
\newtheorem*{main}{Main Theorem}
\newtheorem{lemma}[theorem]{Lemma}
\newtheorem{lem}[theorem]{Lemma}
\newtheorem{corollary}[theorem]{Corollary}
\newtheorem{cor}[theorem]{Corollary}
\newtheorem{clm}[theorem]{Claim}
\newtheorem{remark}[theorem]{Remark}
\theoremstyle{definition}
\newtheorem{example}[theorem]{Example}
\newtheorem{definition}[theorem]{Definition}
\newtheorem{defn}[theorem]{Definition}
\newtheorem*{lem-reword}{Lemma~\ref{coloring-lem} (Rephrased)}
\def\HHH{\mathbb{H}}
\def\vph{\varphi}
\def\ch{\textrm{ch}}
\def\aftermath{\par\vspace{-\belowdisplayskip}\vspace{-\parskip}\vspace{-\baselineskip}}
\newcommand{\aside}[1]{\marginnote{\scriptsize{#1}}[0cm]}
\newcommand{\aaside}[2]{\marginnote{\scriptsize{#1}}[#2]}
\newcommand\Emph[1]{\emph{#1}\aside{#1}}
\newcommand\EmphE[2]{\emph{#1}\aaside{#1}{#2}}
\def\vph{\varphi}
\def\dist{\textrm{dist}}
\def\tG{\tilde{G}}
\def\tT{\tilde{T}}
\def\tB{\tilde{B}}
\def\adj{\leftrightarrow}
\def\nonadj{\not\leftrightarrow}
\newenvironment{clmproof}[1]{\par\noindent\underline{Proof.}\space#1}{\leavevmode\unskip\penalty9999\hbox{}\nobreak\hfill\quad\hbox{$\diamondsuit$}\smallskip}
\def\arrowdrawthick{\draw[thick,decoration={markings, mark=at position
1.0 with {\arrow[>=stealth]{>}}}, postaction={decorate}]}
\title{Sparse Graphs are Near-bipartite}
\author{Daniel W. Cranston\thanks{%
Department of Mathematics and Applied
Mathematics, Virginia Commonwealth University, Richmond, VA, USA;
\texttt{dcranston@vcu.edu}; 
This research is partially supported by NSA Grant H98230-15-1-0013.} \and Matthew P. Yancey\thanks{%
Institute for Defense Analyses - Center for Computing Sciences, Bowie, MD, USA; \texttt{mpyancey1@gmail.com}
}
}
\begin{document}
\maketitle

\def\VG{V(G)}
\def\VGp{V(G')}
\def\VH{V(H)}
\def\VHp{V(H')}
\def\VHj{V(H_j)}
\def\EG{E(G)}
\def\EGp{E(G')}
\def\EH{E(H)}
\def\EHp{E(H')}
\def\EJ{E(J)}
\def\HH{\mathcal{H}}
\def\Re{\mathbf{R}}

\begin{abstract}
A multigraph $G$ is near-bipartite if $V(G)$ can be partitioned as $I,F$ such that
$I$ is an independent set and $F$ induces a forest.  We prove that a multigraph
$G$ is near-bipartite when $3|W|-2|E(G[W])|\ge -1$ for every $W\subseteq
V(G)$, and $G$ contains no $K_4$ and no Moser spindle.  We prove that a simple graph $G$ is
near-bipartite when $8|W|-5|E(G[W])|\ge -4$ for every $W\subseteq V(G)$, and
$G$ contains no subgraph from some finite family $\HH$.  We also construct
infinite families to show that both results are best possible in a very sharp
sense.
\end{abstract}

\newcommand{\bbs}[1]{\boldsymbol{#1}}
\def\dcup{\uplus}
\newcommand\Memph[1]{#1\aside{#1}}
\newcommand\MemphE[2]{#1\aaside{#1}{#2}}
\def\mytilde{\raise.17ex\hbox{$\scriptstyle\mathtt{\sim}$}}

%

\section{Introduction}
A multigraph\footnote{Without loss of generality, we assume that each edge has
multiplicity at most 2, as we explain at the start of Section~\ref{prelims-sec}.} 
$G$ is \Emph{near-bipartite} if its vertex set can be partitioned into
sets $I$ and $F$ such that $I$ is an independent set and $F$ induces a forest.
This condition is somewhat stronger than being 3-colorable, but the two problems
are closely related.
We call $I,F$ a \EmphE{near-bipartite coloring}{3mm} of $G$, or simply an
\Emph{nb-coloring}.  The goal of this paper is to prove sufficient conditions
for multigraphs and simple graphs to be near-bipartite, in terms of their
edge-densities; this is akin to the work done for $k$-coloring in~\cite{KY}.  
Since a near-bipartite coloring
of $G$ restricts to a near-bipartite coloring of each subgraph $J$ of $G$,
naturally our edge-density hypothesis for $G$ should also hold for each subgraph
$J$.  To facilitate a proof by induction, we also allow some vertices to be
precolored.  That is, we allow vertex subsets $I_p$ and $F_p$\aside{$I_p$,
$F_p$, $U_p$} such that our
near-bipartite coloring $I,F$ must have $I_p\subseteq I$ and $F_p\subseteq F$.
For convenience, let $U_p=V(G)\setminus (I_p\cup F_p)$.
We prove results for both the class of multigraphs and the class of
simple graphs.  For simple graphs, to facilitate our proof by induction, we
allow some edges to be specified as {edge-gadgets}.  In practice this means
that, for each edge-gadget $vw$, in every near-bipartite coloring
one of $v$ and $w$ appears in $I$ and the other appears in $F$; intuitively,
this is the same as if $vw$ was a multiedge. 
For a multigraph $G$ and $W\subseteq V(G)$, let \Emph{$e(W)$} denote the set of edges
with both endpoints in $W$.  For a simple graph, we let $e''(W)$ and $e'(W)$\aside{$e''(W),
e'(W)$} denote the subsets of $e(W)$ that are, respectively, edge-gadgets and
not edge-gadgets (but still edges).  Most of our other terminology and notation
is standard, but for reference we collect it in Section~\ref{defns-sec}.
Now we can define our measures of
edge-density, called \Emph{potential}, and denoted $\rho_{m,G}$ and
$\rho_{s,G}$.
(Here $m$ is for multigraph and $s$ is for simple graph.)

For a multigraph $G$ with precoloring $I_p,F_p$, for each $W\subseteq V(G)$ let

$$\rho_{m,G}(W)=3|W\cap U_p|+|W\cap F_p|-2|e(W)|\aaside{$\rho_{m,G}$}{-1mm}$$
and
$$\rho_{s,G}(W)=8|W\cap U_p|+3|W\cap
F_p|-5|e'(W)|-11|e''(W)|\aaside{$\rho_{s,G}$}{-1mm}.$$
Let $M_7$ denote the Moser spindle, shown in Figure~\ref{examples fig}, and let
$\HH$ be a finite family of simple graphs that we define in
Section~\ref{constructH-sec}, none of which is near-bipartite.
The following is the main result of this paper.

\begin{main}
(A) If $G$ is a multigraph with precoloring $I_p,F_p$ such that 
$\rho_{m,G}(W) \geq -1$ for all $W\subseteq V(G)$ and $G$ does not contain $K_4$ or
$M_7$ as a subgraph, then $G$ has a near-bipartite coloring $I,F$ that extends the
precoloring $I_p,F_p$.  Moreover, $I,F$ can be found in polynomial time.

(B) If $G$ is a simple graph with precoloring $I_p,F_p$ such that 
$\rho_{s,G}(W) \geq -4$ for all $W\subseteq V(G)$ and $G$ does not contain any
graph from $\mathcal{H}$ as a subgraph, then $G$ has a near-bipartite coloring
$I,F$ that extends the precoloring $I_p,F_p$.  Moreover, $I,F$ can be found in
polynomial time.
\end{main}

It is NP-complete to decide if a graph is near-bipartite\footnote{This is
unsurprising, since nb-coloring is closely connected with 3-coloring, a
well-known NP-complete problem.}, and this is attributed to Monien \cite{BLS}.
This problem remains NP-complete for several restriced families of graphs.
Brandst\"{a}dt, Brito, Klein, Nogueira, and Protti \cite{BBKNP} showed this for
perfect graphs, and Bonamy, Dabrowski, Feghali, Johnson, and
Paulusma~\cite{BDFJP} showed it for graphs with diameter 3.
Dross, Montassier, and Pinlou \cite{DMP} 
showed it for planar graphs, and Yang and Yuan \cite{YY} showed it for graphs
with maximum degree 4.  In contrast, Bonamy, Dabrowski, Feghali, Johnson, and
Paulusma \cite{BDFJP2} showed that for a simple graph $G$ with $\Delta(G) \leq
3$ and with no $K_4$, an nb-coloring (which exists by the
results below) can be found in time $O(|\VG|)$.

Borodin and Glebov \cite{BG} proved that if $G$ is planar with girth at
least $5$, then $G$ is near-bipartite.
Kawarabayashi and Thomassen \cite{KT} extended this result to allow a small
set of precolored vertices. 
Dross, Montassier, and Pinlou \cite{DMP} conjectured that every planar
graph with girth at least 4 is near-bipartite (which would strengthen the result
of~\cite{BG}).
Because they each considered different generalizations, multiple groups
\cite{BM,B,BKT,CL,YY} proved that if $G$ has no $K_4$ as a subgraph and
$\Delta(G) \leq 3$, then $G$ is near-bipartite.  Yang and Yuan \cite{YY} 
characterized near-bipartite graphs with diameter $2$.  Zaker
\cite[Theorem 4]{Z} proved that $G$ is near-bipartite if and only if 
its vertices can be ordered as $v_1, v_2, \ldots, v_n$ such that each
triple of edges with a common endpoint $v_i v_{j_1}, v_i v_{j_2}, v_i v_{j_3}$
does not satisfy $j_1 < i < j_2 \leq j_3$.

Finding an nb-coloring $I,F$ is also called ``finding a stable cycle cover'' \cite{BBKNP}.
When we want $I$ to have bounded size, the problem is called
finding an ``independent feedback vertex set'', and related work is described
in the references of~\cite{BDFJP}.

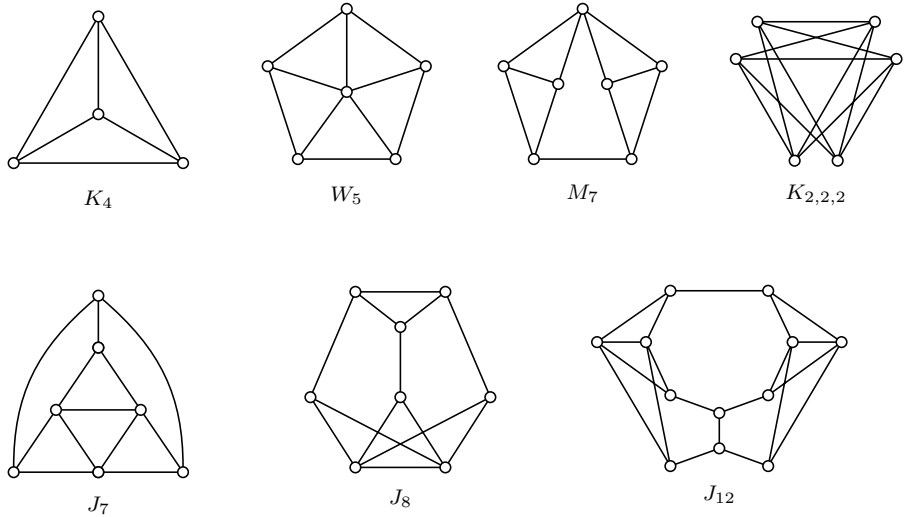
\begin{figure}[!h]
\centering
\begin{tikzpicture} [semithick,scale = .65]
\tikzset{every node/.style=uStyle}

\begin{scope} 
\newcommand\rad{2cm}
\foreach \i in {1,...,3}
  \draw (-30+120*\i:\rad) node (v\i) {};
\draw (0,0) node (v0) {};
\foreach \i/\j in {0/1, 0/2, 0/3, 1/2, 1/3, 2/3}
  \draw (v\i) -- (v\j);
\draw (0,-1.75) node[lStyle] {\footnotesize{$K_4$}};
\end{scope}

\begin{scope}[xshift=2in,yshift=.18in, scale=.85] 
\newcommand\rad{2cm}
\foreach \i in {1,...,5}
  \draw (18+72*\i:\rad) node (v\i) {};
\draw (0,0) node (v0) {};
\foreach \i/\j in {1/2, 2/3, 3/4, 4/5, 5/1}
  \draw (v\i) -- (v\j) (v\i) -- (v0);
\draw (0,-2.5) node[lStyle] {\footnotesize{$W_5$}};
\end{scope}

\begin{scope}[xshift=3.9in,yshift=.18in, scale=.85] 
\newcommand\rad{2cm}
\foreach \i in {1,...,5}
  \draw (18+72*\i:\rad) node (v\i) {};
\foreach \i/\j in {1/2, 2/3, 3/4, 4/5, 5/1}
  \draw (v\i) -- (v\j);
\draw node (w1) at (barycentric cs:v1=1,v3=1) {};
\draw node (w2) at (barycentric cs:v1=1,v4=1) {};
\draw (v1) -- (w1) -- (v3) (w1) -- (v2) (v1) -- (w2) -- (v4) (w2) -- (v5);
\draw (0,-2.45) node[lStyle] {\footnotesize{$M_7$}};
\end{scope}

\begin{scope}[scale=.85, xshift=6.8in,yshift=.32in] 
\newcommand\rad{2cm}
\foreach \i in {1,2,3}
  \draw (45+120*\i:\rad) node (v\i) {} (15+120*\i:\rad) node (w\i) {};
\foreach \i/\j in {1/2, 1/3, 2/3}
  \draw (v\i) -- (v\j) (v\i) -- (w\j) (w\i) -- (v\j) (w\i) -- (w\j);
\draw (0,-2.75) node[lStyle] {\footnotesize{$K_{2,2,2}$}};
\end{scope}

\begin{scope}[yscale=.85, yshift=-3.0in] 
\draw (90:2cm) node (w1) {} (210:2cm) node (w2) {} (330:2cm) node (w3) {}
(90:3.25cm) node (w4) {};
\draw node (v1) at (barycentric cs:w1=1,w2=1) {};
\draw node (v2) at (barycentric cs:w2=1,w3=1) {};
\draw node (v3) at (barycentric cs:w3=1,w1=1) {};
\draw (w2) -- (v2) -- (w3) (v2) -- (v3) -- (w1) -- (v1) -- (v2);
\draw (w2) -- (v1) -- (v3) -- (w3); 
\draw (w1) -- (w4) (w2) to [out= 90, in = 225](w4) (w4) to [in= 90, out = -45](w3);
\draw (w1) node {} (v2) node {};
\draw (0,-1.8) node[lStyle] {\footnotesize{$J_7$}};
\end{scope}

\begin{scope}[scale=1.15, xscale=.8, yscale=1.25, xshift=2.25in, yshift=-1.98in] 
\draw (0,0) node (w1) {} (2,0) node (w2) {} (-1,1) node (z1) {} (1,1) node (z2)
{} (3,1) node (z3) {} (0,2.5) node (v1) {} (1,2) node (v2) {} (2,2.5) node (v3) {};
\draw (v1) -- (v2) -- (v3) -- (v1) (z1) -- (w1) -- (z2) -- (w2) -- (z3) -- (w1)
-- (w2) -- (z1) -- (v1) (z2) -- (v2) (z3) -- (v3);
\draw (1,-0.435) node[lStyle] {\footnotesize{$J_8$}};
\end{scope}

\begin{scope}[yshift=-2.55in, xshift=5in, yscale=.725] 
\draw (-1,3.95) node (w1) {}  -- (1,3.95) node (w2) {};
\draw (-2.5,2.5) node (v1) {} -- (-1.5,2.5) node (v2) {} (1.5,2.5) node (v3) {} -- (2.5,2.5) node (v4) {};
\draw (-1,1) node (z1) {} -- (0,.5) node (z5) {} -- (1,1) node (z3) {}
 (-1,-1) node (z2) {} -- (0,-.5) node (z6) {} -- (1,-1) node (z4) {} (z5) -- (z6);
\draw (v1) -- (z1) -- (v2) -- (z2) -- (v1);
\draw (v3) -- (z3) -- (v4) -- (z4) -- (v3);
\draw (v1) -- (w1) -- (v2) (v3) -- (w2) -- (v4);
\draw (0,-1.8) node[lStyle] {\footnotesize{$J_{12}$}};
\end{scope}
\end{tikzpicture}
\caption{Examples of nb-critical graphs.  The graph $W_5$ is called the 5-wheel
and $M_7$ is called the Moser spindle.  All graphs shown are 4-critical, except
for $K_{2,2,2}$, which is 3-colorable.\label{examples fig}}
\end{figure}

The purpose of this paper is to give an algorithm for finding a near-bipartite
coloring when $G$ is sufficiently sparse.  This motivates the following
definitions.  A multigraph is \Emph{nb-critical} if it is not near-bipartite, but
every proper subgraph is near-bipartite.  Figure~\ref{examples fig} shows
examples of nb-critical graphs.  A multigraph $G$ is \Emph{$(a,b)$-sparse} if
every nonempty subset of vertices $W$ satisfies $|e(W)| \leq a |W| - b$.  A
graph is a forest if and only if it is $(1,1)$-sparse.  A vertex set $I$ is
independent if and only if $G[I]$ is $(0,0)$-sparse.
Our next two theorems rephrase parts (A) and (B) of the Main Theorem, 
state explicit bounds on the running times of algorithms to find the colorings,
and also mention constructions to show that both parts are very sharp.
We give these constructions in Section~\ref{constructH-sec}.
In Section~\ref{goldberg-alg-sec} we describe a key subroutine of our coloring
algorithm, but we defer presenting the algorithm in full until
Section~\ref{algorithm section}, when we have proved the Main Theorem.

\begin{theorem} \label{main thm multi}
\label{thmA}
There exists an infinite family of $(1.5,-1)$-sparse nb-critical multigraphs.
If $G$ is $(1.5,-0.5)$-sparse and has no $K_4$ and no $M_7$, then $G$ is
near-bipartite.  We can find an nb-coloring in time $O(|\VG|^6)$.
\end{theorem}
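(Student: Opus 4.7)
The statement has three components --- the positive coloring result, an $O(|V(G)|^6)$ algorithm, and an infinite sharpness family --- and I would handle them in that order, mostly by cashing in work done elsewhere in the paper.

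First, the positive direction is designed to be a direct specialization of part~(A) of the Main Theorem. Setting the precoloring to be empty, $I_p=F_p=\emptyset$, gives $U_p=V(G)$ and
\[\rho_{m,G}(W)=3|W|-2|e(W)|.\]
Thus the hypothesis $\rho_{m,G}(W)\ge -1$ for every $W\subseteq V(G)$ is exactly $|e(W)|\le \tfrac32|W|+\tfrac12$, i.e., $(1.5,-0.5)$-sparsity. So whenever $G$ is $(1.5,-0.5)$-sparse and avoids $K_4$ and $M_7$, part~(A) of the Main Theorem supplies an nb-coloring.

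Second, for the $O(|V(G)|^6)$ running time, I would appeal to the algorithm built in Section~\ref{algorithm section}. That algorithm unrolls the (contrapositive) proof of the Main Theorem into a sequence of reductions, each of which either invokes the subroutine from Section~\ref{goldberg-alg-sec} or deletes/contracts a small configuration. Tracking the depth of the recursion against the per-step cost of that subroutine is routine bookkeeping and should yield the stated $O(|V(G)|^6)$ bound.

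Third, for the sharpness half, I would exhibit in Section~\ref{constructH-sec} an infinite family $\{G_n\}$ of nb-critical multigraphs satisfying $|e(V(G_n))|=\tfrac32|V(G_n)|+1$. The $5$-wheel $W_5$ (six vertices, ten edges) is the base case, and larger members can be produced by Haj\'os-style gluing that attaches new vertices at the sharp rate of $\tfrac32$ edges per vertex while preserving nb-criticality. Each member must then be checked for (i) non-near-bipartiteness, (ii) near-bipartiteness after any single vertex or edge deletion, and (iii) the global potential inequality $|e(W)|\le \tfrac32|W|+1$ on every $W\subseteq V(G_n)$.

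The real obstacle is part~(A) of the Main Theorem itself; once that is proved, Theorem~\ref{main thm multi} collapses into the three short verifications sketched above. Within this proposal the most delicate step is item~(iii) of the construction --- ensuring the potential inequality survives on every subset as the graphs grow, not just on the full vertex set.
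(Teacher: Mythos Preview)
Your reduction of the coloring statement and the running-time claim to part~(A) of the Main Theorem and to Section~\ref{algorithm section} is exactly what the paper does, and your arithmetic translating $(1.5,-0.5)$-sparsity into $\rho_{m,G}(W)\ge -1$ is correct.

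For the sharpness family, however, your sketch diverges from the paper and is also the weakest part of your write-up. The paper does not use $W_5$ plus Haj\'os-type gluing; it gives an explicit multigraph family $G_k$ in Section~\ref{sharpness-sec} (not Section~\ref{constructH-sec}, which is about the forbidden family $\HH$). Each $G_k$ is a path of alternating single edges and multiedges with multiedge ``caps'' at both ends, has $2k+4$ vertices and $3k+7$ edges, and the paper verifies $(1.5,-1)$-sparsity by an induction that contracts three consecutive path vertices. The nb-criticality proof is a short parity argument using the constraint that multiedge endpoints must receive different colors. Your Haj\'os proposal is plausible but underspecified: Haj\'os-style amalgamation preserves $k$-criticality, but you would need a separate argument that it preserves \emph{nb}-criticality (the analogue of the ``gluing'' lemma for near-bipartite colorings is not standard), and you would also have to check that the gluing never creates a subset violating the potential bound --- the very item you flag as delicate. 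The paper's explicit $G_k$ sidesteps both issues with a clean induction.
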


A graph $G$ is 2-degenerate if every nonempty subgraph $J$ satisfies $\delta(J) \leq 2$.
Every $2$-degenerate graph is near-bipartite, and we can find an nb-coloring
in time $O(|\VG|)$ using the obvious greedy algorithm.  Graphs that are
$(1.5,0.5)$-sparse are 2-degenerate, so Theorem~\ref{main thm multi} shows
that the greedy algorithm is sufficient in many of the cases where sparsity
implies a graph is near-bipartite.  Our more impressive result is that we can
do better when $G$ is simple.

\begin{theorem} \label{main thm simple}
\label{thmB}
There exists an infinite family of $(1.6,-1)$-sparse nb-critical simple graphs.
There exists a finite graph family $\HH$ such that if $G$ is a simple graph
that is $(1.6,-0.8)$-sparse and contains no subgraph isomorphic to a graph in
$\HH$, then $G$ is near-bipartite. We can find an nb-coloring in time $O(|\VG|^{22})$.
\end{theorem}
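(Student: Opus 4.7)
The sufficiency and algorithmic halves of the theorem will reduce straightforwardly to Part (B) of the Main Theorem. Setting $I_p = F_p = \emptyset$ (so $U_p = V(G)$) and introducing no edge-gadgets (so $e''(W) = 0$ and $e'(W) = e(W)$), the potential $\rho_{s,G}$ collapses to $8|W| - 5|e(W)|$. The hypothesis that $G$ is $(1.6,-0.8)$-sparse says exactly $5|e(W)| \leq 8|W| + 4$, equivalently $\rho_{s,G}(W) \geq -4$ for all $W \subseteq V(G)$. Taking $\HH$ to be the same finite family of forbidden subgraphs as in the Main Theorem, Part (B) then produces an nb-coloring. The explicit running time $O(|V(G)|^{22})$ will be obtained by tracing the algorithm of Section~\ref{algorithm section} (built on the subroutine of Section~\ref{goldberg-alg-sec}) and accounting for the worst-case cost of each recursive step.

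The more substantive half of the theorem is the sharpness statement: producing an infinite family of $(1.6,-1)$-sparse nb-critical simple graphs. My plan is to identify a small nb-critical ``seed'' graph $H^*$ that already satisfies $|E(H^*[W])| \leq 1.6|W|+1$ for every $W\subseteq V(H^*)$, and then to generate the infinite family by iteratively joining copies of $H^*$ through a small gadget. Inspecting Figure~\ref{examples fig}, $K_{2,2,2}$ is too dense to serve as a seed (with $12$ edges on $6$ vertices it already exceeds $1.6\cdot 6+1$), but $M_7$, $J_7$, $J_8$, and $J_{12}$ each satisfy the density constraint and are plausible candidates. The gadget must simultaneously (i) force nb-criticality to propagate across joined copies, so that no global nb-coloring can be assembled from partial ones, and (ii) contribute new edges and vertices at the critical density of $1.6$ edges per vertex, i.e., at most zero contribution to $\rho_{s,G}$ per attached copy, so that the global bound $|E(G[W])| \leq 1.6|W|+1$ continues to hold on every subset $W$ of the combined graph.

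The main obstacle is precisely this balancing act. Any slack in the gadget's density would be amplified over many copies and eventually violate $(1.6,-1)$-sparsity, while any shortage of ``coloring pressure'' in the gadget would allow an nb-coloring to be assembled copy-by-copy. The explicit choice of seed and connector, together with the inductive verification of both nb-criticality and of the global sparsity bound, is carried out in Section~\ref{constructH-sec}.
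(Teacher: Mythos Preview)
Your reduction of the sufficiency and algorithmic claims to Part~(B) of the Main Theorem is exactly what the paper does, and your potential calculation is correct.

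For the sharpness construction, however, your plan diverges from the paper and has a real gap. If $H^*$ is itself nb-critical and you join several copies through a connector, the result will almost never be nb-critical: deleting an edge inside one copy makes that copy near-bipartite, but the other copies remain non-near-bipartite, so the whole graph is still not near-bipartite. Nb-criticality is not preserved under ``gluing'' of nb-critical blocks the way, say, $k$-criticality sometimes is under Haj\'os-type operations; the non-colorability must be a single global obstruction that any edge deletion destroys. Your proposal does not explain how the gadget would achieve this, and with whole nb-critical seeds it cannot.

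The paper's construction (Section~\ref{sharpness-sec}, not Section~\ref{constructH-sec}) avoids this by building a \emph{chain} rather than gluing critical blocks. It starts with the multigraph family $G_k$, in which a parity constraint on $F$ propagates along a path of alternating multiedges and single edges; any edge deletion breaks the chain and allows a greedy nb-coloring. To get simple graphs, each multiedge is replaced by a fixed $3$-vertex, $7$-edge ``multiedge-replacement'' gadget that forces the same constraint on its two roots (one in $I$, one in $F$). The resulting graphs $H_k$ inherit both nb-criticality (from the propagation argument for $G_k$) and $(1.6,-1)$-sparsity (verified by induction on $k$, since each replacement contributes exactly $8\cdot 3 - 5\cdot 7 = -11$ to potential, matching the $-11$ coefficient chosen for edge-gadgets). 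So the key idea you are missing is that the ``seed'' is not an nb-critical graph but a multiedge-simulating gadget, and the infinite family is a path of such gadgets rather than a tree of critical pieces.
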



Theorems~\ref{thmA} and~\ref{thmB} are both best possible in a strong sense, due
to the infinite families of sharpness examples.  Since the proof of
Theorem~\ref{thmB} is long, we naturally wondered whether there is a shorter
proof of a slightly weaker result, e.g., that a simple graph $G$ is
near-bipartite whenever it is $(1.6,0)$-sparse and contains no subgraph in
$\HH$.  We answer this question more fully in Section~\ref{why-sec}; in short, we
believe the answer is No, no such shorter proof exists.

The most striking aspect of Theorem~\ref{main thm simple} is that we handle the family
$\HH$, which has hundreds of forbidden subgraphs.
Each graph in $\HH$ is both nb-critical (and so must be forbidden in such a
theorem) and also 4-critical\footnote{A graph is 4-critical if it is not
3-colorable, but each of its proper subgraphs is 3-colorable.}.  Although we
have not explicitly constructed all graphs in $\HH$, its recursive
definition in Section~\ref{define H sec} allows us to show that each of these
graphs has at most 22 vertices; so $\HH$ is finite.  Kostochka and the second
author~\cite{KY} showed that each $n$-vertex
4-critical graph $G$ has $|E(G)|\ge (5n-2)/3$.  As we show in Theorem~\ref{main
thm simple}, each $n$-vertex nb-critical graph with $n\ge 22$ has $|E(G)|\ge
(8n+4)/5$.  Intuitively, the familly $\HH$ is due to the fact that
$(5n-2)/3<(8n+4)/5$ when $n<22$.

Although $\HH$ is finite, it is is a natural subset of an infinite family
$\HH'$, and each graph of $\HH'$ is also both nb-critical and 4-critical. 
Thus, our description of $\HH'$ provides insight into the structure of sparse
nb-critical and 4-critical graphs.  In view of $\HH'$, it is natural to ask whether
nb-criticality implies 4-criticality, or vice versa.  But neither implication is true. 
In Section~\ref{sharpness-sec} we construct an infinite family of nb-critical
graphs $H_k$ that are 3-colorable (so not 4-critical).  There also exist
infinitely many 4-critical graphs such that even after removing
multiple (specified) edges from any one of these, it does not become
near bipartite\footnote{For example, we can start with the 4-critical graphs
$G_k$ constructed by Yao and Zhou in \cite{YZ}.  Even if we remove all
edges $x_1u_i$ and $y_1v_j$ with $4\le i,j\le 2k-5$ the graph fails to
become near-bipartite.  The proof of this is a straightforward case
analysis (considering the nb-colorings of $H_{2k}$ and of
$G[\{x_1,x_2,x_3,y_1\}]$), but the details are too numerous to include
here.}.

%

\subsection{Proof Outline}
To conclude this introduction, we outline the proof of the Main Theorem.
The proofs of parts (A) and (B) are similar, but (B) is harder because the
family $\HH$ of forbidden subgraphs is much larger.  Thus, we just outline the
proof of (B).  

\begin{proof}[(Proof sketch of Main Theorem (B))]
Our proof has three cases. 
The first two cases use induction on $|V(G)|$, and
the third case simply constructs an explicit nb-coloring.

\textbf{Case 1: There exists $\bbs{W\subset V(G)}$ with $\bbs{2\le|W|\le |V(G)|-2}$ and
$\bbs{\rho_{s,G}(W)\le 3}$.}
By induction, $G[W]$ has an nb-coloring $I_W,F_W$.  We form a new graph $G'$
from $G$ by coloring $G[W]$ with $I_W,F_W$, and then identifying each vertex in
$W$ colored $I$ and identifying each vertex in $W$ colored $F$.  We call these
new vertices $w_i$ and $w_f$, and they retain their colors.  It is easy to check
that every nb-coloring of $G'$ extends to an nb-coloring of $G$ (by coloring
$G[W]$ with $I_W,F_W$).  So the key step is showing that $G'$ satisfies the
hypotheses of the Main Theorem.

Suppose that $G'$ contains a subset $W'$ such that $\rho_{s,G'}(W')\le -5$.  We
can check that also $\rho_{s,G}(W'\setminus\{w_i,w_f\}\cup W)\le -5$, a
contradiction.  That is, ``uncontracting'' the set $W'$ with potential too small 
in $G'$ gives a set with potential too small in $G$, which contradicts our
hypothesis.  So suppose instead that $G'$ contains a subgraph $H'$ that is
forbidden; that is $H'\in \HH$.  If $H'\notin\{K_4,M_7\}$, then
Corollary~3.8(iii) implies that $\rho_{s,H'}(V(H'))\le 0$, which yields
$\rho_{s,G}((V(H')\setminus\{w_i,w_f\}\cup W)\le -5$, a contradiction.
If $H'\in \{K_4,M_7\}$, then a short case analysis again reaches a
contradiction.

\textbf{Case 2: $G$ contains some ``reducible configuration'' (and Case~1 does
note apply).}  
Since Case~1 does not apply, we know that $\rho_{s,G}(W)\ge 4$ for all
$W\subseteq V(G)$ with $2\le |W|\le |V(G)|-2$.  We call this
inequality our ``gap lemma'', since it implies a gap
between the lower bound on $\rho_{s,G}$ required by the hypothesis ($-4$) and
the actual value of $\rho_{s,G}$ (at least 4).  A reducible
configuration is one that allows us to proceed by induction.  An easy example is
an uncolored vertex $v$ of degree at most 2.  By induction, $G-v$ has an
nb-coloring $I',F'$.  To extend this coloring to $G$, we color $v$ with $F$
unless all of its neighbors are colored $F$; in that case we color $v$ with $I$.
Our gap lemma has the following powerful consequence: For any $W\subsetneq V(G)$
and any $w\in W$ that is uncolored, we can color $G[W]$ with $w$ colored
$I$ and we can also color $G[W]$ with $w$ colored $F$.  This is because
precoloring a vertex decreases its potential (and that of any set containing
it) by at most 8.  So the gap lemma implies that each vertex subset (containing
the precolored vertex $w$) has potential at least $4-8=-4$.  Thus, the Main
Theorem still applies, even after precoloring $w$.

Let $L$ denote the set of degree 3 vertices that are uncolored and not
incident to any edge-gadget.  We claim that $G[L]$ is a forest.  Suppose, to
the contrary, that $G[L]$ contains a cycle $C$.  Since $G$ contains no
subgraph in $\HH$, cycle $C$ has successive vertices $v_1$ and $v_2$ such that their
neighbors outside of $C$, say $z_1$ and $z_2$ are not linked (this is a
technical term defined when constructing the family of forbidden subgraphs;
it means that adding the edge $z_1z_2$ would create a copy of a subgraph in
$\HH$).  Now we form a new graph $G(C,z_1,z_2)$ from $G$ by deleting $V(C)$ and
adding edge $z_1z_2$; if $z_1z_2$ already exists, then we replace it with an
edge-gadget.  Since $z_1$ and $z_2$ are not linked, $G(C,z_1,z_2)$ satisfies the
hypotheses of the Main Theorem.  It is straightforward to check that every
nb-coloring of $G(C,z_1,z_2)$ extends to an nb-coloring of $G$.

\textbf{Case 3: Neither Case 1 nor Case 2 applies.}  We use discharging to show
that $G$ is very nearly an uncolored graph with no edge-gadgets and consists
of an independent set of vertices of degree 4 and a set of vertices of degree 3
that induces a forest.  In this case, we can color the independent set with $I$
and color the forest with $F$.  If $G$ exactly matches this description, then
$\rho_{s,G}(V(G))=-\ell$, where $\ell$ is the number of components in the
forest.  Further, each place in the graph that differs from this description
slightly decreases $\rho_{s,G}(V(G))$.  By hypothesis, $\rho_{s,G}(V(G))\ge -4$,
so this number of differences is small (as is $\ell$).  In each case, we
explicitly construct an nb-coloring of $G$.
\end{proof}

In Section~\ref{alg-sec} we translate the proof of our Main Theorem into a
polynomial-time algorithm.
Implementing most of the steps is straightforward.  But two parts of this
process merit more comment.  In Section~\ref{goldberg-alg-sec}, we show how to
find a vertex subset $W$ with minimum potential; we can also further require
that $|W|$ be at least some constant distance away from 0 or from $|V(G)|$. 
This task reduces to a series of max-flow/min-cut problems, each of which runs
in time $O(|V(G)|^3\log |V(G)|)$.  Finally, to check whether two vertices are
linked, we simply use brute force.  This relies on the fact that each graph in
$\HH$ has at most 22 vertices, so $\HH$ has only finitely many graphs.  Thus we
can answer this question in time $O(|V(G)|^{20})$.

\section{Preliminaries}
\label{prelims-sec}
In Section~\ref{sharpness-sec} we construct the sharpness examples promised in
Theorems~\ref{thmA} and~\ref{thmB}.  In Section~\ref{potential-sec} we motivate
our choice of coefficients in the definitions of $\rho_{m,G}$ and $\rho_{s,G}$,
and record for reference the potentials of many small graphs.
Section~\ref{goldberg-alg-sec} presents an algorithm for finding a vertex
subset with lowest potential; 
this will be useful in Section~\ref{alg-sec}, where we convert our proofs that
certain graphs have nb-colorings into algorithms to construct those nb-colorings.
Finally, Section~\ref{defns-sec} collects all of our definitions, most of which
are standard.  To simplify our notation throughout, we assume that any sets $I$
and $F$ are disjoint.  This assumption is free, since induced subgraphs of
forests are forests.  We also assume that each pair of vertices is joined by at
most two edges, since allowing further parallel edges puts no further
constraints on the coloring.

\subsection{Sparse nb-critical Graphs} 
\label{constructing sharp examples sec}
\label{sharpness-sec}
Here we describe the sharpness examples in Theorems~\ref{main thm multi} and~\ref{main thm simple}.
For each $k \geq 1$, we construct a family of graphs \Emph{$G_k$} as follows.
The top of Figure \ref{sharp fig} shows $G_3$.
Let $V(G_k) = \{a,b,v_1, \ldots, v_{2k},c,d\}$ and 
$$E(G_k) = \{ab,ab,av_1,bv_1,v_{2k}c,v_{2k}d,cd,cd\} \cup \{v_1v_2, v_2v_3,
\ldots, v_{2k-1}v_{2k}\} \cup \{v_1v_2, v_3v_4, \ldots, v_{2k-1}v_{2k}\}. $$
To check that each $G_k$ is $(1.5,-1)$-sparse, we use induction on
$k$, as follows.  Fix $W\subseteq V(G_k)$.  Suppose that $W$ contains
$v_i,v_{i+1}$, for some $i\le 2k-2$.  Let $W'=W/\{v_i,v_{i+1},v_{i+2}\}$ and
$G'_k=G_k/\{v_i,v_{i+1},v_{i+2}\}$; here ``/'' denotes contraction.  Note that $G'_k\cong G_{k-1}$. By
hypothesis, $e(W')\le 1.5|W'|+1$.  Thus $e(W)\le e(W')+3 \le
1.5|W'|+3+1=1.5|W|+1$.
The case when no such $i$ exists is
straightforward, as is the base case. 
So $G_k$ is $(1.5,-1)$-sparse, as desired.

We claim that each $G_k$ is nb-critical.
To begin we show that $G_k$ is not near-bipartite.
The key observation, which is easy to check, is that
when $I,F$ is an nb-coloring of $G_k$ 
\begin{equation}\label{multiedge is superedge}
\mbox{if $vw$ is a multiedge, then } |I \cap \{v,w\}| = |F \cap \{v,w\}| = 1.
\end{equation}
Assume, contrary to our claim, that $G$ has an nb-coloring $I,F$.
Applying (\ref{multiedge is superedge}) to multiedge $ab$ shows that $|\{a,b\} \cap I|= 1$, which implies $v_1 \in F$.
Similarly, $|\{c,d\} \cap I|= 1$, so $v_{2k} \in F$.
We prove by induction that $v_{2i-1} \in F$ for all $i$, which contradicts \eqref{multiedge is superedge}
for multiedge $v_{2k-1}v_{2k}$.  Assume, by hypothesis, that $v_{2i-3}\in F$.
(The base case is when $i=2$.)
Applying \eqref{multiedge is superedge} to $v_{2i-3}v_{2i-2}$ shows that
$v_{2i-2}\in I$; this, in turn, means that $v_{2i-1}\in F$, as desired.
So $v_{2k-1}v_{2k}\in F$, which is a contradiction.  Thus, $G_k$ is not
near-bipartite.

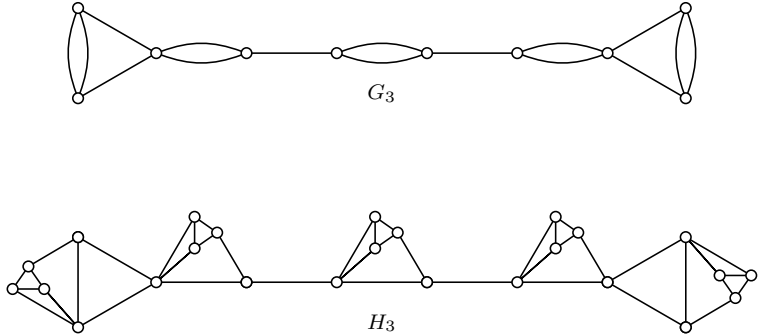
\begin{figure}[!h]
\centering
\begin{tikzpicture} [semithick, scale=1.2]
\tikzset{every node/.style=uStyle}

\draw (-.866,.5) node (a) {} -- (0,0) node (v1) {} -- (-.866,-.5) node (b) {}
(1,0) node (v2) {} -- (2,0) node (v3) {} (3,0) node (v4) {} -- (4,0) node (v5)
{} (5,0) node (v6) {} -- (5.866,.5) node (c) {} (v6) -- (5.866,-.5) node (d) {};
\draw (2.5,-.45) node[lStyle] {\footnotesize{$G_3$}};
\foreach \v/\w in {a/b, v1/v2, v3/v4, v5/v6, c/d}
  \draw (\v) edge[bend left=20] (\w) (\w) edge[bend left=20] (\v);

\begin{scope}[yshift=-1in]
\draw (-.866,.5) node (a) {} -- (0,0) node (v1) {} -- (-.866,-.5) node (b) {}
(1,0) node (v2) {} -- (2,0) node (v3) {} (3,0) node (v4) {} -- (4,0) node (v5)
{} (5,0) node (v6) {} -- (5.866,.5) node (c) {} (v6) -- (5.866,-.5) node (d) {};
\draw (2.5,-.45) node[lStyle] {\footnotesize{$H_3$}};

\foreach \left/\ang in {v1/0, v3/0, v5/0, b/90, c/-90} 
{
\begin{scope}[scale=.25,rotate=\ang]
\draw (\left) {} --+ (1.7,1.5) node {} --+ (2.7,2.2) node {} --+ (1.7,2.9) node
{} -- (\left) --+(1.7,1.5) --+ (1.7,2.9) (\left) --+ (4.0,0) node {} --+ (2.7,2.2);
\end{scope}
}

\end{scope}

\end{tikzpicture}
\caption{Two examples of nb-critical graphs.  Top: $G_3$ is a multigraph, with
vertices $a,b,v_1,\ldots,v_6,c,d$ in order from left to right.
Bottom: $H_3$ is formed from $G_3$ by replacing each pair of parallel edges by
a multiedge-replacement.\label{sharp fig}}
\end{figure}

To see that each subgraph $G_k-v_iv_{i+1}$ is near-bipartite, we color greedily
in the order $\{a,b,v_1,\ldots,v_i,$ $d,c, v_{2k},\ldots,v_{i+1}\}$, adding each vertex to any set where
it does not contradict the definition of $I,F$-coloring.  
For each other edge $e$, we can color $G-e$ similarly.
This completes the proof that each $G_k$ is nb-critical.

We now construct a family \Emph{$H_k$} of simple nb-critical graphs.
The bottom of Figure~\ref{sharp fig} shows $H_3$.
To do this, we define a \Emph{multiedge-replacement} for endpoints $a,b$ as
vertices $x_{ab},y_{ab},z_{ab}$ and edges $ab,ax_{ab},ay_{ab},x_{ab}y_{ab},$ $x_{ab}z_{ab},y_{ab}z_{ab},z_{ab}b$.
We say it is \emph{rooted} at $a$ and $b$ and that they are its roots.
As an example of an multiedge-replacement, consider the 5 leftmost (or 5 rightmost) vertices
in $H_3$ and the edges they induce, as shown on the bottom in Figure~\ref{sharp fig}.
To construct $H_k$ we replace each multiedge of $G_k$ with a
multiedge-replacement.  (These multiedge-replacements allow us to simulate
multiedges in simple graphs.)
It is straightforward to show by induction on $k$ that
each $H_k$ is $(1.6,-1)$-sparse.

The proof that $H_k$ is nb-critical follows from the proof that $G_k$ is
nb-critical, together with the fact (proved below) that in any nb-coloring
$I,F$ of a multiedge-replacement,
\begin{equation}\label{gadget is superedge}
\mbox{if the multiedge-replacement is rooted at $v$ and $w$, then } |I \cap \{v,w\}| = |F \cap \{v,w\}| = 1.
\end{equation}

We also need the observation that removing any edge from a multiedge-replacement
allows an nb-coloring with both roots colored $F$; this is easy to check
directly. This observation implies that every proper
subgraph of $H_k$ is near-bipartite.

We now prove (\ref{gadget is superedge}). 
If $z_{vw} \in I$, then $\{w,x_{vw},y_{vw}\}\subseteq F$.
So the circuit $v,x_{vw},y_{vw}$ implies that $v \in I$, and (\ref{gadget is superedge}) holds.
If instead $z_{vw} \in F$, then the circuit $x_{vw},y_{vw},z_{vw}$ forces
$\{x_{vw},y_{vw}\}\not\subset F$; by symmetry, assume $x_{vw}\in F$ and
$y_{vw}\in I$.  Thus $v\in F$.  But now the circuit $vwz_{vw}x_{vw}$
forces $w\in I$.  Again, (\ref{gadget is superedge}) holds.
This completes the proof of (\ref{gadget is superedge}). So $H_k$ has no
nb-coloring precisely because $G_k$ has no nb-coloring.  Thus, $H_k$ is nb-critical.

\subsection{Potential Functions} \label{define pot func sec}
\label{potential-sec}

Recall from the introduction that
 $$ \rho_{m,G}(W) = 3|W \cap U_p| + |W \cap F_p| - 2|e(W)|$$ 
and
 $$ \rho_{s,G}(W) = 8|W \cap U_p| + 3|W \cap F_p| - 5|e'(W)| - 11|e''(W)|.$$ 

Our choice of coefficients in $\rho_{m,G}$ and $\rho_{s,G}$ has a simple
explanation based on the constructions in the previous section.  We begin with $\rho_{m,G}$.
The ratio $3/2$ of the coefficients on $|W\cap U_p|$ and $e(W)$ arises because
$\lim_{k\to\infty}|E(G_k)|/|V(G_k)|=3/2$.  To understand the coefficient 1 on
$|W\cap F_p|$, consider an arbitrary vertex $w \in U_p$.
We create vertices $y_w, y_w' \in U_p$ and add edges $wy_w, wy_w', y_wy_w',
y_wy_w'$; see left of Figure~\ref{pot-fig}.
Because $y_wy_w'$ is a multiedge, every nb-coloring $I,F$ of this
graph must have $|I \cap \{y_w, y_w'\}|=1$, so $w \in F$.
Thus, functionally speaking, this construction is equivalent to moving $w$ from $U_p$ to $F_p$.
The weight of $w$ in $F_p$ represents the combined contribution to $\rho_{m,G}$ of $w, y_w,
y_w'$, and the associated edges: the 3 vertices and 4 edges give us $3(3) - 2(4) = 1$.

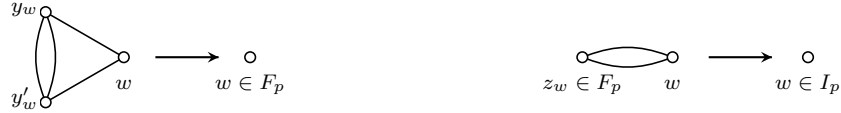
\begin{figure}[!h]
\centering
\begin{tikzpicture} [semithick, scale=1.2]
\tikzstyle{uStyle}=[shape = circle, minimum size = 4pt, inner sep = 1pt,
outer sep = 0pt, fill=white, semithick, draw]
\tikzstyle{lStyle}=[shape = rectangle, minimum size = 5pt, inner sep =
0.5pt, outer sep = 0pt, draw=none]

\tikzset{every node/.style=uStyle}

\draw (-.866,.5) node (a) {} -- (0,0) node (v1) {} -- (-.866,-.5) node (b) {};
\draw (-1.1,.53) node[lStyle] {\footnotesize{$y_w$}};
\draw (-1.1,-.45) node[lStyle] {\footnotesize{$y'_w$}};

\foreach \v/\w in {a/b}
  \draw (\v) edge[bend left=20] (\w) (\w) edge[bend left=20] (\v);
\draw (0,-.3) node[lStyle] {\footnotesize{$w$}};
\draw (1.4,0) node (v2) {};
\draw (1.4,-.3) node[lStyle] {\footnotesize{$w\in F_p$}};
\arrowdrawthick (.35,0) to (1.05,0);

\begin{scope}[xshift=2in]
\draw (0,0) node (zw) {} (1,0) node (w) {};
\draw (zw) edge[bend left=20] (w) (w) edge[bend left=20] (zw);
\draw (0,-.3) node[lStyle] {\footnotesize{$z_w\in F_p$}};
\draw (1,-.3) node[lStyle] {\footnotesize{$w$}};

\arrowdrawthick (1.4,0) to (2.1,0);
\draw (2.5,0) node {};
\draw (2.5,-.3) node[lStyle] {\footnotesize{$w\in I_p$}};
\end{scope}

\end{tikzpicture}
\caption{Constructions to require $w\in F_p$ (left) and $w\in I_p$
(right).\label{pot-fig}%
}
\end{figure}

To understand the coefficient 0 on $|W\cap I_p|$, consider an arbitrary
vertex $w \in U_p$, and create vertex $z_w \in F_p$ and add edges $wz_w, wz_w$;
see right of Figure~\ref{pot-fig}.
By construction, every nb-coloring $I,F$ of this graph must have $w
\in I$, and so we have mimicked moving $w$ from $U_p$ to $I_p$.
The weight of $w$ in $I_p$ represents the combined contribution of $w, z_w$, and
the two associated edges: $3 + 1 - 2(2) = 0$.

To double-check that our coefficients make sense, suppose we want to move a
vertex $v$ from $U_p$ to $F_p$.  We can also achieve this by adding a
vertex $w\in I_p$ and adding edge $vw$.  Functionally, now $v\in
F_p$, so combining the weights of $v$, $w$, and $vw$ should give the weight of a
single vertex in $F_p$, and it does: $3+0-2=1$.


Similarly, we can analyze the coefficients of $\rho_{s,G}$.  Note that
$\lim_{k\to \infty}|E(H_k)|/|V(H_k)|=8/5$.  To compute the weight of an
edge-gadget, we have $8(3)-5(7)= -11$, since it is simulated by a
multiedge-replacement.  To effectively move a vertex from
$U_p$ to $F_p$ or $I_p$, we use the same method as above, but with edge-gadgets
in place of multiedges. For a vertex in $F_p$ we count the contributions of 3
vertices, 2 edges, and one additional edge-gadget to get $8(3)-2(5)-11=3$.  For
a vertex in $I_p$ we count contributions of one vertex in $F_p$, one vertex in
$U_p$, and one edge-gadget to get $3+8-11=0$.

\begin{example}\label{example potentials for many things}
\label{example1}
We calculate the potential for several examples (assuming that no vertices are precolored).
\begin{enumerate}[(i)]
	\item $\rho_{m,K_k}(V(K_k)) = 3k-2{k\choose 2}=4k-k^2$ and
$\rho_{s,K_k}(V(K_k)) = 8k-5{k\choose 2} = \frac{21}2k - \frac52k^2$.
	\item $\rho_{m,W_5}(V(W_5)) = 3(6) - 2(10) = -2$ and $\rho_{s,W_5}(V(W_5)) = 8(6) - 5(10) = -2$.
	\item $\rho_{m,K_{2,2,2}}(V(K_{2,2,2})) = 3(6) - 2(12) = -6$ and
$\rho_{s,K_{2,2,2}}(V(K_{2,2,2})) = 8(6) - 5(12) = -12$.
	\item $\rho_{m,M_7}(V(M_7)) = 3(7)-2(11) = -1$ and $\rho_{s,M_7}(V(M_7)) = 8(7) - 5(11) = 1$.
	\item $\rho_{m,J_7}(V(J_7)) = 3(7)-2(12) = -3$ and $\rho_{s,J_7}(V(J_7)) = 8(7) - 5(12) = -4$.
	\item $\rho_{m,J_8}(V(J_8)) = 3(8) - 2(13) = -2$ and
$\rho_{s,J_8}(V(J_8)) = 8(8)-5(13) = -1$.
	\item $\rho_{m,J_{12}}(V(J_{12})) = 3(12)-2(20) = -4$ and
$\rho_{s,J_{12}}(V(J_{12})) = 8(12) - 5(20) = -4$. 
	\item $\rho_{m,G_k}(V(G_k)) = 3(2k+4) - 2 (3k+7) = -2$; further
$\rho_{m,G_k}(W) > -2$ for all $W \subsetneq V({G_k})$.
	\item $\rho_{s,H_k}(V(H_k)) = 8(5(k+2))-5(8(k+2)+1) = -5$; further
$\rho_{m,H_k}(W) > -5$ for all $W \subsetneq V({H_k})$. 
\end{enumerate}
The second statements in (viii) and (ix) are proved by induction on $k$.
\end{example}

\subsection{Computational Aspects of Sparsity} \label{computing sparsity sec}
\label{goldberg-alg-sec}
Recall that a graph $G$ is \Emph{$(a,b)$-sparse} if every nonempty 
$W\subseteq V(G)$ satisfies $|e(W)|\le a|W|-b$.  Similarly, $G$ is
\EmphE{$(a,b)$-tight}{2mm} if it is $(a,b)$-sparse and $|\EG| = a|\VG| - b$,
and $G$ is \EmphE{$(a,b)$-strictly sparse}{2mm} if it is $(a,b)$-sparse and no
subgraph is $(a,b)$-tight.
These sparsity notions have connections to many other concepts.  Lee and
Streinu~\cite[\S]{LS} survey several applications, emphasizing the
equivalence between $(2,3)$-tight graphs and Laman graphs for planar
bar-and-joint rigidity.  Sparsity is also related to minimal bends in
vertex contact representations of paths on a grid; see~\cite{AL}.  

Kostochka and the second author~\cite{KY} showed how to
color $(\frac{k}2-\frac1{k-1},\frac{k(k-3)}{2(k-1)})$-strictly
sparse graphs in polynomial time. Later they proved~\cite{KY2} that certain
known critical graphs are in fact
$(\frac{k}2-\frac1{k-1},\frac{k(k-3)}{2(k-1)})$-tight.  Their 
coloring algorithm fits into a larger body of work that uses the so-called
``Potential Method'' to color sparse graphs.
We will use the Potential Method to prove parts (A) and (B) of our Main Theorem.
When we color an $(a,b)$-sparse graph, 
a key step is to either find a proper $(a,b')$-tight subgraph $J$, for
specifically chosen $b' > b$, or else report that no such $J$ exists.
We may also impose additional constraints, for instance that $2\le |J|\le
|V(G)|-2$ or that $|J|$ is maximized or minimized.

The \Emph{maximum average degree} of a graph $G$ is the minimum $a$ such that
$G$ is $(a/2,0)$-sparse.
Researchers have recently discovered
new applications for finding a subgraph with maximum average degree, and
algorithms achieving this have grown in interest (Google Scholar claims that a
paper with a foundational algorithm~\cite{G} for this problem has over 250 citations).
Finding the subgraph with largest maximum average degree among subgraphs whose
order is bounded either from above or below is conjectured to be 
computationally hard~\cite{AC}, but it can be done in polynomial
time~\cite{CPWLT} if the bounds are $O(1)$ away from being trivial.
We are unaware of any work bounding the subgraph's order from
both above and below simultaneously.

Much of the work above generalizes to hypergraphs.
Fix a hypergraph $\HHH$, vertex weights $w_v:V(\HHH) \rightarrow \Re^+$, and
edge weights $w_e:E(\HHH) \rightarrow \Re^+$.
The potential of a vertex set $X$, denoted \EmphE{$\rho(X)$}{-4mm}, is defined as
$\rho(X) = \sum_{u \in X} w_v(u) - \sum_{f \subseteq X} w_e(f)$.
Hypergraph $\HHH$ is \Emph{$b$-sparse} if $\rho(X) \geq b$ for every nonempty
vertex subset $X$.  A graph $G$ is $(a,b)$-sparse if and only if for weights
$w_v \equiv a$, $w_e \equiv 1$ we have that $G$ is $b$-sparse.

Lee and Steinu~\cite{LS} gave an algorithm to find an $(a,b)$-tight subgraph of
maximum order when $0 \leq b < 2a$, and Streinu and Theran~\cite{ST}
generalized it to hypergraphs.  Goldberg~\cite{G} gave an algorithm to find a
subgraph with largest maximum average degree.  The core routine of Goldberg's
algorithm is a max-flow/min-cut method; for a fixed $a'$ it finds the largest
$b'$ such that the graph is $(a',b')$-sparse and returns an $(a',b')$-tight
subgraph.  Goldberg's algorithm may return the empty subgraph, so it always
returns with $b' \geq 0$.
Kostochka and the second author~\cite{KY} modified Goldberg's algorithm to fit
the needs of the Potential Method, but they only proved the modifications work for
the case needed in that paper.  Goldberg~\cite{G} also generalized his work to
allow for edge weights and ``vertex weights,'' but his vertex weights are
functionally equivalent to the presence of loops and differ from what we do here.
To simplify current and future work with the Potential Method, we describe here
the most general version of the algorithm in~\cite{KY}.

\begin{theorem}\label{find small potential 1}
Fix a hypergraph $\HHH$, vertex weights $w_v:V(\HHH) \rightarrow \Re^+$, and
edge weights $w_e:E(\HHH) \rightarrow \Re^+$.  We can find a vertex subset $W$ such that $\rho(W) = \min_{U \subseteq V(\HHH)} \rho(U)$ in time $O((|V(\HHH)|+|E(\HHH)|)^3)$.
If each hyperedge has bounded size, then we can find $W$ in time $O((|V(\HHH)|+|E(\HHH)|)^2\log(|V(\HHH)|+|E(\HHH)|))$.
\end{theorem}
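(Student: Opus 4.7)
The plan is to reduce the potential-minimization problem to a single $s$-$t$ minimum cut in an auxiliary flow network, generalizing Goldberg's densest-subgraph reduction~\cite{G} and the Kostochka--Yancey modification~\cite{KY} to arbitrary vertex and edge weights on hypergraphs. First I would build a directed network $N$ on node set $V(\HHH)\cup E(\HHH)\cup\{s,t\}$ as follows: for each hyperedge $f\in E(\HHH)$, introduce an arc $s\to x_f$ of capacity $w_e(f)$ together with arcs $x_f\to v$ of capacity $+\infty$ for every $v\in f$; for each vertex $v\in V(\HHH)$, introduce an arc $v\to t$ of capacity $w_v(v)$.

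The key step is to identify finite $s$-$t$ cuts in $N$ with subsets of $V(\HHH)$. For any finite cut, let $W$ denote the set of original vertices on the $s$-side. Since no arc of infinite capacity may be cut, the edge-node $x_f$ may lie on the $s$-side only when $f\subseteq W$; conversely, moving $x_f$ to the $s$-side whenever $f\subseteq W$ strictly decreases the cut value (it saves $w_e(f)$ and cuts no infinite arc), so without loss of generality $x_f$ is on the $s$-side if and only if $f\subseteq W$. Under this convention the cut value equals
\[
\sum_{f\not\subseteq W} w_e(f) \;+\; \sum_{v\in W} w_v(v) \;=\; \Bigl(\sum_{f\in E(\HHH)} w_e(f)\Bigr)+\rho(W),
\]
so the minimum cut corresponds precisely to a minimum-potential $W$, which is recovered as the source side intersected with $V(\HHH)$.

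Finally, I would run a standard max-flow algorithm on $N$. Setting $n:=|V(\HHH)|+|E(\HHH)|+2$ and $m:=|V(\HHH)|+|E(\HHH)|+\sum_{f\in E(\HHH)}|f|$, in general $m=O(n^2)$ and FIFO push-relabel computes max flow in $O(n^3)$ time, establishing the first bound. When every hyperedge has bounded size we have $m=O(n)$, and the Goldberg--Tarjan highest-label push-relabel variant with dynamic trees runs in $O(nm\log(n^2/m))$ time, which yields the claimed $O((|V(\HHH)|+|E(\HHH)|)^2\log(|V(\HHH)|+|E(\HHH)|))$ bound.

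The main technical point to verify carefully is the equivalence between finite cuts and subsets $W$: namely that restricting to cuts with $x_f$ on the $s$-side exactly when $f\subseteq W$ loses no generality, so that the genuinely nonlinear quantity $\sum_{f\subseteq W}w_e(f)$ (which is not a cut value a priori) becomes expressible as a constant minus a cut value. Once infinite-capacity arcs are in place this is a short exchange argument, but it is the crux of the reduction. A minor point is that $W=\emptyset$ is a legitimate optimum (with $\rho(\emptyset)=0$), corresponding to the trivial cut that places only $s$ on the source side; this matches the statement of the theorem, which minimizes over all subsets.
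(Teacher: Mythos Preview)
Your proposal is correct and follows essentially the same approach as the paper: both reduce to a single $s$-$t$ min-cut in an auxiliary network with vertex-nodes, edge-nodes, and infinite-capacity incidence arcs, and both obtain $\rho(W)+\sum_f w_e(f)$ as the cut value. The only cosmetic differences are that your network is the reverse of the paper's (you run $s\to x_f\to v\to t$, while the paper runs $s\to v\to x_e\to t$; correspondingly you read $W$ off the $s$-side and the paper reads it off the $t$-side), and you invoke push-relabel variants where the paper cites Karzanov and Sleator--Tarjan for the same time bounds.
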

\begin{proof}
The following is a straightforward adaptation of Goldberg's argument in~\cite{KY}; we get to add weights for free. 
(Figure~\ref{goldberg-example-fig} shows an example.)

Using a Max-flow/Min-cut algorithm, we will find a minimum weight cut $E'$ in the
following auxiliary digraph $P$.  Let $V(P) = \{s,t\} \cup V(\HHH) \cup E(\HHH)$.
For each vertex $v$ of $\HHH$, add an arc from $s$ to the corresponding vertex in $P$ with capacity $w_v$.
For each edge $e$ of $\HHH$, add an arc from the corresponding vertex in $P$ to $t$ with capacity $w_e$.
For each vertex $v$ in an edge $e$ of $\HHH$, add an arc in $P$ with infinite
capacity from the vertex corresponding to $v$ to the vertex corresponding to $e$.

Let \Emph{$w_e^{tot}$} denote the sum of all edge weights in $\HHH$.
Observe that if $v$ is a vertex in an edge $e$ (in $\HHH$), then either $sv$ is in
the edge cut $E'$ of $P$ or else $et$ is in $E'$.
Let $W=\{v\in V(\HHH):~sv\in E'\}$, and note that $e(W)=\{e\in E(\HHH):~et\notin E'\}$.
Thus, the weight of $E'$ is precisely 
\begin{align*}
&\sum_{x\in W}w_v(x)+\sum_{f\notin \HHH[W]}w_e(f)\\
= &\sum_{x\in W}w_v(x)- \sum_{f\in \HHH[W]}w_e(f)+ \sum_{f\in E(\HHH)}w_e(f) \\
= &~\rho(W)+w_e^{tot}.
\end{align*}
The algorithm's running time is dominated by the cost of finding a minimum $s-t$ edge-cut in $P$.
Since $|V(P)| = |V(\HHH)|+|E(\HHH)| + 2$, the algorithm of Karzanov~\cite{K}
runs in time $O((|V(\HHH)|+|E(\HHH)|)^3)$.
If each hyperedge has bounded size, then $|E(P)|= O(|V(\HHH)|+|E(\HHH)|)$, so
the algorithm of Sleater and Tarjan~\cite{ST2} runs in time
$O((|V(\HHH)|+|E(\HHH)|)^2\log(|V(\HHH)|+|E(\HHH)|))$.
\end{proof}

\begin{figure}[!t]
\centering
\begin{tikzpicture}[semithick, scale=.85]
\tikzstyle{uStyle}=[shape = circle, minimum size = 10pt, inner sep = 1pt,
outer sep = 0pt, fill=white, semithick, draw]
\tikzstyle{uBStyle}=[shape = circle, minimum size = 13pt, inner sep = 1pt,
outer sep = 0pt, fill=white, semithick, draw]
\tikzset{every node/.style=uStyle}

\draw (0,2.5) node (u) {\footnotesize{$u$}} -- (-1,1) 
node (v) {\footnotesize{$v$}}
-- (1,1) node (w) {\footnotesize{$w$}}
(-1,-1) node (x) {\footnotesize{$x$}} -- (1,-1) node (y) {\footnotesize{$y$}} --
(0,-2.5) node (z) {\footnotesize{$z$}};
\draw (u) -- (w) -- (y) (v) -- (x) -- (z);

\foreach \start/\end/\where/\far/\outer/\value in {
u/v/above/.7/5/5,
u/w/above/.7/5/8,
v/w/below/.5/0.5/2,
v/x/left/.5/.5/7,
w/y/right/.5/.5/5,
x/y/above/.5/1.5/3,
x/z/below/.3/2.5/6,
y/z/below/.3/2.5/7
}
\draw (\start) -- (\end) 
node[pos=\far,\where, lStyle, outer sep=\outer, fill=none, inner sep=1, shape=circle]{\scriptsize{\value}};

\foreach \where/\dir/\far/\value in {u/270/.4/3, v/30/.4/4, w/150/.4/2,
x/330/.4/1, y/210/.4/9, z/90/.4/15}
\draw (\where) ++ (\dir:\far cm) node[lStyle, outer sep=1, fill=none, inner
sep=1, shape=circle]{\scriptsize{\value}};

\node[above = .7cm of u, lStyle] (u') {};
\node[above left=.1cm and .25cm of v, lStyle] (v') {};
\node[below right=.23cm and .89cm of v, lStyle] (v'') {};
\node[above right=.15cm and .25cm of w, lStyle] (w') {};
\node[below right=.15cm and .29cm of x, lStyle] (x') {};
\node[below left=.15cm and .23cm of x, lStyle] (x'') {};
\draw[ultra thick] (v'.center) .. controls (u'.center) and (u'.center)  ..
(w'.center)
.. controls ++(300:1cm) and ++(360:1cm) .. (v''.center) .. controls ++(180:1cm) and
++(90:2cm) ..  (x'.center) .. controls ++(240:.35cm) and ++(300:.25cm) ..
(x''.center) -- cycle; .. controls ++(180:.05cm) .. (v'.center);


\begin{scope}[xshift=3.5in, yshift=-1.15in, xscale=1.25, yscale=2, rotate = 90]

\draw (0,0) node (s) {\scriptsize{$s$}} 
(3,0) node (t) {\scriptsize{$t$}} 
(1,3.5) node (u) {\footnotesize{$u$}}
(1,2.1) node (v) {\footnotesize{$v$}}
(1,0.7) node (w) {\footnotesize{$w$}}
(1,-.7) node (x) {\footnotesize{$x$}}
(1,-2.1) node (y) {\footnotesize{$y$}}
(1,-3.5) node (z) {\footnotesize{$z$}}

(2,3.5) node[uBStyle] (uv) {\footnotesize{$uv$}}
(2,2.5) node[uBStyle] (uw) {\footnotesize{$uw$}}
(2,1.5) node[uBStyle] (vw) {\footnotesize{$vw$}}
(2,0.5) node[uBStyle] (vx) {\footnotesize{$vx$}}
(2,-.5) node[uBStyle] (wy) {\footnotesize{$wy$}}
(2,-1.5) node[uBStyle] (xy) {\footnotesize{$xy$}}
(2,-2.5) node[uBStyle] (xz) {\footnotesize{$xz$}}
(2,-3.5) node[uBStyle] (yz) {\footnotesize{$yz$}};

\draw[gray!50!white, semithick] (u) edge[bend left = 40] (uv);
\draw[gray!50!white, semithick] (w) edge[bend left = 35] (uw);
\draw[gray!50!white, semithick] (v) edge[bend left = 10] (vw);
\draw[gray!50!white, semithick] (v) edge[bend right = 35] (vx);
\draw[gray!50!white, semithick] (w) edge[bend right = 40] (wy);
\draw[gray!50!white, semithick] (x) edge[bend right = 40] (xy);
\draw[gray!50!white, semithick] (x) edge[bend right = 15] (xz);
\draw[gray!50!white, semithick] (y) edge[bend right = 40] (yz);
\foreach \vert in {u,v,w,x,y,z}
  \draw (s) -- (\vert);
\foreach \vert in {uv, uw, vw, vx, wy, xy, xz, yz}
  \draw (t) -- (\vert);

\tikzset{
    partial ellipse/.style args={#1:#2:#3}{
        insert path={+ (#1:#3) arc (#1:#2:#3)}
    }
}

\draw[ultra thick] (s) [partial ellipse=-15:55:.35cm and 1.7cm];
\draw[ultra thick] (t) [partial ellipse=180:235:.35cm and 1.7cm];

\foreach \start/\end/\where/\far/\outer/\value in {
s/u/left/.7/6.5/3,
s/v/left/.7/2.5/4,
s/w/left/.7/0.5/2,
s/x/right/.7/0.5/1,
s/y/right/.7/2.5/9,
s/z/right/.7/6.5/15,
t/uv/left/.79/6.5/5,
t/uw/left/.81/4.5/8,
t/vw/left/.83/2.5/2,
t/vx/left/.86/1.5/7,
t/wy/right/.86/1.5/5,
t/xy/right/.83/2.5/3,
t/xz/right/.81/4.5/6,
t/yz/right/.79/6.5/7,
u/uw/left/.4/1.5/(3),
v/uv/right/.4/1.5/(4),
w/vw/left/.4/0.5/(2),
x/vx/left/.4/1.5/(1),
y/wy/left/.4/1.5/(5),
y/xy/right/.4/0.4/(3),
z/xz/left/.4/1.0/(6),
z/yz/right/.4/0.5/(7)
}
\draw (\start) -- (\end) 
node[pos=\far,\where, lStyle, outer sep=\outer, fill=none, inner sep=1, shape=circle]{\scriptsize{\value}};
\end{scope}

\end{tikzpicture}
\caption{Left: A graph $G$ with weights on edges and vertices, and its subgraph
with minimum potential.  
The potential of the subgraph of $G$ indicated is $3+4+2+1-(5+8+2+7)=-12$.
Right: A minimum cut and maximum flow---in an auxilliary graph---that
correspond to the subgraph of $G$ with minimum potential. Flow value are
shown as $(a)$ and capacities as $a$; recall that each ``center'' edge has
infinite capacity.  Edges without flow value shown have the flow needed to
conserve flow at their endpoints. (Curved, light gray edges do not receive any
flow, but are drawn for completeness.)
The minimum cut has value $3+4+2+1+5+3+6+7=31$.  To calculate the minimum
potential of a subgraph in $G$ we subtract the sum of capacities of top 
edges ($5+8+2+7+5+3+6+7=43$) from the value of the maximum flow.  Thus, the
potential is $31-43=-12$.\label{goldberg-example-fig}
} 
\end{figure}

We have two immediate uses for the vertex weights.
First, we can adapt the algorithm to the problem of extending a precoloring,
as discussed in Section \ref{define pot func sec}.
Second, we can specify vertices as mandatory to include in
our subgraph, as we show in the proof of our next result.

\begin{theorem}\label{find small potential 2}
Theorem~\ref{find small potential 1}
can be adapted to allow the condition that we find the largest or smallest
subset among optimal sets. Further, for constants $m_1$ and $m_2$, we can also require
the subset to have order at least $m_1$ and at most $|\HHH| - m_2$, where the
algorithm now runs in time $O(|V(\HHH)|^{m_1 + m_2}(|V(\HHH)|+|E(\HHH)|)^3)$.
\end{theorem}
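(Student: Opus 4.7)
The plan is to reduce both extensions to the basic max-flow/min-cut routine of Theorem~\ref{find small potential 1}. For the ``largest or smallest among optimal sets'' clause, I would perturb the vertex weights uniformly: replace each $w_v(u)$ by $w_v(u) - \epsilon$ (to favor larger $W$) or $w_v(u) + \epsilon$ (to favor smaller $W$) for an $\epsilon > 0$ chosen so small that $\epsilon \cdot |V(\HHH)|$ is smaller than any positive gap between distinct values of $\rho(\cdot)$. A set minimizing the perturbed potential still minimizes the original $\rho$ and additionally extremizes $|W|$ among such minimizers. (Alternatively, the minimum $s$--$t$ cuts of the auxiliary network $P$ form a distributive lattice whose unique minimum and maximum elements can be identified from the residual graph after one extra BFS, yielding the same conclusion without altering weights.)

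For the size constraints $|W| \ge m_1$ and $|W| \le |V(\HHH)| - m_2$, I would enumerate over all pairs $(A,B)$ of disjoint subsets of $V(\HHH)$ with $|A| = m_1$ and $|B| = m_2$, solving for each pair a constrained version of Theorem~\ref{find small potential 1} that forces $A \subseteq W$ and $W \cap B = \emptyset$. The forcing is implemented directly in $P$: for each $u \in A$, append an arc of infinite capacity from $u$ to $t$, so that $u$ must lie on the $t$-side of any finite cut (hence $u \in W$); for each $u \in B$, raise the capacity of arc $su$ to $\infty$, so that $su$ cannot be cut and $u$ must lie on the $s$-side (hence $u \notin W$). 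A direct check confirms that the weight of any finite minimum cut in the augmented network is still $\rho(W) + w_e^{tot}$ for the corresponding admissible $W$. Every admissible $W$ is captured by at least one choice $(A,B)$ with $A \subseteq W$ and $B \subseteq V(\HHH) \setminus W$, so the minimum over all $\binom{|V(\HHH)|}{m_1}\binom{|V(\HHH)| - m_1}{m_2} = O(|V(\HHH)|^{m_1 + m_2})$ pairs recovers the constrained global minimum.

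Combining the two extensions is immediate: for each forced pair $(A,B)$ apply the perturbation trick to obtain a largest (or smallest) constrained optimum, then select the pair-wise winner of lowest potential, breaking ties the same way. The dominant cost is the $O(|V(\HHH)|^{m_1+m_2})$ min-cut computations, each running in $O((|V(\HHH)|+|E(\HHH)|)^3)$ time via Karzanov's algorithm, matching the stated bound. The main obstacle is simply the bookkeeping needed to verify that the forcing modifications interact cleanly with the cut-value identity $\rho(W) + w_e^{tot}$ (so that infinite-capacity arcs are never cut in an optimal solution) and that the perturbation parameter $\epsilon$ can be chosen effectively from the bit-length of the input weights; both are routine but must be done carefully to preserve correctness.
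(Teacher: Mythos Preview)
Your proposal is correct and follows essentially the same plan as the paper: break ties in $|W|$ by a uniform $\epsilon$-perturbation of the vertex weights, and handle the size constraints by enumerating $O(|V(\HHH)|^{m_1+m_2})$ choices of vertices to force in or out of $W$, modifying the min-cut instance accordingly. The only differences are cosmetic: the paper forces $|W|\le |V(\HHH)|-m_2$ by deleting $m_2$ vertices from $\HHH$ (rather than giving their $s$-arcs infinite capacity) and forces $|W|\ge m_1$ by adding a single high-weight hyperedge on the $m_1$ chosen vertices (rather than adding individual $u\to t$ arcs); these are interchangeable implementations of the same idea.
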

\begin{proof}[(Proof sketch)]
To find an optimal subgraph of maximal order, we increase the weight of
each vertex by $\epsilon$. To find one of minimal order, we decrease each weight.

Let $W$ denote the vertex subset returned by the algorithm in Theorem~\ref{find
small potential 1}.  To ensure that $|W|\le |\VG|-m_2$, we remove a set $X$ of
$m_2$ vertices before running the algorithm.  By considering all ${|V(\HHH)|
\choose m_2}$ choices for $X$, we find our desired $W$.

To ensure that $|W|\ge m_1$, we choose a set $Y$ of $m_1$ vertices
and add a new hyperedge over those vertices, with extremely high capacity.
Any optimal cut must contain those vertices, so we can account for the weight of this
new hyperedge at the end.  Again we consider all possible choices for $Y$.
The theorem follows from the inequality ${|V(\HHH)| \choose m_1}{|V(\HHH)|
\choose m_2} \leq |V(\HHH)|^{m_1 + m_2}$.
\end{proof}

\begin{corollary}\label{find small potential 3}
Let $m_1, m_2$ be fixed nonnegative integers.
If $G$ is a connected graph with $O(|\VG|)$ edges, then a largest (or
smallest) vertex subset $W$ with smallest potential satisfying $m_1 \leq |W| \leq |\VG| - m_2$ can be found in 
time $O(|\VG|^{2+m_1 + m_2}\log(|\VG|))$.
\end{corollary}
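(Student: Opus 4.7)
The plan is to obtain the corollary as a direct specialization of Theorem~\ref{find small potential 2}, using the fact that a graph is a hypergraph whose hyperedges all have size~$2$ and therefore qualifies for the faster Sleator--Tarjan-based running time recorded in Theorem~\ref{find small potential 1}.

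First I would take $\HHH := G$, viewed as a $2$-uniform hypergraph, with vertex weights $w_v \equiv a$ and edge weights $w_e \equiv 1$ (or whatever weights capture the potential of interest; the argument is indifferent to the particular positive weights). Since $G$ is connected and has $O(|\VG|)$ edges, we have $|V(\HHH)| + |E(\HHH)| = O(|\VG|)$, and each hyperedge has size at most $2$, so the ``bounded hyperedge size'' clause of Theorem~\ref{find small potential 1} applies. Hence one run of the minimum-potential subroutine on $\HHH$ (or on any subhypergraph obtained by deletions or by adding one extra large-capacity hyperedge) costs $O(|\VG|^{2}\log|\VG|)$ time rather than the cubic bound used in the proof of Theorem~\ref{find small potential 2}.

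Next I would enforce the size constraints exactly as in the proof of Theorem~\ref{find small potential 2}. To guarantee $|W| \leq |\VG|-m_2$ I enumerate all $\binom{|\VG|}{m_2}$ choices of an $m_2$-set $X$ to delete; to guarantee $|W|\ge m_1$ I enumerate all $\binom{|\VG|}{m_1}$ choices of an $m_1$-set $Y$ and add a single auxiliary hyperedge on $Y$ of extremely large capacity, whose (fixed) contribution is subtracted at the end. Using $\binom{|\VG|}{m_1}\binom{|\VG|}{m_2} \leq |\VG|^{m_1+m_2}$ and multiplying by the per-call cost of $O(|\VG|^{2}\log|\VG|)$ yields the claimed total running time $O(|\VG|^{2+m_1+m_2}\log|\VG|)$. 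To select among ties the $W$ of maximum (respectively minimum) order, I perturb every vertex weight by $+\varepsilon$ (respectively $-\varepsilon$) for a sufficiently small $\varepsilon>0$, exactly as in the proof of Theorem~\ref{find small potential 2}; the perturbation does not alter the set of potential-minimizers but breaks ties in the required direction, and adds only $O(1)$ overhead per call.

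There is no serious obstacle here, since everything reduces to bookkeeping about the input size of the auxiliary flow network: the only point that deserves care is verifying that adding a single mandatory hyperedge (of size $m_1$) on the enumerated set $Y$ keeps $|V(P)|+|E(P)| = O(|\VG|)$ and preserves the ``bounded hyperedge size'' hypothesis needed to invoke the Sleator--Tarjan bound. This is immediate because $m_1$ is a fixed constant, so the enlarged instance still has $O(|\VG|)$ arcs and each of its hyperedges has size at most $\max(2,m_1) = O(1)$. The corollary follows.
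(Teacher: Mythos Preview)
Your proposal is correct and is precisely the specialization the paper intends: the corollary is stated without proof immediately after Theorem~\ref{find small potential 2}, and you have simply spelled out that for a graph with $O(|\VG|)$ edges the bounded-hyperedge-size clause of Theorem~\ref{find small potential 1} replaces the cubic factor by $|\VG|^2\log|\VG|$, while the $|\VG|^{m_1+m_2}$ factor from the enumerations in Theorem~\ref{find small potential 2} carries over unchanged. Your remark that the single auxiliary hyperedge has constant size $m_1$ (so the bounded-size hypothesis still applies) is the only nontrivial check, and it is handled correctly.
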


\subsection{Definitions and Notation}
\label{defns-sec}
For completeness, below we collect our definitions, many of which are standard.
A graph $G$ consists of a vertex set $\VG$ and a multiset $\EG$ of unordered
pairs of vertices, called the edge multiset.
An edge $e$ that is the pair of vertices $v$ and $w$ is written as $e=vw$.
This paper deals with loopless graphs, so if $vw$ is an edge, then $v \neq w$.
Two edges $e_1, e_2$ are \Emph{parallel} if they are the same pair of vertices.
A \EmphE{multiedge}{4mm} is an equivalence class of edges that contains exactly two edges.
(Recall that we allow at most two parallel edges joining any pair of vertices,
since more parallel edges put no further constraints on the coloring.)
A graph is \Emph{simple} if it has no multiedges.

A \Emph{circuit} of length $k$ in a graph is a sequence of vertices $v_1, v_2,
\ldots, v_{k+1}$ and edges $e_1, e_2, \ldots, e_k$ such that (a) $v_1 =
v_{k+1}$, (b) $e_i = v_iv_{i+1}$, and (c) $e_i \neq e_j$ when $i \neq j$.
In particular, a multiedge forms a circuit of length $2$.
A \Emph{forest} is a graph with no circuits.

For a vertex subset $W \subseteq \VG$, let $e(W) = \{e \in E(G) : e\mbox{ has
both endpoints in }W\}$. We write $G[W]$ for the subgraph induced by $W$; that
is $V(G[W])=W$ and $E(G[W])=e(W)$.
A vertex subset $W$ is \Emph{independent} if $|e(W)| = 0$.
For each vertex $v$, let $d(v)$ denote the number of edges (including
edge-gadgets) incident to $v$.  Specifically, multiedges contribute 2 to the
degree of each endpoint, but edge-gadgets only contribute 1.
We write $\Delta(G)$ and $\delta(G)$ to denote the maximum and minimum degrees,
respectively.
Let $N(v)$ denote the set of vertices that share an edge or edge-gadget with $v$.
If $G$ is simple, then $d(v) = |N(v)|$.


\section{Constructing $\HH$}
\label{constructH-sec}
\subsection{Linked Vertices} \label{Linked sec}
In this subsection and the next, we construct the family $\HH$ of subgraphs
forbidden in part (B) of the Main Theorem.  On a first pass, the reader may
prefer to focus on the proof of part (A), since it uses many of the same ideas,
but is much easier than that of part (B).  In that case, we recommend skipping
to Section~\ref{proof-sec}.

While trying to color $G$, we often want to color by minimality a graph $J$
formed by adding an edge to some proper subgraph of $G$.  A major hurdle we
face is showing that $J$ satisfies the hypotheses of the Main Theorem.  To
understand when adding an edge creates a copy of some forbidden subgraph, we
study the following notion of linked vertices.

\begin{defn} \label{linked def}
Let $H$ be an nb-critical graph.
Form $H'$ from $H$ by removing a single edge $vw$.
Vertices $s,t$ are \EmphE{linked in $G$}{-4mm} if $G$ contains a subgraph $H''$ that
is isomorphic to $H'$, where vertices $s,t \in V(H'')$ correspond to $v,w$ in the isomorphism.
We call $H$ the \EmphE{linking graph}{-4mm}.
\end{defn}

As an example, if $G$ contains a copy of $K_4-e$, then its non-adjacent vertices
are linked.  The following lemma generalizes a key concept from the proof of
\eqref{gadget is superedge} in Section~\ref{sharpness-sec}.

\begin{lemma} \label{linked vertices lemma}
If vertices $s,t$ are linked in a graph $J$, then for any
nb-coloring $I,F$ of $J$, either
\begin{itemize}
\item[(i)] $\{s,t\} \subseteq I$, or 
\item[(ii)] $\{s,t\} \subseteq F$ and there exists a path from $s$ to $t$ in $J[F]$.
\end{itemize}
\end{lemma}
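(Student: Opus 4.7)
The plan is to argue by contradiction: suppose $I,F$ is an nb-coloring of $J$ for which neither (i) nor (ii) holds, and use $I,F$ to manufacture an nb-coloring of the linking graph $H$ itself, contradicting the nb-criticality of $H$.

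First I would fix a subgraph $H''$ of $J$ isomorphic to $H-vw$ via an isomorphism $\varphi$ sending $v,w$ to $s,t$, and pull back the restriction of $I,F$ to $V(H'')$ along $\varphi$ to obtain a partition $I_H, F_H$ of $V(H)$ that is already an nb-coloring of $H-vw$. The key step will be to show that, whenever both (i) and (ii) fail, this same partition $I_H, F_H$ remains an nb-coloring once the missing edge $vw$ is reinstated, yielding an nb-coloring of $H$.

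The analysis then splits into two cases based on where $s$ and $t$ sit in the coloring of $J$. If they land in different color classes, then $vw$ has exactly one endpoint in $I_H$ and exactly one in $F_H$, so independence of $I_H$ and acyclicity of $H[F_H]$ are both preserved automatically. Otherwise the failure of (i) together with the failure of (ii) forces both $s,t\in F$ with no $s,t$-path in $J[F]$; any $v,w$-path in $(H-vw)[F_H]$ would map through $\varphi$ to an $s,t$-path in $H''[F\cap V(H'')]\subseteq J[F]$, so no such path exists, and reinstating $vw$ cannot create a cycle in $H[F_H]$.

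I expect no serious obstacle beyond this short case split; the only point worth double-checking is that (ii) allows the path to roam over all of $J[F]$ rather than just the embedded copy $H''$, which is precisely what makes the statement as strong as possible while still following from the fact that $\varphi$ only sees the vertices of $H''$.
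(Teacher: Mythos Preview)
Your proposal is correct and follows essentially the same contradiction argument as the paper: assume neither (i) nor (ii) holds, and produce an nb-coloring of the linking graph $H$. The only cosmetic difference is that the paper first adds the edge $st$ to all of $J$ (observing that $I,F$ remains an nb-coloring of $J+st$) and then restricts to $H''+st\cong H$, whereas you restrict to $H''$ first and then reinstate the edge; your explicit remark that (ii) quantifies over paths in all of $J[F]$, not just $H''$, is exactly the point the paper uses implicitly when it asserts $I,F$ is an nb-coloring of $J+st$.
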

\begin{proof}
We use notation as in Definition \ref{linked def}, and let $e=st$.
Suppose, to the contrary, that $J$ has an nb-coloring $I,F$ 
with $|\{s,t\} \cap F| \geq 1$ and that if $s,t\in F$, then $G[F]$ has no path
from $s$ to $t$.  Now $I,F$ is also an nb-coloring for $J+e$.
Since $I,F$ restricts to an nb-coloring for $H''+e$,
and $H''+e\cong H$, this contradicts our assumption that $H$ is not near-bipartite.
\end{proof}

\begin{lemma} \label{linked have large degree}
Using the notation of Definition \ref{linked def},
we know that $\delta(H) \geq 3$. So for each $w \in V(H'')\setminus \{s,t\}$ we
have $d_G(w) \geq 3$.
\end{lemma}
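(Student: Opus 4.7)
The plan is the standard near-bipartite analogue of the well-known fact that any color-critical graph has minimum degree at least one less than its chromatic number. First I would prove the main claim $\delta(H) \geq 3$ by contradiction: suppose some $v \in V(H)$ has $d_H(v) \leq 2$. Since $H$ is nb-critical, the proper subgraph $H - v$ admits an nb-coloring $I', F'$, and I would show it always extends to an nb-coloring of $H$, contradicting the assumption that $H$ is not near-bipartite. The extension is by a short case analysis on the structure of the edges incident to $v$: if $v$ has a multiedge to some $u$, then by \eqref{multiedge is superedge}-style reasoning we must place $v$ opposite $u$, which works because the only constraint is the two-cycle at $uv$. Otherwise $v$ has at most two simple neighbors $u_1, u_2$; we place $v \in F$ whenever doing so neither creates a cycle in the forest (which can only happen when $u_1, u_2 \in F$ lie in the same tree of $G[F']$) nor violates the multigraph forest condition, and we place $v \in I$ in the single remaining case (both $F'$-neighbors in the same $F'$-tree, or both neighbors in $I'$); one checks that in every sub-case at least one choice of color for $v$ preserves both that $I$ is independent and that $F$ induces a forest.

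The second sentence of the lemma then follows immediately: $H''$ is isomorphic to $H' = H - e$ where $e$ is the removed edge between the vertices corresponding to $s$ and $t$. For any $w \in V(H'') \setminus \{s,t\}$, the corresponding vertex $u \in V(H)$ is not an endpoint of $e$, so deleting $e$ does not change its degree, giving $d_{H''}(w) = d_{H'}(u) = d_H(u) \geq \delta(H) \geq 3$. Since $H'' \subseteq G$, we get $d_G(w) \geq d_{H''}(w) \geq 3$, as required.

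The main obstacle, such as it is, is just being careful about the multigraph setting when $d_H(v) = 2$: a multiedge between $v$ and one neighbor must be handled separately from two simple edges to distinct neighbors, because a multiedge itself is a circuit and so two $F$-colored endpoints of a multiedge already violate the forest condition. Once that case is split off, each branch is one-line verification.
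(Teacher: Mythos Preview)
Your approach is the same as the paper's, but you have over-complicated it and introduced a slip in the process. The paper uses a single dichotomy with no separate multiedge case: if some neighbor of $w$ lies in $I'$, put $w$ in $F$ (then $w$ has at most one edge into $F'$, so $F'\cup\{w\}$ is acyclic even in the multigraph sense); otherwise every neighbor of $w$ lies in $F'$, so put $w$ in $I$ (and $I'\cup\{w\}$ is independent). That is the whole argument.

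Your write-up contains an actual error in the parenthetical: you list ``both neighbors in $I'$'' as part of the ``single remaining case'' where you place $v\in I$, but that would make $I$ non-independent. Fortunately your stated rule (put $v$ in $F$ unless that creates a cycle in the forest) handles this situation correctly by putting $v\in F$, so the mistake is only in the description of the remaining case, not in the rule itself. Once you drop the unnecessary multiedge split and fix that parenthetical, your argument collapses to exactly the paper's two-line proof.
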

\begin{proof}
The second statement clearly follows from the first, so we prove the first.
Suppose, to the contrary, that $w\in \VH$ and $d_H(w) \leq 2$.
By nb-criticality, $H-w$ has an nb-coloring $I',F'$.  If $I'$
contains a neighbor of $w$ in $H$, then let $I = I'$ and $F = F' \cup \{w\}$. 
Otherwise let $I = I' \cup \{w\}$ and $F = F'$.  But now $I,F$ is an
nb-coloring of $H$, which contradicts that $H$ is nb-critical.
\end{proof}

\subsection{The Forbidden Subgraphs} \label{define H sec}
To define $\HH$ we first define an infinite family of graphs $\HH'$.
The graphs $K_4$, $W_5$, $J_7$, and $J_{12}$ are called \Emph{base graphs}.
We define $\HH'$ recursively: each graph in $\HH'$ is either a base graph or
else is formed by merging smaller graphs in
$\HH'$ in a certain way.
To explain this construction, we define \emph{specially-linked}
vertices (in Definition~\ref{defn H'}); this idea builds on Definition~\ref{linked
def}, but also assumes that the nb-critical graph $H$ is in $\HH'$.

All graphs in $\HH'$ contain no edge-gadgets and only contain uncolored vertices.
This assumption will persist throughout this subsection.  (However, when we
forbid a subgraph in the Main Theorem, we also forbid it with precolored
vertices and/or with some edges replace by edge-gadgets, since such variations
are no easier to color.)

\begin{definition}
\label{defn H'}
If two vertices $s,t$ are linked in a graph $J$, then they are \Emph{specially-linked}
if the linking nb-critical graph $H$ (in Definition~\ref{linked def}) is in
$\HH'$, where $\HH'$ is defined next.\footnote{Formally, perhaps we should
define specially-linked only after defining $\HH'$.  Explicitly making that
substitution in (ii.c) below gives a correct recursive definition of $\HH'$, but also
renders (ii.c) harder to parse.}

A graph $H$ is in \Emph{$\HH'$} if (i) $H$ is one of the four base graphs, or (ii) $H$
is nb-critical and contains an induced cycle $C = (x_1, x_2, \ldots, x_k)$ such
that each of the following three conditions holds:
\begin{enumerate}
\item[(ii.a)] the length of the cycle, $k$, satisfies $k \in \{3,5\}$,
\item[(ii.b)] each vertex in $C$ has degree $3$, and 
\item[(ii.c)] if $\{x_{j-1},x_{j+1},z_j\}$ denotes $N(x_j)$, with indices modulo
$k$, then $z_j$ and $z_{j+1}$ are specially-linked in $H - C$ (whenever $z_j \neq z_{j+1}$). 
\end{enumerate}

The family of graphs $\HH$ is defined as 
$$\HH = \{H \in \HH' :  \rho_{s,H}(W) \geq -4~\mbox{for all}~W \subseteq
\VH\}.\aside{$\HH$}$$
\end{definition}

\begin{figure}[!h]
\centering
\begin{tikzpicture} [semithick,scale=.65]
\tikzset{every node/.style=uStyle}
\tikzstyle{lStyle}=[shape = rectangle, minimum size = 0pt, inner sep =
0pt, outer sep = 0pt, draw=none]

\draw (0,0) node (v) {} 
(-.4,1.8) node (w4) {}
(.4,1.8) node (w5) {}
(0,3) node (w6) {}
(0,3.5) node (z2) {}
(-1.7,1.1) node (w1) {}
(-1.1,1.7) node (w2) {}
(-2.5,2.5) node (w3) {}
(-1.6,4.3) node (z1) {}
(1.7,1.1) node (w7) {}
(1.1,1.7) node (w8) {}
(2.5,2.5) node (w9) {}
(1.6,4.3) node (z3) {};

\draw 
(v) -- (w1) -- (w2) -- (w3) -- (w1)  (w2) -- (v)
(v) -- (w4) -- (w5) -- (w6) -- (w4)  (w5) -- (v)
(v) -- (w7) -- (w8) -- (w9) -- (w7)  (w8) -- (v)
(w3) -- (z1)
(w6) -- (z2)
(w9) -- (z3);
\draw[line width=1.1mm] (z1) -- (z2) -- (z3) -- (z1);

\begin{scope}[yshift = 3.00in, scale=1] 
\newcommand\rad{2cm}
\foreach \i in {1,...,5}
  \draw (18+72*\i:\rad) node (v\i) {};
\foreach \i/\j in {1/2, 2/3, 3/4, 4/5, 5/1}
  \draw (v\i) -- (v\j);
\draw node (w1) at (barycentric cs:v1=1,v3=1) {};
\draw node (w2) at (barycentric cs:v1=1,v4=1) {};
\draw (v1) -- (w1) -- (v3) (w1) -- (v2) (v1) -- (w2) -- (v4) (w2) -- (v5);
\draw[line width = 1.1mm] (v2) -- (v3) -- (w1) -- (v2);
\end{scope}

\begin{scope}[scale=1.15, yscale=1.3, xshift=2.15in, yshift=1.6in] 
\draw (0,0) node (w1) {} (2,0) node (w2) {} (-1,1) node (z1) {} (1,1) node (z2)
{} (3,1) node (z3) {} (0,2.5) node (v1) {} (1,2) node (v2) {} (2,2.5) node (v3) {};
\draw (v1) -- (v2) -- (v3) -- (v1) (z1) -- (w1) -- (z2) -- (w2) -- (z3) -- (w1)
-- (w2) -- (z1) -- (v1) (z2) -- (v2) (z3) -- (v3);
\draw[line width=1.1mm] (v1) -- (v2) -- (v3) -- (v1);
\end{scope}

\begin{scope}[xshift=2.95in,yshift=.25in, yscale=.825, xscale=1.05, scale=1.1]
\draw 
(-1.7,0) node (v1) {}
(0,-.6) node (v2) {}
(0,.6) node (v3) {}
(1.7,0) node (v4) {}
(0,3) node (z1) {}
(-1.6,3.5) node (z2) {}
(1.6,3.5) node (z3) {}
(-.6,4) node (z4) {}
(.6,4) node (z5) {};
\draw (v2) -- (v1) -- (v3) -- (v2) -- (v4) -- (v3);
\draw[line width=1.1mm] (z1) -- (z2) -- (z4) -- (z5) -- (z3) -- (z1);
\draw (z1) -- (v1) -- (z2) (z3) -- (v4);
\draw (z5) .. controls ++(135:3.5) and ++(125:2.5) .. (v1);
\draw (z4) .. controls ++(45:3.5) and ++(55:2.5) .. (v4);
\end{scope}
\end{tikzpicture}
\caption{$M_7$, $J_8$ (top), and two other graphs in $\HH$.  For each
graph, the cycle $x_1,\ldots,x_k$ is shown in bold.}
\end{figure}
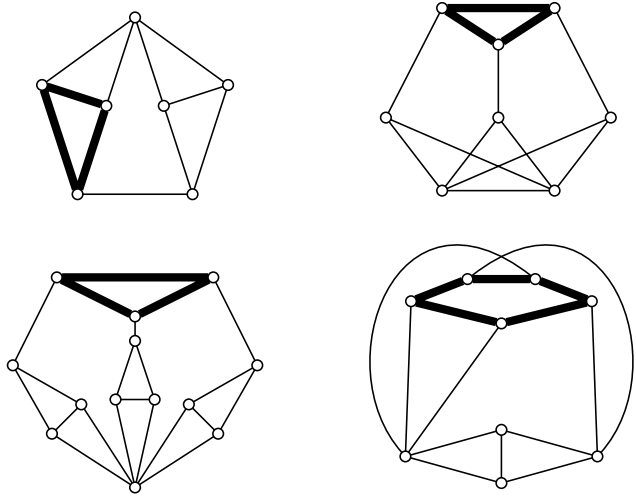

\begin{remark}\label{two zi}
If $H \in \HH'$ and $H$ is not a base graph, then there exists $j$ such that $z_j \neq z_{j+1}$.
\end{remark}
\begin{proof}
If not, then $\{x_1,\ldots,x_k, z_1\}$ induces either $K_4$ or $W_5$, which contradicts that $H$ is nb-critical.
\end{proof}

Examples of graphs in $\HH'$ include $M_7$ and $J_8$; the graph $K_{2,2,2}$ is 
nb-critical, but is not in $\HH'$ since it is 4-regular, so fails condition
(ii.b) in Definition~\ref{defn H'}.
In Lemma~\ref{small examples in hh}
we will show that, among graphs in $\HH'$ that are not base graphs, $M_7$ is the
smallest and $J_8$ is the second smallest (although we do not prove that $J_8$
is uniquely the second smallest).  


In the introduction, we claimed that each graph in $\HH$ is $4$-critical and
that $\HH$ is a finite family.  We now prove these claims, as well as a few
properties of $\HH'$ that we will need later.  The most important result from 
this subsection is Corollary~\ref{how to show H is not present}.

\begin{lem}\label{HH is 4-crit}
Each graph $H\in \HH'$ is $4$-critical.
\end{lem}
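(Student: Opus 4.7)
The plan is to prove this by induction on $|V(H)|$, leveraging the recursive structure of $\HH'$. The base case handles the four base graphs $K_4, W_5, J_7, J_{12}$, each of which is known to be $4$-critical (and can be verified directly in a sentence). For the induction to be well-founded, I will first observe that whenever $H \in \HH'$ is not a base graph, any linking graph $H^*$ appearing in condition (ii.c) has $|V(H^*)| \le |V(H-C)|+2 < |V(H)|$, because $H^* - v w$ embeds into $H - C$ and $|C| \ge 3$.

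For the inductive step, fix $H \in \HH'$ not a base graph, and assume all smaller members of $\HH'$ are $4$-critical. To show $4$-criticality I will verify the two required properties separately. The easy half is that every proper subgraph of $H$ is $3$-colorable: since $H$ is nb-critical, every proper subgraph has an nb-coloring $(I,F)$, which gives a $3$-coloring by coloring $I$ with one color and properly $2$-coloring the forest $H[F]$ with the other two.

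The substantive half is to show $H$ itself is not $3$-colorable. Suppose, for contradiction, that $c$ is a proper $3$-coloring of $H$. Consider the induced cycle $C = (x_1, \ldots, x_k)$ with $k \in \{3,5\}$ from the definition, with $N(x_j) = \{x_{j-1}, x_{j+1}, z_j\}$. For each $j$ with $z_j \ne z_{j+1}$, condition (ii.c) says $z_j, z_{j+1}$ are specially-linked in $H-C$ via some linking graph $H^*_j \in \HH'$, so $H - C$ contains a copy of $H^*_j - v_j w_j$ with $z_j, z_{j+1}$ playing the roles of $v_j, w_j$. By the inductive hypothesis, $H^*_j$ is $4$-critical and in particular not $3$-colorable, so any proper $3$-coloring of $H^*_j - v_j w_j$ must assign the same color to $v_j$ and $w_j$ (otherwise the coloring would extend to $H^*_j$). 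Applying this to the restriction $c\!\upharpoonright_{H-C}$, which is a $3$-coloring of $H-C$, gives $c(z_j) = c(z_{j+1})$. This equality also holds trivially when $z_j = z_{j+1}$, so going around the cycle we conclude $c(z_1) = c(z_2) = \cdots = c(z_k)$, say all equal to color $1$. Then $c(x_j) \in \{2,3\}$ for every $j$, but $C$ is an odd cycle, so it cannot be properly $2$-colored, a contradiction.

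The main obstacle is really just the bookkeeping to translate the nb-coloring notion of ``specially-linked'' into a $3$-coloring statement: the translation uses the inductive $4$-criticality of the linking graph $H^*_j$ together with the standard $3$-coloring extension argument for $H^*_j - v_jw_j$. Once that translation is made, the rest reduces to the odd-cycle parity argument, and everything hinges on Remark~\ref{two zi} only to rule out degenerate situations where the entire construction would collapse onto a single $z$-vertex (which would contradict nb-criticality of $H$).
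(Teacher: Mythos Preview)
Your proof is correct and follows essentially the same approach as the paper: induction on $|V(H)|$, with the base graphs checked directly, $\chi(H-e)\le 3$ obtained from nb-criticality, and $\chi(H)\ge 4$ obtained by using the inductive $4$-criticality of the linking graphs to force all $z_j$ to share a color, leaving an odd cycle to $2$-color. Two small cosmetic remarks: your bound $|V(H^*)|\le |V(H-C)|+2$ is unnecessarily generous (in fact $|V(H^*)|\le |V(H-C)|$ since $H^*-vw$ already sits inside $H-C$), and Remark~\ref{two zi} is not actually needed here---if all $z_j$ coincide the same-color conclusion is trivial and the odd-cycle argument still applies.
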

\begin{proof}
We use induction on $|\VH|$.  It is
easy to check that each base graph is 4-critical (due to symmetry, case analysis is quite short).

Now we assume that $H \in \HH'$ and $H$ is larger than the base graphs.
By definition, $H$ has a cycle $C$ that satisfies $ii.a$, $ii.b$, and $ii.c$ from Definition \ref{defn H'}.
To prove that $H$ is $4$-critical, we show that $\chi(H) \geq 4$ and that
$\chi(H-e) \leq 3$ for every $e \in \EH$.
The latter is easy: since $H$ is nb-critical, $H-e$ is near-bipartite, so $\chi(H-e) \leq 3$.

Assume, to the contrary, that $H$ admits a proper $3$-coloring $\vph$.
By definition, if $z_i \neq z_{i+1}$, then $z_i$ and $z_{i+1}$ are specially-linked in $H-C$.
By induction, this implies that the linking graph $J$ is a $4$-critical graph.
A basic fact of $4$-critical graphs is that for any edge $vw \in \EJ$, any
proper $3$-coloring of $J-vw$ uses the same color on $v$ and $w$.
It follows that $\vph(z_i) = \vph(z_j)$ for all $i,j\in\{1,\ldots,k\}$.
But now $\vph(z_1)$ is forbidden from use on each vertex of the odd cycle $C$;
since $C$ has no 2-coloring, this contradicts the existence of $\vph$. 
\end{proof}

\begin{lemma}\label{small examples in hh}
If $H \in \HH'$ and $H \notin \{K_4, W_5, M_7, J_7\}$, then $|\VH| \geq 8$.
\end{lemma}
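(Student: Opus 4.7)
The plan is to classify every member of $\HH'$ with at most $7$ vertices and verify that this list is exactly $\{K_4, W_5, M_7, J_7\}$. I split the task along the recursive definition of $\HH'$.

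For the base-graph case, the four base graphs $K_4, W_5, J_7, J_{12}$ have $4$, $6$, $7$, and $12$ vertices respectively, so only $K_4, W_5, J_7$ fall within the range, and each is already in the exceptional set. For a recursively constructed $H \in \HH'$, let $C$ be the associated cycle of length $k \in \{3,5\}$. By Remark~\ref{two zi} some $z_j \neq z_{j+1}$, which triggers the specially-linked condition; since every graph in $\HH'$ has at least $4$ vertices (by Lemma~\ref{HH is 4-crit}, as $4$-critical graphs have $\geq 4$ vertices), we get $|V(H - C)| \geq 4$, hence $|V(H)| \geq k + 4 \geq 7$. Equality forces $k = 3$ and $|V(H - C)| = 4$, and since the only member of $\HH'$ on at most $5$ vertices is $K_4$, every specially-linked pair $(z_j, z_{j+1})$ is linked by $K_4$, which forces $z_j$ and $z_{j+1}$ to share the two remaining vertices of $V(H - C)$ as common neighbors.

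With this local structure in hand, it remains to enumerate the cases by how many of $z_1, z_2, z_3$ coincide. If all three are distinct, write $V(H - C) = \{z_1, z_2, z_3, w\}$; the three $K_4$-linking conditions collectively force all six edges on $V(H - C)$, so $H$ contains a $K_4$, and deleting any edge of $C$ leaves that $K_4$ intact, contradicting the nb-criticality of $H$. If exactly two of the $z_j$'s coincide, say $z_1 = z_2 \neq z_3$ (the other cases being cyclically symmetric), then the linking conditions force the five edges of $K_4 - z_1 z_3$ on $V(H - C) = \{z_1, z_3, w_1, w_2\}$; the only remaining freedom is the edge $z_1 z_3$, which, if present, again produces a $K_4$ and is ruled out as before. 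A direct inspection identifies the surviving $7$-vertex graph with the Moser spindle $M_7$.

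I expect the main obstacle to be the final explicit identification with $M_7$. The simplest route is to observe that the unique degree-$4$ vertex must be $z_1$ and that its neighborhood $\{x_1, x_2, w_1, w_2\}$ induces a perfect matching (the edges $x_1 x_2$ and $w_1 w_2$), matching the structure at the hub of $M_7$; the isomorphism then extends uniquely to the remaining vertices. Once this identification is complete, the lemma follows immediately because any $H \in \HH' \setminus \{K_4, W_5, M_7, J_7\}$ must then have $|V(H)| \geq 8$.
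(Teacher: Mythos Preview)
Your proposal is correct and follows essentially the same route as the paper: both argue that a non-base $H$ has $|V(H-C)|\ge 4$ (since the smallest graph in $\HH'$ is $K_4$), forcing $|V(H)|\ge 7$, and that equality requires $k=3$ with $H-C$ on four vertices linked by $K_4$. The only difference is that the paper simply asserts ``in this case $H$ is $M_7$,'' while you carry out the case analysis on how many of $z_1,z_2,z_3$ coincide to verify it; your extra detail is correct and fills in what the paper leaves implicit.
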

\begin{proof}

Fix $H \in \HH' \setminus \{K_4, W_5, M_7, J_7\}$.  
Clearly the unique smallest graph in $\HH'$ is $K_4$.  If $H\ne J_{12}$, then Remark \ref{two zi}
implies that $H-C$ contains two linked vertices, so $H-C$ has at least $4$ vertices.
Thus, $H$ has at least $7$ vertices.  Further, $H$ has at least $8$
vertices unless $H-C$ is $K_4-e$ and $C$ is a $3$-cycle.  In this case,
$H$ is $M_7$, which contradicts our hypothesis.
\end{proof}

\begin{cor}
The families $\HH'$ and $\HH$ satisfy the following four properties.
\label{how to show H is not present}
\label{cor1}
\begin{enumerate}[(i)]
\item If $H \in \HH'$, then $\rho_{s,H}(\VH) \leq (10-|\VH|)/3 \leq 2$.
\item If $H \in \HH$, then $|\VH| \leq 22$. 
\item If $H \in \HH'$ and $H \notin \{K_4, M_7\}$, then $\rho_{s,H}(\VH) \leq 0$.
\item If $H \in \HH'$ and $\rho_{s,H}(\VH) \geq 0$, then for every $\emptyset \neq
W \subsetneq \VH$, we have $\rho_{s,H}(W) \geq 6$.
\end{enumerate}
\end{cor}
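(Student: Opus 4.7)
The plan is to prove (i) first, since all of (ii)--(iv) reduce to it plus some small case analysis. The key observation is that Lemma~\ref{HH is 4-crit} says every $H \in \HH'$ is $4$-critical, so we can invoke the Kostochka--Yancey edge bound $|E(H)| \geq (5|\VH|-2)/3$ from~\cite{KY}. Substituting into $\rho_{s,H}(\VH) = 8|\VH| - 5|E(H)|$ (recall $H$ has no precolored vertices and no edge-gadgets) gives
\[
  \rho_{s,H}(\VH) \;\leq\; 8|\VH| - \tfrac{5(5|\VH|-2)}{3} \;=\; \tfrac{10-|\VH|}{3}.
\]
Since $K_4$ is the smallest member of $\HH'$, we have $|\VH|\ge 4$, so $(10-|\VH|)/3 \leq 2$, which proves (i).

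Part (ii) is immediate: by definition of $\HH$, every $H\in\HH$ satisfies $\rho_{s,H}(\VH)\ge -4$. Combining with (i), $-4 \leq (10-|\VH|)/3$, so $|\VH|\le 22$.

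For part (iii), bound (i) already gives $\rho_{s,H}(\VH)\leq 0$ whenever $|\VH|\ge 10$. When $|\VH|\in\{8,9\}$, the bound $(10-|\VH|)/3$ is less than $1$, and since $\rho_{s,H}(\VH) = 8|\VH|-5|E(H)|$ is an integer, this still forces $\rho_{s,H}(\VH)\leq 0$. For $|\VH|\le 7$, Lemma~\ref{small examples in hh} limits $H$ to $\{K_4, W_5, M_7, J_7\}$, and Example~\ref{example1} gives the exact values $\rho_s(W_5)=-2$ and $\rho_s(J_7)=-4$, both at most $0$; the two excluded graphs $K_4$ and $M_7$ have $\rho_s = 2$ and $1$, which is precisely why they are excluded.

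Finally, part (iv) follows from (iii): the only $H\in\HH'$ with $\rho_{s,H}(\VH)\ge 0$ are $K_4$ and $M_7$. We then verify $\rho_{s,H}(W)\ge 6$ for every $\emptyset\ne W\subsetneq \VH$ by direct case analysis. For $K_4$ this is immediate since $|W|\le 3$ forces $|e(W)|\le 3$, giving $\rho_s\ge 24-15=9$ at worst. For $M_7$ the check reduces to computing, for each $k\in\{1,\ldots,6\}$, the maximum of $|e(W)|$ over $|W|=k$; since $M_7$ has $11$ edges and every vertex has degree at most $4$, for $|W|=6$ the worst case (delete the degree-$4$ vertex) leaves $7$ edges and $\rho_s=48-35=13$, and for smaller $|W|$ straightforward enumeration of triangles/5-vertex subgraphs shows the maximum edge-counts are small enough that $\rho_s\ge 6$. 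The main (mild) obstacle is simply organizing the $M_7$ subset-by-subset computation; everything else is a direct consequence of (i) via Kostochka--Yancey.
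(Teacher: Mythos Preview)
Your arguments for (i), (ii), and (iii) are correct. In fact your proof of (iii) is slightly slicker than the paper's: the paper invokes the sharper Kostochka--Yancey bound $|E(H)|\ge (5|\VH|-1)/3$ for non-$4$-Ore graphs from~\cite{KY2}, together with the fact that $4$-Ore graphs have order $\equiv 1\pmod 3$, whereas you get away with just the basic bound from~\cite{KY} plus integrality of $\rho_{s,H}$.

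However, your proof of (iv) has a genuine gap. You assert that ``the only $H\in\HH'$ with $\rho_{s,H}(\VH)\ge 0$ are $K_4$ and $M_7$,'' but part (iii) only gives $\rho_{s,H}(\VH)\le 0$ for $H\notin\{K_4,M_7\}$; it does not rule out equality. And equality does occur: any $4$-Ore graph on $10$ vertices has exactly $16$ edges, hence $\rho_{s}=8\cdot 10-5\cdot 16=0$, and the paper notes (just after the corollary) that all seven such graphs lie in $\HH\subseteq\HH'$. So there is an entire case---$H\in\HH'$ with $|\VH|=10$ and $\rho_{s,H}(\VH)=0$---that your argument does not address at all.

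The paper handles this case as follows. Its proof of (iii), via the $4$-Ore machinery from~\cite{KY2}, yields as a byproduct that $\rho_{s,H}(\VH)=0$ forces $H$ to be $4$-Ore with $|\VH|=10$. Then it applies a structural fact about $4$-Ore graphs (Claim~16 of~\cite{KY2}): every proper nonempty $W\subsetneq\VH$ satisfies $|e(W)|\le(5|W|-5)/3$, whence $\rho_{s,H}(W)\ge(25-|W|)/3>5$. Your streamlined proof of (iii) bypasses the $4$-Ore classification, which is elegant, but it means you no longer have the structural information needed to finish (iv); you would need to either reintroduce the $4$-Ore argument or find some other way to control proper subsets of the $10$-vertex graphs.
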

\begin{proof}
We start with (i).  Lemma~\ref{HH is 4-crit} implies $H$ is $4$-critical.
Kostochka and Yancey \cite{KY} proved that if $H$ is $4$-critical, then $|\EH| \geq (5|\VH|-2)/3$.
So $\rho_{s,H}(\VH)=8|\VH|-5|\EH|\le (24|\VH|-25|\VH|+10)/3\le 2$; the final
inequality uses that $|\VH|\ge 4$. Now (ii) follows from (i), since
$\rho_{s,H}(\VH)\ge -4$.

Next we consider (iii).  Kostochka and Yancey \cite{KY2} constructed a family of
4-critical graphs that they called $4$-Ore graphs, and proved that if $H$ is
$4$-critical and not $4$-Ore, then $|\EH| \geq (5|\VG|-1)/3$.  They also showed
that if $H$ is $4$-Ore, then $|\VH| \equiv 1 (\mbox{mod }3)$.  Moreover, if $H$
is $4$-Ore and $|\VH| \leq 7$, then $H$ is $K_4$ or $M_7$.

Recall from Example \ref{example potentials for many things} that
$\rho_{s,W_5}(V_{W_5}) = -2$ and $\rho_{s,S}(V_S) = -4$.  So we assume that
$H\in\HH'\setminus\{K_4,W_5,M_7,J_7\}$.
If $H$ is $4$-Ore, then the previous paragraph and
Lemma~\ref{small examples in hh} imply $|\VH| \geq 10$. So (i) implies (iii).
If $H$ is not $4$-Ore, then $|\EH|\ge (5|\VH|-1)/3$ implies $\rho_{s,H}(\VH)
\leq (5-|\VH|)/3$. Now $|\VH| \geq 8$ again implies (iii).

Finally, consider (iv).
We omit the tedious calculations when $G \in \{K_4,M_7\}$.
The proof of Part (iii) shows that if $H \in \HH'$ and $\rho_{s,H}(\VH) = 0$,
then $H$ is a $4$-Ore graph with $10$ vertices.
It was shown in \cite{KY2} (see Claim 16) that if $H$ is $4$-Ore and $\emptyset
\neq W \subsetneq \VH$, then $|E(W)| \leq (5|W|-5)/3$.
Since $\rho_{s,H}$ is integer valued and $|W| < |\VH| = 10$, part (iv) holds because 
$$\rho_{s,H}(W) \geq 8|W| - 5\left(\frac{5|W|-5}3\right) = \frac{25 - |W|}3 > 5.$$
\aftermath
\end{proof}

We omit the work, but case analysis revealed that there are exactly $7$
$4$-Ore graphs with $10$ vertices, and all $7$ are in $\HH$.
Corollary \ref{how to show H is not present}(ii) immediately implies the following.

\begin{remark}
There exists finitely many graphs in $\HH$.
\end{remark}

\section{Proof of the Main Theorem} 
\label{proof-sec}
In Section~\ref{proof-sec}, we start proving the Main Theorem. 
The proofs of parts (A) and (B) rely on many common lemmas, which we prove in
Section~\ref{basic-sec}.  To unify our presentation, we write
\Emph{$\rho_{*,G}$} to denote a statement that holds for both $\rho_{m,G}$ and
$\rho_{s,G}$.  In Section \ref{multi proof sec} we finish proving 
part (A).  In Sections \ref{simple proof sec 1} and
\ref{simple proof sec 2} we finish proving part (B).
To prove each part of our Main Theorem, we assume it is false, and let $G$ be a
counterexample minimizing $|V(G)|+|E(G)|$.  Ultimately, we will reach a
contradiction, by constructing an nb-coloring $I,F$ of $G$.

\subsection{Basic Lemmas} \label{common work}
\label{basic-sec}
In Section~\ref{basic-sec}, we have two goals: (i) to show that $G$ is fairly
``well-behaved'', and (ii) to prove our first gap lemma.  We say a bit more
about each.  To facilitate our proofs, we have allowed precolored vertices, as
well as edge-gadgets.  But we hope that our minimal counterexample $G$ has few,
if any, of these.  It is also easy to check that $\delta(G)\ge 2$.  To get more
control on $G$, we want to show that $G$ has few 2-vertices.  By
``well-behaved'' we mean all of these hoped-for properties.

We will often nb-color some proper subgraph $J$ of $G$, by minimality.  To
get more power in our proof, we would like the option of slightly modifying $J$
before coloring it.  A small modification can only decrease potential by a small
amount.  For example, adding an edge decreases $\rho_{m,G}$ by 2 and decreases
$\rho_{s,G}$ by 5.  So to allow adding an edge, we must show (for each
$W\subsetneq V(G)$), that $\rho_{m,G}(W)\ge -1+2=1$ and $\rho_{s,G}(W)\ge -4+5=1$.
This is the content of Lemma~\ref{small subgraphs part 1}.  We call this a \Emph{gap lemma}, since
it establishes a gap between the actual value of $\rho_{*,G}(W)$ and the lower
bound required by the hypothesis of the Main Theorem.  In later sections, we
prove stronger gap lemmas for both multigraphs and simple graphs, but those
proofs all rely on Lemma~\ref{small subgraphs part 1}.

\begin{lem}
\label{submodular}
The potential function is submodular, i.e., for any graph $J$ all $W_1, W_2
\subseteq V(J)$ satisfy 
$$\rho_{*,J}(W_1 \cap W_2) + \rho_{*,J}(W_1 \cup W_2) \leq \rho_{*,J}(W_1) + \rho_{*,J}(W_2).$$
\end{lem}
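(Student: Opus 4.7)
The plan is to decompose $\rho_{*,J}(W)$ into a vertex contribution and an edge contribution, verify that each of the vertex-counting terms is modular (so contributes equality to the inequality) and that each of the edge-counting terms is supermodular, and then combine these facts, using the fact that the edge terms appear with negative coefficients.

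For the vertex part, fix any set $S \subseteq V(J)$ (we will take $S = U_p$ and $S = F_p$ in turn). For any $W_1, W_2 \subseteq V(J)$, the identity
\[
|(W_1 \cap W_2) \cap S| + |(W_1 \cup W_2) \cap S| = |W_1 \cap S| + |W_2 \cap S|
\]
holds because it is the inclusion-exclusion identity applied to $W_1 \cap S$ and $W_2 \cap S$. Thus the contributions $3|W \cap U_p| + |W \cap F_p|$ (in $\rho_{m,J}$) and $8|W \cap U_p| + 3|W \cap F_p|$ (in $\rho_{s,J}$) are modular in $W$, and contribute equality across the inequality we want.

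For the edge part, I will show that $W \mapsto |e(W)|$ is supermodular, i.e.,
\[
|e(W_1 \cap W_2)| + |e(W_1 \cup W_2)| \geq |e(W_1)| + |e(W_2)|,
\]
and that the same holds verbatim for $W \mapsto |e'(W)|$ and $W \mapsto |e''(W)|$. It suffices to check this for each individual edge $uv$, since $|e(W)|$ is a sum of indicator functions $\mathbf{1}[u \in W]\cdot \mathbf{1}[v \in W]$ (with multiplicity for parallel edges), and the inequality is preserved under nonnegative sums. Setting $a_i = \mathbf{1}[u \in W_i]$ and $b_i = \mathbf{1}[v \in W_i]$, one needs
\[
\min(a_1,a_2)\min(b_1,b_2) + \max(a_1,a_2)\max(b_1,b_2) \;\geq\; a_1 b_1 + a_2 b_2,
\]
which follows by a short case check: if $a_1 \leq a_2$ and $b_1 \leq b_2$ the two sides are equal, and otherwise (WLOG $a_1 \leq a_2$ and $b_1 > b_2$) the LHS equals $a_2$ while the RHS equals $a_1 \leq a_2$. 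The exact same argument applies when we restrict attention to ordinary edges or to edge-gadgets, since those are just subsets of the edge multiset.

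Combining: since $\rho_{m,J}(W) = (\text{modular vertex part}) - 2|e(W)|$ and $\rho_{s,J}(W) = (\text{modular vertex part}) - 5|e'(W)| - 11|e''(W)|$, and since supermodularity of each edge-counting function is preserved (and flipped to submodularity) when multiplied by a negative constant and added to a modular function, the desired submodularity of $\rho_{*,J}$ follows. There is really no main obstacle here; the only thing to be careful about is keeping the signs straight and checking that the pointwise edge inequality holds including the case of parallel edges in the multigraph setting, which is automatic since parallel edges contribute the same indicator function twice.
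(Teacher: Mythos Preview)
Your proof is correct and follows the same approach as the paper's. The paper's own proof is just a two-sentence sketch (``Each vertex is counted equally many times on both sides of the inequality. Each edge is counted at least as often on the left as on the right.''); you have simply spelled out in detail what those two sentences mean, including the explicit $\{0,1\}$ case check for the per-edge supermodularity.
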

\begin{proof}
Each vertex is counted equally many times on both sides of the inequality.  Each edge is
counted at least as often on the left as on the right.
\end{proof}

\begin{lem} \label{easy cases}
$|\VG| \geq 3$, $G$ is connected, and $\delta(G)\ge 2$.
\end{lem}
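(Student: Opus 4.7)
The plan is to prove each of the three assertions by contradiction: assume it fails for $G$, construct an nb-coloring $I,F$ extending the precoloring $I_p,F_p$, and thereby contradict the minimality of the counterexample $G$. Each part is handled independently.

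For $|V(G)|\ge 3$, I would handle $|V(G)|\in\{1,2\}$ by inspection. A single vertex is trivial; for two vertices, a short case-check over precolorings and edge types (none, single edge, multiedge, or edge-gadget) yields an nb-coloring in every case, because the potential hypothesis already rules out every configuration that would fail (for instance, two $I_p$-precolored endpoints of an edge give $\rho_{*,G}(\{u,v\}) = -2$, below both required bounds). For connectedness, suppose $G = G_1 \uplus G_2$ with each $G_i$ nonempty. For any $W \subseteq V(G_i)$ the edges in $W$ and its intersection with each precoloring class match those in $G$, so $\rho_{*,G_i}(W) = \rho_{*,G}(W)$; also $G_i$ inherits the property of containing no forbidden subgraph. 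Each $G_i$ is smaller than $G$, so by minimality admits an nb-coloring $(I_i,F_i)$ extending the restricted precoloring. Then $I = I_1 \cup I_2$, $F = F_1 \cup F_2$ is an nb-coloring of $G$, since $G[F]$ is a disjoint union of forests.

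For $\delta(G) \ge 2$, suppose some $v$ has $d(v) \le 1$. When $v \notin I_p$, the deletion $G-v$ inherits the hypothesis (because $\rho_{*,G-v}(W) = \rho_{*,G}(W)$ for every $W \subseteq V(G-v)$, and no forbidden subgraph is created), so by minimality it has an nb-coloring $(I',F')$, which extends to $G$ by placing $v \in F$: at most one edge is added to $G[F']$, so the forest condition is preserved.

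The main obstacle is $v \in I_p$ with a unique neighbor $u$, because naively putting $v \in I$ fails if $u \in I'$. The fix is to delete $v$ while simultaneously promoting $u$ from $U_p$ to $F_p$; the case $u \in I_p$ cannot occur, since then $\rho_{*,G}(\{u,v\}) = -2$ would violate the hypothesis, and if $u \in F_p$ already no change is needed. Moving $u$ from $U_p$ to $F_p$ decreases $\rho_{*,G-v}(W)$ by the weight-gap $g$ (equal to $2$ for multigraphs and $5$ for simple graphs) exactly when $u \in W$. The modified hypothesis still holds because applying the original bound to $W \cup \{v\}$ gives $\rho_{*,G}(W \cup \{v\}) = \rho_{*,G}(W) - g$, hence $\rho_{*,G}(W) \ge (\text{bound}) + g = 1$, which is exactly the strengthened value the modified precoloring requires. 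Minimality then supplies an nb-coloring $(I',F')$ of $G-v$ with $u \in F'$, so placing $v \in I$ completes an nb-coloring of $G$. The edge-gadget subcase is handled identically, but with edge-weight $11$ in place of $5$, which only strengthens the inequality.
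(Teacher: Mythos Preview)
Your proof is correct and follows essentially the same approach as the paper. The one notable difference is that the paper's one-line argument for $\delta(G)\ge 2$ (``add $v$ to $F$'') tacitly ignores the case $v\in I_p$ with $d(v)=1$; the paper instead proves $I_p=\emptyset$ in the immediately following lemma, using exactly the promotion-of-the-neighbor-to-$F_p$ trick you carry out here, so you have simply folded that later argument into this one.
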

\begin{proof}
The only graphs with at most two vertices with precolorings that do not extend
to nb-colorings are (i) when $I_p = \VG$ and $G$ contains an edge
and (ii) when $F_p = \VG$ and $G$ contains a multiedge or edge-gadget.
In each case, $\rho_{*,G}(\VG)$ is too small to satisfy the hypothesis of
the Main Theorem.

If $G$ is disconnected, then each component has an nb-coloring by
minimality. Together these give an nb-coloring of $G$.  If $G$ has a
1-vertex $v$, then $G-v$ has an nb-coloring, and we extend it to $G$ by
adding $v$ to $F$.
\end{proof}

Recall that, for each vertex $v$, $d(v)$ denotes the number of edges (including
edge-gadgets) incident to $v$.  Specifically, multiedges contribute 2 to the
degree of each endpoint, but edge-gadgets only contribute 1.
By a \Emph{forbidden subgraph}, we mean $K_4$ or $M$ in the case of
multigraphs, and we mean some graph in the family $\HH$ in the case of simple graphs.

\begin{lemma} \label{remove I}
$I_p = \emptyset$.
\label{Ip-empty-lem}
\end{lemma}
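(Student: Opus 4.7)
My plan is to suppose for contradiction that $I_p \neq \emptyset$, pick some $v \in I_p$, and reduce to $G' := G - v$ with the precoloring refined by moving every uncolored neighbor of $v$ into $F_p$. The point is that since $v$ is precolored $I$, every nb-coloring extending $I_p, F_p$ must place all of $N(v)$ into $F$, so this is a genuine refinement; an nb-coloring of $G'$ extending the refined precoloring then yields an nb-coloring of $G$ extending $I_p, F_p$ by putting $v$ back into $I$, contradicting minimality of $G$.

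The work is verifying that $G'$, with the refined precoloring $I_p' = I_p \setminus \{v\}$ and $F_p' = F_p \cup (N(v) \cap U_p)$, still satisfies the hypothesis of the Main Theorem. The forbidden-subgraph condition is automatic since $G' \subseteq G$. For the potential condition, I first observe that we may assume $I_p$ is independent: otherwise some $u \in I_p$ is joined to $v$, and $\rho_{m,G}(\{u,v\}) = -2$ (respectively $\rho_{s,G}(\{u,v\}) \le -5$), contradicting the hypothesis. For any nonempty $W \subseteq V(G')$, removing $v$ does not change $e(W)$, and moving each vertex of $N(v) \cap W \cap U_p$ from $U_p$ to $F_p$ drops its coefficient by $c$, where $c=2$ in the multigraph case and $c=5$ in the simple case. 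Hence
$$\rho_{*,G'}(W) = \rho_{*,G}(W) - c\,|N(v) \cap W \cap U_p|.$$
Meanwhile, because $v \in I_p$ contributes $0$ to potential, the hypothesis applied to $W \cup \{v\}$ in $G$ gives $\rho_{*,G}(W) = \rho_{*,G}(W \cup \{v\}) + (\text{edge contributions at } v)$. The edges (and edge-gadgets, weighted by $11 > 5$ in the simple case) from $v$ to $W$ contribute at least $c\,|N(v) \cap W| \ge c\,|N(v) \cap W \cap U_p|$, so
$$\rho_{*,G'}(W) \ge \rho_{*,G}(W \cup \{v\}) \ge -1 \text{ (resp.\ } -4\text{)},$$
verifying the hypothesis for $G'$.

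By minimality, $G'$ admits an nb-coloring $I', F'$ extending $I_p', F_p'$, and $I := I' \cup \{v\}, F := F'$ is then an nb-coloring of $G$: $I$ is independent since $N(v) \subseteq F_p' \subseteq F$, and $G[F] = G'[F']$ is a forest. This contradicts that $G$ is a counterexample, so $I_p = \emptyset$. The only subtle point is the potential inequality, and the key trick is to compare $\rho_{*,G'}(W)$ not to $\rho_{*,G}(W)$ but to $\rho_{*,G}(W \cup \{v\})$; the edge slack at $v$ precisely compensates for the weight lost by pushing its uncolored neighbors into $F_p$.
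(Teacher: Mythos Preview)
Your proof is correct and follows essentially the same approach as the paper: delete a vertex $v\in I_p$, push its neighbors into $F_p$, verify the hypotheses for $G'$ via the inequality $\rho_{*,G'}(W)\ge\rho_{*,G}(W\cup\{v\})$, and extend the resulting nb-coloring back to $G$ by adding $v$ to $I$. The paper's proof is terser---it simply asserts that inequality without justification---whereas you spell out the arithmetic showing the edge slack at $v$ compensates for the coefficient drop on its uncolored neighbors; this is exactly the intended reasoning.
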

\begin{proof}
Suppose there exists some vertex $w \in I_p$.
By the lower bound on $\rho_{*,G}$, for each edge $vw$ we know $v \notin I_p$.
Let $N = \{v: vw \in \EG\}$.
Let $G' = G - w$, and define a precoloring $I_p',F_p'$ as $ I_p' = I_p - \{w\}$ and $ F_p' = F_p \cup N.$
We claim that $G'$ with the precoloring $I_p',F_p'$ satisfies the hypotheses of
the Main Theorem. 
We did not add any edges, so any subgraph contained in $G'$ is also contained in $G$.
Let $W \subseteq V(G')$, and observe that $\rho_{*,G'}(W) \geq \rho_{*,G}(W \cup \{w\})$.
This proves the claim.
Now by minimality, we can find in polynomial time an nb-coloring
$I',F'$ that extends the precoloring $I_p',F_p'$.
Let $I = I' \cup \{w\}$ and $F = F'$.
\end{proof}

Although we know that $I_p = \emptyset$ in $G$, the notion of $I_p$ will still
be useful.  In particular, we will often use minimality to color a graph $G'$
with a precoloring $I_p',F_p'$ such that $I_p' \neq \emptyset$.

\begin{lemma}\label{min degree 2}
$|N(v)| \geq 2$ for each $v\in\VG$.
\end{lemma}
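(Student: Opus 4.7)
The plan is to reduce to the smaller graph $G' := G - v$ and apply minimality. Let $w$ be the unique neighbor of $v$. Since Lemma \ref{easy cases} gives $d(v) \geq 2$ while $|N(v)| = 1$, the edges joining $v$ to $w$ include either a multiedge (multigraph case) or an edge-gadget (simple case), since a lone regular edge would force $d(v) = 1$. In either case every nb-coloring of $G$ extending the precoloring must place $v$ and $w$ in opposite classes. By Lemma \ref{Ip-empty-lem}, $v \in U_p \cup F_p$, and I would split on this.

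If $v \in U_p$, I would equip $G'$ with the restricted precoloring; since no precolorings or edges of $V(G')$ change, $\rho_{*,G'}(W') = \rho_{*,G}(W') \geq -1$ (or $-4$) for every $W' \subseteq V(G')$, and deletion introduces no forbidden subgraph. Minimality yields an nb-coloring $I', F'$ of $G'$, which I would extend by placing $v$ in the class opposite to $w$; validity follows since $v$ has no other neighbor, so $F \cup \{v\}$ stays a forest in one case and $I \cup \{v\}$ stays independent in the other.

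The harder subcase is $v \in F_p$. I first observe $w \notin F_p$, since otherwise the multiedge or edge-gadget would already give $\rho_{*,G}(\{v,w\}) < -1$ (or $<-4$), violating the hypothesis. I would then define a precoloring on $G'$ by $F_p' := F_p \setminus \{v\}$ and $I_p' := \{w\}$. The main obstacle is verifying the potential bound, because moving $w$ from $U_p$ to $I_p'$ drops $\rho_{*,G'}(W')$ by $3$ (multi) or $8$ (simple) relative to $\rho_{*,G}(W')$ whenever $w \in W'$. To compensate, I would apply the hypothesis of the Main Theorem to $W' \cup \{v\}$ in $G$ and note that adding $v$ contributes at most $1 - 2(2) = -3$ (multi) or $3 - 11 = -8$ (simple) to the potential, so $\rho_{*,G}(W') \geq 2$ (resp.\ $4$), and hence $\rho_{*,G'}(W') \geq -1$ (resp.\ $-4$). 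For $W'$ with $w \notin W'$, the equality $\rho_{*,G'}(W') = \rho_{*,G}(W')$ gives the bound directly, and deletion creates no forbidden subgraph. Minimality then yields an nb-coloring of $G'$ with $w \in I'$, which I would extend to an nb-coloring of $G$ by placing $v \in F$, contradicting the choice of $G$.
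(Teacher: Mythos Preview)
Your proof is correct and follows essentially the same approach as the paper's: delete $v$, handle the case $v\in U_p$ by coloring $v$ opposite to $w$, and in the case $v\in F_p$ precolor $w$ into $I_p'$ on $G'=G-v$, verifying the potential bound via $\rho_{*,G'}(W')=\rho_{*,G}(W'\cup\{v\})$ when $w\in W'$. One small slip: in the simple case an edge-gadget contributes only $1$ to $d(v)$ (see the paper's definition of $d$), so $|N(v)|=1$ already forces $d(v)=1$, which is ruled out by Lemma~\ref{easy cases}; thus the simple case is vacuous and your edge-gadget computations there are unnecessary (though the numbers you wrote are consistent).
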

\begin{proof}
Suppose there exist vertices $v, w$ such that $N(v) = \{w\}$.
By minimality, $G-v$ has an nb-coloring $I',F'$.
If $v \notin F_p$, then we extend $I',F'$ to $G$ by coloring $v$ with the color
unused on its neighbor.  So assume $v\in F_p$.  If $v$ is not incident to a
multiedge or an edge-gadget, then $I', F' \cup \{v\}$ is an nb-coloring of $G$.

Now assume that both $v \in F_p$ and also $vw$ is either a multiedge or an edge-gadget.
If $w \in F_p$, then $\rho_{*,G}(\{v,w\})$ contradicts the hypotheses of the
theorem; so assume $w\notin F_p$.
Let $G' = G-v$, $F_p' = F_p$, and $I_p' = I_p \cup \{w\}$.
We claim that $G'$ with precoloring $I_p', F_p'$ satisfies the
hypotheses of the Main Theorem. 
For if $w \notin W$, then $\rho_{*,G'}(W) = \rho_{*,G}(W)$; and if $w \in W$,
then $\rho_{*,G'}(W) = \rho_{*,G}(W \cup \{v\})$.
So, by minimality, $G'$ has an nb-coloring $I',F'$ that extends the
precoloring $I_p',F_p'$.  Now $I',F'\cup\{v\}$ is an nb-coloring of $G$, a
contradiction.
\end{proof}

\begin{lemma}
\label{uncolored is min degree 3}
If $v$ is uncolored and not incident to an edge-gadget, then $d(v) \geq 3$.
\end{lemma}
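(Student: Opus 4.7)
The plan is to argue by contradiction. Suppose $v$ is uncolored, not incident to any edge-gadget, yet $d(v) \le 2$. By Lemma~\ref{min degree 2} we have $|N(v)| \ge 2$, so $v$ has exactly two distinct neighbors, call them $u$ and $w$, joined to $v$ by single ordinary edges (there is no multiedge at $v$ and no gadget at $v$). Since $v$ is uncolored, $v \notin I_p \cup F_p$, so removing $v$ does not disturb the precoloring.

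Next I would apply minimality to $G' := G - v$, keeping the precoloring $I_p, F_p$ unchanged. For every $W \subseteq V(G')$ we have $\rho_{*,G'}(W) = \rho_{*,G}(W)$, so the potential hypothesis still holds; and every subgraph of $G'$ is a subgraph of $G$, so no forbidden configuration has been introduced. Thus $G'$ admits an nb-coloring $I', F'$ extending the precoloring, which I would then try to extend across $v$.

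The extension splits into two cases based on the colors of $u$ and $w$. If at most one of $u, w$ lies in $F'$, then set $F := F' \cup \{v\}$ and $I := I'$; this keeps $I$ independent (since $v$'s color is $F$) and keeps $G[F]$ a forest, because $v$ becomes an isolated vertex or a pendant leaf of $G[F]$. Otherwise $\{u,w\} \subseteq F'$. If $u$ and $w$ lie in different components of $G[F']$, then $F := F' \cup \{v\}$ still induces a forest (we have merely merged two trees via $v$). If instead $u$ and $w$ lie in the same tree of $G[F']$, then adding $v$ to $F$ creates a circuit, so I would switch to $I := I' \cup \{v\}$ and $F := F'$; this is a valid nb-coloring because $\{u,w\} \subseteq F'$ guarantees neither neighbor of $v$ lies in $I'$, so $I$ remains independent. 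In every sub-case I obtain an nb-coloring of $G$ extending $I_p, F_p$, contradicting the minimality of $G$. The only delicate point is the sub-case in which both neighbors of $v$ lie in a common tree of $G[F']$, but the asymmetry between $F$ (forest) and $I$ (independent set) makes the alternate placement of $v$ automatically available, so I expect no serious obstacle.
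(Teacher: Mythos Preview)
Your proof is correct and follows essentially the same approach as the paper's: delete $v$, nb-color $G-v$ by minimality, then extend by placing $v$ in $I$ if both neighbors are colored $F$ and in $F$ otherwise. Your case analysis is slightly finer (you separately handle the sub-case where both neighbors lie in $F'$ but in different components of $G[F']$, placing $v$ in $F$ there), whereas the paper simply puts $v$ in $I$ whenever both neighbors are in $F'$; this extra distinction is harmless but unnecessary.
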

\begin{proof}
Suppose, to the contrary, that $v$ is uncolored, $v$ is not incident to an edge-gadget, and
$d(v)=2$.  Since $|N(v)|\ge 2$ by the previous lemma, we denote $\{x,y\}$ by $N(v)$. 
By minimality, $G - v$ has an nb-coloring $I',F'$.
If $\{x,y\}\subseteq F'$, then $I'\cup\{v\},F'$ is an nb-coloring of $G$.
Otherwise
$I',F\cup\{v\}$ is an nb-coloring of $G$.
\end{proof}

Now we can prove our gap lemma.

\begin{lemma}
\label{small subgraphs part 1}
If $\emptyset \neq W \subsetneq \VG$, then $\rho_{*,G}(W) \geq 1$.
\label{gap-lemma-basic}
\end{lemma}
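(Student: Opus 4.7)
The plan is to argue by contradiction, using the minimality of the counterexample $G$. Suppose some proper nonempty $W\subsetneq V(G)$ satisfies $\rho_{*,G}(W)\le 0$. Submodularity (Lemma~\ref{submodular}) guarantees that the union of any two intersecting ``bad'' sets is again bad, so I may choose $W$ to be inclusion-maximal among proper nonempty subsets with $\rho_{*,G}(W)\le 0$.

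First I would dispose of the small cases $|W|\le 2$. For $|W|=1$, Lemma~\ref{remove I} gives $I_p=\emptyset$, so the singleton vertex has weight at least $1$ for $\rho_m$ and at least $3$ for $\rho_s$, contradicting $\rho_{*,G}(W)\le 0$. For $|W|=2$, a short enumeration of configurations consistent with both $\rho_{*,G}(W)\le 0$ and the hypothesis bound on $\rho_{*,G}$ pins down $W=\{u,v\}$ with $u\in U_p$, $v\in F_p$, and a multiedge (in the multigraph case) or edge-gadget (in the simple-graph case) joining them. Since such a $uv$ always forces $u$ to lie in $I$, I would delete $u$ and promote $N(u)\setminus\{v\}$ into $F_p'$, mimicking the argument of Lemma~\ref{remove I}. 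The maximality of $W$ ensures $\rho_{*,G}(W\cup\{x\})\ge 1$ for every $x\in V(G)\setminus W$, which controls the edge count between $x$ and $W$ tightly enough to verify that the resulting smaller graph still satisfies the Main Theorem's hypotheses.

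For $|W|\ge 3$, the heart of the argument is a contraction reduction. By minimality, $G[W]$---which inherits the hypotheses since it contains no forbidden subgraph and $\rho_{*,G[W]}(W')=\rho_{*,G}(W')$ for every $W'\subseteq W$---admits an nb-coloring $I_W,F_W$ extending the restricted precoloring. I would form $G'$ from $G$ by identifying $W\cap I_W$ to a single vertex $w_i\in I_p'$ and $W\cap F_W$ to a single vertex $w_f\in F_p'$, deleting edges inside $W$, and capping multiple parallel edges/gadgets at $w_i,w_f$ down to the permitted maximum. Since $|W|\ge 3$, $|V(G')|<|V(G)|$, so minimality applies to $G'$. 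Extending any nb-coloring of $G'$ back to one of $G$ is routine: the $I$-independence is immediate, and a hypothetical $F$-cycle in $G$ would contract through $w_f$ to an $F$-cycle in $G'$ (using that $F_W$ is itself a forest in $G[W]$).

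The main obstacle is verifying that $G'$ satisfies the hypotheses of the Main Theorem. The potential bound on subsets $W'\subseteq V(G')$ disjoint from $\{w_i,w_f\}$ is inherited from $G$ directly; for $W'$ meeting $\{w_i,w_f\}$, I would lift to $U=(W'\setminus\{w_i,w_f\})\cup W\subseteq V(G)$ and use submodularity together with $\rho_{*,G}(W)\le 0$ to convert $\rho_{*,G}(U)\ge$ the hypothesis bound into $\rho_{*,G'}(W')\ge$ the same bound. For the no-forbidden-subgraph condition, a forbidden $H'\subseteq G'$ that meets $\{w_i,w_f\}$ would lift (again via $U=(V(H')\setminus\{w_i,w_f\})\cup W$) to a subset of $V(G)$ whose potential falls strictly below the hypothesis bound, courtesy of the bounds on $\rho_{*,H'}$ from Corollary~\ref{how to show H is not present}(i) and (iii); the delicate sub-case is $H'\in\{K_4,M_7\}$ in the simple-graph setting, which would be handled by a short case analysis using the forcing behavior at $w_i,w_f$ and the assumed absence of $K_4$ and $M_7$ from $G$.
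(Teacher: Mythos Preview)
Your overall architecture---color $G[W]$, contract $W$ to two precolored vertices $w_i,w_f$, recurse on $G'$, lift---is exactly the paper's. But several of your setup choices create gaps.

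First, the submodularity argument for taking $W$ inclusion-maximal is circular: from $\rho(W_1\cap W_2)+\rho(W_1\cup W_2)\le\rho(W_1)+\rho(W_2)\le 0$ you cannot conclude $\rho(W_1\cup W_2)\le 0$ without already knowing $\rho(W_1\cap W_2)\ge 0$, which is precisely the lemma being proved. You can simply pick any inclusion-maximal bad $W$, but the justification you give is wrong.

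Second, and more seriously, the ``capping'' step is both unnecessary and incorrect. If some $v\in\overline{W}$ has two neighbours $x,y\in F_W$ and you collapse both edges to a single edge $vw_f$, then an nb-colouring of $G'$ may put $v\in F'$; back in $G$ this can close an $F$-cycle $v,x,\ldots,y,v$ whenever $x,y$ lie in the same component of $G[F_W]$. So the extension fails. The paper avoids this entirely by choosing $W$ to \emph{minimise} $\rho_{*,G}(W)$: then any $v\in\overline{W}$ with two incident edges into $W$ (or one edge-gadget) would satisfy $\rho_{*,G}(W\cup\{v\})<\rho_{*,G}(W)$, forcing $W\cup\{v\}=V(G)$, a case disposed of directly. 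This single structural claim makes $G'$ simple with no capping at all. (Your inclusion-maximality actually yields the same claim when $|\overline{W}|\ge 2$, but you neither state nor use it, and you would still need to treat $|\overline{W}|=1$ separately.)

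Third, the separate $|W|=2$ analysis is unnecessary and your enumeration misses the multigraph configuration where both vertices lie in $F_p$ joined by a single edge ($\rho_m=0$). The paper handles $|W|=2$ uniformly with the general case: the contraction still shrinks $G$ because $|E(G'[\{w_i,w_f\}])|=0<|E(G[W])|$.

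Finally, in the simple-graph forbidden-subgraph check there is no delicate $K_4,M_7$ sub-case at this stage: Corollary~\ref{how to show H is not present}(i) gives $\rho_{s,H'}(V(H'))\le 2$ for every $H'\in\HH$, and since $V(H')$ must meet $\{w_i,w_f\}$ one subtracts at least $8$, giving $\rho_{s,G}((V(H')\setminus\{w_i,w_f\})\cup W)\le 2-8+0=-6$, an immediate contradiction.
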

\begin{proof}
Suppose, to the contrary, there exists $W\subsetneq \VG$ such that $|W|\ge 1$
and $\rho_{*,G}(W) \leq 0$.  Among such subsets, choose $W$ to minimize
$\rho_{*,G}(W)$.  Since $I_p=\emptyset$, we must have $|W|\ge 2$. 
Further, if $|W|=2$, then $E(G[W])\ne\emptyset$. 

By minimality, $G[W]$ has an nb-coloring $I_W, F_W$ with $F_p\cap
W\subseteq F_W$.
Let $\overline{W} = \VG \setminus W$.  

\begin{clm}
Each $v\in \overline{W}$ has at most one incident edge (and no edge-gadget)
with endpoint in $W$.  
\end{clm}
\begin{clmproof}
Suppose, to the contrary, that there exists $v\in \overline{W}$ with
two incident edges, or an incident edge-gadget, with endpoints in $W$.  Now
$\rho_{*,G}(W\cup\{v\})<\rho_{*,G}(W)$.  So, by the minimality of $W$, we must
have $W\cup\{v\}=\VG$.  If $v$ has at least three incident edges into $W$, or
an edge and another edge-gadget, then $\rho_{*,G}(W\cup\{v\})$ violates the
hypothesis of the Main Theorem: $\rho_{m,G}(W\cup\{v\})\le
\rho_{m,G}(W)+3-2(3)=-3$ or $\rho_{s,G}(W\cup\{v\})\le \rho_{s,G}(W)+8-5(3)\le
-7$.  
So assume $v$ has exactly two edges into $W$ or exactly one
edge-gadget and no other edges.  Further, $v\in U_p$, since otherwise
$\rho_{*,G}(W\cup\{v\})$ is too small.  By minimality, $G-v$ has
an nb-coloring.  Since $W=V(G)\setminus\{v\}$, we can easily extend this
coloring to $G$, which contradicts that $G$ is a counterexample.  Thus, each
$v\in \overline{W}$ has at most one neighbor in $W$, and no incident
edge-gadget into $W$, as desired.
\end{clmproof}

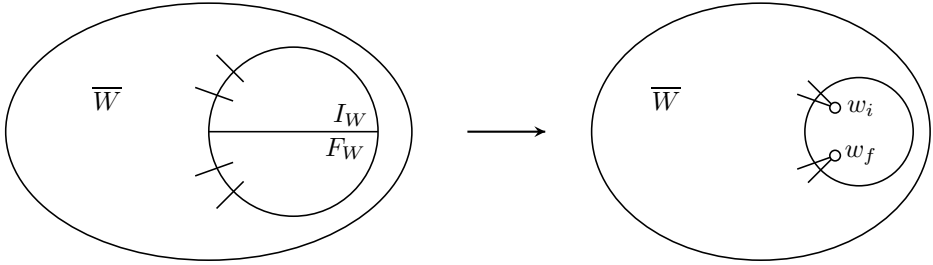
\begin{figure}[!h]
\centering
\begin{tikzpicture} [semithick, scale=.9]
\tikzstyle{lStyle}=[shape = rectangle, minimum size = 5pt, inner sep =
0.5pt, outer sep = 0pt, draw=none]
\tikzset{every node/.style=uStyle}

\begin{scope}[xshift=-.5in]
\draw (0,0) circle (3cm  and 1.9cm)  (1.25,0) circle (1.25cm) (-1.5,.5)
node[lStyle] {$\overline{W}$} 
(2.1,.25) node[lStyle] {$I_W$}
(2.02,-.25) node[lStyle] {$F_W$};
\draw (0,0) --++ (2.5cm,0);
\draw (1.25,0) ++(153:1.0cm) -- ++(-.56,.2); 
\draw (1.25,0) ++(135:1.04cm) -- ++(-.4,.4); 
\draw (1.25,0) ++(207:1.0cm) -- ++(-.56,-.2); 
\draw (1.25,0) ++(225:1.04cm) -- ++(-.4,-.4); 
\end{scope}

\begin{scope}[xshift=2.75in]
\draw (-.1,0) circle (2.5cm  and 1.9cm) (-1.5,.5) node[lStyle] {$\overline{W}$};
\draw (1.35,0) circle (0.8cm);
\draw (1.38,.35) node[lStyle] {$w_i$};
\draw (1.35,0) ++(135:0.5cm) node (wi) {};
\draw (wi) -- ++(-.56,.2); 
\draw (wi) -- ++(-.4,.4); 
\draw (1.38,-.35) node[lStyle] (wf) {$w_f$};
\draw (1.35,0) ++(225:0.5cm) node (wf) {};
\draw (wf) -- ++(-.56,-.2); 
\draw (wf) -- ++(-.4,-.4); 
\end{scope}

\arrowdrawthick (2.55,0) to (3.7,0);

\end{tikzpicture}
\caption{The construction of $G'$ from $G$ in the proof of Lemma~\ref{gap-lemma-basic}.\label{gap-lemma}}
\end{figure}

We construct a graph $G'$ with vertex set $\overline{W} \cup \{w_f, w_i\}$.
We give $G'$ the precoloring $I_p', F_p'$, where $I_p' = \{w_i\}$ and $F_p' =
(F_p\cap\overline{W}) \cup \{w_f\}$.  The edge set of $G'$ is 
$$E(G') = E(G[\overline{W}]) \cup \{uw_i : u x \in \EG, u \in \overline{W}, x
\in I_W\} \cup \{vw_f : v z \in \EG, v \in \overline{W}, z \in F_W\} .$$
If $w_f$ or $w_i$ has degree $0$, then we delete it.
Note that $G'$ is smaller than $G$, since either $|W|\ge 3 > |\{w_i,w_f\}|$ or
else $|W|=2$ and $|E(G'[\{w_i,w_f\}])|=0<|E(G[W])|$.
Because $|N(v)\cap W|\le 1$ for each $v\in \overline{W}$, if $G$ is a simple
graph, then so is $G'$.

Suppose $G'$ has an nb-coloring $I', F'$ that extends the
precoloring $I_p', F'_p$.  It is easy to check that $I' \setminus \{w_i\}
\cup I_W, (F' \setminus \{w_f\}) \cup F_W$ is an nb-coloring
of $G$.  This contradicts that $G$ is a counterexample.
So $G'$ is not near-bipartite.  Recall that $G'$ is smaller than $G$.
So, to reach a contradiction, we will show that $G'$, with precoloring 
$I_p', F_p'$ satisfies the hypotheses of the Main Theorem. 

To begin, we show that $\rho_{m,G'}(W')\ge -1$ and $\rho_{s,G'}(W')\ge -4$ for all
$W'\subseteq V(G')$.  First assume, to the contrary, that there exists $W'$ with
$\rho_{m,G'}(W')\le -2$.  
The key observation is that
\begin{align}
\rho_{m,G}(W'\setminus\{w_i,w_f\}\cup W)\le
\rho_{m,G'}(W') +\rho_{m,G}(W)-\rho_{m,G'}(W'\cap\{w_i,w_f\}).
\label{4.6-ineq}
\end{align}

Although we use \eqref{4.6-ineq} in the form above, it is perhaps easier to understand the
equivalent version: 
\begin{align*}
\rho_{m,G}(W'\setminus\{w_i,w_f\}\cup W)-\rho_{m,G}(W)\le
\rho_{m,G'}(W') -\rho_{m,G'}(W'\cap\{w_i,w_f\}).
\end{align*}
The left side equals $\rho_{m,G'}(W'\setminus \{w_i,w_f\})-2|e(W'\setminus
\{w_i,w_f\},W)|$.
The right side equals $\rho_{m,G}(W'\setminus \{w_i,w_f\})-
2|e(W'\setminus \{w_i,w_f\},\{w_i,w_f\})|$.  The right is no less than the left, since each
edge in $e_{G'}(W'\setminus \{w_i,w_f\},\{w_i,w_f\})$ is the image of at least one
edge in $e_G(W'\setminus \{w_i,w_f\},W)$, and $G[W'\setminus\{w_i,w_f\}]\cong
G'[W'\setminus\{w_i,w_f\}]$.

Now \eqref{4.6-ineq} implies $\rho_{m,G}(W'\setminus\{w_i,w_f\}\cup W)\le -2+0-0=-2$,
which is a contradiction.
Inequality \eqref{4.6-ineq} is the key to proving all of our gap
lemmas.  We use it repeatedly below, often with less detail.
Now assume, to the contrary, that there exists $W'$ with $\rho_{s,G'}(W')\le -5$.
Similar to the previous case, now $\rho_{s,G}(W'\setminus\{w_i,w_f\}\cup W)\le
\rho_{s,G'}(W') +\rho_{s,G}(W)-\rho_{s,G'}(W'\cap\{w_i,w_f\})\le -5+0-0=-5$,
which is a contradiction.

Now we must show that $G'$ does not contain forbidden subgraphs.  In the case of
multigraphs, 
we must show that $G'$ contains neither $K_4$ nor $M_7$.  
%
Suppose instead that $G'$ contains $K_4$ or $M_7$, and let $W'$ denote its vertex set.
Recall from Example~\ref{example1} that (with no precolored vertices)
$\rho_{m,K_4}(V(K_4))=0$ and $\rho_{m,M_7}(V(M_7))=-1$.
So $\rho_{m,G'}(W')-\rho_{m,G'}(W'\cap \{w_i,w_f\})\le 0-3$. 
Now $\rho_{m,G}((W'\setminus\{w_f,w_i\})\cup W)\le
-3+0=-3$, which is a contradiction.
%

Finally, we consider the case of simple graphs.  We must show that $G'$ does not
contain any graph in $\HH$.  Suppose that it does; call this graph $H'$, and
let $W'$ denote its vertex set.  By Corollary~\ref{cor1}(i), we know that (with no
precolored vertices) $\rho_{s,H'}(W')\le 2$.  Thus,
$\rho_{s,G'}(W')-\rho_{s,G'}(W'\cap\{w_i,w_f\})\le 2-8=-6$.  As a result,
$\rho_{m,G'}((W'\setminus\{w_f,w_i\})\cup W)\le -6+0=-6$, which is a
contradiction.
\end{proof}

Lemma \ref{small subgraphs part 1} is useful in many ways.
It immediately implies our next lemma, which is a
strengthening of the submodularity condition in Lemma~\ref{submodular},
and it also implies Lemmas~\ref{fancy edges incident to uncolored} and \ref{put
one vertex in F}.

\begin{lemma} \label{subadditive}
In $G$ the function $\rho$ is subadditive on proper subsets: unless $W_1 = W_2 = \VG$, 
$$\rho_{*,G}(W_1 \cup W_2) \leq \rho_{*,G}(W_1) + \rho_{*,G}(W_2).$$
\end{lemma}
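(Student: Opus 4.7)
The plan is to derive subadditivity from submodularity (Lemma~\ref{submodular}) by showing that the intersection term $\rho_{*,G}(W_1\cap W_2)$ is nonnegative in every case allowed by the hypothesis. Rearranging the submodularity inequality gives
\[
\rho_{*,G}(W_1\cup W_2) \le \rho_{*,G}(W_1) + \rho_{*,G}(W_2) - \rho_{*,G}(W_1\cap W_2),
\]
so it suffices to prove $\rho_{*,G}(W_1\cap W_2)\ge 0$.

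I would then split into three cases based on $W_1\cap W_2$. First, if $W_1\cap W_2=\emptyset$, then $\rho_{*,G}(\emptyset)=0$ directly from the definitions of $\rho_{m,G}$ and $\rho_{s,G}$, and we are done. Second, if $\emptyset\neq W_1\cap W_2\subsetneq V(G)$, then the gap lemma (Lemma~\ref{small subgraphs part 1}) applies to $W_1\cap W_2$ and gives $\rho_{*,G}(W_1\cap W_2)\ge 1>0$, finishing this case. Third, if $W_1\cap W_2=V(G)$, then $W_1=W_2=V(G)$, which is precisely the case excluded by the hypothesis of the lemma.

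There is no real obstacle here; the only subtlety is recognizing that the gap lemma is stated for proper nonempty subsets, which is exactly why the excluded case $W_1=W_2=V(G)$ must be carved out (when $W_1\cap W_2=V(G)$ we cannot invoke Lemma~\ref{small subgraphs part 1} to guarantee $\rho_{*,G}(W_1\cap W_2)\ge 1$). This makes the ``proper subsets'' hypothesis essential and gives the precise form of the statement.
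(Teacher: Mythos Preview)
Your proof is correct and follows essentially the same approach as the paper: combine submodularity (Lemma~\ref{submodular}) with the gap lemma (Lemma~\ref{small subgraphs part 1}) and the observation $\rho_{*,G}(\emptyset)=0$ to conclude $\rho_{*,G}(W_1\cap W_2)\ge 0$ whenever the excluded case $W_1=W_2=V(G)$ does not occur. Your explicit three-case split is just a slightly more detailed version of the paper's one-line argument.
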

\begin{proof}
Assume that $W_1\ne \VG$ or $W_2\ne \VG$.
Since $\rho_{*,G}(\emptyset)=0$, the previous lemma gives
$\rho_{*,G}(W_1\cap W_2)\ge 0$ for all $W_1,W_2\subseteq \VG$.
So
$\rho_{*,G}(W_1 \cup W_2) 
\le \rho_{*,G}(W_1 \cup W_2) +\rho_{*,G}(W_1\cap W_2)
\le \rho_{*,G}(W_1) + \rho_{*,G}(W_2),$
by Lemma~\ref{submodular}.
\end{proof}


The proof of the following lemma is simple arithmetic, so we omit it.
\begin{lemma} \label{fancy edges incident to uncolored} 
Both endpoints of a multiedge (for part (A)) or an edge-gadget (for part (B))
are uncolored.  Further, when $G$ is a multigraph, at least one endpoint of
each edge is uncolored.  
\end{lemma}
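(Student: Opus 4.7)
The plan is to derive each conclusion from a direct computation of $\rho_{*,G}(\{v,w\})$ on a two-vertex set, combined with the gap lemma (Lemma~\ref{small subgraphs part 1}). Recall that Lemma~\ref{remove I} gives $I_p=\emptyset$, so ``uncolored'' just means ``in $U_p$'', i.e., not in $F_p$; and Lemma~\ref{easy cases} gives $|V(G)|\ge 3$, so $\{v,w\}\subsetneq V(G)$ for any pair of vertices.

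First I would handle the multiedge/edge-gadget claim. Suppose, for contradiction, that some multiedge (in the multigraph setting) or edge-gadget (in the simple-graph setting) $vw$ has an endpoint in $F_p$; by symmetry assume $w\in F_p$. I would split on whether $v\in F_p$ or $v\in U_p$. If both $v,w\in F_p$, then in the multigraph setting
\[
\rho_{m,G}(\{v,w\})=0\cdot|U_p\cap\{v,w\}|+2-2(2)=-2<-1,
\]
directly violating the hypothesis of the Main Theorem; in the simple-graph setting the edge-gadget gives
\[
\rho_{s,G}(\{v,w\})=0+3(2)-11(1)=-5<-4,
\]
again a contradiction. If instead $v\in U_p$, then the same computations yield $\rho_{m,G}(\{v,w\})=3+1-4=0$ and $\rho_{s,G}(\{v,w\})=8+3-11=0$. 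Since $|V(G)|\ge 3$, the set $\{v,w\}$ is a nonempty proper subset, so Lemma~\ref{small subgraphs part 1} forces $\rho_{*,G}(\{v,w\})\ge 1$, contradicting the value $0$ just computed.

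For the second assertion (multigraph case, every edge has an uncolored endpoint) I would suppose an edge $vw$ has both $v,w\in F_p$. The multiedge subcase is already covered above, so assume $vw$ is a single edge. Then
\[
\rho_{m,G}(\{v,w\})=0+2-2(1)=0,
\]
and again Lemma~\ref{easy cases} gives $\{v,w\}\subsetneq V(G)$, so Lemma~\ref{small subgraphs part 1} yields $\rho_{m,G}(\{v,w\})\ge 1$, a contradiction.

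There is essentially no obstacle here: the only ``content'' is the observation that the gap lemma $\rho_{*,G}(W)\ge 1$ on proper subsets is strictly stronger than the hypothesis bound of $-1$ or $-4$, and this slack is exactly what rules out a single precolored endpoint when the edge (or edge-gadget) already contributes enough to $|e(\{v,w\})|$ to knock the vertex-weight contribution down to $0$. This is why the authors describe the proof as ``simple arithmetic'' and omit it.
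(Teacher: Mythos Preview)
Your argument is correct and is exactly the ``simple arithmetic'' the paper alludes to: compute $\rho_{*,G}(\{v,w\})$ for the relevant two-vertex sets, and invoke Lemma~\ref{small subgraphs part 1} (together with $I_p=\emptyset$ and $|V(G)|\ge 3$) to obtain a contradiction whenever an endpoint lies in $F_p$. There is nothing to add.
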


\begin{lemma} \label{put one vertex in F}
If $W \subsetneq \VG$ and $w \in \VG$, then $G[W]$ has an nb-coloring
$I,F$ that extends the precoloring $\emptyset, F_p \cup \{w\}$. 
\end{lemma}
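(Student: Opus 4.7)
The plan is to apply the minimality of $G$ to the induced subgraph $G[W]$, equipped with the precoloring obtained by inheriting from $G$ and additionally forcing $w$ into $F_p$ (a genuine modification only when $w\in W$ and $w\notin F_p$). Since $W\subsetneq \VG$, the graph $G[W]$ is strictly smaller than $G$, so once we verify that $G[W]$ with this modified precoloring satisfies the hypotheses of the Main Theorem, minimality immediately yields the desired nb-coloring $I,F$.

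The forbidden-subgraph hypothesis is automatic, because $G[W]$ is a subgraph of $G$ and therefore avoids $K_4$ and $M_7$ in part (A), and every graph in $\HH$ in part (B). So it remains only to check the potential lower bound. Writing $\rho'_{*,G[W]}$ for the potential of $G[W]$ computed with respect to the modified precoloring, a direct count from the definitions of $\rho_{m,G}$ and $\rho_{s,G}$ gives $\rho'_{*,G[W]}(W')=\rho_{*,G}(W')-c$ when $w\in W'$, and $\rho'_{*,G[W]}(W')=\rho_{*,G}(W')$ otherwise, where $c=2$ in the multigraph case (the coefficient on $w$ drops from $3$ to $1$) and $c=5$ in the simple-graph case (from $8$ to $3$). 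By Lemma~\ref{Ip-empty-lem}, $I_p=\emptyset$, so these are the only two possibilities to track.

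Since $W\subsetneq \VG$, every nonempty $W'\subseteq W$ is also a proper nonempty subset of $\VG$, and Lemma~\ref{small subgraphs part 1} yields $\rho_{*,G}(W')\geq 1$. Hence $\rho'_{*,G[W]}(W')\geq 1-c$, which equals $-1$ for multigraphs and $-4$ for simple graphs---exactly the thresholds demanded by the Main Theorem---while $W'=\emptyset$ trivially has potential $0$. Both hypotheses of the Main Theorem therefore hold for $G[W]$ under the modified precoloring, and minimality produces an nb-coloring of $G[W]$ that places every vertex of $F_p\cap W$, together with $w$ when $w\in W$, into $F$; this is precisely an extension of $(\emptyset, F_p\cup\{w\})$.

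The whole argument reduces to a one-line bookkeeping calculation plus a single invocation of the gap lemma, so no serious obstacle arises; the substantive point is that the slack supplied by Lemma~\ref{small subgraphs part 1} matches exactly the drop in potential caused by precoloring $w$, which is why this lemma is positioned directly after the gap lemma and will be convenient to apply repeatedly in later sections.
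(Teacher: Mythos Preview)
Your proof is correct and follows exactly the paper's approach: verify that $G[W]$ with the augmented precoloring satisfies the hypotheses of the Main Theorem by using the gap lemma (Lemma~\ref{small subgraphs part 1}) to absorb the drop in potential caused by moving $w$ into $F_p$, then invoke minimality. The paper's version is simply terser, omitting the explicit computation of the constants $c=2$ and $c=5$ that you spell out.
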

\begin{proof}
Let $F_p' = F_p \cup \{w\}$ and $I_p' = \emptyset$.
Because $G[W]$ is a subgraph of $G$, it contains no forbidden subgraphs.
By Lemma~\ref{small subgraphs part 1}, the graph $G[W]$ with
precoloring $I_p',F_p'$ satisfies the hypotheses of the Main Theorem. 
So by minimality, $G[W]$ has an nb-coloring that extends $I_p',F_p'$.
\end{proof}

\subsection{Multigraphs} 
\label{multi proof sec}
\label{multi-sec}
In this section, we prove part (A) of the Main Theorem.
The key step, which we begin with, is to strengthen by 1 the gap lemma we proved
in Lemma~\ref{small subgraphs part 1} of the previous section.
Everything after this stronger gap lemma is a chain of implications that culminates with
the fact that $G$ cannot exist.


\begin{lem}
If $W\subsetneq \VG$ and $|W|\ge 2$, then $\rho_{m,G}(W)\ge 2$.
\label{small subgraphs part 2}
\label{gap-multi}
\end{lem}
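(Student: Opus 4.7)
\medskip
\noindent\textit{Proof proposal.} The plan is to adapt the argument of Lemma~\ref{small subgraphs part 1}, carrying through one extra unit of slack since $\rho_{m,G}(W)=1$ rather than $\le 0$. Suppose, to the contrary, that there exists $W\subsetneq V(G)$ with $|W|\ge 2$ and $\rho_{m,G}(W)\le 1$. By Lemma~\ref{small subgraphs part 1} we then have $\rho_{m,G}(W)=1$; and since the only possible vertex weights in $\rho_m$ are $3$ and $1$ (using $I_p=\emptyset$ from Lemma~\ref{remove I}), a parity check rules out $|W|=2$, so $|W|\ge 3$.

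By minimality, $G[W]$ has an nb-coloring $I_W,F_W$ extending the restriction of the precoloring (it satisfies the Main Theorem hypotheses, since $G$ has no forbidden subgraph and Lemma~\ref{small subgraphs part 1} gives $\rho_{m,G}(W'')\ge 1$ for every nonempty $W''\subseteq W$). Construct $G'$ exactly as in the proof of Lemma~\ref{small subgraphs part 1}: contract $I_W$ to a new vertex $w_i\in I_p'$ (weight $0$) and $F_W$ to a new vertex $w_f\in F_p'$ (weight $1$), deleting either if isolated. Because $|W|\ge 3$, $G'$ is strictly smaller than $G$, and any nb-coloring of $G'$ that extends $I_p',F_p'$ extends to one of $G$ via $I_W,F_W$. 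So it suffices to verify that $G'$ satisfies the hypotheses of Main Theorem~(A), at which point minimality of $G$ gives the desired contradiction.

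The verification rests on the contraction inequality~\eqref{4.6-ineq}, whose derivation does not depend on the number of edges from $\overline{W}$ to $W$. If $G'$ contains $K_4$ or $M_7$ on vertex set $W'$, the subcase $W'\cap\{w_i,w_f\}=\emptyset$ places a forbidden subgraph inside $G[\overline{W}]\subseteq G$, a contradiction; otherwise a short case analysis on $W'\cap\{w_i,w_f\}$, using $\rho_{m,K_4}(V(K_4))=0$, $\rho_{m,M_7}(V(M_7))=-1$, and that $w_i,w_f$ reduce vertex weights by $3$ and $2$ respectively, yields $\rho_{m,G'}(W')-\rho_{m,G'}(W'\cap\{w_i,w_f\})\le -3$; combined with $\rho_{m,G}(W)=1$, inequality~\eqref{4.6-ineq} then forces $\rho_{m,G}((W'\setminus\{w_i,w_f\})\cup W)\le -2$, contradicting either the hypothesis on $G$ or Lemma~\ref{small subgraphs part 1}. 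Likewise, if $\rho_{m,G'}(W')\le -2$ for some $W'\subseteq V(G')$, then \eqref{4.6-ineq} yields $\rho_{m,G}((W'\setminus\{w_i,w_f\})\cup W)\le -1$, which contradicts Lemma~\ref{small subgraphs part 1} whenever $(W'\setminus\{w_i,w_f\})\cup W\subsetneq V(G)$.

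The hard part will be the residual case where $\overline{W}\subseteq W'$ and hence $W'\in\{V(G'),\,V(G')\setminus\{w_i\},\,V(G')\setminus\{w_f\},\,\overline{W}\}$. Direct computation shows $\rho_{m,G'}(V(G'))=\rho_{m,G}(V(G))\ge -1$; removing $w_i$ or $w_f$ shifts the potential by $+2d_{G'}(w_i)$ or $-1+2d_{G'}(w_f)$, and since each undeleted contraction vertex has degree at least $1$, these give $\ge 1$ and $\ge 0$ respectively; finally $\rho_{m,G'}(\overline{W})=\rho_{m,G}(\overline{W})\ge 1$ by Lemma~\ref{small subgraphs part 1}. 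In every subcase $\rho_{m,G'}(W')\ge -1$, so the assumption $\rho_{m,G'}(W')\le -2$ is impossible, and thus $G'$ satisfies all hypotheses of Main Theorem~(A).
\medskip
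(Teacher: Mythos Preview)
Your argument has a genuine gap in the final ``hard part.'' The claimed equality $\rho_{m,G'}(V(G'))=\rho_{m,G}(V(G))$ is only valid when $w_f$ is present. A direct count gives
\[
\rho_{m,G'}(V(G'))=\rho_{m,G}(V(G))-\rho_{m,G}(W)+[\![w_f\text{ present}]\!]
=\rho_{m,G}(V(G))-1+[\![w_f\text{ present}]\!],
\]
so if the nb-coloring $I_W,F_W$ you chose happens to put every vertex of $W$ with a neighbor in $\overline{W}$ into $I_W$, then $w_f$ is deleted and $\rho_{m,G'}(V(G'))=\rho_{m,G}(V(G))-1$ can equal $-2$. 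In that event $G'$ fails the hypotheses of Main Theorem~(A), and you cannot recurse. Nothing in your setup prevents this: you took an \emph{arbitrary} nb-coloring of $G[W]$.

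This is exactly the place where the paper spends the ``one extra unit of slack'' you mention but never actually cash in. Before coloring $G[W]$, the paper picks a boundary vertex $w\in W$ with a neighbor in $\overline{W}$ (this exists since $G$ is connected) and precolors $w$ into $F_p$; Lemma~\ref{small subgraphs part 1} shows $G[W]$ with this extra precolored vertex still satisfies the hypotheses. This forces $w\in F_W$, so $w_f$ exists with $d_{G'}(w_f)\ge 1$, and then your direct computations all go through. (The paper also chooses $W$ to minimize $\rho_{m,G}$, but that is not essential; your reduction to $\overline{W}\subseteq W'$ via Lemma~\ref{small subgraphs part 1} works just as well.) The fix is a single sentence invoking Lemma~\ref{put one vertex in F}, but as written the proof is incomplete.
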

\begin{proof}
The proof is very similar to that of Lemma~\ref{small subgraphs part 1}, 
so we mainly emphasize the differences.
Suppose, the lemma is false; that is, some vertex subset $W$
satisfies $2\le |W|<|\VG|$ and $\rho_{m,G}(W)\le 1$.  
Among such $W$, choose one to minimize $\rho_{m,G}(W)$.
By Lemma~\ref{small subgraphs part 1} we know that $\rho_{m,G}(W)=1$.  
First, we note that $|W|\ge 3$.  Suppose, to the contrary, that $|W|=2$.
By Lemma~\ref{Ip-empty-lem}, $I_p=\emptyset$, so each vertex contributes odd
weight (1 or 3) and each edge contributes even weight ($-2$), which implies 
$\rho_{m,G}(W)\equiv 0 \bmod 2$.  By Lemma~\ref{small subgraphs part 1}, we
have $\rho_{m,G}(W)\ge 1$; thus $\rho_{m,G}(W)\ge 2$.  So, $|W|\ge 3$, as desired.

As in the previous proof, each $v\in\overline{W}$ has at most one neighbor
in $W$.  Since $G$ is connected and $W\subsetneq \VG$,
there exists $w\in W$ with a neighbor not in $W$.  Let $G'=G[W]$ with
precoloring $I'_p, F'_p$, where $I'_p=\emptyset$ and $F'_p = (F_p\cap
W)\cup\{w\}$.  For each $X\subseteq W$, we have $\rho_{m,G'}(X)\ge
\rho_{m,G}(X)-2\ge 1-2=-1$, by Lemma~\ref{gap-lemma-basic}\footnote{This step in
the proof is the only place where we actually use Lemma~\ref{gap-lemma-basic},
and it is why we prove that weaker result before proving this one.}.  
Thus, by minimality, $G'$ has an nb-coloring $I', F'$ that extends the
precoloring $I_p',F_p'$.  Now we repeat the construction of graph $G'$ from
the proof of Lemma~\ref{small subgraphs part 1}.

%

Suppose $G'$ has an nb-coloring $I',F'$ that extends
the precoloring $I'_p, F'_p$.  It is easy to check that $(I' \setminus \{w_i\}
\cup I_W), (F' \setminus \{w_f\} \cup F_W)$ is an nb-coloring of
$G$.  This contradicts that $G$ is a counterexample.
So $G'$ is not near-bipartite.  Recall that $G'$ is smaller than $G$.
So to reach a contradiction, we will show that $G'$, with precoloring
$I_p', F_p'$, satisfies the hypotheses of the Main Theorem. 

We must show that $G'$ does not contain $K_4$ or $M_7$.  Recall that (with all
vertices uncolored), we have $\rho_{m,K_4}(V(K_4))=0$ and
$\rho_{m,M_7}(V(M_7))=-1$.  Suppose, to the contrary, that $G'$ contains a copy
of $H \in \{K_4, M_7\}$, and let $W'$ denote its vertex set.  Since
$H \not\subseteq G$, we have $W'\cap\{w_f,w_i\}\ne \emptyset$.  
As in the proof of Lemma~\ref{small subgraphs part 1},
we have $\rho_{m,G}(W'\setminus\{w_f,w_i\}\cup W) \le \rho_{m,G'}(W') +
\rho_{m,G}(W) - \rho_{m,G'}(W'\cap\{w_f,w_i\})$. 
The subgraph of $G'$ without $W'\cap\{w_f,w_i\}$ is isomorphic to $H$ 
with one or two uncolored vertices removed,
so we have $\rho_{m,G'}(W')-\rho_{m,G'}(W'\cap\{w_f,w_i\})\le 
\rho_{m,H}(\VH)-\rho_{m,K_1}(V(K_1))\le -3$. Thus $\rho_{m,G}(W'\setminus\{w_f,w_i\}\cup W) \le 
-3+1=-2$, which is a contradiction.


Finally, we show that $\rho_{m,G'}(W')\ge -1$ for all $W\subseteq \VG$.
Assume, to the contrary, that there exists $W'$ with $\rho_{m,G'}(W')\le -2$.
Now $\rho_{m,G}(W'\setminus\{w_i,w_f\}\cup W)\le
\rho_{m,G'}(W') +\rho_{m,G}(W)-\rho_{m,G'}(W'\cap\{w_i,w_f\})\le -2 +
\rho_{m,G}(W)<\rho_{m,G}(W)$.  By our choice of $W$, we know that
$W'\setminus\{w_i,w_f\}\cup W=\VG$.  If $w_f\in W'$, then
$\rho_{m,G}(W'\cap\{w_i,w_f\})=1$.  Now $\rho_{m,G}(W'\setminus\{w_i,w_f\}\cup
W)\le -2 +1 -1 = -2$, a contradiction.  So instead, assume $w_f\notin W'$.  However,
now we have $\rho_{m,G}(W'\setminus\{w_i,w_f\}\cup W)< -2 +1 -0=-1$. Here the
inequality is strict, since the left side counts an edge from $w_f$ to a
neighbor outside of $W$, but that edge is not counted on the right (recall from
the second paragraph that $w$ is precolored to be in $F$ and $w$ has a neighbor
in $\overline{W}$).  Again, $\rho_{m,G}(W'\setminus \{w_i,w_f\}\cup W)\le -2$, which
is a contradiction.  So $G'$ satisfies the hypotheses of the Main Theorem, 
which finishes the proof.
\end{proof}

\begin{lemma} \label{min degree 3 multi}
$\delta(G)\ge3$.
\end{lemma}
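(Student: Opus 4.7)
I would proceed by contradiction, assuming $G$ has a vertex $v$ with $d(v)=2$; Lemma~\ref{easy cases} already gives $d(v)\ge 2$. First I pin down the local structure around $v$: since $G$ has no edge-gadgets, Lemma~\ref{uncolored is min degree 3} forces $v$ to be precolored, and combined with Lemma~\ref{remove I} we get $v\in F_p$. Lemma~\ref{min degree 2} gives $|N(v)|\ge 2$, so $N(v)=\{x,y\}$ with $x\ne y$ and two single edges $vx,vy$ (no multiedge is incident to $v$). A direct application of the multigraph gap lemma Lemma~\ref{gap-multi} to the pair $\{v,x\}$ rules out $x\in F_p$: otherwise $\rho_{m,G}(\{v,x\})=1+1-2=0<2$. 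By symmetry, $x,y\in U_p$.

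The main reduction is to form $G':=G-v$ with new precoloring $I_p':=\{x\}$ and $F_p':=F_p\setminus\{v\}$. Since $G'\subseteq G$, it contains no $K_4$ or $M_7$. For any nonempty $W\subseteq V(G')$, a short bookkeeping argument shows that $\rho_{m,G'}(W)$ equals $\rho_{m,G}(W)$ when $x\notin W$ and $\rho_{m,G}(W)-3$ when $x\in W$. In the former case the hypothesis for $G'$ is immediate from that for $G$. In the latter case one must verify $\rho_{m,G}(W)\ge 2$: this is supplied by Lemma~\ref{gap-multi} whenever $2\le|W|\le|\VG|-2$, is trivial for $W=\{x\}$, and for $W=V(G')$ reduces to $\rho_{m,G}(\VG)\ge -1$ since deleting the degree-$2$ precolored vertex $v$ shifts the total potential by exactly $-1-2(-2)=+3$.

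Having verified the hypothesis, by minimality $G'$ admits an nb-coloring $I',F'$ extending $(I_p',F_p')$. Setting $I:=I'$ and $F:=F'\cup\{v\}$ yields an nb-coloring of $G$: $I$ is unchanged and so independent, and $G[F]$ remains a forest because $v$'s only neighbor in $F$ is $y$ (as $x\in I$), so $v$ attaches as a leaf to the forest $G'[F']$. This extends $(I_p,F_p)$ and contradicts that $G$ is a counterexample. The delicate step is the $W=V(G')$ case of the potential verification; it works out only because the weights of $\rho_{m,G}$ chosen in Section~\ref{potential-sec} are calibrated so that the $+3$ gained by deleting a degree-$2$ vertex of $F_p$ together with its two incident edges exactly cancels the $-3$ cost of precoloring the neighbor $x$ into $I_p'$.
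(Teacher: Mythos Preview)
Your proof is correct, but it takes a different reduction than the paper. The paper forms $G'$ from $G-v$ by \emph{adding the edge} $wx$ between the two neighbors of $v$; the extension step then effectively un-subdivides that edge, placing $v$ back in $F$. The cost of this approach is that the new edge may create a copy of $K_4$ or $M_7$, and the paper handles this with a short case analysis (using Lemma~\ref{gap-multi} to force $W'\cup\{v\}=\VG$ and then coloring explicitly).

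Your reduction instead leaves the edge set alone and precolors one neighbor $x$ into $I_p'$. This buys you the forbidden-subgraph check for free, since $G'\subseteq G$, and the extension is trivial because $x\in I'$ guarantees $v$ is a leaf of $G[F]$. The price is the $-3$ drop in potential from moving $x$ into $I_p'$, but as you observe this is exactly covered by Lemma~\ref{gap-multi}. One small remark: you invoke Lemma~\ref{gap-multi} only for $2\le|W|\le|\VG|-2$ and then argue $W=V(G')$ separately, but in fact Lemma~\ref{gap-multi} already covers every proper subset of $\VG$ of size at least $2$, including $V(G')$ itself; your direct computation for that case is correct but unnecessary. Overall your route is arguably cleaner here, trading the paper's forbidden-subgraph case analysis for a straightforward potential bookkeeping that the gap lemma absorbs.
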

\begin{proof}
Assume, to the contrary, that $G$ contains a vertex $v$ with $d(v)\le 2$.
By Lemmas~\ref{min degree 2} and~\ref{uncolored is min degree 3} we know 
that $d(v)=2$ and $v\in F_p$.  Let $w$ and $x$ denote the neighbors of
$v$.  Form $G'$ from $G-v$ by adding edge $wx$.  (Note that $wx\notin E(G)$,
since otherwise $\rho_{m,G}(\{v,w,x\})=2(3)+1(1)-3(2)=1$, which contradicts
Lemma~\ref{small subgraphs part 2}.)  
Suppose there exists $W'\subseteq \VGp$ with $\rho_{m,G'}(W')\le -2$.  Since
$G'[W']\not\subseteq G$, we have $\{w,x\}\subseteq W'$.  Now
$\rho_{m,G}(W'\cup\{v\}) = \rho_{m,G'}(W') +(\rho_{m,G}(\{v,w,x\}) -
\rho_{m,G'}(\{w,x\})) \le -2 +(-1)$, which is a contradiction.  
So assume
instead that $G'$ contains a copy of $K_4$ or $M_7$; let $W'$ denote its vertex
set.  In this case $\rho_{m,G}(W'\cup\{v\})=\rho_{m,G'}(W')-1\le 0$. This contradicts
Lemma~\ref{small subgraphs part 2} unless $W'\cup\{v\}=\VG$.  However, in
that case we can easily construct an explicit nb-coloring of $G$ (when
$G'[W']=K_4$ we have only a single case, and when $G'[W']=M_7$ we have
four cases).
\end{proof}

\begin{lemma} \label{no precolor multi}
$F_p=\emptyset$.
\end{lemma}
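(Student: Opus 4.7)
The plan is to suppose, for contradiction, that there is a vertex $v \in F_p$, and reduce to a strictly smaller precolored graph. Since $v$ is colored, Lemma~\ref{fancy edges incident to uncolored} rules out any multiedge incident to $v$, and Lemma~\ref{min degree 3 multi} gives $d(v) \geq 3$; so $v$ has at least three distinct uncolored neighbors in $G$.

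The most natural reduction is to set $G' := G - v$ with precoloring $I_p' := \emptyset$ and $F_p' := F_p \setminus \{v\}$. Since $G'$ is a subgraph of $G$, it contains neither $K_4$ nor $M_7$, and for every $W \subseteq V(G')$ we have $\rho_{m,G'}(W) = \rho_{m,G}(W) \geq -1$, so the hypotheses of Main Theorem~(A) hold for $G'$. By the minimality of $G$, there is an nb-coloring $I',F'$ of $G'$ extending $I_p',F_p'$. The only candidate extension to $G$ is $I := I'$ and $F := F' \cup \{v\}$; this is a valid nb-coloring of $G$ precisely when $G[F' \cup \{v\}]$ is a forest. Since $G'[F']$ is already a forest, the only obstruction is a cycle through $v$, i.e., two neighbors $u_1, u_2 \in N(v) \cap F'$ lying in the same component of $G'[F']$, connected by a path $P$ there.

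When such a $P$ exists, take $W := V(P) \cup \{v\}$. Then $W$ contains $v \in F_p$, has $|W|-1$ uncolored vertices, and induces at least $|E(P)|+2$ edges (those of $P$ together with $vu_1$ and $vu_2$), giving
\[
\rho_{m,G}(W) \leq 3(|W|-1) + 1 - 2|W| = |W| - 2.
\]
By the gap lemma (Lemma~\ref{small subgraphs part 2}), $\rho_{m,G}(W) \geq 2$, forcing $|W| \geq 4$. So $u_1$ and $u_2$ cannot be adjacent in $G'[F']$, and the shortest offending $P$ has at least two edges.

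The main obstacle is ruling out obstructions with $|E(P)| \geq 2$. To handle these, I would refine the reduction by strategically precoloring a single neighbor $u$ of $v$ into $I_p'$ (before running minimality on $G'$); this remains admissible because for any $W \ni u$ with $|W| \geq 2$ the gap lemma yields $\rho_{m,G'}(W) = \rho_{m,G}(W) - 3 \geq -1$, and $\rho_{m,G'}(\{u\}) = 0$. Iterating over the choices of $u \in N(v)$ and appealing to the gap lemma on the resulting tight sets, one either finds a choice for which the extension succeeds, or else obtains a rigid configuration around $v$ whose potential violates Lemma~\ref{small subgraphs part 2} or whose structure embeds a copy of $K_4$ or $M_7$. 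The delicate step will be this final case analysis, pinning down the extremely limited edge budget on sets $W$ for which the inequality above is tight.
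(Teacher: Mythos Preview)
Your proposal is incomplete, and you have missed a much simpler argument. The paper's proof is a two-line global count: since $\delta(G)\ge 3$ (Lemma~\ref{min degree 3 multi}), we have $|E(G)|\ge \tfrac32|V(G)|$, hence
\[
\rho_{m,G}(V(G)) = 3|U_p| + |F_p| - 2|E(G)| \le 3|V(G)| - 2|F_p| - 3|V(G)| = -2|F_p|.
\]
The hypothesis $\rho_{m,G}(V(G))\ge -1$ then forces $|F_p|=0$. No reduction, no recursion, no case analysis.

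Your approach instead deletes $v\in F_p$, recolors by minimality, and tries to reinsert $v$ into $F$. The obstruction analysis up to ``$|W|\ge 4$'' is correct, but after that you only sketch a plan: precolor some neighbor $u$ into $I$, iterate over choices of $u$, and hope the gap lemma eventually pins down a $K_4$ or $M_7$. This is not a proof. Each choice of $u$ produces a \emph{different} nb-coloring of $G-v$, so the obstructing $F$-paths from the various runs need not overlap or fit together into a single dense subgraph of $G$; you have not explained how to combine information across these separate colorings. The ``rigid configuration'' you allude to is never identified, and there is no reason to expect the potential bounds alone to close the argument when $d(v)$ is large. In short, the delicate step you flag is the entire content, and it is missing.
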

\begin{proof}
Since $\delta(G)\ge3$, we have $|E(G)|=\frac12\sum_{v\in \VG}d(v)\ge
\frac32|\VG|$.  Now $\rho_{m,G}(\VG)=3|U_p|+|F_p|-2|E(G)|\le 3|\VG| - 2|F_p| -
2(\frac32|\VG|)=-2|F_p|$.  By assumption $\rho_{m,G}(\VG)\ge -1$, so
$F_p=\emptyset$.
\end{proof}

\begin{lemma}
$G$ has at most one vertex $w$ with $d(w)>3$.  If $w$ exists, then $d(w)=4$.
\label{nearly 3-regular}
\end{lemma}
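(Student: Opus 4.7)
The plan is to read off the lemma from a single counting inequality applied to all of $V(G)$. By Lemmas~\ref{Ip-empty-lem} and~\ref{no precolor multi} we have $I_p = F_p = \emptyset$, so every vertex is uncolored and contributes $3$ to $\rho_{m,G}$. Hence
$$\rho_{m,G}(V(G)) = 3|V(G)| - 2|E(G)|.$$
The hypothesis of the Main Theorem gives $\rho_{m,G}(V(G)) \geq -1$, which rearranges to $2|E(G)| \leq 3|V(G)| + 1$.

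Next, I would invoke the handshaking identity $2|E(G)| = \sum_{v\in V(G)} d(v)$ together with $\delta(G) \geq 3$ from Lemma~\ref{min degree 3 multi}. Subtracting the lower bound $3|V(G)|$ from both sides yields
$$\sum_{v\in V(G)} (d(v)-3) = 2|E(G)| - 3|V(G)| \leq 1,$$
where every summand is nonnegative.

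Now every vertex $w$ with $d(w)>3$ contributes at least $1$ to the left-hand sum, so there is at most one such vertex. If such a $w$ exists, then the inequality forces $d(w)-3 \leq 1$, i.e.\ $d(w) = 4$, which is precisely the conclusion of the lemma.

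There is no real obstacle here: once the gap lemma and the facts $\delta(G)\ge 3$ and $F_p=\emptyset$ are in place, the statement is a one-line global potential count. The only thing to be careful about is the reliance on $F_p=\emptyset$; without it the coefficient $1$ on $|W\cap F_p|$ in $\rho_{m,G}$ would weaken the bound, and the above clean subtraction would not go through.
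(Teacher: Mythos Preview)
Your proof is correct and takes essentially the same approach as the paper: both combine the hypothesis $\rho_{m,G}(V(G))\ge -1$ with handshaking and $\delta(G)\ge 3$ to bound the total excess degree. The paper phrases it as $d(v)+d(w)\le 7$ for any two vertices $v,w$, while you sum $\sum_v (d(v)-3)\le 1$ over all vertices; your formulation is arguably cleaner, but the underlying counting is identical.
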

\begin{proof}
Choose arbitrary vertices $v,w\in \VG$.  Since $\delta(G)\ge3$, we have
$2|E(G)|\ge 3(|\VG|-2)+d(v)+d(w)$.  Thus $\rho_{m,G}(\VG)\le
3|\VG|-(3(|V(G)|-2)+d(v)+d(w))=6-d(v)-d(w)$.  Since $\rho_{m,G}(\VG)\ge -1$, we
get $d(v)+d(w)\le 7$.  Since $d(v)\ge 3$, the lemma holds.
\end{proof}

\begin{lemma} \label{no multi edges}
$G$ has no multiedges.
\label{no parallels}
\end{lemma}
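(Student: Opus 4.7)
The plan is to suppose, for contradiction, that $G$ contains a multiedge $vw$, and then delete $v$ from $G$ while precoloring the unique remaining neighbor of $v$ to the forest class. By induction, the resulting graph has an nb-coloring, and this coloring will extend to $G$ regardless of the color of $w$.

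More concretely, by Lemmas~\ref{fancy edges incident to uncolored}, \ref{no precolor multi}, \ref{min degree 3 multi}, and \ref{nearly 3-regular}, we know that $v$ and $w$ are uncolored, that $d(v),d(w)\in\{3,4\}$, and that at most one of $d(v),d(w)$ equals $4$; so we may assume $d(v)=3$. Since parallel edges have multiplicity at most $2$, the edge $vw$ accounts for exactly two of $v$'s three incident edges, and $v$ has one remaining neighbor $v'\notin\{v,w\}$ joined to $v$ by a single edge. Form $G':=G-v$, and impose the precoloring $I_p'=\emptyset$, $F_p'=\{v'\}$.

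The main step is to verify that $G'$ with this new precoloring satisfies the hypotheses of part~(A); minimality then yields an nb-coloring $I',F'$ of $G'$ with $v'\in F'$. Containment of forbidden subgraphs is immediate since $G'\subseteq G$. For the potential bound, note that for any $W\subseteq V(G')$ we have $\rho_{m,G'}(W)=\rho_{m,G}(W)-2[v'\in W]$, since no edges change and the only vertex whose weight shifts is $v'$ (from $3$ to $1$). When $v'\notin W$ the hypothesis on $G$ gives $\rho_{m,G'}(W)\ge -1$. When $v'\in W$ and $|W|\le |V(G)|-2$, the gap Lemma~\ref{small subgraphs part 2} gives $\rho_{m,G}(W)\ge 2$, so $\rho_{m,G'}(W)\ge 0$. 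When $W=V(G')$, a short calculation using the $3$ edges at $v$ yields $\rho_{m,G}(V(G'))=\rho_{m,G}(V(G))+3\ge 2$, so again $\rho_{m,G'}(W)\ge 0$. The singleton cases are trivial.

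Given such an nb-coloring $I',F'$ of $G'$ with $v'\in F'$, the extension to $G$ is forced by the color of $w$: if $w\in I'$, put $v$ in $F$, and the only $F$-neighbor of $v$ is $v'$, so $v$ becomes a pendant attached to $G'[F']$, leaving a forest; if $w\in F'$, put $v$ in $I$, and since both $w$ and $v'$ lie in $F$, the set $I'\cup\{v\}$ remains independent. Either way we obtain an nb-coloring of $G$, contradicting that $G$ is a minimal counterexample. The main obstacle, and the reason this argument works at all, is the precoloring trick enabled by Lemma~\ref{small subgraphs part 2}: precoloring $v'$ to $F$ costs us only $2$ units of potential, which is absorbed by the gap of $2$ between the bound $-1$ demanded by the theorem and the bound $\ge 2$ guaranteed for every proper subset of size at least $2$.
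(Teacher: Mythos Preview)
Your proof is correct and follows essentially the same approach as the paper: delete the degree-$3$ endpoint $v$ of the multiedge, precolor its unique non-multiedge neighbor into $F$, color $G-v$ by minimality, and extend by giving $v$ the color opposite to its multiedge partner. The paper's proof is shorter only because it invokes Lemma~\ref{put one vertex in F} directly rather than re-verifying the hypotheses by hand; note also that the weaker gap Lemma~\ref{small subgraphs part 1} (potential $\ge 1$ on proper nonempty subsets) already suffices for this verification, so your appeal to Lemma~\ref{small subgraphs part 2} is stronger than needed.
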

\begin{proof}
Suppose, to the contrary, that $G$ has a multiedge.  By the previous lemma,
one of its endpoints has degree 3.  So let $v$ be a 3-vertex with neighborhood
$\{w,x\}$, and with a multiedge to $x$.  
By Lemma \ref{put one vertex in F} there exists an nb-coloring of $G-v$ with $w \in F$.
This is a contradiction, as such a coloring can be extended to $G$ by coloring $v$ with the color not on $x$.
\end{proof}

\begin{lemma}
$|\VG|\ge 6$.
\end{lemma}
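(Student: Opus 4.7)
The plan is to rule out $|V(G)|\in\{3,4,5\}$ using only the structural constraints already extracted. By Lemmas~\ref{Ip-empty-lem}, \ref{no precolor multi}, \ref{min degree 3 multi}, \ref{nearly 3-regular}, and \ref{no parallels}, the graph $G$ is a \emph{simple} graph with no precolored vertices, $\delta(G)\ge 3$, and at most one vertex $w$ of degree exceeding $3$; moreover, when such $w$ exists, $d(w)=4$. So $G$ is a near-cubic simple graph without $K_4$ or $M_7$ that is not near-bipartite.

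First I would handle $|V(G)|\le 4$. A simple graph with $\delta\ge 3$ requires at least $4$ vertices, and on exactly $4$ vertices the only such graph is $K_4$, which is forbidden. Hence $|V(G)|\ge 5$.

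Next I would handle $|V(G)|=5$. Since the degree sum must be even, the uniform sequence $(3,3,3,3,3)$ is impossible, and Lemma~\ref{nearly 3-regular} excludes any sequence with two vertices of degree greater than $3$; so the degree sequence is exactly $(4,3,3,3,3)$. The vertex $w$ of degree $4$ is adjacent to every other vertex, and each remaining vertex must have exactly $2$ neighbors among the other three. A $2$-regular graph on $4$ vertices is $C_4$, so $G$ is (uniquely) the $5$-vertex wheel: a vertex $w$ joined to a $4$-cycle $v_1v_2v_3v_4v_1$. One checks quickly that this graph contains no $K_4$ (any triangle using $w$ would require two adjacent vertices of the $C_4$ together with their common neighbor of distance $2$, which does not exist) and obviously no $M_7$ (too few vertices).

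Finally I derive the contradiction by exhibiting an explicit nb-coloring of this graph. Let $I=\{v_1,v_3\}$ and $F=\{w,v_2,v_4\}$. Since $v_1v_3\notin E(G)$, the set $I$ is independent; and $G[F]$ consists only of the edges $wv_2$ and $wv_4$ (note $v_2v_4\notin E(G)$), which is a path, hence a forest. Thus $G$ admits an nb-coloring, contradicting that $G$ is a counterexample. Therefore $|V(G)|\ge 6$. The only place care is needed is in the degree-sequence enumeration for $|V(G)|=5$, but Lemma~\ref{nearly 3-regular} makes that enumeration trivial; there is no substantive obstacle.
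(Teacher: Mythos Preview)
Your proof is correct and follows essentially the same approach as the paper: use the established structural constraints to pin down the unique $5$-vertex candidate (the paper phrases it as $K_5$ minus two independent edges, you as the wheel $W_4$; these are the same graph) and then exhibit the explicit nb-coloring $I=\{v_1,v_3\}$, $F=\{w,v_2,v_4\}$. The paragraph verifying that $W_4$ contains no $K_4$ or $M_7$ is unnecessary, since $G$ already satisfies these hypotheses by assumption, but it does no harm.
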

\begin{proof}
Since $\delta(G)=3$, and $G$ has no multiedge, $|\VG|\ge 4$.  If $|V(G)|=4$,
then $G$ is $K_4$, which is a contradiction.  So suppose that $|\VG|=5$.
By Lemma~\ref{nearly 3-regular}, $G$ has four 3-vertices and a 4-vertex.  Thus,
$G$ is formed from $K_5$ by deleting two independent edges.  
So let $I$ consist of two non-adjacent vertices, and let $F=\VG\setminus I$.
This nb-coloring of $G$ is a contradiction.
Thus, $|\VG|\ge 6$, as desired.
\end{proof}

\begin{lemma}
$G$ has no 3-cycle.
\label{no3-cycle-lem}
\end{lemma}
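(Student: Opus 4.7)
Assume for contradiction that $G$ contains a $3$-cycle $C=v_1v_2v_3$. By Lemma~\ref{nearly 3-regular}, at least two vertices of $C$ have degree $3$, so without loss of generality $d(v_1)=d(v_2)=3$, with unique external neighbors $w_1$ and $w_2$ respectively. Since $G$ has no $K_4$, the case $w_1=w_2=w$ forces $wv_3\notin E(G)$, and in that degenerate case one can directly nb-color $G-\{v_1,v_2\}$ by minimality and extend to $v_1,v_2$ (using that $w$'s remaining neighbors are limited). Assume henceforth that $w_1\ne w_2$.

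My plan is to form the multigraph $G'=(G-\{v_1,v_2\})+w_1w_2$, where the new edge is inserted as a parallel copy (creating a multiedge) if $w_1w_2$ was already an edge of $G$. Clearly $|V(G')|<|V(G)|$. For sparsity, note that for any $W\subseteq V(G')$, $\rho_{m,G'}(W)=\rho_{m,G}(W)-2$ when $\{w_1,w_2\}\subseteq W$ and $\rho_{m,G'}(W)=\rho_{m,G}(W)$ otherwise. Since $W\subseteq V(G')\subsetneq V(G)$, the gap lemma (Lemma~\ref{gap-multi}) gives $\rho_{m,G}(W)\ge 2$ whenever $|W|\ge 2$, so $\rho_{m,G'}(W)\ge 0\ge -1$ in all cases. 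To exclude $K_4$ and $M_7$ from $G'$, observe that any such forbidden subgraph must use the new edge $w_1w_2$; combining such a copy with $\{v_1,v_2\}$ and the triangle edges of $C$ in $G$ produces a vertex set $V''\subseteq V(G)$ whose potential $\rho_{m,G}(V'')$ can be computed by adding contributions from $K_4$ (or $M_7$) and $C$, and subtracting the multiplicity of shared vertices/edges; using $\rho_{m,K_4}(V(K_4))=0$ and $\rho_{m,M_7}(V(M_7))=-1$ from Example~\ref{example1}, this count forces $\rho_{m,G}(V'')<-1$ or an actual copy of $K_4$ or $M_7$ already inside $G$, each contradicting the hypothesis.

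By minimality, $G'$ admits an nb-coloring $(I',F')$. The new (multi)edge $w_1w_2$ forces either exactly one of $w_1,w_2$ into $I'$, or both into $F'$ with $w_1$ and $w_2$ in the same tree of $G'[F']$. I extend to an nb-coloring of $G$ by a case analysis on the colors of $w_1,w_2,v_3$ in $(I',F')$, choosing $(v_1,v_2)$ from among $(I,F),(F,I),(F,F)$ so that the $I$-independence and $F$-forest conditions are preserved. For example, if $w_1\in I'$ and $w_2\in F'$, I set $v_1\in F$ and then $v_2\in I$ (when $v_3\in F'$) or $v_2\in F$ (when $v_3\in I'$); in either sub-case the forest condition is verified directly since $v_1$ has at most one $F$-neighbor outside $\{v_1,v_2\}$. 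The existence of such an nb-coloring of $G$ contradicts the minimality of $G$ as a counterexample.

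The main obstacle is verifying the absence of forbidden subgraphs in $G'$: I must track how a putative $K_4$ or $M_7$ through the new edge $w_1w_2$ would embed near the triangle $C$ in $G$, and check in each subcase that the resulting configuration in $G$ either is itself a copy of $K_4$ or $M_7$ or has potential below the threshold allowed by the hypothesis. A secondary obstacle is the residual extension case in which $w_1,w_2$, and $v_3$ all lie in a single tree of $G'[F']$, which may require recoloring $G'$ via Lemma~\ref{put one vertex in F} with a precoloring chosen to break that tree structure.
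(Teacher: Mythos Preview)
Your approach differs from the paper's: the paper uses \emph{vertex identification} rather than edge addition, and it first disposes of the ``diamond'' case (two triangles sharing an edge) before treating a lone triangle. Your plan, as written, has two genuine gaps.

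\textbf{The forbidden-subgraph check fails.} Suppose $K_4\subseteq G'$ on $W'=\{w_1,w_2,a,b\}$ using the new edge $w_1w_2$. Then $G[W']\cong K_4-e$, which is a diamond (triangles $abw_1$ and $abw_2$ sharing $ab$), not a copy of $K_4$. Computing potentials: if $v_3\notin W'$ then $\rho_{m,G}(W'\cup\{v_1,v_2,v_3\})=3\cdot 7-2\cdot 10=1$, which is \emph{not} below $-1$ and only contradicts Lemma~\ref{gap-multi} when this set is a proper subset. When it equals $V(G)$ (so $|V(G)|=7$) you get no contradiction at all. Your claimed dichotomy ``$\rho_{m,G}(V'')<-1$ or $K_4$/$M_7$ already in $G$'' is therefore false. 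The same issue arises for $M_7$. The paper avoids this by first eliminating diamonds, after which a $K_4$ in the reduced graph is impossible for structural reasons; your edge-addition gives no such leverage.

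\textbf{The degenerate case $w_1=w_2=w$ is exactly the diamond case, and your extension breaks.} With $N(v_1)\setminus\{v_2\}=N(v_2)\setminus\{v_1\}=\{v_3,w\}$, take an nb-coloring of $G-\{v_1,v_2\}$ with $v_3,w\in F'$ lying in the same component of the forest. Then neither $v_i$ can enter $I$ without leaving the other with two $F$-neighbors in one tree, and putting both in $F$ creates the triangle $v_1v_2v_3$. Lemma~\ref{put one vertex in F} lets you force a vertex into $F$, not into $I$, so it does not rescue this. The paper handles the diamond by identifying the two degree-$2$ vertices of the diamond and arguing via potential.

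On the positive side, your ``secondary obstacle'' is illusory: if $w_1,w_2\in F'$ in $G'$, then because $w_1w_2$ is an edge of the forest $G'[F']$, removing that edge necessarily places $w_1$ and $w_2$ in different components of $(G-\{v_1,v_2\})[F']$; from there your extension to $v_1,v_2$ goes through in every subcase. So the extension step is fine --- it is the reducibility of $G'$ that is not established.
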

\begin{proof}
First suppose that $G$ contains 3-cycles $vwx$ and $wxy$.  Form $G'$ from
$G-\{w,x\}$ by identifying $v$ and $y$; call this new vertex $z$.  If there
exists $W'\subseteq \VGp$ with $\rho_{m,G'}(W')\le -2$, then clearly $z\in
W'$.  So $\rho_{m,G}((W'\setminus\{z\})\cup\{v,w,x,y\})\le -2 + 3(3)-5(2)=-3$, a
contradiction.  Suppose instead that $G'$ contains a copy of $M_7$, and let $W'$
denote its vertex set.  Similar to before, $\rho_{m,G}((W'\setminus\{z\}) \cup
\{v,w,x,y\})\le \rho_{m,M_7}(V(M_7))+3(3)-5(2)=-2$, a contradiction.  Finally,
suppose that $G'$ contains a copy of $K_4$, and let $W'$ denote its vertex set.
Now $\rho_{m,G}((W'\setminus \{z\})\cup\{v,w,x,y\})\le 0 + 3(3) -5(2)=-1$. 
This contradicts Lemma~\ref{gap-multi}, unless $\VG=(W'\setminus
\{z\})\cup\{v,w,x,y\}$.  However, in that case, we can easily check that
$G=M_7$, a contradiction.  Since $G'$ is smaller than $G$ and satisfies the
hypotheses of the Main Theorem, $G'$ has an nb-coloring, $I',F'$.  
And we easily extend $I',F'$ to $G$, which is a contradiction.

Now suppose that $G$ contains a 3-cycle $vwx$ and none of its edges lie on
another 3-cycle.  Assume, without loss of generality that $d(w)=d(x)=3$.
Let $y$ denote the third neighbor of $x$.  Since $w$ and $x$ have distinct
neighbors off the 3-cycle, we can also assume that $d(y)=3$.  
Form $G'$ from $G-\{v,x\}$ by identifying
$w$ and $y$; call this new neighbor $z$.  If there exists $W'\subseteq \VGp$
with $\rho_{m,G'}(W')\le -2$, then also $\rho_{m,G}(W'\setminus\{z\})\cup
\{w,x,y\})\le -2+2(3)-2(2)=0$, which contradicts Lemma~\ref{gap-multi}, since
$(W'\setminus\{z\})\cup \{w,x,y\}\subseteq \VG \setminus \{v\}$.  Note that $G'$ cannot contain
$K_4$, since $G$ does not contain two 3-cycles with a common edge.
Suppose instead that $G'$ contains $M_7$.  Recall that $M_7$ contains two
edge-disjoint copies of two 3-cycles sharing an edge.  Since $G$ contains no
such subgraph, both copies must contain the new vertex $z$.  
But this is impossible: since $d(w)=d(y)=3$, also $d_{G'}(z)=3$.
\end{proof}

\begin{lemma} \label{finish of multi}
$G$ does not exist.  That is, part (A) of the Main Theorem is true.
\end{lemma}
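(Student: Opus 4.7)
My plan is to exploit the tight structural constraints that the preceding lemmas have pinned down on the minimum counterexample $G$: it is simple, triangle-free, has no precolored vertices, and satisfies $\delta(G)=3$ and $|V(G)|\ge 6$, with at most one vertex of degree exceeding three (such a vertex then having degree exactly four). The case $|V(G)|=6$ forces $G=K_{3,3}$, which is bipartite and hence near-bipartite, so I may assume $|V(G)|\ge 7$. Let $L$ denote the set of $3$-vertices of $G$. Then $|L|\ge|V(G)|-1$ and $|E(G[L])|\ge|E(G)|-4$, and a short case check (on whether $G$ is $3$-regular or has one $4$-vertex) shows $|E(G[L])|\ge|L|$ once $|V(G)|\ge 7$, so $G[L]$ contains a cycle. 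I will pick $C=v_1v_2\cdots v_\ell$ to be a shortest such cycle, which is then induced in $G$; triangle-freeness forces $\ell\ge 4$, and each $v_i$ has a unique neighbor $z_i\in V(G)\setminus V(C)$.

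Next I will verify that $z_1$ and $z_2$ are not linked in $G-V(C)$ in the sense of Definition~\ref{linked def}; here the only linking graphs are $K_4$ and $M_7$. A copy of $K_4$ or $M_7$ in $(G-V(C))+z_1z_2$ that is not already contained in $G-V(C)\subseteq G$ must use the new edge $z_1z_2$, and since $G$ is triangle-free, every triangle of that copy must pass through $z_1z_2$. However, $K_4$ has four triangles and no edge lies in more than two of them, while $M_7$ has four triangles and no edge lies in more than two of them. In either case at least two of those triangles would lie entirely in the triangle-free graph $G-V(C)$, a contradiction.

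With that in hand I will form $G':=(G-V(C))+z_1z_2$ (creating a multiedge if $z_1z_2\in E(G)$) and invoke minimality. For any $W\subseteq V(G')$, $\rho_{m,G'}(W)=\rho_{m,G}(W)$ unless $\{z_1,z_2\}\subseteq W$, in which case the value drops by $2$. Since $|W|\le|V(G)|-\ell\le|V(G)|-4$, the gap lemma (Lemma~\ref{small subgraphs part 2}) gives $\rho_{m,G}(W)\ge 2$ whenever $|W|\ge 2$, so $\rho_{m,G'}(W)\ge 0\ge -1$ in every case. Combined with the preceding paragraph, $G'$ satisfies the hypotheses of the Main Theorem, and since $|V(G')|<|V(G)|$, by minimality $G'$ admits an nb-coloring $(I',F')$.

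The last step is to extend $(I',F')$ to an nb-coloring of $G$ by coloring the cycle $C$. Since $z_1z_2$ is an edge of $G'$, at most one of $z_1,z_2$ lies in $I'$, which constrains the two hinge vertices $v_1,v_2$. The general rule for each $v_i$ is that $v_i\in F$ whenever $z_i\in I'$, and $v_i$ is otherwise free, subject only to the cycle constraint that no two consecutive $v_i$'s both lie in $I$. I expect the main obstacle to be ensuring that $G[F]$ remains a forest: if several of the $z_j\in F'$ lie in a common tree component of $G'[F']$, then putting the corresponding $v_j$'s into $F$ could close a cycle through that component. I will handle this with a short case analysis on the length $\ell$ and on the distribution of the $z_j$'s across the components of $G'[F']$, using the flexibility provided by Lemma~\ref{put one vertex in F} to rerun the induction on $G'$ with an additional carefully chosen vertex precolored into $F_p$, thereby breaking any offending path in $G'[F']$ whenever the naive extension fails.
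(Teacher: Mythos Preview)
Your route differs from the paper's, which is much shorter: the paper picks any $3$-vertex $v$, forms $G'$ from $G-v$ by \emph{identifying} two of its neighbors into a single vertex $z$, and notes that every triangle of $G'$ passes through $z$ (since $G$ is triangle-free); as neither $K_4$ nor $M_7$ has a vertex lying in all of its triangles, $G'$ contains no forbidden subgraph. Un-identifying costs at most $3$ in potential, so Lemma~\ref{gap-multi} gives $\rho_{m,G'}\ge -1$, and extending the resulting coloring to $v$ is a one-line case split on the colors of $z$ and of the third neighbor. No preliminary case $|V(G)|=6$, no cycle-hunting in $G[L]$, and no extension along a long cycle.

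Your approach can be made to work, but the extension step is only a plan, and your proposed fallback is unsafe. Lemma~\ref{put one vertex in F} is stated for induced subgraphs $G[W]$, not for $G'=(G-V(C))+z_1z_2$; if you imitate its proof by precoloring a vertex of $G'$ into $F_p$ and re-invoking minimality, then for any $W\supseteq\{z_1,z_2\}$ containing that vertex you only have $\rho_{m,G'}(W)\ge 0$, and precoloring drops this by $2$, to $-2<-1$. The good news is that no rerun is needed. Since $z_1z_2\in E(G')$, not both of $z_1,z_2$ lie in $I'$; and if every $z_i\in F'$, then $z_1z_2$ is necessarily a simple (not multi-) edge of the forest $G'[F']$, hence a bridge there, so $z_1$ and $z_2$ lie in different components of $(G-V(C))[F']$. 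Thus the two obstructions to extending an nb-coloring over $C$ (all $z_i\in I'$; or $\ell$ odd with all $z_i$ in a single $F'$-component of $G-V(C)$) both fail, and a greedy extension around $C$ succeeds directly. This is essentially the content of the paper's later Lemma~\ref{color-k-cycle}; writing that argument out here completes your proof.
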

\begin{proof}
Choose a vertex $v\in \VG$ with $d(v)=3$.  Let $\{w,x,y\}=N(v)$. Form $G'$ from
$G-v$ by identifying $w$ and $x$; call this new vertex $z$.  By
Lemma~\ref{no3-cycle-lem} $G$ has no 3-cycle, so $G'$ cannot contain $K_4$ or
$M_7$, since neither has a single vertex contained in all of its 3-cycles.  For
each $W'\subseteq \VGp$, Lemma~\ref{gap-multi} implies $\rho_{m,G'}(W')\ge
\rho_{m,G}((W'\setminus\{z\})\cup\{w,x\})-3\ge 2-3=-1$.  Thus, by minimality,
$G'$ has an nb-coloring $I',F'$.  And it is easy to extend this to $G$. 
Specifically, remove $z$ from whichever set contains it and add $w$ and $x$ to
this set.  Now, if both $y$ and $z$ were in $F'$, then add $v$ to $I'$;
otherwise add $v$ to $F'$.
\end{proof}

\subsection{Simple Graphs: More Reducible Configurations}
\label{simple proof sec 1}
\label{simple}

In this section we continue the proof of part (B) of the Main Theorem, which we
began in Section~\ref{basic-sec}.  Our approach mirrors what we did in
Section~\ref{multi-sec}, where we showed (for part (A)) that a minimal
counterexample must be well-behaved.  The main results of the section are that
$\delta(G)\ge 3$ and that the subgraph induced by uncolored 3-vertices is a forest.
To prove these properties, a key step is strengthening our earlier gap lemma,
which we do in Lemma~\ref{gap-simple}.  In Section~\ref{color-simple-sec} we will complete
the proof of part (B).  Using the structural results that we prove here, there
we will give a discharging argument to show that $G$ is very nearly comprised
entirely of uncolored 3-vertices that induce a forest, together with uncolored
4-vertices that induce an independent set.  (In fact $G$ can vary slightly from
this, but in each case we explicitly construct an nb-coloring.)

We will frequently use our next lemma to extend an nb-coloring from a subgraph
of $G$ to all of $G$.

\begin{lem} 
\label{color-k-cycle}
Suppose $C=x_1\ldots x_k$ is an induced cycle in $G$ with $d(x_i)=3$ for all $i$, and let
$\{z_i\}=N(x_i)\setminus\{x_{i-1},x_{i+1}\}$ for all $i$.  Fix an nb-coloring
$I',F'$ of $G-V(C)$.  We can extend $I',F'$ to $G$ unless (i) $z_i\in I'$ for
all $i$ or (ii) $k$ is odd and $z_i\in F'$ for all $i$ and all $z_i$ are in the
same component of $G[F']$.
\end{lem}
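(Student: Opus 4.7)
The plan is to prove both directions of the characterization: (1) when (i) or (ii) holds no extension exists; (2) otherwise we construct one.

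For (1), in case (i) each $x_i$ must lie in $F$ (else $x_i z_i$ joins two $I$-vertices), so $V(C) \subseteq F$ and $C$ gives a cycle in $G[F]$. In case (ii), letting $T = \{i : x_i \in I\}$ for any proposed extension yields an independent set in the odd cycle $C$, so some two consecutive cycle vertices lie outside $T$; extending maximally, one obtains an arc $x_a, x_{a+1}, \ldots, x_b$ of consecutive $F$-vertices with $b > a$. Because every $z_j \in F'$ and they share one component of $G[F']$, a $z_a$-to-$z_b$ path in $G[F']$ together with $x_a z_a, x_a x_{a+1}, \ldots, x_{b-1} x_b, x_b z_b$ forms a cycle in $G[F]$.

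For (2), set $S = \{i : z_i \in F'\}$ and $\overline{S} = [k] \setminus S$; we have $S \neq \emptyset$. I will choose $T \subseteq S$ independent in $C$ and set $I = I' \cup \{x_i : i \in T\}$, $F = F' \cup \{x_j : j \notin T\}$. The construction splits into three cases. \textbf{Case A} ($S = [k]$, $k$ even): take $T = \{1, 3, \ldots, k-1\}$, so every $F$-colored $x_j$ is isolated on $C$ inside $V(C) \setminus \{x_i : i \in T\}$ and attaches to $G[F']$ as a pendant of $z_j$. \textbf{Case B} ($S = [k]$, $k$ odd): the failure of (ii) supplies an index $i_0$ with $z_{i_0}$ and $z_{i_0 + 1}$ in distinct components of $G[F']$; after relabeling so $i_0 = k$, I take $T = \{2, 4, \ldots, k-1\}$, which produces the singleton $F$-arcs $\{x_3\}, \{x_5\}, \ldots, \{x_{k-2}\}$ together with the length-two arc $\{x_k, x_1\}$ whose two attachments $z_k, z_1$ lie in distinct components. \textbf{Case C} ($\overline{S} \neq \emptyset$): within each maximal $S$-segment, I take $T$ to be a maximum independent set; for even-length segments the alternation phase ($T$-end toward the left gap or toward the right gap) is chosen so that every gap in $\overline{S}$ has at least one adjacent segment with a $T$-end facing it. This coordination is always achievable by a short matching argument on the cyclic sequence of segments and gaps, and it guarantees that each $F$-arc of $C$ contains at most one $S$-vertex per component of $G[F']$.

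In each case the forest property of $G[F]$ is verified by a vertex-edge count: every $F$-arc $x_a, \ldots, x_b$ contributes $b - a + 1$ new vertices, $b - a$ internal edges, and one edge $x_j z_j$ for each $j \in \{a, \ldots, b\} \cap S$, yielding a tree-like attachment whose merges of components of $G[F']$ match the required change in the number of components. The main obstacle I expect is Case C, specifically coordinating the alternation phases across distinct $S$-segments so that no gap is flanked on both sides by $F$-endpoints. The key insight that makes Case C work is that $\overline{S}$-vertices enter $G[F]$ with no $F'$-attachment, so they act as free placeholders that break would-be long arcs into shorter pieces having at most one attachment per component of $G[F']$.
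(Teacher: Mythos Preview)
Your proposal is correct but takes a longer route than the paper in the mixed case. A few remarks. First, the lemma only asserts extendability when neither (i) nor (ii) holds; your part (1), proving non-extendability under (i) and (ii), is correct but not required. Your Cases A and B coincide with the paper's treatment when all $z_i \in F'$. For Case C, the paper replaces your segment/phase machinery with a one-line greedy: relabel so that $z_k \in I'$ and $z_1 \in F'$, put $x_1 \in I$, and for $j = 2, \ldots, k$ put $x_j \in I$ exactly when both $x_{j-1}$ and $z_j$ currently lie in $F$. This forces $x_k \in F$ (since $z_k \in I'$), and by the rule every non-initial vertex of an $F$-arc has $z_j \in I'$; hence every arc has at most one attachment to $G[F']$, and acyclicity is immediate. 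Your construction reaches the same endpoint, but the key step---``this coordination is always achievable by a short matching argument''---is asserted without proof. It does hold (for instance, orient every even-length $S$-segment so its $T$-end faces the clockwise-adjacent gap; then each gap is covered by the segment on its counterclockwise side, which is either even and so oriented, or odd and hence has both endpoints in $T$), but you need to write this down. The paper's greedy both closes this gap and is substantially shorter.
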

\begin{proof}
Fix an nb-coloring $I',F'$ of $G-V(C)$.
First suppose that there exist $z_i\in I'$ and $z_j\in F'$.  By symmetry,
assume that $z_k\in I'$ and $z_1\in F'$.  We iteratively add each $x_i$ to
either $I'$ or $F'$.  Let $I_1=I'\cup\{x_1\}$ and $F_1=F'$.  For each $j>1$, do
the following.  If $I_{j-1}\cap\{x_{j-1},z_j\}=\emptyset$, then
$I_j=I_{j-1}\cup\{x_j\}$ and $F_j=F_{j-1}$; otherwise $I_j=I_{j-1}$ and
$F_j=F_{j-1}\cup\{x_j\}$.  It is easy to prove by induction on $j$ that
$I_k,F_k$ is an nb-coloring of $G$.

Now instead assume that $z_i\in F'$ for all $i\in [k]$.  If $k$ is even, then let
$I=I'\cup\bigcup_{i=1}^{k/2}x_{2i-1}$ and $F=F'\cup\bigcup_{i=1}^{k/2}x_{2i}$.
Now $I,F$ is an nb-coloring of $G$.  So assume $k$ is odd.  Suppose, by
symmetry, that $z_{k-1}$ and $z_k$ are in different components of $G[F']$.
Let $I=I'\cup\bigcup_{i=1}^{(k-1)/2}x_{2i-1}$ and $F=F'\cup\{x_k\}\cup
\bigcup_{i=1}^{(k-1)/2}x_{2i}$. Again $I,F$ is an nb-coloring of $G$.
\end{proof}

Our next construction is motivated by our desire to avoid the exceptional cases
in the previous lemma.  Clearly, this is achieved by every nb-coloring of
$G(C,z_1,z_2)$, which we define next.  Ultimately, we will use this
construction and lemma after it to show that the uncolored 3-vertices of $G$
induce a forest.  But the proof that $G(C,z_1,z_2)$ has an nb-coloring is
tricky, and we will break it into Lemmas~\ref{no-short-cycle-of-3s},
\ref{no-5cycle-of-3s}, and \ref{no cycle of 3s}.

\begin{defn}
\label{GCz1z2-defn}
Let $C=x_1\ldots x_k$ be a $k$-cycle in $G$ induced by 3-vertices, and let
$\{z_i\}=N(x_i)\setminus\{x_{i-1},x_{i+1}\}$ for all $i$.  Let $W=\VG-C$.
We construct an auxiliary graph \EmphE{$G(C,z_1,z_2)$}{-4mm} as follows\footnote{Part
(ii) of Definition~\ref{GCz1z2-defn} is the most important place where we
construct edge-gadgets.  A key consequence of using an edge-gadget is that
$G(C,z_1,z_2)$ is smaller than $G$, which is essential for the proof of
Lemma~\ref{no-short-cycle-of-3s}.}: 
\begin{enumerate}
	\item[(i)] if $z_1$ and $z_{2}$ are the endpoints of an edge-gadget, then $G(C,z_1,z_2) = G[W]$; otherwise 
	\item[(ii)] if $z_1z_2 \in \EG$, then $G(C,z_1,z_2)$ is formed from
$G[W]$ by removing $z_1z_2$ and replacing it with an edge-gadget; otherwise
	\item[(iii)] $G(C,z_1,z_2)$ is formed from $G[W]$ by adding edge $z_1z_{2}$.
\end{enumerate}
\end{defn}

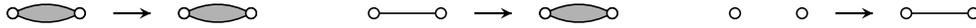
\begin{figure}[!h]
\centering
\begin{tikzpicture} [semithick, scale=.9]
\tikzset{every node/.style=uStyle}

\begin{scope}[xshift=4.2in,xscale=1.1]
\draw (0,0) node {} (.9,0) node {} (2.3,0) node {} -- (3.2,0) node {};
\arrowdrawthick (1.35,0) -- (1.85,0);
\end{scope}

\begin{scope}[xshift=2.1in,xscale=1.1]
\draw (0,0) node {} -- (.9,0) node {} (2.3,0) node (z1) {} (3.2,0) node (z2) {};
\path[draw=black,fill=gray!60] (z1.center) to[bend left=30] (z2.center) (z2.center)
to[bend left=30] (z1.center);
\arrowdrawthick (1.35,0) -- (1.85,0);
\draw (z1) node {} (z2) node {};
\end{scope}

\begin{scope}[xscale=1.1]
\draw (0,0) node (z1) {} (.9,0) node (z2) {} (2.3,0) node (z3) {} (3.2,0) node (z4) {};
\path[draw=black,fill=gray!60] (z1.center) to[bend left=30] (z2.center) (z2.center)
to[bend left=30] (z1.center);
\path[draw=black,fill=gray!60] (z3.center) to[bend left=30] (z4.center) (z4.center)
to[bend left=30] (z3.center);
\arrowdrawthick (1.35,0) -- (1.85,0);
\draw (z1) node {} (z2) node {} (z3) node {} (z4) node {};
\end{scope}

\end{tikzpicture}
\caption{Vertices $z_1$ and $z_2$ in the construction of
$G(C,z_1,z_2)$ in Definition~\ref{GCz1z2-defn}.\label{GCz1z2-fig}}
\end{figure}

To find an nb-coloring of $G(C,z_1,z_2)$ by minimality, we must show that
$G(C,z_1,z_2)\notin \HH$.  Our next lemma helps us do this.

\begin{lem} 
\label{links are special}
We use notation from Definition \ref{linked def}.
If vertices $v$ and $w$ are linked in $G$ through subgraph $H'$ with $|\VHp|
< |\VG|$, then $v$ and $w$ are specially-linked (and the linking graph is in
$\HH$).
\end{lem}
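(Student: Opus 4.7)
The plan is to establish both conclusions of the lemma: $\rho_{s,H}(W) \ge -4$ for every $W \subseteq V(H)$, and $H \in \HH'$. Together these yield $H \in \HH$, and $H \in \HH'$ is precisely what it means for $v$ and $w$ to be specially-linked (Definition~\ref{defn H'}).

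I begin with the potential bound, which is a direct calculation. Let $H'' \subseteq G$ be the embedded copy of $H' = H - vw$ under the linking isomorphism, and for each $W \subseteq V(H)$ let $W'$ denote its image in $V(G)$. Since $|V(H')| < |V(G)|$, the set $W'$ is a proper subset of $V(G)$, so the Gap Lemma (Lemma~\ref{small subgraphs part 1}) gives $\rho_{s,G}(W') \ge 1$ whenever $W \neq \emptyset$. Now $H$ has no precolored vertices and no edge-gadgets, so $\rho_{s,H}(W) = 8|W| - 5|E(H[W])|$, whereas in $G$ each vertex contributes at most $8$ and each edge or edge-gadget contributes at most $-5$. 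Because $H''[W']$ is missing only the possible edge $vw$ relative to $H[W]$, a direct comparison gives $\rho_{s,H}(W) \ge \rho_{s,G}(W') - 5 \ge -4$; the empty-set case is trivial.

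The harder task is showing $H \in \HH'$, which I would approach by induction on $|V(H)|$. In the base case $|V(H)| \le 7$, Lemma~\ref{small examples in hh} forces $H \in \{K_4, W_5, M_7, J_7\}$, all of which are in $\HH'$; candidates such as $K_{2,2,2}$ are already excluded by the potential bound just established. In the inductive step, $H$ is non-base nb-critical with $|V(H)| \ge 8$, and the goal is to exhibit an induced cycle $C = x_1 \cdots x_k$ in $H$ with $k \in \{3,5\}$, every $x_i$ a $3$-vertex, and condition (ii.c) of Definition~\ref{defn H'} satisfied. Once $C$ is found, (ii.c) follows readily: each pair $(z_j, z_{j+1})$ with $z_j \ne z_{j+1}$ is linked in $H - C$ through a strictly smaller nb-critical subgraph, and the inductive hypothesis then certifies that pair as specially-linked.

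The principal obstacle is producing the cycle $C$ in the inductive step. This is the structural heart of why $\HH'$ was defined recursively via the cycle condition. I expect the proof to combine (i) nb-criticality forcing $\delta(H) \ge 3$ and sharp local constraints, (ii) the sparsity of $H$ forced by the potential bound above, and (iii) the absence of forbidden $\HH$-subgraphs in the ambient $G$, which limits how the copy $H'' \subseteq G$ may sit; together, via case analysis around short cycles and $3$-vertices, these should force the required induced $3$- or $5$-cycle of $3$-vertices in $H$, completing the induction.
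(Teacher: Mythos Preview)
Your first paragraph (the potential bound $\rho_{s,H}(W)\ge -4$) is fine and matches the paper's reasoning. The problem is the second half: you try to prove $H\in\HH'$ by an internal induction on $|V(H)|$, but this does not work as written.

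For the base case, you invoke Lemma~\ref{small examples in hh}, but that lemma has the hypothesis $H\in\HH'$---exactly what you are trying to establish. It tells you nothing about an arbitrary nb-critical graph on at most $7$ vertices. For the inductive step, you openly concede that ``the principal obstacle is producing the cycle $C$'' and then offer only a heuristic about sparsity and local constraints. That is not a proof; it is a wish. In fact, proving that every sparse nb-critical graph with $|V(H)|\ge 8$ contains an induced $3$- or $5$-cycle of $3$-vertices with the linking property (ii.c) would be a substantial structural theorem in its own right, and nothing in the paper establishes it directly.

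The idea you are missing is to exploit the minimality of $G$ as a counterexample to the Main Theorem. The paper argues as follows: $H$ is strictly smaller than $G$ (since $|V(H')|<|V(G)|$ and $|E(H)|\le |E(G)|-1$), and $H$ is not near-bipartite. So $H$ must fail one of the two hypotheses of the Main Theorem. Your potential computation already rules out the first failure mode. Therefore $H$ contains some $H^*\in\HH$ as a subgraph. But $H$ is nb-critical and $H^*$ is not near-bipartite, so $H^*$ cannot be a \emph{proper} subgraph of $H$; hence $H=H^*\in\HH\subseteq\HH'$, and $v,w$ are specially-linked. This sidesteps entirely the structural analysis you were attempting.
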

\begin{proof}
Suppose, to the contrary, that $v$ and $w$ are linked via subgraph $H'$, where
$H' + vw\cong H$ for some $H \notin \HH$.  Now $|\EHp|\le |\EG|-2$ (since
$\delta(G)\ge 2$), so $|\EH|\le |\EG|-1$.  Thus, $|\VH|+|\EH|<|\VG|+|\EG|$,
which implies that $H$ is smaller than $G$ in our ordering.  
By the definition of linked, $H$ is not near-bipartite.
So by the minimality of $G$, either $H$ contains as a subgraph some graph in
$\HH$ or else there exists $W\subseteq V(H)$ such that $\rho_{s,H}(W)\le -5$.
In the latter case, $\rho_{s,G}(W) \leq \rho_{s,H}(W) + 5 \leq 0$, which
contradicts our gap lemma, Lemma~\ref{small subgraphs part 1}.
So $H$ contains as a subgraph a graph from $\HH$.  Since $\HH\subseteq \HH'$,
vertices $v$ and $w$ are specially-linked, as desired.
\end{proof}

\begin{lem}\label{no cycle of 3s of length at most 4}
\label{no-short-cycle-of-3s}
If $C = x_1\cdots x_k$ is a cycle in $G$ induced by 3-vertices,
then at least one of the following holds: 
\begin{enumerate}
\item[(i)] $k \geq 5$, or
\item[(ii)] some $x_i$ is incident to an edge-gadget, or 
\item[(iii)] $V(C) \cap F_p \neq \emptyset$.
\end{enumerate}
\end{lem}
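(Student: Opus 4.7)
The plan is to derive a contradiction by assuming $k\in\{3,4\}$, each $x_i$ uncolored with $d(x_i)=3$ and not incident to any edge-gadget, and $V(C)\cap F_p=\emptyset$. Write $\{z_i\}=N(x_i)\setminus\{x_{i-1},x_{i+1}\}$ and $W=\VG\setminus V(C)$. Because $G$ is a minimum counterexample it is nb-critical, and by the strong gap lemma (Lemma~\ref{gap-simple}, which we assume is established earlier in the section) we have $\rho_{s,G}(W')\ge 4$ for every $\emptyset\ne W'\subsetneq\VG$ with $2\le|W'|\le|\VG|-2$. I would handle $k=3$ first and then use its conclusion to handle $k=4$.

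For $k=3$, the argument branches on whether $C$ certifies that $G$ lies in $\HH'$. If every pair $(z_i,z_{i+1})$ with $z_i\ne z_{i+1}$ is specially-linked in $G[W]$, then either all three $z_i$ coincide---in which case $V(C)\cup\{z_1\}$ induces $K_4\in\HH$, contradicting the Main Theorem hypothesis---or else $C$ witnesses conditions (ii.a)-(ii.c) of Definition~\ref{defn H'} for $G$, so $G\in\HH'$; combined with $\rho_{s,G}(\VG)\ge -4$ this gives $G\in\HH$, forcing $G$ to contain itself as a forbidden subgraph, a contradiction. Otherwise some pair, say $(z_1,z_2)$ with $z_1\ne z_2$, is not specially-linked in $G[W]$. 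Form $G'=G(C,z_1,z_2)$ per Definition~\ref{GCz1z2-defn}. The potential of any $W'\subseteq V(G')$ drops by at most $6$ compared to $\rho_{s,G}(W')$ (only edges on $\{z_1,z_2\}$ are affected, and the worst case is an edge becoming an edge-gadget), so the strong gap lemma yields $\rho_{s,G'}(W')\ge 4-6=-2\ge -4$ when $2\le|W'|\le|\VG|-2$, while $|W'|\le 1$ is trivial since then $\{z_1,z_2\}\not\subseteq W'$. If $G'$ contained a copy of some $H\in\HH$, it would use the new edge/gadget $z_1z_2$, making $H-z_1z_2\subseteq G[W]$ a witness to $z_1,z_2$ being linked; Lemma~\ref{links are special} would then yield specially-linked, contradicting the choice of pair. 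By minimality, $G'$ admits an nb-coloring $I',F'$. When $z_1z_2$ is an edge-gadget in $G'$ (cases (i),(ii)) the two vertices lie in different classes; when $z_1z_2$ is only a regular edge (case (iii)), the edge is a bridge of the forest $G'[F']$ whenever $z_1,z_2\in F'$, so removing it disconnects $z_1$ from $z_2$ in $G[F']$, whence $z_3$ (if in $F'$) cannot share a component in $G[F']$ with both of them. In every sub-case neither exception of Lemma~\ref{color-k-cycle} applies, so $I',F'$ extends to an nb-coloring of $G$, the final contradiction.

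For $k=4$ the $\HH'$-based argument is unavailable, since Definition~\ref{defn H'} admits only $k\in\{3,5\}$. However, the $k=3$ case just proved rules out any triangle of uncolored 3-vertices; in particular $C$ has no chord (such a chord would create one), so $C$ is induced and each $z_i$ lies in $W$. Applying Lemma~\ref{put one vertex in F} to $W$ with $w=z_1$ yields an nb-coloring $I',F'$ of $G[W]$ with $z_1\in F'$. Exception (i) of Lemma~\ref{color-k-cycle} fails because $z_1\in F'$, and exception (ii) is vacuous because $k=4$ is even; hence $I',F'$ extends to an nb-coloring of $G$, a contradiction.

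The main obstacle is the $k=3$, not-specially-linked subcase: it relies on the strong gap lemma to absorb the potential loss of up to $6$ introduced by the edge-gadget substitution, and in case (iii) of Definition~\ref{GCz1z2-defn} it uses the subtle bridge observation to ensure that adding $z_1z_2$ does not secretly unite $z_1$ and $z_2$ inside the forest $G[F']$ of the original graph---precisely the configuration that could otherwise block extension via Lemma~\ref{color-k-cycle}.
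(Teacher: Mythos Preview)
Your argument is circular. You invoke the strong gap lemma, Lemma~\ref{gap-simple}, ``which we assume is established earlier in the section,'' but in the paper it is proved \emph{after} the present lemma, and its proof (Case~3, where the forbidden subgraph is $K_4$) explicitly uses Lemma~\ref{no-short-cycle-of-3s} to conclude that the triangle $\overline{W}$ contains a precolored vertex or an incident edge-gadget. So you cannot appeal to Lemma~\ref{gap-simple} here.

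Moreover, the weak gap lemma (Lemma~\ref{small subgraphs part 1}) is not strong enough for your direct comparison $\rho_{s,G'}(W')\ge\rho_{s,G}(W')-6$: it only gives $\rho_{s,G'}(W')\ge 1-6=-5$, which fails the needed bound $-4$. The paper avoids this by comparing $\rho_{s,G'}(U)$ not with $\rho_{s,G}(U)$ but with $\rho_{s,G}(U\cup V(C))$. Adding back the three vertices of $C$ together with the five guaranteed edges (the three cycle edges plus $x_1z_1,x_2z_2$) yields $\rho_{s,G'}(U)\ge\rho_{s,G}(U\cup V(C))-5$. If $\rho_{s,G'}(U)\le-5$, then $\rho_{s,G}(U\cup V(C))\le 0$, so by the basic gap lemma $U\cup V(C)=\VG$; but then $z_3\in U$ and the edge $x_3z_3$ enters the count, improving the inequality to $\rho_{s,G}(\VG)\le-5$, a contradiction. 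This two-step use of the weak gap lemma is the missing idea in your $k=3$ case. Your $k=4$ case matches the paper.
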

\begin{proof}
Suppose, to the contrary, that $k \in \{3,4\}$, no $x_i$ is incident to an edge-gadget, and each $x_i$ is uncolored.
Let $W = \VG \setminus V(C)$, and let $\{z_i\} = N(x_i)\setminus\{x_{i-1},x_{i+1}\}$ for all $i$.

First suppose $k=4$.  Let $G'=G[W]$ with $I_p'=I_p$ and
$F_p'=F_p\cup\{z_1\}$.  By Lemma~\ref{put one vertex in F}, $G'$ has an
nb-coloring $I',F'$ that extends $I_p',F_p'$.  By Lemma~\ref{color-k-cycle}, we
can extend $I',F'$ to an nb-coloring of $G$.

Now assume $k=3$.
By Definition~\ref{defn H'} and Remark~\ref{two zi}, there exists $j$ such that $z_j
\neq z_{j+1}$ and $z_j, z_{j+1}$ are not specially-linked through a subgraph of $G[W]$.
By Lemma~\ref{links are special}, vertices $z_j$ and $z_{j+1}$ are not linked through any subgraph of $G[W]$.
By symmetry, we assume $j=1$.  Let $G'=G(C,z_1,z_2)$.  We first show that we can
extend any nb-coloring $I',F'$ of $G'$ to $G$, by Lemma~\ref{color-k-cycle}.
By construction of $G'$, at least one of $z_1$ and $z_2$ is in $F'$.
Assume $z_1,z_2,z_3\in F'$. Note that $z_1$ and $z_2$ must be in
different components of $G[F']$, even if they are in the same component of $G'[F']$.
By Lemma~\ref{color-k-cycle}, we can extend $I',F'$ to $G$, as desired. 

Now we must show that $G'$ does indeed have the desired nb-coloring $I',F'$.
By construction, $G'$ is smaller than $G$, and $G'$ contains no forbidden
subgraph, since $z_1$ and $z_2$ are not linked in $G$.
By the minimality of $G$, if $\rho_{s,G'}(U) > -5$ for all $U \subseteq W$, then $G'$ is near-bipartite.
If $\{z_1,z_2\} \not\subseteq U$, then $\rho_{s,G'}(U) = \rho_{s,G}(U) > -5$.
If $\{z_1,z_2\} \subseteq U$, then 
\begin{equation}\label{remove edge add C eqn}
\rho_{s,G'}(U) \geq \rho_{s,G}(U \cup C) - 8(3) + 5(5) - 6 = \rho_{s,G}(U\cup C) - 5.
\end{equation}
If there exists $U$ such that $\rho_{s,G'}(U) \leq -5$, then $\rho_{s,G}(U \cup C) \leq 0$.
By Lemma \ref{small subgraphs part 1}, this implies that $U \cup V(C) = \VG$. 
So $z_3 \in U$, and we can add the edge $x_3z_3$ to the calculation in
(\ref{remove edge add C eqn}); the new bound claims $\rho_{s,G}(U \cup C) \leq -5$, which is a contradiction.
Thus, $G'$ has an nb-coloring $I',F'$.
\end{proof}

A key intermediate result in this section is our improved gap lemma, Lemma~\ref{small subgraphs simple}.
Our next result is designed to help us prove this gap lemma.

\begin{lem}\label{large potential when missing just two vertices}
If $W \subset \VG$ such that $|W| = |\VG| - 2$, then $\rho_{s,G}(W) \geq 5$.
\end{lem}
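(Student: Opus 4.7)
The plan is to argue by contradiction, adapting the contraction template from the proof of Lemma \ref{small subgraphs part 2} to simple graphs. Assume $\rho_{s,G}(W) \le 4$; by Lemma \ref{small subgraphs part 1}, we then have $\rho_{s,G}(W) \in \{1,2,3,4\}$. Write $\{u,v\} = V(G) \setminus W$. Since $G$ is connected (Lemma \ref{easy cases}), some $w \in W$ has a neighbor in $\{u,v\}$; after relabeling, assume $uw \in E(G)$. By Lemma \ref{put one vertex in F}, $G[W]$ admits an nb-coloring $I_W, F_W$ with $w \in F_W$.

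I next form an auxiliary graph $G'$ on vertex set $\{u,v,w_i,w_f\}$ with precoloring $I_p' = \{w_i\}$ and $F_p' = (F_p \cap \{u,v\}) \cup \{w_f\}$. Include the edge $uv$ iff it is present in $G$ (preserving gadget type) and, for each $x \in \{u,v\}$, add $xw_i$ (respectively, $xw_f$) iff $x$ has a neighbor in $I_W$ (resp., $F_W$), marking the new edge as a gadget iff some gadget edge in $G$ witnesses it. Crucially, no edge $w_iw_f$ is added. A routine count confirms that $G'$ is strictly smaller than $G$ in the induction order, so if $G'$ satisfies the Main Theorem hypotheses, minimality yields an nb-coloring $I',F'$ of $G'$ that lifts to $I = (I' \setminus \{w_i\}) \cup I_W$, $F = (F' \setminus \{w_f\}) \cup F_W$, contradicting that $G$ is a counterexample.

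Verifying those hypotheses is the heart of the argument. Since $|V(G')|=4$ and the only member of $\HH$ on at most four vertices is $K_4$, while $w_iw_f \notin E(G')$, the graph $G'$ cannot contain $K_4$ and the forbidden-subgraph condition is automatic. For the potential bound $\rho_{s,G'}(W') \ge -4$, I use the uncontraction identity
\[
\rho_{s,G}\bigl((W' \setminus \{w_i, w_f\}) \cup W\bigr) \le \rho_{s,G'}(W') + \rho_{s,G}(W) - \rho_{s,G'}(W' \cap \{w_i,w_f\}),
\]
together with the crucial strict improvement: whenever $u \in W'$ and $w_f \notin W'$, the edge $uw \in E(G)$ has no preimage in $G'[W']$, so the right-hand side strengthens by at least $5$ (or $11$ if $uw$ is a gadget). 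Case-splitting on $W' \cap \{w_i,w_f\}$ and on $W' \cap \{u,v\}$, and substituting $\rho_{s,G}(W) \le 4$ together with the hypothesis $\rho_{s,G}(V(G)) \ge -4$, rules out $\rho_{s,G'}(W') \le -5$ in every case.

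The main obstacle will be the unforgiving arithmetic: with only $8$ units of slack between $\rho_{s,G}(W)=4$ and the threshold $-4$, and an edge-gadget carrying weight $11$, bare counting does not quite close the argument. The subtle configurations are those in which $w_f \in W'$ but $w_i \notin W'$, for which our asymmetric choice $w \in F_W$ does not directly supply the strict improvement. In these cases I expect the argument to invoke connectedness a second time (using that $v$ too has a neighbor in $W$, or that $uv \in E(G)$, to produce a missing-edge improvement on the $w_i$ side) or, for a handful of truly tight residual configurations, to construct an nb-coloring of $G$ by direct inspection.
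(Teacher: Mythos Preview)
Your approach takes a detour that the paper avoids entirely. The paper's proof is a direct edge count: writing $\{v,w\}=V(G)\setminus W$, it uses the degree lower bounds from Lemmas~\ref{min degree 2}, \ref{uncolored is min degree 3}, and~\ref{fancy edges incident to uncolored} to show that the contribution of $\{v,w\}$ to $\rho_{s,G}(V(G))-\rho_{s,G}(W)$ is at most $-9$ in almost every case, giving $\rho_{s,G}(V(G))\le 4-9=-5$, a contradiction. The single exceptional configuration (where $vw$ is an edge-gadget and each of $v,w$ has exactly one further incident edge) is dispatched by an explicit two-vertex extension via Lemma~\ref{put one vertex in F}. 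No contraction, no auxiliary graph.

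Your contraction template has a genuine gap that you do not address: the lift $I=(I'\setminus\{w_i\})\cup I_W$, $F=(F'\setminus\{w_f\})\cup F_W$ need \emph{not} be an nb-coloring of $G$. In the proofs of Lemmas~\ref{small subgraphs part 1} and~\ref{small subgraphs part 2} this lift is valid only because those proofs first establish that every vertex of $\overline{W}$ has at most one neighbor in $W$, using the minimality of $\rho$ over a family of candidate subsets. You have no such minimality here---$W$ is a fixed set of size $|V(G)|-2$---and with $\rho_{s,G}(W)$ as large as $4$, a vertex $u$ that is uncolored and gadget-free can (and by Lemma~\ref{uncolored is min degree 3} typically will) have two neighbors in $W$. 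If both land in $F_W$ in the same component of $G[F_W]$ and $u\in F'$, then $G[F]$ acquires a cycle through $u$ that is invisible in $G'$. The irony is that ruling out these bad configurations amounts exactly to the paper's direct degree count; once that count is carried out, the contraction apparatus is superfluous. Your acknowledged ``truly tight residual configurations'' on the potential side are a second symptom of the same underlying issue: the slack $\rho_{s,G}(W)\le 4$ is simply too loose for the contraction template to close by itself.
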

\begin{proof}
Assume, to the contrary, that there exists $W$ satisfying the hypotheses with $\rho_{s,G}(W) \leq 4$.
Let $\{v,w\} = \VG \setminus W$.
If $v$ and $w$ are both uncolored and not incident to edge-gadgets, then they
each have degree at least 3, by Lemma~\ref{uncolored is min degree 3}; and so
together they are incident to at least $2(3)-1=5$ edges (with equality if
$d(v)=d(w)=3$ and $v$ and $w$ are adjacent).  Now $\rho_{s,G}(\VG) \le
\rho_{s,G}(W)+ 2(8) - 5(5) \le 4 -9=-5$, which contradicts the hypothesis of the
Main Theorem.  

Now we assume instead that at least one of $v$ and $w$ is either precolored or incident
to an edge-gadget.  Recall from Lemma~\ref{fancy edges incident to uncolored} 
that both endpoints of each edge-gadget are uncolored.
Each precolored vertex has potential 5 less than each uncolored
vertex, and is still incident to at least 2 edges, by Lemma~\ref{min degree 2};
so the analysis remains the same.  Thus, we assume that $v$ and $w$ are both uncolored.
Suppose that at least one of $v$ and $w$ is
incident to an edge-gadget, but $vw$ is not an edge-gadget itself. 
If $k$ denotes the total number of edge-gadgets incident to $v$ and $w$, then
$v$ and $w$ are also incident to at least $5-2k$ more edges.
Since each edge-gadget decreases potential more than 2 edges do, the analysis remains the same.
Finally, assume that $vw$ is an edge-gadget and $v$ and $w$ are each incident
to only one other edge.  (If at least one of $v$ and $w$ has degree 3, then
together they have one incident edge-gadget, and at least three more incident
edges, so the analysis is similar to before.) Let $x$ denote the neighbor of $w$
other than $v$.  By Lemma~\ref{put one vertex in F}, we can nb-color $G[W]$ with
precoloring $I_p'=\emptyset$ and $F_p'=F_p\cup\{x\}$.
To extend this coloring to $G$, add $v$ to $F$ and $w$ to $I$.
%
\end{proof}

Now we can prove our stronger gap lemma.

\begin{lem}
\label{small subgraphs simple}
\label{gap-simple}
If $W \subset \VG$ such that $0 < |W| \leq |\VG|-2$, then $\rho_{s,G}(W) \geq 4$.
\end{lem}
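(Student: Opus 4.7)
The plan is to mirror the proof of Lemma~\ref{small subgraphs part 2} (the multigraph gap lemma), lifting the basic bound of $1$ from Lemma~\ref{small subgraphs part 1} to $4$ on the range $0<|W|\le|V(G)|-2$. Since Lemma~\ref{large potential when missing just two vertices} already gives the stronger bound $5$ at the top endpoint $|W|=|V(G)|-2$, only $1\le|W|\le|V(G)|-3$ requires attention. I would assume the lemma fails and pick $W$ violating it with minimum $\rho_{s,G}(W)\in\{1,2,3\}$.

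First I would clear the small-$|W|$ cases. Since $I_p=\emptyset$ (Lemma~\ref{Ip-empty-lem}) and edge-gadgets have uncolored endpoints (Lemma~\ref{fancy edges incident to uncolored}), the only possible failures with $|W|\le 2$ are $W=\{w\}$ with $w\in F_p$ giving $\rho=3$, and $W=\{u,v\}$ with $u,v\in F_p$ joined by a single (non-gadget) edge, giving $\rho=1$. In each such case I would delete the offending precolored vertex(es) from $G$, use Lemma~\ref{put one vertex in F} on the resulting smaller graph to precondition the extended coloring so that reinserting the $F_p$-vertex(es) into $F$ keeps $G[F]$ a forest, and derive a contradiction to the minimality of $G$. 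For the main case $|W|\ge 3$ I would follow Lemma~\ref{small subgraphs part 2}. First, any $v\in\overline W:=V(G)\setminus W$ with two edges, or one edge-gadget, into $W$ would give $\rho_{s,G}(W\cup\{v\})\le\rho_{s,G}(W)-2$ and $|W\cup\{v\}|\le|V(G)|-2$, contradicting minimality, so each $v\in\overline W$ sends at most one edge and no edge-gadget into $W$. Since $G$ is connected (Lemma~\ref{easy cases}), I fix $w\in W$ with a neighbor in $\overline W$. Applying minimality to $G[W]$ with $w$ adjoined to $F_p$ yields an nb-coloring $I_W,F_W$ of $G[W]$ with $w\in F_W$: the weight drop at $w$ is at most $5$, so Lemma~\ref{small subgraphs part 1} preserves the Main Theorem hypothesis $\ge-4$. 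I then form $G''$ by contracting $I_W$ to $w_i\in I_p''$ and $F_W$ to $w_f\in F_p''$, exactly as in the proof of Lemma~\ref{small subgraphs part 1}; the structural claim above makes $G''$ simple, and $|V(G'')|+|E(G'')|<|V(G)|+|E(G)|$.

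The heart of the proof is then verifying that $G''$ satisfies the Main Theorem hypotheses, because minimality then gives an nb-coloring $I'',F''$ of $G''$ which lifts, via $I=(I''\setminus\{w_i\})\cup I_W$ and $F=(F''\setminus\{w_f\})\cup F_W$, to an nb-coloring of $G$---the desired contradiction. The master inequality
\[
\rho_{s,G}\bigl((W'\setminus\{w_i,w_f\})\cup W\bigr)\;\le\;\rho_{s,G''}(W')+\rho_{s,G}(W)-\rho_{s,G''}(W'\cap\{w_i,w_f\}),
\]
combined with $\rho_{s,G''}(\{w_f\})=3$ and $\rho_{s,G''}(\{w_i\})=0$, forces any hypothetical $W'$ with $\rho_{s,G''}(W')\le-5$ to yield a subset of $V(G)$ whose potential drops below $1$ (contradicting Lemma~\ref{small subgraphs part 1}) or, when the uncontracted set is all of $V(G)$ with $w_f\notin W'$, to yield a strict improvement of at least $5$ (from the edge between $w_f$ and the neighbor of $w$ in $\overline W$) pushing the right-hand side below $-4$ and contradicting the hypothesis on $G$. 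For a forbidden subgraph $H'\in\mathcal{H}$ in $G''$, when $H'\notin\{K_4,M_7\}$ I would apply Corollary~\ref{cor1}(iii) ($\rho_{s,H'}(V(H'))\le 0$), combined with the potential drops of $8$ at $w_i$ and $5$ at $w_f$, to again close the inequality. The main obstacle is the residual case $H'\in\{K_4,M_7\}$: $\rho_{s,K_4}(V(K_4))=2$ and $\rho_{s,M_7}(V(M_7))=1$ are too large for the master inequality alone to push the right-hand side below $-4$, so each subcase---indexed by which of $w_i,w_f$ lies in $V(H')$ and by whether the uncontracted set equals $V(G)$---will require a short explicit structural analysis to either directly construct an nb-coloring of $G$ or extract a contradiction from Lemma~\ref{large potential when missing just two vertices}.
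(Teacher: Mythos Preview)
Your overall strategy matches the paper's: contract a minimum-potential $W$ to $\{w_i,w_f\}$ and verify the contracted graph meets the Main Theorem's hypotheses via the master inequality. The paper is more streamlined---it does not isolate $|W|\le 2$ nor fix a specific $w\in W$ with a neighbor outside; it simply colors $G[W]$ by minimality, invokes Lemma~\ref{put one vertex in F} to ensure $w_f$ survives, and when $\overline W\subseteq U'$ concludes $w_f\in U'$ from minimality of $\rho_{s,G'}(U')$ rather than from your edge trick. (Your small-$|W|$ plan, incidentally, is shaky: Lemma~\ref{put one vertex in F} lets you force a vertex into $F$, not into $I$, so it is unclear how deleting $w\in F_p$ and ``preconditioning'' lets you reinsert $w$ into $F$ without closing a cycle.) Your potential verification is otherwise equivalent to the paper's.

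The real gap is your endgame for $H'\in\{K_4,M_7\}$: neither tool you name closes these cases. For $H'=M_7$ with a single $w_*\in V(H')\cap\{w_i,w_f\}$, the paper does not color $G$ directly nor use Lemma~\ref{large potential when missing just two vertices}; it observes that every vertex of $M_7$ lies in a copy of $K_4-e$, takes the three other vertices $x,y,z$ of such a copy through $w_*$, and computes $\rho_{s,G}(W\cup\{x,y,z\})\le\rho_{s,G}(W)+24-25<\rho_{s,G}(W)$ with $|W\cup\{x,y,z\}|\le|V(G)|-3$, contradicting the minimality of $W$. For $H'=K_4$, the master inequality only yields $(\VHp\setminus\{w_*\})\cup W=V(G)$, i.e., $\overline W$ is a triangle of $3$-vertices each with one edge into $W$. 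Lemma~\ref{large potential when missing just two vertices} applied to $W\cup\{x_i\}$ gives only $5\le 6$, and a direct nb-coloring runs into exactly the exceptional cases of Lemma~\ref{color-k-cycle}. The paper instead invokes Lemma~\ref{no cycle of 3s of length at most 4} (already available at this point), which forces some $x_i$ to be precolored or incident to an edge-gadget, sharpening the bound to $\rho_{s,G}(V(G))\le -8$. Without that citation, you would need to reprove its $k=3$ case here via the $G(C,z_1,z_2)$ construction and the non-linked pair furnished by Definition~\ref{defn H'}; that is the missing ingredient, and it is not the ``short explicit'' step you anticipate.
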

\begin{proof}
Suppose, to the contrary, that some $W$ satisfies the hypotheses and has $\rho_{s,G}(W) \leq 3$.
We assume further that $W$ minimizes $\rho_{s,G}(W)$ among all such vertex subsets.
Let $\overline{W} = \VG \setminus W$.

We first show that if $v \in \overline{W}$, then $|N(v) \cap W| \leq 1$.
Suppose, to the contrary, that $|N(v) \cap W| \geq 2$, which gives that
$\rho_{s,G}(W \cup \{v\}) \leq \rho_{s,G}(W) + 8 - 10$.
Lemma~\ref{large potential when missing just two vertices} implies that $|W| <
|\VG|-2$, so $|W \cup \{v\}| \leq |\VG|-2$, which contradicts the minimality of $W$.
Thus $|N(v)\cap W|\leq 1$, as desired.

By minimality, $G[W]$ has an nb-coloring $I_W,F_W$.
We construct a graph $G'$ with vertex set $\overline{W} \cup \{w_f, w_i\}$,
similar to the proof of our first gap lemma, Lemma~\ref{gap-lemma-basic}.
We give $G'$ the precoloring $I_p',F_p'$, where $I_p' = \{w_i\}$ and $F_p' =
(F_p \cap \overline{W}) \cup \{w_f\}$.  The edge set of $G'$ is given by 
$$E(G') = e(\overline{W}) \cup \{uw_i : u x \in \EG, u \in \overline{W}, x \in
I_W\} \cup \{vw_f : v z \in E_G, v \in \overline{W}, z \in F_W\}.$$
If $w_f$ or $w_i$ has degree $0$, then we delete it.  Using Lemma~\ref{put one
vertex in F}, we will assume that $w_f$ is not deleted.  Recall that $|N(v)
\cap W| \leq 1$ for each $v \in \overline{W}$, so $G'$ is a simple graph.

If $G'$ has an nb-coloring $I', F'$, then we delete $\{w_i,w_f\}$
and use the nb-coloring $I_W,F_W$ on $G[W]$ to get an nb-coloring of $G$.
This contradicts that $G$ is a counterexample, so $G'$ must not satisfy the
hypotheses of the Main Theorem. 
Thus, $G'$ contains either a forbidden subgraph or else a vertex set $U'$ such
that $\rho_{s,G'}(U') \leq -5$.
We start with the latter case.
Pick $U' \subseteq V(G')$ to minimize $\rho_{s,G'}(U')$.  Now 
\begin{eqnarray*}
\rho_{s,G'}(U')  & \geq & \rho_{s,G}((U'\setminus\{w_f,w_i\})\cup W) -
\rho_{s,G}(W) + \rho_{s,G'}(U' \cap \{w_f,w_i\}) \\ 
                & \geq & \rho_{s,G}((U'\setminus\{w_f,w_i\})\cup W) - 3 +
\rho_{s,G'}(U' \cap \{w_f,w_i\}) .
\end{eqnarray*} 
The explanation of the above inequality is identical to that of
\eqref{4.6-ineq} in the proof of Lemma~\ref{small subgraphs part 1}.
Trivially, $\rho_{s,G'}(U' \cap \{w_f,w_i\}) \geq 0$.

If $\overline{W} \not\subseteq U'$, then $(U'\setminus\{w_f,w_i\}) \cup W \neq \VG$,
so Lemma \ref{small subgraphs part 1} implies $\rho_{s,G}((U'\setminus\{w_f,w_i\}) \cup W) \geq 1$.
Thus, $\rho_{s,G'}(U') \ge 1-3+0=-2$.
If $\overline{W} \subseteq U'$, then the minimality of $\rho_{s,G'}(U')$ implies
that $w_f \in U'$, so $\rho_{s,G'}(U' \cap \{w_f,w_i\}) = 3$.
By hypothesis $\rho_{s,G}((U'\setminus\{w_f,w_i\}) \cup W) \geq -4$.
So $\rho_{s,G'}(U') \geq -4 - 3 +3 =-4$.

Now assume that $G'$ contains a subgraph $H' \in \HH$.
Because $G$ is a minimal counterexample, $G$ is nb-critical, 
so $H' \not\subset G$, which implies $\VHp \cap \{w_i,w_f\} \neq \emptyset$.
Recall that graphs in $\HH$ have only uncolored vertices, so the potential of $H'$ minus $\VHp \cap \{w_f,w_i\}$
can be calculated as a graph in $\HH$ minus one or two uncolored vertices, even though what we have done
is remove a precolored vertex from a subgraph of $G'$.
Moreover, if any other vertex in $H'$ is precolored, it contributes less to the potential than
an uncolored vertex, so 
\begin{eqnarray}
 \rho_{s,G}((\VHp \setminus \{w_f,w_i\}) \cup W) & \leq & (\rho_{s,H'}(\VHp) - 8) + \rho_{s,G}(W) \notag \\
		& \leq & \rho_{s,H'}(\VHp)  -5. \label{ineq1} 
\end{eqnarray}

\textbf{Case 1: $\bbs{H'\notin \{K_4, M_7\}}$}.
By Corollary~\ref{how to show H is not present}(iii), we know that $\rho_{s,H}(\VHp) \leq 0$.
This implies that $\rho_{s,G}((\VHp\setminus\{w_i,w_f\})\cup W) \leq -5$, which
contradicts that $G$ is a counterexample.
\smallskip

For Cases 2 and 3, we will use the following fact.  Let
$U=(\VHp\setminus\{w_f,w_i\})\cup W$.
By Corollary~\ref{how to show H is not present}(i), $\rho_{s,H}(\VHp) \leq 2$,
so inequality~\eqref{ineq1} gives $\rho_{s,G}(U) \leq -3$.  Now
Lemma~\ref{gap-lemma-basic} implies that $U = \VG$.
\smallskip

\textbf{Case 2: $\bbs{H'=M_7}$}.
If $\VHp \supset \{w_f , w_i\}$, then inequality~\eqref{ineq1} improves to
$\rho_{s,G}(U) \leq \rho_{s,H'}(\VHp)-13$. 
So $\rho_{s,G}(U) \le 2-13 = -11$, a contradiction.
Instead assume that $|\VHp \cap \{w_f,w_i\}| = 1$.
For ease of notation, let $\{w_*\} = \VHp \cap \{w_f,w_i\}$.
Note that each vertex in $M_7$ is in a copy of $K_4 - e$.
Let $x,y,z$ be vertices in $H'$ such that $H[\{x,y,z,w_*\}]$ is $K_4 - e$.
By construction, $\{x,y,z\} \subseteq \overline{W}$.
So $\rho_{s,G}(W\cup\{x,y,z\}) = \rho_{s,G}(W) + 8(3) - 5(5) < \rho_{s,G}(W)$. 
Since $0<|W\cup\{x,y,z\}|\le |\VG|-3$, 
this contradicts the minimality of $\rho_{s,G}(W)$.

\textbf{Case 3: $\bbs{H'=K_4}$}.
Because $w_f$ and $w_i$ are not adjacent (if they both exist), $|\VHp \cap \{w_i,w_f\}| = 1$.
So $G[\overline{W}]=K_3$ and each vertex of $\overline{W}$ has one edge into $W$.
By Lemma \ref{no cycle of 3s of length at most 4}, either $\overline{W}$ contains a
precolored vertex or else is incident to an edge-gadget.  In each case, 
the above inequality $\rho_{s,G}(U) \leq \rho_{s,H'}(\VHp)  -5$ improves to
$\rho_{s,G}(U) \leq \rho_{s,K_4}(V_{K_4}) - 10 \leq -8$, which contradicts that
$G$ is a counterexample.
\end{proof}

The previous lemma gives the following three easy corollaries.  The first is
analogous to Lemma~\ref{put one vertex in F}, but now we can add a vertex to $I_p$.
The third slightly extends Lemma~\ref{no cycle of 3s of length at most 4}.

\begin{lem} \label{put one vertex in I}
If $W \subset \VG$ such that $|W| \leq |\VG|-2$ and $w \in W$, then $G[W]$ has
an nb-coloring that extends the precoloring $I_p \cup \{w\}, F_p$.
\end{lem}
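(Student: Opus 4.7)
The plan is to apply the minimality of the counterexample $G$ to the subgraph $G[W]$ equipped with the augmented precoloring $I_p' = I_p \cup \{w\}$ and $F_p' = F_p$. Since $|W| \leq |\VG|-2 < |\VG|$, we have that $G[W]$ is strictly smaller than $G$, so if $G[W]$ with this precoloring satisfies the hypotheses of the Main Theorem, then by minimality it admits an nb-coloring extending $I_p', F_p'$, and we are done. So the proof reduces to verifying these hypotheses.

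First, since $G$ contains no subgraph from $\HH$, neither does the subgraph $G[W]$. It remains to check the potential condition: $\rho_{s,G[W]}(U) \geq -4$ for every $U \subseteq W$ (computed with respect to the new precoloring $I_p', F_p'$). Split into cases based on whether $w \in U$. If $w \notin U$, then no vertex has changed its precoloring status within $U$, so $\rho_{s,G[W]}(U)$ under $I_p', F_p'$ equals $\rho_{s,G}(U)$, which is at least $1$ by Lemma~\ref{small subgraphs part 1} since $\emptyset \neq U \subsetneq \VG$.

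If $w \in U$, then relative to the original precoloring the only change is that $w$ has moved from $U_p$ (contributing $8$) to $I_p$ (contributing $0$), so
\[
\rho_{s,G[W]}(U) = \rho_{s,G}(U) - 8.
\]
Since $1 \leq |U| \leq |W| \leq |\VG|-2$, our stronger gap lemma, Lemma~\ref{small subgraphs simple}, gives $\rho_{s,G}(U) \geq 4$, and hence $\rho_{s,G[W]}(U) \geq -4$, as required. Applying the minimality of $G$ to $G[W]$ with precoloring $I_p \cup \{w\}, F_p$ then yields the desired nb-coloring.

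There is essentially no obstacle here: the lemma is the $I_p$-analogue of Lemma~\ref{put one vertex in F}, and the only substantive point is that precoloring a vertex into $I_p$ costs $8$ in potential (rather than $5$ for $F_p$), which is precisely why we needed to strengthen the gap lemma from $\rho_{s,G}(U) \geq 1$ to $\rho_{s,G}(U) \geq 4$ for subsets avoiding the top two ranks; this is exactly what Lemma~\ref{small subgraphs simple} provides.
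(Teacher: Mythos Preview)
Your proof is correct and follows essentially the same approach as the paper's own argument: apply minimality to $G[W]$ with the augmented precoloring, note that forbidden subgraphs cannot appear, and use Lemma~\ref{small subgraphs simple} to absorb the $-8$ cost of moving $w$ into $I_p$. The paper's version is simply terser, bounding $\rho_{s,G'}(U) \ge \rho_{s,G}(U) - 8 \ge 4 - 8 = -4$ uniformly without splitting on whether $w \in U$.
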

\begin{proof}
Let $G'=G[W]$ with precoloring $I_p \cup \{w\}, F_p$.  Each $U\subseteq W$
satisfies $\rho_{G',s}(U)\ge \rho_{G,s}(U)-8\ge 4-8=-4$, so $G'$ has the desired
coloring by the Main Theorem.
\end{proof}

\begin{lem}\label{edge gadgets form matching}
Each vertex in $G$ is incident to at most one edge-gadget.
\end{lem}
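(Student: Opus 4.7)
I will argue by contradiction: suppose some vertex $v \in V(G)$ is incident to two edge-gadgets $vu$ and $vw$. Since edge-gadgets behave as multiedges and the paper caps parallel edges at multiplicity two, $u \neq w$; and by Lemma~\ref{fancy edges incident to uncolored}, all three of $u$, $v$, $w$ are uncolored. The strategy is to compute $\rho_{s,G}(\{u,v,w\})$ and derive a contradiction from whichever gap lemma applies.

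I split into three subcases according to the status of the pair $uw$. If $uw$ is itself an edge-gadget, then $\rho_{s,G}(\{u,v,w\}) = 8 \cdot 3 - 11 \cdot 3 = -9$, directly violating the hypothesis $\rho_{s,G} \geq -4$. If $uw$ is a non-gadget edge, then $\rho_{s,G}(\{u,v,w\}) = 8 \cdot 3 - 5 - 11 \cdot 2 = -3$, contradicting Lemma~\ref{small subgraphs part 1} (giving $\rho_{s,G} \geq 1$ on nonempty proper subsets) unless $\{u,v,w\} = V(G)$. If $uw \notin E(G)$, then $\rho_{s,G}(\{u,v,w\}) = 8 \cdot 3 - 11 \cdot 2 = 2 < 4$, so Lemma~\ref{small subgraphs simple} forces $|V(G)| \leq 4$.

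The remaining obstacle is to rule out the small residual instances by constructing explicit nb-colorings, thereby contradicting the minimality of $G$. When $|V(G)| = 3$ (possible in the second and third subcases above), $V(G) = \{u,v,w\}$ and the coloring $I = \{v\}$, $F = \{u,w\}$ satisfies both edge-gadgets at $v$, while $G[F]$ has at most one edge and hence is a forest. When $|V(G)| = 4$ (possible only in the third subcase), let $x$ be the extra vertex; by Lemma~\ref{min degree 2}, $x$ has at least two neighbors in $\{u,v,w\}$. I expect a routine subcase check over $x$'s incidences to show that one of $(I,F) = (\{v\},\{u,w,x\})$, $(\{v,x\},\{u,w\})$, or $(\{u,w\},\{v,x\})$ is a valid nb-coloring in every configuration, using the absence of edge $uw$ to keep $G[F]$ a forest. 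In the one stubborn configuration where $v$ has a third incident edge-gadget (to $x$), a direct computation gives $\rho_{s,G}(V(G)) \leq 8 \cdot 4 - 11 \cdot 3 - 5 = -6 < -4$, contradicting the hypothesis. This finite local analysis completes the proof, and the potential calculations together with the gap lemmas do all the real work.
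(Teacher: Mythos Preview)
Your proposal is correct and follows essentially the same approach as the paper: compute $\rho_{s,G}(\{u,v,w\})\le 8(3)-11(2)=2$ and invoke Lemma~\ref{small subgraphs simple} to force $|V(G)|\le 4$, then dispose of the small cases. The paper compresses this to two sentences and simply asserts ``a short case analysis shows that $|V(G)|\ge 5$,'' whereas you actually sketch that case analysis. Your split on the status of $uw$ is more granular than necessary---the single bound $\rho\le 2$ already suffices to reach the gap-lemma contradiction---but it does no harm. One small cleanup: in the $|V(G)|=4$ analysis you really only need colorings 1 and 3, since coloring~1 fails only when $ux$ or $wx$ is an edge-gadget and coloring~3 fails only when $vx$ is an edge-gadget; if both fail you have at least four edge-gadgets on four vertices and $\rho_{s,G}(V(G))\le 32-44=-12$, which is stronger than your $-6$ bound and makes the ``stubborn'' subcase immediate.
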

\begin{proof}
If, to the contrary, some $v$ is incident to edge-gadgets with endpoints
$w$ and $x$, then $\rho_{s,G}(\{v,w,x\})\le 8(3)-11(2)=2$, which contradicts
Lemma~\ref{gap-simple}.  (A short case analysis shows that $|\VG|\ge 5$.)
\end{proof}

\begin{lem}\label{no cycle of 3s of length at most 5}
\label{no-5cycle-of-3s}
If $C = x_1\cdots x_k$ is a cycle in $G$ induced by 3-vertices,
then at least one of the following holds: 
\begin{enumerate}
\item[(i)] $k \geq 6$, or
\item[(ii)] some $x_i$ is incident to an edge-gadget, or 
\item[(iii)] $V(C) \cap F_p \neq \emptyset$.
\end{enumerate}
\end{lem}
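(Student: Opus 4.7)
The cases $k\in\{3,4\}$ are already handled by Lemma~\ref{no cycle of 3s of length at most 4}, so my plan is to dispatch the remaining case $k=5$ by closely mirroring its $k=3$ argument, but drawing on the stronger gap lemma (Lemma~\ref{small subgraphs simple}). Suppose for contradiction that $C=x_1\cdots x_5$ is an induced $5$-cycle of uncolored $3$-vertices in $G$ with no $x_i$ incident to an edge-gadget. Let $W=\VG\setminus V(C)$ and $\{z_i\}=N(x_i)\setminus\{x_{i-1},x_{i+1}\}$. First I would dispose of the degenerate case $|\VG|=6$: then $|W|=1$, so all $z_i$ coincide, producing $W_5\in\HH$ as a subgraph of $G$—contradicting the hypothesis. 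Hence $|\VG|\ge 7$ and $|W|\ge 2$.

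Next I would establish the key structural step: there is some $j$ with $z_j\neq z_{j+1}$ such that $z_j,z_{j+1}$ are \emph{not} specially-linked through any subgraph of $G[W]$. If no such $j$ exists, then either all $z_i$ coincide (reproducing the $W_5$ contradiction) or the recursive construction of Definition~\ref{defn H'} attaches the specially-linking subgraphs to $C$ and yields some $H\in\HH'$ realized as a subgraph of $G$. Since $\rho_{s,H}(X)\ge\rho_{s,G}(X)\ge -4$ for every $X\subseteq V(H)$, such $H$ lies in $\HH$, contradicting the hypothesis on $G$. By Lemma~\ref{links are special}, $z_j,z_{j+1}$ are in fact not linked at all through any subgraph of $G[W]$; by symmetry take $j=1$.

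Now let $G'=G(C,z_1,z_2)$; by construction $G'$ is smaller than $G$, simple, and—because $z_1,z_2$ are not linked in $G[W]$—contains no forbidden subgraph (exactly as in the $k=3$ case of Lemma~\ref{no cycle of 3s of length at most 4}). For the potential condition, if $\{z_1,z_2\}\not\subseteq U$ then $\rho_{s,G'}(U)=\rho_{s,G}(U)\ge -4$; otherwise each of the three cases of Definition~\ref{GCz1z2-defn} gives $\rho_{s,G'}(U)\ge\rho_{s,G}(U)-6$, and since $|U|\le|W|\le|\VG|-5\le|\VG|-2$, the gap lemma (Lemma~\ref{small subgraphs simple}) yields $\rho_{s,G}(U)\ge 4$, hence $\rho_{s,G'}(U)\ge -2\ge -4$. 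By minimality, $G'$ admits an nb-coloring $I',F'$. I then apply Lemma~\ref{color-k-cycle} to $C$: its exception (i) cannot occur, since at most one of $z_1,z_2$ lies in $I'$ (in cases (i)/(ii) of $G(C,z_1,z_2)$ the edge-gadget forces this, and in case (iii) the edge $z_1z_2$ precludes both being in the independent set $I'$); and its exception (ii) cannot occur either, since in cases (i)/(ii) the edge-gadget puts exactly one of $z_1,z_2$ in $F'$, whereas in case (iii), if both $z_1,z_2\in F'$ the edge $z_1z_2$ is a bridge of the forest $G'[F']$, so $z_1$ and $z_2$ lie in distinct components of $G[F']=G'[F']-z_1z_2$. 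This extension contradicts that $G$ is a counterexample.

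The most delicate step is the structural claim in the second paragraph: ruling out that every adjacent pair of distinct $z_i$'s is specially-linked in $G[W]$. This rests on the recursive definition of $\HH'$ and on the fact that membership in $\HH$ (as opposed to just $\HH'$) is inherited from $G$ via the subset-potential bound. The remaining verifications—the accounting in the three cases of $G(C,z_1,z_2)$ and excluding the two exceptions of Lemma~\ref{color-k-cycle}—follow the template of Lemma~\ref{no cycle of 3s of length at most 4} and go through cleanly thanks to the extra slack provided by Lemma~\ref{small subgraphs simple}.
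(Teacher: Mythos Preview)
Your proof is correct and follows essentially the same route as the paper's: cite Lemma~\ref{no cycle of 3s of length at most 4} for $k\le 4$; for $k=5$, locate a pair $z_j\ne z_{j+1}$ that is not specially-linked in $G[W]$ (appealing to Definition~\ref{defn H'} and the fact that otherwise $G$ would contain a member of $\HH$), form $G'=G(C,z_j,z_{j+1})$, and verify the hypotheses of the Main Theorem for $G'$ using Lemma~\ref{small subgraphs simple} in place of the ad hoc calculation~\eqref{remove edge add C eqn}. The paper's own proof is extremely terse (``nearly identical to the case $k=3$''), so your more explicit treatment of the degenerate case $|W|=1$, the potential bound, and the two exceptions of Lemma~\ref{color-k-cycle} simply fills in details the paper leaves implicit.

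One remark: your sentence ``the recursive construction of Definition~\ref{defn H'} attaches the specially-linking subgraphs to $C$ and yields some $H\in\HH'$'' skates over the requirement that $H$ be nb-critical, which is part of Definition~\ref{defn H'}(ii). The paper is equally brief at this spot (it just cites ``Definition~\ref{defn H'} and Remark~\ref{two zi}''), so this is not a divergence from the paper's argument; but if you want to be fully rigorous you should note that the union of $C$, the edges $x_iz_i$, and the linking subgraphs is not near-bipartite (via Lemma~\ref{linked vertices lemma} and the odd-cycle argument), and then pass to an nb-critical subgraph.
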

\begin{proof}
The proof is nearly identical to the case $k=3$ in the proof of
Lemma~\ref{no-short-cycle-of-3s}.  Let $\{z_i\} =
N(x_i)\setminus\{x_{i-1},x_{i+1}\}$ for all $i$.  
By Remark~\ref{two zi} and symmetry, assume $z_1\ne z_2$.
If we let $G'=G(C,z_1,z_2)$, then the only difference is in proving that $G'$
has an nb-coloring.  For each $U\subseteq \VGp$ with $|U|\ge 2$,
Lemma~\ref{gap-simple} gives $\rho_{s,G'}(U)\ge \rho_{s,G}(U)-6\ge 4-6=-2$.  So
$G'$ has an nb-coloring by the Main Theorem.
\end{proof}

We now prove that $\delta(G)\ge 3$, which will be helpful for our discharging
argument in the next subsection.

\begin{lem}\label{min degree 3 simple}
$\delta(G) \geq 3$.
\end{lem}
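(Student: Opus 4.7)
The plan is to derive a contradiction from the assumption that some vertex $v \in V(G)$ has $d(v) \leq 2$, by reducing $v$ to a single highly constrained configuration that can be colored by a direct extension.

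The first step will be to show that $F_p = \emptyset$ in $G$. For any hypothetical $v \in F_p$ one computes $\rho_{s,G}(\{v\}) = 3$, while Lemma~\ref{gap-simple} applied to the singleton $W = \{v\}$ (which is legal since $|V(G)| \geq 3$ by Lemma~\ref{easy cases}) forces $\rho_{s,G}(\{v\}) \geq 4$, a contradiction. Combined with Lemma~\ref{Ip-empty-lem}, this shows that every vertex of $G$ is uncolored.

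Now take any $v$ with $d(v) \leq 2$. Lemma~\ref{min degree 2} gives $|N(v)| \geq 2$, so $d(v) = 2$; since $v$ is uncolored, Lemma~\ref{uncolored is min degree 3} forces $v$ to be incident to an edge-gadget; and by Lemma~\ref{edge gadgets form matching} this gadget is unique. Hence $N(v) = \{w,x\}$ with $vw$ the edge-gadget and $vx$ an ordinary edge. I will then apply Lemma~\ref{put one vertex in F} with $W = V(G) \setminus \{v\}$ and chosen vertex $x$, obtaining an nb-coloring $I', F'$ of $G - v$ with $x \in F'$.

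To finish, I extend $I', F'$ to $G$, using the fact that the edge-gadget $vw$ forces $v$ and $w$ into opposite classes. If $w \in I'$, I set $F := F' \cup \{v\}$ and $I := I'$: the only neighbor of $v$ in $F$ is $x$, so $v$ becomes a pendant in $G[F]$ and the forest property is preserved. If instead $w \in F'$, I set $I := I' \cup \{v\}$ and $F := F'$: both neighbors $w,x$ of $v$ lie in $F'$, so independence of $I$ is preserved. Either way we obtain an nb-coloring of $G$ extending the (empty) precoloring, contradicting that $G$ is a counterexample. The entire content of the argument sits in the first step, a one-line use of the strong gap lemma to kill $F_p$; once $v$ is reduced to a unique configuration, everything afterward is bookkeeping.
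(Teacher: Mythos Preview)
Your proof is correct and is genuinely shorter than the paper's.

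The crucial difference is your first step: you observe that once Lemma~\ref{gap-simple} is available, the singleton potential $\rho_{s,G}(\{v\})=3$ for any $v\in F_p$ violates the bound $\rho_{s,G}(W)\ge 4$ (legal since $|V(G)|\ge 3$), so $F_p=\emptyset$ globally. The paper never makes this observation; instead it keeps the possibility $v\in F_p$ alive and handles it through Cases~2 and~3, each of which builds an auxiliary graph $G'$, checks potentials, and in Case~3 must rule out forbidden subgraphs by invoking the structure of $\HH$ together with Lemmas~\ref{links are special}, \ref{put one vertex in I}, \ref{no cycle of 3s of length at most 5}, and a split into sub-cases~3.a and~3.b. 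All of that disappears under your reduction, after which the only surviving configuration is exactly the paper's Case~1, which you dispatch the same way (Lemma~\ref{put one vertex in F} on $G-v$ with $x$ forced into $F$, then color $v$ opposite to $w$). What you lose is nothing: every lemma you cite (\ref{gap-simple}, \ref{edge gadgets form matching}, \ref{put one vertex in F}) is already proved at this point, and there is no circularity. Your observation that $F_p=\emptyset$ would in fact also streamline parts of the later discharging argument, where the paper still tracks the sets $B_j^{(f)}$.
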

\begin{proof}
Suppose, to the contrary, that some $v \in \VG$ has $d(v) \leq 2$.
Lemma~\ref{min degree 2} implies that $d(v) = 2$.
Lemma~\ref{uncolored is min degree 3} shows that either $v \in F_p$ or $v$ is incident to an
edge-gadget, and Lemma~\ref{fancy edges incident to uncolored} implies that $v$
cannot satisfy both.  Let $N(v) = \{w_1, w_2\}$.

\textbf{Case 1: $\bbs{v\in U_p}$ and $\bbs{vw_1}$ is an edge-gadget.}
By Lemma~\ref{edge gadgets form matching}, $vw_2$ is an edge and not an edge-gadget.
Let $G'=G-v$.  By Lemma~\ref{put one vertex in F}, $G'$ has an nb-coloring $I',F'$
with $w_2 \in F'$.  To extend $I',F'$ to $G$, we color $v$ with the color
unused on $w_1$.  This contradicts that $G$ is a counterexample.

So now assume that $v \in F_p$, and both $vw_1, vw_2$ are edges and not edge-gadgets.
Note that $w_1$ and $w_2$ are both uncolored, since otherwise
$\rho_{s,G}(\{v,w_i\})= 3(2)-5=1$, which contradicts Lemma~\ref{gap-simple}.

\begin{figure}[!h]
\centering
\begin{tikzpicture} [semithick, scale=.9]
\tikzset{every node/.style=uStyle}

\begin{scope}[xscale=1.1,xshift=-.375in]
\draw (-.10,0) node (w1) {} (.9,0) node (w2) {} --
(.4,-.35) node (v) {} (2.3,0) node {}  (3.10,0) node {} (3.69,.05) node[lStyle] {$\in F'$};
\path[draw=black,fill=gray!60] (v.center) to[bend left=30] (w1.center) (w1.center)
to[bend left=30] (v.center);
\draw (v) node {} (w1) node {};
\arrowdrawthick (1.35,0) -- (1.85,0);
\end{scope}

\begin{scope}[xshift=2.15in,xscale=1.1]
\draw (0,0) node (z0) {} -- (.45,-.35) node {} -- (.9,0) node {} -- (z0) (2.3,0) node (z1) {} (3.2,0) node (z2) {};
\path[draw=black,fill=gray!60] (z1.center) to[bend left=30] (z2.center) (z2.center)
to[bend left=30] (z1.center);
\arrowdrawthick (1.35,0) -- (1.85,0);
\draw (z1) node {} (z2) node {};
\end{scope}

\begin{scope}[xshift=4.25in,xscale=1.1]
\draw (0,0) node {} -- (.45,-.35) node {} -- (.9,0) node {} (2.3,0) node (z1) {} -- (3.2,0) node (z2) {};
\arrowdrawthick (1.35,0) -- (1.85,0);
\draw (z1) node {} (z2) node {};
\end{scope}

\end{tikzpicture}
\caption{Constructing $G'$ from $G$ for Cases 1, 2, and 3 in the proof of
Lemma~\ref{min degree 3 simple}.\label{lemma min degree 3 simple fig}}
\end{figure}
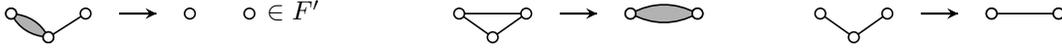

\textbf{Case 2: $\bbs{w_1\in N(w_2)}$.}
We form $G'$ from $G-v$ by 
replacing $w_1w_2$ with an edge-gadget (if it is not already an edge-gadget);
This is analogous to our earlier construction of $G(C,z_1,z_2)$.  To extend any
nb-coloring $I',F'$ of $G'$ to $G$, we simply add $v$ to $F'$.
Because $G'$ is smaller than $G$, by minimality $G'$ must contain a forbidden
subgraph or a vertex set $W'$ such that $\rho_{s,G'}(W') \leq -5$.

By hypothesis, $G$ contains no forbidden subgraph, and by construction graphs in
$\HH$ have no edge-gadgets.  So $G'$ contains no forbidden subgraph.
To reach a contradiction, we show that $\rho_{s,G'}(W')\ge -4$ for all
$W'\subseteq \VGp$.  If $\{w_1, w_2\} \not\subseteq W'$,
then $\rho_{s,G'}(W') = \rho_{s,G}(W')$; and if $\{w_1,w_2\} \subseteq W'$, then 
$$\rho_{s,G'}(W') \geq \rho_{s,G}(W' \cup \{v\}) - 6 - 3 + 5(2) \geq \rho_{s,G}(W'
\cup \{v\}) +1 \geq -3.$$

\textbf{Case 3: $\bbs{w_1 \notin N(w_2)}$.}
We form $G'$ from $G-v$ by adding edge $w_1w_2$.
If $G'$ has an nb-coloring $I',F'$, then we can extend it to $G$ by
adding $v$ to $F'$.  So we assume $G'$ has no nb-coloring.
By construction, $G'$ is smaller than $G$.  So by minimality $G'$ has a
forbidden subgraph or contains a vertex subset $W'$ such that $\rho_{s,G'}(W')
\leq -5$.  Similar to Case 2, 
$$\rho_{s,G'}(W') \geq \rho_{s,G}(W' \cup \{v\}) - 5 - 3 + 10 \geq \rho_{s,G}(W' \cup \{v\}) +2 \geq -2.$$
So $G'$ must contain a forbidden subgraph.

By definition, this implies that $w_1$ and $w_2$ are linked via some subgraph $H$.
By Lemma~\ref{links are special} they are specially-linked.
Corollary~\ref{how to show H is not present}(i) implies that 
\begin{equation}
\label{min degree 3 linked neighbors}
 \rho_{s,G}(\VH \cup \{v\}) \leq \rho_{s,H}(\VH) + 3 - 5 \leq 0. 
\end{equation}
Lemma~\ref{small subgraphs part 1} shows that $\VG = \VH \cup \{v\}$.
Further, $H$ is an induced subgraph and no vertex in $\VG\setminus\{v\}$ is
precolored; otherwise inequality \eqref{min degree 3 linked neighbors}
can be strengthened by $5$, which gives an outright contradiction.

It is straightforward to check that if $H \in \{K_4, W_5, J_7, J_{12}\}$ and
$w_1w_2\in \EH$, then $H-w_1w_2$ has an nb-coloring $I',F'$ 
with $\{w_1,w_2\} \subseteq I'$.  So $H$ must contain a cycle $C =
x_1,\ldots,x_k$ as in Definition~\ref{defn H'}.  By Lemma~\ref{no cycle of 3s
of length at most 5}, $G$ contains no instance of $C$ as in
Definition~\ref{defn H'}.  So there exists $j$ such that either $w_1w_2 =
x_jz_j$ or else $w_1w_2 = x_jx_{j+1}$.  Thus, $H - C$ is an induced subgraph of
$G$, so it has an nb-coloring $I',F'$.

\textbf{Case 3.a: $\bbs{w_1w_2 = x_jz_j}$.}
By symmetry, assume $j=1$.  By Lemma~\ref{put one vertex in I}, $H-C$ has an
nb-coloring $I',F'$ with $z_1\in I'$.  To extend $I',F'$ to $G$, let
$I=I'\cup\{x_1\}$ and $F'=F\cup\{v\}\cup\{x_2,\ldots,x_k\}$.

\textbf{Case 3.b: $\bbs{w_1w_2 = x_jx_{j+1}}$.}
By symmetry, assume $j=k-1$.
By Lemma~\ref{put one vertex in F}, we assume $z_1 \in F'$.  By
Lemma~\ref{linked vertices lemma} and Definition~\ref{defn H'}(ii.c), we assume
that $\{z_1, \ldots, z_k\} \subseteq F'$.  To extend $I',F'$ to $G$, 
if $k$ is even, then let
$I=I'\cup \bigcup_{i=1}^{k/2}x_{2i}$ and
$F=F'\cup\{v\}\cup\bigcup_{i=1}^{k/2} x_{2i-1}$.
If $k$ is odd, then let
$I=I'\cup\{x_k\}\cup \bigcup_{i=1}^{(k-1)/2}x_{2i}$ and
$F=F'\cup\{v\}\cup\bigcup_{i=1}^{(k-1)/2} x_{2i-1}$.
\end{proof}

Now we can show that the uncolored 3-vertices, with no incident edge-gadgets, induce a forest.
We extend the ideas of Lemma~\ref{no cycle of 3s of length at most 5} to all finite $k$.

\begin{lem}
\label{no cycle of 3s}
If $C = x_1\cdots x_k$ is a cycle in $G$ induced by 3-vertices, then at least one of the following holds: 
\begin{enumerate}
\item[(i)] some $x_i$ is incident to an edge-gadget or 
\item[(ii)] $V(C) \cap F_p \neq \emptyset$.
\end{enumerate}
\end{lem}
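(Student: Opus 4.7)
The plan is to follow the pattern of Lemma~\ref{no cycle of 3s of length at most 5}, now for arbitrary $k$. Suppose for contradiction $C = x_1 \cdots x_k$ is such a cycle with no incident edge-gadgets and $V(C) \cap F_p = \emptyset$; by that lemma we may take $k \geq 6$. Write $W = V(G) \setminus V(C)$ and $\{z_i\} = N(x_i) \setminus \{x_{i-1}, x_{i+1}\}$.

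First I would dispose of the degenerate case that all $z_i$ coincide at a single vertex $z$. If $k$ is even, Lemma~\ref{put one vertex in F} yields an nb-coloring of $G - V(C)$ with $z \in F$, and Lemma~\ref{color-k-cycle} extends this to $G$, since its exception~(ii) requires $k$ odd, contradicting that $G$ is a counterexample. If $k$ is odd (so $k \geq 7$), then $U = V(C) \cup \{z\}$ has $|U| = k+1$ and $e_G(U) = 2k$ since each $x_i$ is saturated, so $\rho_{s,G}(U) \leq 8 - 2k \leq -6$; this contradicts the Main Theorem hypothesis when $U = V(G)$, contradicts it after accounting for the missing vertex of degree $\geq 3$ (Lemma~\ref{min degree 3 simple}) when $|U| = |V(G)|-1$, and contradicts Lemma~\ref{gap-simple} when $|U| \leq |V(G)|-2$. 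Hence some $z_j \neq z_{j+1}$, and by symmetry I may assume $z_1 \neq z_2$.

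I would then form $G' = G(C, z_1, z_2)$ from Definition~\ref{GCz1z2-defn} and apply minimality, exactly as in the $k = 5$ case. The graph $G'$ is strictly smaller than $G$, and for any $U \subseteq V(G')$ the construction decreases $\rho_{s,G}(U)$ by at most $6$ (worst case: replacing a regular edge by an edge-gadget when $\{z_1, z_2\} \subseteq U$); since $|U| \leq |V(G)| - k \leq |V(G)| - 2$, Lemma~\ref{gap-simple} gives $\rho_{s,G'}(U) \geq 4 - 6 = -2 \geq -4$. Any nb-coloring $I', F'$ of $G'$ extends to $G$ by Lemma~\ref{color-k-cycle}: if all $z_i \in F'$, then the edge or edge-gadget between $z_1$ and $z_2$ in $G'$ would close a cycle in $G'[F']$ unless $z_1$ and $z_2$ lie in distinct components of $G[F']$, blocking exception~(ii); otherwise some $z_i \in I'$ and exception~(i) fails.

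The main obstacle is verifying that $G'$ contains no subgraph from $\HH$, because Definition~\ref{defn H'}(ii.a) no longer lets us combine a specially-linked pair on $C$ with $C$ itself to land in $\HH'$ once $k \geq 6$. To circumvent this, I would refine the choice of index, selecting $j$ so that $z_j \neq z_{j+1}$ and $z_j, z_{j+1}$ are not linked in $G[W]$. To show such $j$ exists, I would assume to the contrary that every transition pair is linked and invoke Lemma~\ref{links are special} to obtain linking subgraphs $H_j \in \HH$; then using the bounds on $|V(H_j)|$ and $\rho_{s,H_j}(V(H_j))$ from Corollary~\ref{how to show H is not present} together with a careful accounting of the $k$ cycle edges and the chord edges $x_iz_i$ with $z_i \in V(H_j)$, one would locate a vertex subset of $V(G)$ whose potential falls below $-4$, contradicting the Main Theorem hypothesis on $G$. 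With such $j$ in hand, $G'$ is free of forbidden subgraphs, so by minimality it admits an nb-coloring, which extends to $G$ by the previous paragraph, completing the contradiction.
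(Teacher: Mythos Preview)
Your outline matches the paper's proof: reduce to $k\ge 6$, show that if some consecutive pair $z_j\ne z_{j+1}$ is \emph{not} linked in $G[W]$ then $G(C,z_j,z_{j+1})$ inherits the hypotheses and its nb-coloring extends via Lemma~\ref{color-k-cycle}; otherwise every transition pair is (specially) linked and one assembles from the linking subgraphs a set of low potential to obtain a contradiction. (The paper also disposes of all even $k$ up front by coloring $G[W]$ with $z_1\in F$ and extending, which slightly streamlines the rest; your version only treats even $k$ in the all-$z_i$-equal subcase, but this is harmless since your remaining argument works for $k\ge 6$ of either parity.)

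The one place your sketch is too thin is the final ``careful accounting'' step. It is not enough to cite the bounds $\rho_{s,\tilde H_j}(V(\tilde H_j))\le 2$ and $|V(\tilde H_j)|\le 22$: a single linking subgraph $H_j$ together with $C$ only picks up the two chord edges $x_jz_j,x_{j+1}z_{j+1}$, giving $\rho\le 7+8k-5(k+2)=3k-3$, which is useless. You need the \emph{union} $J=\bigcup_j H_j$ to contain all $z_i$, so that $\rho_{s,G}(J\cup V(C))\le \rho_{s,G}(J)-2k$. The paper controls $\rho_{s,G}(J)$ by proving the key claim that $\rho_{s,G}(V(H_j))\le \rho_{s,G}(U)$ for every nonempty $U\subseteq V(H_j)$ (this is where Corollary~\ref{how to show H is not present}(iv), not just (i)--(ii), is essential), and then iterating submodularity: since $z_{j+1}\in V(J_{j-1})\cap V(H_j)$, the claim gives $\rho_{s,G}(V(J_{j-1})\cap V(H_j))\ge \rho_{s,G}(V(H_j))$, hence $\rho_{s,G}(V(J_j))\le \rho_{s,G}(V(J_{j-1}))$. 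This yields $\rho_{s,G}(J)\le 7$ and thus $\rho_{s,G}(J\cup V(C))\le 7-2k\le -5$. Your proposal gestures at this but does not identify either the subset claim or the submodular chaining; without them the step does not go through.
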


\begin{proof}
Suppose, to the contrary, that $x_1\cdots x_k$ satisfies the hypotheses, but
both possible conclusions fail.  By Lemma~\ref{no cycle of 3s of length at most
5}, $k \geq 6$.  Let $W = \VG - C$ and let $\{z_i\} = N(x_i) - C$.
If $k$ is even, then by Lemma \ref{put one vertex in F} $G[W]$ has an nb-coloring 
with $z_1 \in F'$, and we can extend it to $G$ by Lemma~\ref{color-k-cycle}.  
Thus, we assume that $k$ is odd; so $k\ge 7$.  

If $z_1 = \cdots = z_k$, then $\rho_{s,G}(V(C) \cup \{z_1\}) = 8(k + 1) - 5(2k) = 8 - 2k \leq -6$,
which is a contradiction.  Thus, the set $\{z_1, \ldots, z_k\}$ contains at
least two distinct vertices.  Our plan for the rest of the proof is similar to
the first sentence of this paragraph.  We will find a subset $V_{J^*_\ell}$ of
$W$ that contains all $z_i$ and such that $\rho_{s,G}(V_{J^*_\ell})\le 7$. 
(We will show that each distinct pair $z_i$, $z_{i+1}$ is linked, and let
$V_{J^*_\ell}$ be the vertices of the union of their linking subgraphs.)
This implies that $\rho_{s,G}(V_{J^*_\ell}\cup C)\le
\rho_{s,G}(V_{J^*_\ell})+8k-5(2k)\le 7-2k\le -7$, which is a contradiction. 
So it remains to find this $V_{J^*_\ell}$ and prove that
$\rho_{s,G}(V_{J^*_\ell})\le 7$.

Suppose there exists $j\in\{1,\ldots,k\}$ such that $z_j\ne z_{j+1}$ and $z_j$
and $z_{j+1}$ are not linked.  Let $G'=G(C,z_j,z_{j+1})$.  Note that
$\rho_{s,G'}(U)\ge \rho_{s,G}(U)-6\ge 4-6=-2$ for all $U\subseteq W$.
Since $z_j$ and $z_{j+1}$ are not linked, $G'$ contains no forbidden subgraphs.
So, by minimality, $G'$ has an nb-coloring, $I',F'$.  And by
Lemma~\ref{color-k-cycle}, we can extend $I',F'$ to $G$.  Thus, for each $j$
with $z_j\ne z_{j+1}$, we know that $z_j$ and $z_{j+1}$ are linked.
By Lemma~\ref{links are special}, in fact they are specially-linked.  Let $L =
\{j: 1 \leq j < k, z_j \neq z_{j+1}\}$.  As shown above, $L \neq \emptyset$.
For each $j \in L$, let $H_j$ denote the subgraph of $G[W]$ that links $z_j$ with $z_{j+1}$.

\begin{clm}
For each $U\subseteq \VHj$, we have $\rho_{s,G}(\VHj)\le \rho_{s,G}(U)$.
\end{clm}
\begin{clmproof}
Let $\tilde{H}_j$ be the graph in $\HH$ formed from $H_j$ by adding edge $z_jz_{j+1}$.
We note that $\rho_{s,G}(H_j) = \rho_{s,\tilde{H}_j}(V_{\tilde{H}_j}) +5$, and
we consider the possibilities for $\rho_{s,\tilde{H}_j}(V(\tilde{H}_j))$.
If $\rho_{s,\tilde{H}_j}(V_{\tilde{H}_j}) \leq -1$, then $\rho_{s,G}(H_j) \leq 4$.
By the gap lemma, 
$\rho_{s,G}(U) \geq 4$.
If $\rho_{s,\tilde{H}_j}(V_{\tilde{H}_j}) \in \{0,1\}$, then $\rho_{s,G}(H_j) \leq 6$.
By Corollary~\ref{how to show H is not present}(iv), $\rho_{s,G}(U) \geq 6$.
Finally, assume that $\rho_{s,\tilde{H}_j}(V_{\tilde{H}_j}) \geq 2$.  
By Corollary~\ref{how to show H is not present}(i), this means that $\tilde{H}_j = K_4$.
The proper, induced, non-trivial subgraphs of $K_4$ are $K_1, K_2, K_3$, which have potentials $8,11,9$.
This proves the claim.
\end{clmproof}

Let $J_0 = \{z_1\}$, and for each $1 \leq i < k$, if $i \notin L$ let $J_{i} = J_{i-1}$, otherwise $J_{i} = J_{i-1} \cup H_{i}$.
Let $\ell$ be the minimum element of $L$, which implies $J_\ell = H_\ell$.
Because each graph in $\HH$ has potential at most $2$, we have $\rho(J_\ell) \leq 2 + 5 = 7$.

Let $t$ be an arbitrary element in $L$.
Since $z_t\in V(H_t\cap J_{t-1})$, it is a non-empty subset of $\VHj$. So the
previous claim implies that $\rho_{s,G}(V(J_{t-1})\cap V(H_t))\ge
\rho_{s,G}(V(H_t))$ for all $t$. Now Lemma~\ref{submodular} implies
$$\rho_{s,G}(V({J_t})) = \rho_{s,G}(V(J_{t-1})\cup V({H_t}))
\leq \rho_{s,G}(V({J_{t-1}})) + \rho_{s,G}(V({H_t})) -
\rho_{s,G}(V(J_{t-1}) \cap V({H_t})) \le \rho_{s,G}(V(J_{t-1})).$$
Clearly this inequality also holds if $t \notin L$.
By applying this inequality for each $t\in\{\ell + 1,\ldots,k\}$, we get
$$\rho_{s,G}(V({J_k})) \leq \rho_{s,G}(V(J_{\ell})) \leq 7,$$
which completes the proof.
\end{proof}

\subsection{Simple Graphs: Discharging and Finishing the Coloring}\label{simple proof sec 2}
\label{color-simple-sec}

In this section we continue our proof that our counterexample $G$ is
``well-behaved''; we ultimately construct an nb-coloring of $G$, which
contradicts that $G$ is a counterexample.  

\subsubsection{Discharging to Force Structure}
Let \Emph{$d'(v)$} denote the degree of vertex $v$, when we count
each edge-gadget as contributing 2 to the degree of each endpoint.  \emph{Throughout
this section whenever we write degree we mean $d'$.}

Let \Emph{$L$} denote the set of vertices in $G$ that are uncolored, degree 3,
and not incident to any edge-gadget ($L$ is for low degree, or little risk).
Let $B=V(G)\setminus L$ (here $B$ is for bigger degree, or bigger risk).
Let $B_j \subset B$\aside{$B_j$} denote the set of vertices in $G$ that are
uncolored, degree $j$, and not incident to any edge-gadget.
Let $B_j^{(eg)} \subset B$\aside{$B_j^{(eg)}$} denote the set of vertices in
$G$ that are degree $j$ and incident to an edge-gadget.
Let $B_j^{(f)} \subset B$\aaside{$B^{(f)}_j$}{4mm} denote the set of vertices in $G$
that are degree $j$ and in $F_p$.  
By Proposition~\ref{fancy edges incident to uncolored} each vertex $v$ is
incident to at most one edge-gadget, and not incident to an edge-gadget at all when $v\in F_p$.
That is, $B_j^{(eg)}\cap B_j^{(f)}=\emptyset$ for each $j$.
Let $B_* = B \setminus (B_4 \cup B_5 \cup B_4^{(eg)}\cup B_5^{(eg)} \cup
B_3^{(f)})$\aside{$B_*$}.
We will use discharging to show that nearly all of $V(G)$ is contained in $L\cup
B_4$ and that $G[B_4]$ has very few edges.  In particular, we will show that
$B_*=\emptyset$.  Our idea is to assign charges to $V(G)\cup E(G)$ that sum to
at most 4, and to discharge so that every vertex and edge has nonnegative
charge, but each vertex outside $L\cup B_4$ has positive charge.

We recall a few useful facts.
By Lemma~\ref{min degree 3 simple}, $\delta(G) \geq 3$.
By Lemma~\ref{no cycle of 3s}, $G[L]$ is a forest.
By Lemma~\ref{Ip-empty-lem}, $I_p = \emptyset$, and by hypothesis $\rho_{s,G}(\VG) \geq -4$.

We assign to each vertex $v$ and edge $e$ a charge, denoted $\ch(v)$ or $\ch(e)$
as follows.
For each vertex $v\in U_p$, let $\ch(v)=2.5d'(v)-8$, and for each $v\in F_p$, let
$\ch(v)=2.5d'(v)-3$.  For each edge-gadget $e$, let $\ch(e)=1$.\aside{$\ch(v)$\\
$\ch(e)$}  (Each edge $e$
that is not an edge-gadget has $\ch(e)=0$.)
The sum of these initial charges is

\begin{align*}
\sum_{v \in U_p}5d'(v)/2-8+\sum_{v\in F_p}5d'(v)/2-3 + e''(V(G)) &=
-8|U_p|-3|F_p|+5e'(\VG)+11e''(\VG) \\
&= -\rho_{s,G}(\VG) \\
&\le 4.
\end{align*}

We use only a single discharging rule, and write $\ch^*$\aside{$\ch^*$} for the
charges after applying it. 
\begin{itemize}
\item[(R1)] Each vertex $v\in B$ gives $1/2$ to each
adjacent 3-vertex and gives $1/2$ to each edge with its other endpoint in $B$
(which means giving $2/2$ to each incident edge-gadget).
\end{itemize}

Now we show that each vertex and edge ends with nonnegative charge.  
Note that each edge-gadget $e$ has $\ch^*(e)=1+4(1/2)=3$ since, by definition,
both its endpoints are in $B$.  Further, each edge $e$ induced by $B$ has $\ch^*(e)=0+2(1/2)=1$.  
For each tree $T$ of $G[L]$, we compute the charge of the entire tree (the sum of the charges
of its vertices), showing it is at least 1.  Let $k=|V(T)|$.  The
number of edges with exactly one endpoint in $T$ is $3k-2(k-1)=k+2$.  Note that
$\ch(T)=k((5/2)3-8)=-k/2$  So $\ch^*(T)=-k/2+(k+2)/2=1$. 

Now we consider vertices in $B$.
If $v\in F_p$, then $\ch^*(v)=5d'(v)/2-3-d'(v)/2=2d'(v)-3\ge 3$.
Recall that $\delta(G)\ge 3$; that is, each vertex has at least 3 neighbors
(excluding multiplicity for edge-gadgets).  So if $v$ is incident to an edge-gadget, then
$d'(v)\ge 4$.  Thus, if $v\in U_p\cap B$, then $d'(v)\ge 4$.  Hence,
if $v\in U_p\cap B$, then $\ch^*(v)=5d'(v)/2-8-d'(v)/2=2d'(v)-8\ge 0$.  

Let $\ell$ denote the number of components in $L$.
Recall that $e'(B)$ and $e''(B)$ denote, respectively, the number of edges in
$G[B]$ that are not edge-gadgets, and are edge-gadgets.
Our observations imply that
\begin{align}
\ell + e'(B) + 3e''(B) + 2|B_5| + 2|B_5^{(eg)}| + 3|B_3^{(f)}| +  4|B_*| \leq 4.
\label{dis-in}
\end{align}

In Lemma~\ref{restrict-lem} we use \eqref{dis-in} to greatly restrict the
structure of $G$.  For the proof we will use a key lemma about extending
nb-colorings of $G[B]$ to all of $G$.  To keep the flow of our
presentation, we state the lemma now, but defer its proof a bit longer.

\begin{lem-reword}
\label{extend-color-reword-lem}
For a graph $G$, let $\vph'$ be a coloring of some $W\subseteq \VG$ such that
$\vph'$ is an nb-coloring of $G[W]$, and such that $G-W$ is a forest in which
each vertex has degree 3 in $G$.  We can extend $\vph'$ to an nb-coloring of $G$
whenever each component $T$ of the forest has either (i) a leaf with no
neighbors in $W$ colored $F$ or (ii) an odd number of incident edges leading to
neighbors in $W$ colored $F$.
\end{lem-reword}
%
%
%

\begin{lem}
\label{restrict-lem}
$\ell+e'(B)\le 4$, $\ell\ge 1$, $e'(B)\ge 1$, and $V(G)=L\cup B_4$. 
\end{lem}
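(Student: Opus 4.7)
My plan is to combine the discharging inequality derived above,
$$\ell + e'(B) + 3e''(B) + 2|B_5| + 2|B_5^{(eg)}| + 3|B_3^{(f)}| + 4|B_*| \;\le\; 4, \qquad(\ast)$$
with a short reducibility case analysis that uses the rephrased coloring lemma (Lemma~\ref{coloring-lem}). The bound $\ell + e'(B) \le 4$ is immediate from $(\ast)$, and the set $B_*$ is eliminated at once: each vertex of $B_*$ contributes at least $4$ to the left side of $(\ast)$, so $|B_*| \ge 1$ would force $\ell = 0$ and $e'(B) = e''(B) = 0$; but then $L = \emptyset$ and $G$ has no edges, contradicting $|\VG| \ge 3$ and connectivity (Lemma~\ref{easy cases}).

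I would next translate $(\ast)$ into an equivalent formula for $\rho_{s,G}(\VG)$. Writing $a = |B_4|,\ b = |B_5|,\ c = |B_4^{(eg)}|,\ d = |B_5^{(eg)}|,\ f = |B_3^{(f)}|$, and combining the definition $\rho_{s,G}(\VG) = 8|U_p|+3|F_p|-5e'(\VG)-11e''(\VG)$ with the identities $|F_p|=f$ (a consequence of $B_*=\emptyset$), $e'(\VG) = 2|L|+\ell+e'(B)$, $e''(\VG) = e''(B)$, and the degree-sum identity $\sum_v d'(v) = 2e'(\VG)+4e''(\VG)$, a direct manipulation gives
$$\rho_{s,G}(\VG) \;=\; -\,\ell - e'(B) - 3e''(B) - 2b - 2d - 3f.$$
From this, $\ell \ge 1$ follows: if $L=\emptyset$, then $\ell=0$ and the degree-sum identity forces $4a+5b+4c+5d+3f = 2e'(B)+4e''(B)$; substituting back and using $|\VG| = a+b+c+d+f \ge 3$ shows $\rho_{s,G}(\VG) < -4$, contradicting the hypothesis.

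With $\ell \ge 1$, inequality $(\ast)$ tightens to $e'(B)+3e''(B)+2b+2d+3f \le 3$, leaving only the cases (A) $b=1$, (B) $f=1$, and (C) $e''(B)=1$ with $c=2,\ d=0$ (each with all other listed summands zero). I would rule out each case by producing an explicit nb-coloring of $G$ that extends the precoloring. In every case one first nb-colors $G[B]$, placing the lone special vertex appropriately (the $B_5$-vertex in $I$; the $B_3^{(f)}$-vertex forced into $F$; the two edge-gadget endpoints in opposite parts), and then extends the coloring to $L$ using Lemma~\ref{coloring-lem}, whose hypotheses hold since $G[L]$ is a forest (Lemma~\ref{no cycle of 3s}) and every vertex of $L$ has degree $3$ in $G$. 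The final claim $e'(B) \ge 1$ is then easy: if $e'(B)=0$, then $B = B_4$ is independent in $G$, so $I = B$, $F = L$ is an nb-coloring of $G$ directly, contradicting that $G$ is a counterexample. The principal obstacle will be case (C), where both edge-gadget endpoints may be adjacent to several trees of $G[L]$; verifying the extension condition of Lemma~\ref{coloring-lem} for each such tree may require locally flipping the $I/F$ roles of the edge-gadget endpoints or of nearby $B_4$-vertices.
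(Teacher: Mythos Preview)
Your overall plan mirrors the paper's: both use the discharging inequality $(\ast)$ to bound $\ell+e'(B)$ and eliminate $B_*$, then kill the residual cases by producing an nb-coloring of $G[B]$ and extending it to $L$ via Lemma~\ref{coloring-lem}. Your explicit identity $\rho_{s,G}(\VG)=-\ell-e'(B)-3e''(B)-2b-2d-3f$ is just the equality underlying the discharging, and it is correct; the paper instead gets $\ell\ge 1$ by the one-line count $|\EG|\ge 6>4\ge e(B)$. Your cases (A) and (B) go through essentially as you say (in (B), with $w\in F$ and the rest of $B$ in $I$, the unique tree has exactly three $F$-edges, so Lemma~\ref{coloring-lem}(ii) applies directly; the paper actually writes out an explicit coloring here, but yours is cleaner). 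Note that in case (A) you should not force $e'(B)=0$: one can have $b=1$, $e'(B)=1$, $\ell=1$, and then the $B_4$-endpoint of the $B$-edge must go to $F$, yielding three $F$-edges.

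There is, however, a real gap in case (C). With $e''(B)=1$ you get $\ell=1$, $e'(B)=0$, and the two gadget endpoints $u,v$ each have exactly two neighbours in $L$. The hard subcase is when $T=G[L]$ is a path whose two leaves are adjacent to \emph{both} $u$ and $v$. Here Lemma~\ref{coloring-lem} cannot be made to apply, no matter how you ``flip'': the gadget forces exactly one of $u,v$ into $F$, contributing $2$ $F$-edges; any further $B_4$-vertex you move to $F$ contributes $4$ more; so the total number of $F$-edges into $T$ is always even, and the two leaves are never $F$-leaf-good since each sees one $I$- and one $F$-coloured gadget endpoint. The paper resolves this subcase not with Lemma~\ref{coloring-lem} but with an explicit hand-coloring: it picks an auxiliary $w\in B_4\setminus\{u,v\}$ (which exists because $K_4\not\subseteq G$ forces $|T|\ge 3$), lets $z_1,z_2,z_3,z_4$ be the neighbours of $w$ along the path, and sets $I=(B\setminus\{v,w\})\cup\{z_1,z_3\}$, $F=(L\setminus\{z_1,z_3\})\cup\{v,w\}$. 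Your plan needs this (or the equivalent use of Lemma~\ref{helper-lem}) to close the argument.
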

\begin{proof}
The first inequality follows directly from~\eqref{dis-in}.
Next we recall that $\delta(G)\ge 3$, which implies $|\VG|\ge 4$; combining these
inequalities yields $|\EG|\ge 6$.  Since \eqref{dis-in} implies
$e(B)\le 4$, we must have $L\ne \emptyset$.  That is, $\ell\ge 1$.
Since $\ell\ge1$, note that \eqref{dis-in} implies $B_*=\emptyset$.  Further,
if $|B_5^{(eg)}|\ge 1$, then \eqref{dis-in} fails, since $e''(B)\ge 1$, so
$1+3+2\not\le 4$; thus, $B_5^{(eg)}=\emptyset$.

All that remains is to show that $V(G)=L\cup B_4$.  This will imply $e'(B)\ge
1$, since otherwise we can color $B$ with $I$ and color $L$ with $F$.  
Since $B_*=\emptyset$ and $B_5^{(eg)}=\emptyset$, to show that
$V(G)=L\cup B_4$, we will show that $B_3^{(f)}=\emptyset$,
$B_4^{(eg)}=\emptyset$, and $B_5=\emptyset$.

Suppose that $B_3^{(f)}\ne\emptyset$.  Inequality~\eqref{dis-in}
implies that $e'(B)+e''(B)=0$, and $|B_3^{(f)}|=1$. 
Let $w$ denote the vertex in $B_3^{(f)}$, and let $v_1, v_2, v_3$ denote the
neighbors of $w$.  Since $e'(B)+e''(B)=0$, each $v_i$ is in $L$.  Let $T'$
denote the subgraph of $T$ that is the union of the three paths with endpoints
in $\{v_1,v_2,v_3\}$.  Either $T'$ is a subdivision of $K_{1,3}$ or else $T'$
is a path.  In the first case, let $x$ denote the vertex of degree 3 in $T'$.
Now we let $I=B\cup\{x\}\setminus\{w\}$ and $F=L\cup\{w\}\setminus\{x\}$.
In the second case, some $v_i$ has degree 2 in $T'$; by symmetry, say it is
$v_2$.  Now let $I=B\cup\{v_2\}\setminus\{w\}$ and
$F=L\cup\{w\}\setminus\{v_2\}$.  Thus, we must have $B_3^{(f)}=\emptyset$.

Suppose that $B^{(eg)}\ne\emptyset$, which implies that $e''(B)\ge 1$. Now
\eqref{dis-in} implies $e''(B)=1$, $e'(B)=0$, $\ell=1$, and $\VG=L\cup B_4\cup
B_4^{(eg)}$.  
Let $\tB$ denote the 2 endpoints of the edge-gadget.
Since $\ell=1$, let $T=G[L]$.  If $T$ has at least three leaves, then one of
them, call it $v$, has a neighbor not in $\tB$.  Choose $w\in\tB$ such that $v\notin
N(w)$.  Let $F=\{w\}$ and $I=B\setminus\{w\}$.  Since $v$ has two neighbors in
$B$ colored $I$, we can extend the coloring to $G$ by
Lemma~\ref{coloring-lem} (Rephrased), part (i).
Thus, we assume $T$ has only two leaves; that is, $T$ is a path.  
Further, we assume that each leaf of $T$ is adjacent to both vertices in $\tB$,
since otherwise the argument above still works.
Since $G$ has
no copy of $K_4$, the path $T$ is longer than a single edge.  So $B_4\supsetneq
\tB$.  Let $v$ denote a vertex of $\tB$ and $w$ a vertex in $B_4\setminus \tB$.
Let $I=B\setminus\{v,w\}$.  

Let $z_1,\ldots,z_4$ denote the neighbors of $w$ along the path $T$ (in order).
Let $I=(B\setminus\{v,w\}) \cup\{z_1,z_3\}$ and $F=(L\setminus\{z_1,z_3\})
\cup\{v\}$.  It is easy to check that $I,F$ is an nb-coloring of $G$.
Thus, $e''(B)=0$, which implies $B_4^{(eg)}=\emptyset$.

Finally, suppose $B_5\ne \emptyset$.  Now \eqref{dis-in} implies $|B_5|=1$,
$e'(B)=1$, and $\ell=1$.  So let $T=G[L]$.
Let $e$ denote the edge induced by $B$ and let $x$ denote an endpoint of $e$
with $d(x)=4$.  
Let $I=B\setminus\{x\}$ and $F=\{x\}$.  
The only edges incident to $T$ with an endpoint colored $F$ are the 3 edges
incident to $x$ (other than $e$).  So we can extend the nb-coloring of $B$ to
$V(G)$ by Lemma~\ref{coloring-lem}(ii).
 This shows that $B_5=\emptyset$, which completes the proof of the lemma.
\end{proof}

\subsubsection{Why the Theorem We Prove Must Be Sharp}
\label{why-sec}
In Section \ref{coloring-simple-subsec} we will show that if a graph $G$
satisfies $\delta(G) = 3$, $\Delta(G) = 4$, has its vertices of degree $3$
induce a forest with $\ell$ components, and has at most $4-\ell$ edges with both 
endpoints of degree $4$, then either $G$ (i) is near-bipartite, (ii)
contains a subgraph isomorphic to $M_7$, or (iii) is $J_7$ or $J_{12}$.
In Section~\ref{coloring-lems-sec} we prove several lemmas that help us 
find nb-colorings.  Even with these tools, Section~\ref{coloring-simple-subsec} 
consists of a long, technical case analysis.  So, before we continue, we should
explain why Section~\ref{coloring-simple-subsec} is essential.

Our case analysis would be greatly reduced if we could instead assume that
$\ell+e'(B)\le 3$, and it would be nearly trivial if $\ell+e'(B)\le 2$.
These assumptions correspond to the moderately weaker result that $G$ is 
near-bipartite whenever all $W\subseteq V(G)$ satisfy
$\rho_{s,G}(W) \geq -3$ (respectively $\rho_{s,G}(W) \geq -2$).
The work in Section~\ref{coloring-simple-subsec} is
necessary because such modifications would make our work up to this point 
more difficult, bordering on impossible.

The technique that we use---letting $G$ be a minimum counterexample---is akin
to a proof by induction.  A weaker theorem provides a weaker inductive
hypothesis\footnote{Leading to a dictum of Douglas West, ``If you can't prove
something, try proving something harder!''}.  The gaps in the gap lemmas 
($1-(-4)=5$ and $4-(-4)=8$) correspond to the decreases in potential resulting
from precoloring a single vertex ($8-3=5$ and $8-0=8$).
The latter values would not
change by altering the statement of the Main Theorem.  If we merely had the
weaker inductive hypothesis that graphs smaller than $G$ with potential at
least $-3$ are near-bipartite, then our first gap lemma (Lemma~\ref{small
subgraphs part 1}) would be insufficent to precolor a vertex (Lemma~\ref{put
one vertex in F}).  But we cannot delay proving Lemma~\ref{put one vertex in
F} until after a larger gap is proved \emph{precisely because} Lemma~\ref{put
one vertex in F} is used in the proofs of the stronger gap lemmas
(Lemmas~\ref{small subgraphs part 2} and~\ref{gap-simple}).


\subsubsection{Coloring Lemmas}
\label{coloring-lems-sec}
In the previous lemma we showed that $V(G)=L\cup B_4$. Further, $\ell\ge
1$, $e'(B)\ge 1$, and $\ell+e'(B)\le 4$.  In Section~\ref{coloring-simple-subsec}, we
will show how to color $G$.  
Our main tools will be Lemmas~\ref{coloring-lem} and~\ref{helper-lem}, which
allow us to extend partial nb-colorings to components of $G[L]$.  To prove
the first of these, we use the following technical result.  Let $S_1\dcup
S_2$\aaside{$S_1\dcup S_2$}{-6mm} denote the disjoint union of sets $S_1$ and $S_2$.
When vertices $v$ and $w$ are adjacent we write $v\adj w$, and
otherwise $v\nonadj w$\aaside{$v\adj w$\\ $v\nonadj w$}{-6.5mm}.
An operation that we will use repeatedly is to \Emph{suppress} a vertex of degree 2,
which is to delete it and add an edge between its neighbors.

\begin{figure}[!h]
\centering
\begin{tikzpicture}[semithick, scale=.85]
\tikzset{every node/.style=uStyle}

\draw (0,0) node {} -- (1,0) node {} -- (1,1) node {} (2,0) node {};
\draw (0,.25) node[lStyle] {\footnotesize{o}} (.75,1) node[lStyle]
{\footnotesize{o}} (2,.3) node[lStyle] {\footnotesize{i}};
\arrowdrawthick (1.25,0) to (1.75,0);

\begin{scope}[xshift=1.5in]
\draw (0,0) node {} -- (1,0) node {} -- (1,1) node {} (2,0) node {};
\draw (0,.3) node[lStyle] {\footnotesize{i}} (.75,1) node[lStyle]
{\footnotesize{o}} (2,.25) node[lStyle] {\footnotesize{o}};
\arrowdrawthick (1.25,0) to (1.75,0);
\end{scope}

\begin{scope}[xshift=3.0in]
\draw  (1,0) -- (2,0) (0,0) node {} -- (1,0) node {} -- (1,1) node {} (2,1) node {} -- (2,0)
node {} -- (3,0) node {};

\draw (0,.3) node[lStyle] {\footnotesize{i}} (.75,1) node[lStyle] {\footnotesize{i}};
\arrowdrawthick (3.5,0) to (4.0,0);
\begin{scope}[xshift=-1mm]
\draw (4.3,1) node {} .. controls ++(270:1.05) and ++(180:1.05) .. (5.3,0) node {};
\end{scope}
\end{scope}
\end{tikzpicture}
\caption{The induction step in the proof of Lemma~\ref{tree-helper}.}
\end{figure}
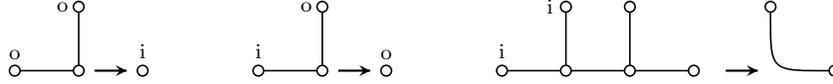

\begin{lem}
\label{tree-helper}
Let $T$ be a tree in which each non-leaf vertex has degree 3.  Let $S_{in}\dcup
S_{out}$ be a partition of the leaves of $T$.  If $|S_{out}|$ is odd, then $T$
has an independent set $S$ such that $S_{in}\subseteq S$ and $S_{out}\cap
S=\emptyset$, and also each component of $T-S$ contains at most one leaf of $T$.
\end{lem}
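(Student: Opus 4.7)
The plan is to prove the lemma by induction on $|V(T)|$. Since every non-leaf has degree $3$, the handshake identity gives $|V(T)|=2\ell-2$, where $\ell$ is the number of leaves, so the only base cases are $T=K_2$ and $T=K_{1,3}$. For $T=K_2$ with $|S_{out}|=1$, take $S:=S_{in}$; for $T=K_{1,3}$ with center $u$ and leaf set $\{a,b,c\}$, take $S:=S_{in}$ when $|S_{out}|=1$ and $S:=\{u\}$ when $|S_{out}|=3$. In each instance the three required properties are immediate.

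For the inductive step with $|V(T)|\ge 6$, I would locate a \emph{cherry} by taking the second vertex $u$ on a longest path of $T$: its neighbors consist of two leaves $a,b$ and a third vertex $c$, which must be internal since $|V(T)|\ge 6$. The proof then splits into three cases on the partition labels of $\{a,b\}$, matching the three panels of the preceding figure. When $a,b\in S_{out}$, set $T':=T-\{a,b\}$ (so $u$ becomes a leaf of $T'$), relabel $S_{in}':=S_{in}\cup\{u\}$ and $S_{out}':=S_{out}\setminus\{a,b\}$, and take $S:=S'$. When $a\in S_{in}$ and $b\in S_{out}$, again let $T':=T-\{a,b\}$ but relabel $S_{in}':=S_{in}\setminus\{a\}$ and $S_{out}':=(S_{out}\setminus\{b\})\cup\{u\}$, and take $S:=S'\cup\{a\}$. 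When $a,b\in S_{in}$, the naive deletion would flip the parity of $|S_{out}|$, so instead form $T'$ by deleting $\{a,b,u,c\}$ and joining the remaining two neighbors $c_1,c_2$ of $c$ by a new edge; this preserves the degree-$3$ condition on non-leaves and leaves $S_{out}':=S_{out}$ unchanged. Take $S:=S'\cup\{a,b\}$. In all three cases the parity of $|S_{out}|$ is preserved and $T'$ is strictly smaller than $T$, so the inductive hypothesis applies.

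The substantive bookkeeping is verifying the component condition on $T-S$ and the independence of $S$. In Cases $2$ and $3$, the leaf $b$ (and in Case $2$ also $a$) attaches in $T-S$ to a component of $T'-S'$ whose only $T'$-leaf was the internal vertex $u$ of $T$, so that component contains exactly one leaf of $T$. In Case $1$, the edge $c_1c_2$ of $T'-S'$ is replaced in $T-S$ by the length-$2$ path from $c_1$ to $c_2$ through $c$, with $u$ dangling from $c$; since $u$ and $c$ are internal in $T$, no affected component gains a $T$-leaf. Independence of $S$ in $T$ follows because each newly-added vertex of $S$ is adjacent in $T$ only to vertices that the construction has placed outside $S$, and because $c_1,c_2$ (which are adjacent in $T'$) cannot both lie in $S'$.

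I expect the main obstacle to be Case $1$: the natural reduction ``delete $a,b$ and mark $u$ as out'' would flip the parity of $|S_{out}|$ and spoil the induction, which is the very reason the lemma requires $|S_{out}|$ to be odd. The deeper reduction that additionally removes $u$ and $c$ and inserts the edge $c_1c_2$ is exactly what simultaneously preserves the degree-$3$ structure on non-leaves and the parity of $|S_{out}|$ needed for induction.
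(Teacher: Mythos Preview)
Your approach is essentially the paper's: the same cherry reduction with the same three cases (the paper locates the cherry by pigeonhole rather than longest-path, and in the $a,b\in S_{in}$ case phrases the removal of $c$ as ``suppressing'' the degree-$2$ vertex, but these are cosmetic). Two small points to clean up: you omit the base case of a one-vertex tree (a single leaf in $S_{out}$; take $S=\emptyset$), and your bookkeeping description for the case $a,b\in S_{out}$ is off---there $u\in S_{in}'\subseteq S'=S$, so $a$ and $b$ become isolated singleton components of $T-S$ rather than attaching to a component through $u$.
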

\begin{proof}
Let $k$ denote the number of leaves in $T$.
Our proof is by induction on $k$.  
If $k=1$, then $T$ is an isolated vertex contained by $S_{out}$.
Set $S = \emptyset$.
If $k=2$, then $T \cong K_2$ and $|S_{out}| = |S_{in}| = 1$.
Set $S = S_{in}$.
For good measure, we also consider $k=3$, where $T=K_{1,3}$. 
If all leaves are in $S_{out}$, then we take $S$ to be the center vertex.
Otherwise, one leaf is in $S_{out}$ and the other two are in $S_{in}$, so we
take $S$ to consist of the two leaves in $S_{in}$.

Now suppose that $k\ge 4$.  The number of non-leaf vertices in $T$ is $k-2$, and
each of these has at most two leaf neighbors.  By Pigeonhole, some non-leaf
vertex $v$ has exactly two leaf neighbors, say $w_1$ and $w_2$.  If $w_1,w_2\in
S_{out}$, then we apply induction to $T-\{w_1,w_2\}$, with leaf partition
$S'_{in}\dcup S'_{out}$, where $S'_{in}=S\cup\{v\}$ and
$S'_{out}=S_{out}\setminus\{w_1,w_2\}$.  If $|\{w_1,w_2\}\cap S_{out}|=1$, then
we assume $w_1\in S_{out}$ (by symmetry) and let $T'=T-\{w_1,w_2\}$.  We apply
induction to $T'$ with $S'_{out}=(S_{out}\setminus\{w_1\})\cup\{v\}$
and $S'_{in}=S_{in}\setminus\{w_2\}$, and let $S'$ be the guaranteed independent
set.  Let $S=S'\cup\{w_2\}$.  Finally, suppose that $w_1,w_2\in
S_{in}$.  Now let $x$ denote the third neighbor of $v$.  Form $T'$ from
$T-\{v,w_1,w_2\}$ by suppressing $x$.  Let $S'_{out}=S_{out}$ and
$S'_{in}=S_{in}\setminus\{w_1,w_2\}$.  Given the independent set $S'$ for $T'$ by
induction, let $S=S'\cup\{w_1,w_2\}$.
\end{proof}

\begin{remark}
Recall, from Lemma~\ref{restrict-lem}, that $V(G)=L\cup B_4$ and that $G[L]$ is a forest.  
All figures in the rest of the paper will denote nb-colorings of $G$.  Vertices
in $I$ are drawn as \tikz \draw node[uBstyle]{} node[uStyle]{}; and those in
$F$ are drawn as \tikz \draw node[uStyle]{};.  Edges in bold denote those induced by vertices of $L$.
\end{remark}

\begin{defn}
Fix an nb-coloring $\vph'$ of $G[B]$.  
Now each edge from a vertex of $L$ to a vertex of
$B$ colored $F$ is an \EmphE{$F$-edge}{-5mm} (an $I$-edge is defined
analogously).  We say that the $F$-edges incident to a
component $T$ of $G[L]$ are $F$-edges belonging to $T$. 
A component $T$ of $G[L]$ is
\EmphE{$F$-odd}{-6mm} (resp.~\EmphE{$F$-null}{-3mm}) if its number of $F$-edges is odd
(resp.~0).  Further, $T$ is \Emph{$F$-leaf-good} if some leaf of $T$ has two
neighbors in $B$ colored $I$.  (If $T$ is $F$-null, then clearly $T$ is
$F$-leaf-good.)
\end{defn}

\begin{figure}[!h]
\centering

\begin{tikzpicture}[semithick]
\tikzset{every node/.style=uStyle}
\tikzstyle{lStyle}=[shape = rectangle, minimum size = 0pt, inner sep =
0pt, outer sep = 0pt, draw=none]

\def\rtt{.7071} 

\begin{scope}[xshift=.1in,scale=.85]
\draw (-\rtt,\rtt) node {} -- (0,0) node {} -- (-\rtt,-\rtt) node {} (1,0) node {};
\draw (1,.3) node[lStyle] {\footnotesize{i}};
\arrowdrawthick (.3,0) to (.75,0);

\begin{scope}[xshift=1.35in]
\draw (-\rtt,\rtt) node {} -- (0,0) node {} -- (-\rtt,-\rtt) node[uBstyle] {} node {} (1,0) node {};
\draw (1,.3) node[lStyle] {\footnotesize{o}};
\arrowdrawthick (.3,0) to (.75,0);

\end{scope}

\begin{scope}[xshift=2.7in]
\draw (-\rtt,\rtt) node[uBstyle] {} node {} -- (0,0) node {} 
-- (-\rtt,-\rtt) node[uBstyle] {} node {} (1,0) node {};
\draw (1,.3) node[lStyle] {\footnotesize{i/o}};
\arrowdrawthick (.3,0) to (.75,0);

\end{scope}
\end{scope}

\begin{scope}[yshift=-1.30in, xshift=.7cm]

\draw (0,.3) -- (0,1.7) (-1,1) node[uBstyle] {} node {} -- (0,1) node {} (1.2,.3) -- (1.2, 1.7);
\arrowdrawthick (.4,1) -- (.8,1);

\begin{scope}[xshift=1.62in]
\draw (0,.3) -- (0,1.7) (-1,1) node {} -- (0,1) node {}; (1.2,.3) -- (1.2, 1.7);
\arrowdrawthick (.4,1) -- (.8,1);
\begin{scope}[xshift=.15cm]
\draw (2,.3) -- (2,1.7) (1,1) node {} -- (2,1) node {} (1,1.3) node[lStyle] {\footnotesize{o}};
\end{scope}
\end{scope}

\end{scope}

\end{tikzpicture}
\caption{The construction of tree $T'$ in the proof of Lemma~\ref{coloring-lem}.}

\end{figure}
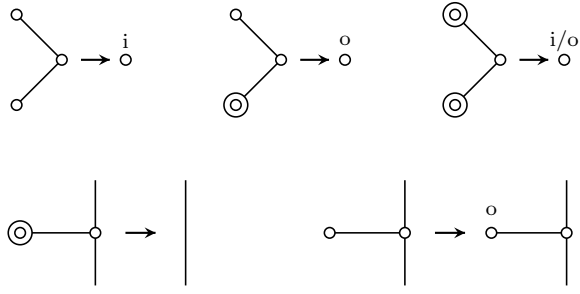

\begin{lem}
For a graph $G$, let $\vph'$ be a coloring of some $W\subseteq \VG$ such
that $\vph'$ is an nb-coloring of $G[W]$, and such that $G\setminus W$ is a
forest in which each vertex has degree 3 in $G$.  We can extend $\vph'$ to an
nb-coloring of $G$ whenever each component $T$ of the
forest is either (i) $F$-odd or (ii) $F$-leaf-good.
\label{coloring-lem}
\end{lem}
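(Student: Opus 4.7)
Plan. We extend $\vph'$ to each component $T$ of $G-W$ independently. Fix $T$; since every $v\in V(T)$ has $d_G(v)=3$, the number of its external edges to $W$ is $3-d_T(v)\in\{0,1,2\}$. Call $v$ \emph{$I$-forbidden} if some external neighbor of $v$ lies in $I\cap W$, and \emph{free} otherwise. We seek an independent set $S\subseteq V(T)$ of free vertices to place in $I$, with the remaining vertices of $T$ going into $F$. Since $\vph'$ is an nb-coloring, $S\cup(I\cap W)$ is automatically independent in $G$, and because $G[F]$ is already a forest, $G[F\cup(V(T)\setminus S)]$ will be a forest as long as every component of $T-S$ carries at most one $F$-edge to $W$.

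Classify each leaf of $T$ by the $\vph'$-colors of its two external neighbors as Class~$II$, $IF$, or $FF$: Classes $II$ and $IF$ are $I$-forbidden, while Class~$FF$ leaves are free. We first dispose of degree-$2$ vertices of $T$ by a brief preliminary reduction (an $I$-forbidden degree-$2$ vertex must be colored $F$; a free one can be suppressed, its external edge absorbed into the classification of a now-adjacent leaf), leaving a tree whose non-leaves all have degree~$3$. We then invoke Lemma~\ref{tree-helper} with $S_{in}=\{\text{Class }FF\text{ leaves}\}$ and $S_{out}=\{\text{Class }II\text{ and Class }IF\text{ leaves}\}$; provided $|S_{out}|$ is odd, this yields an independent set $S\supseteq S_{in}$ disjoint from $S_{out}$ whose removal leaves each component of $T-S$ with at most one leaf of $T$. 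Internal vertices contribute no external edges, and the unique leaf in any component of $T-S$ lies in $S_{out}$ and hence is Class~$II$ or $IF$, contributing at most one $F$-edge---so every component of $T-S$ contributes at most one $F$-edge to $G[F]$, yielding the forest condition.

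The remaining task is to arrange $|S_{out}|=|IF|+|II|$ to be odd. Since the total number of $F$-edges of $T$ equals $2|FF|+|IF|$, hypothesis~(i) ($F$-odd) forces $|IF|$ to be odd and handles the subcase where $|II|$ is even. In the residual parity cases---hypothesis~(i) with $|II|$ odd, or hypothesis~(ii) ($F$-leaf-good) with $|IF|+|II|$ even---we have $|II|\ge 1$, so a Class~$II$ leaf $v^*$ exists; we use $v^*$ as a parity handle by pre-assigning $v^*\in F$ and applying the result recursively to $T-v^*$, tracking how the removal of $v^*$ affects the $F$-edge count, the classification of its former $T$-neighbor $x$ (which may become a degree-$2$ vertex to be reabsorbed), and whether the residual component still satisfies (i) or (ii). The main obstacle lies exactly here: the careful parity bookkeeping for $|S_{out}|$ combined with the initial degree-$2$ reduction; each step is routine but the combination demands patience and a case split on how $v^*$'s removal reshapes the leaf classes in $T-v^*$.
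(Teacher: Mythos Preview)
Your overall architecture matches the paper's---reduce to Lemma~\ref{tree-helper} with an appropriate leaf partition $S_{in}\dcup S_{out}$---but your treatment of the degree-$2$ vertices of $T$ has a real gap. You suppress the \emph{free} degree-$2$ vertices (those whose single external neighbor lies in $F$), claiming the $F$-edge is ``absorbed into the classification of a now-adjacent leaf''; but suppression creates no new leaf, so that phrase has no content and the $F$-edge simply disappears from your bookkeeping. Consequently your identity ``total $F$-edges $= 2|FF|+|IF|$'' is false in general. For a concrete failure, take $T$ to be the path $a\,b\,c\,d\,e$ with $a,e$ of Class $IF$ and each of $b,c,d$ having its one external neighbor in $F$. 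Then $T$ has five $F$-edges, so hypothesis~(i) holds; yet $|IF|=2$ and $|II|=0$, so $|IF|+|II|$ is even and there is no Class-$II$ leaf to serve as your parity handle $v^*$. Your method stalls, although $S=\{b,d\}$ plainly works.

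The paper's construction does the opposite of yours: it suppresses the non-leaves whose external edge is an $I$-edge (these are later colored $F$ and carry no $F$-edge, so are harmless), while for each non-leaf $v$ whose external edge is an $F$-edge it \emph{adds} a new pendant leaf $w_v$ and places $w_v\in S_{out}$. With this convention $|S_{out}|\equiv 2|FF|+|IF|+|\{\text{non-leaves with an }F\text{-edge}\}|\equiv(\text{total }F\text{-edges})\pmod 2$, so hypothesis~(i) directly gives $|S_{out}|$ odd. Under hypothesis~(ii) the $II$ leaves are distributed freely between $S_{in}$ and $S_{out}$, with the last one used to fix the parity---no recursion on $T-v^*$ is needed. (Any $II$ leaf that lands in $S$ is simply colored $F$ regardless, which is harmless since it carries no $F$-edge.)
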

\begin{proof}
Suppose that $G$, $W$, and $\vph'$ satisfy the hypotheses.  Let $T$ be a
(tree) component of $G-W$.  We show how to extend $\vph'$ to $V(T)$ so that no
two of its vertices with incident $F$-edges are linked by a path in $T$
entirely colored $F$. 

From $T$ we form a new tree $T'$, and leaf partition $S_{in}\dcup S_{out}$,
 as follows.  When a non-leaf $v$ of $T$ has an incident $I$-edge, we
suppress $v$.  When a non-leaf $v$ of $T$ has an incident $F$-edge,
we add a leaf $w_v$ incident to $v$ and add $w_v$ to $S_{out}$.  
When a leaf $v$ of $T$ has two incident $F$-edges, we add $v$ to $S_{in}$. 
When a leaf $v$ has both an incident $I$-edge and an incident $F$-edge, we add
$v$ to $S_{out}$.
Now consider leaves of $T$ with two incident $I$-edges (if such leaves exist).  
For all but one of
these, say $w$, we add them to $S_{in}$ or $S_{out}$ arbitrarily.  Finally, we
add $w$ to either $S_{in}$ or $S_{out}$ so that $|S_{out}|$ is odd.  Under both
hypotheses (i) and (ii), we get that $|S_{out}|$ is odd.

Now we invoke Lemma~\ref{tree-helper}, to find an independent set $S$ such that
$S_{in}\subseteq S$ and $S_{out}\cap S=\emptyset$, and also each component of
$T-S$ contains at most one leaf of $T'$.
We color each vertex of $S$ with
$I$, except for leaves of $T$ with two incident $I$-edges.  It is easy to
check that no two vertices of $T$ with incident $F$-edges are linked by a path
in $T$ all colored $F$.
\end{proof}

\begin{lem}
Let $G$ be a tree or else be connected and have a single cycle $C$, which is
not a 3-cycle.  Form $G'$ from $G$ by
adding a new vertex $v$ and making $v$ adjacent to at most four vertices in
$\VG$, at least one of which is on $C$, if $C$ exists.  Now $G'$ has a near
bipartite coloring $I,F$ with $I\subseteq N_{G'}(v)$.
\label{helper-lem}
\end{lem}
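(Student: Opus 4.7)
The plan is to prove the lemma by induction on $|V(G)|$, combining two simplifying reductions with explicit case analysis on $k := |N_{G'}(v)|$, which satisfies $0 \le k \le 4$. First I would reduce to a small graph using two moves: (R1) if $G$ has a vertex $w \notin N_{G'}(v)$ of degree $1$, I delete $w$, apply induction to $G-w$, and extend by placing $w$ in $F$; (R2) if $G$ has a vertex $w \notin N_{G'}(v)$ of degree $2$ not lying on $C$, I suppress $w$ (which is safe since $G$'s only cycle, if present, has length at least $4$, so $w$'s two neighbors are non-adjacent in $G$ and no multi-edge is created), apply induction to the result, and place $w$ in $F$. In both reductions, $G'[F]$ remains a forest because $w$ contributes only a pendant edge or a subdivision of an existing edge. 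After iterating these reductions, every vertex of $G$ outside $N_{G'}(v) \cup V(C)$ has degree at least $3$, which together with $|N_{G'}(v)| \le 4$ bounds $|V(G)|$ by a small constant and reduces the remaining work to finitely many small configurations.

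Next, I would construct $I$ by cases on $k$. For $k=0$ (which forces $G$ to be a tree, by the hypothesis that $N_{G'}(v)$ meets $V(C)$ when $C$ exists), set $I = \emptyset$. For $k \in \{1,2\}$, take $I = \{u_1\}$, choosing $u_1 \in N_{G'}(v) \cap V(C)$ if $G$ is unicyclic and any $u_1 \in N_{G'}(v)$ otherwise; this single deletion breaks $C$ (if present) and the unique cycle through $v$ (when $k=2$), since that cycle must traverse $u_1$. For $k = 3$ in the tree case, I analyze the reduced Steiner tree of $N_{G'}(v)$ in $G$: if it is a path with a middle vertex $u_i \in N_{G'}(v)$, set $I = \{u_i\}$; if it is a Y whose three leaves are $u_1,u_2,u_3$, set $I = \{u_1,u_2\}$, which is independent because the tree-path between $u_1$ and $u_2$ passes through a non-$N$ Steiner node. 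For $k = 4$ in the tree case, the reduced Steiner tree is a path, Y, H, or X, and in each shape a suitable subset of $N_{G'}(v)$ of size $2$ or $3$ works (for example, the alternating pair in a path, or three of the four leaves in X).

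The main obstacle will be the unicyclic case with $k \in \{3,4\}$, where $I$ must simultaneously destroy $C$, destroy all $\binom{k}{2}$ cycles through $v$, and remain independent in $G$. To handle this I would use three structural facts: (a) since $G$ has no cycle beyond $C$, various adjacencies within $N_{G'}(v)$ are forbidden, because they would create a second cycle together with $C$ or with a path through $C$; (b) removing a chosen vertex of $C$ disconnects the subtrees hanging off $C$ at distinct attachment points, which breaks the cycles through $v$ that traverse $C$; and (c) because $C$ has length at least $4$, any three vertices on $C$ contain a non-adjacent pair, giving enough slack to place multiple vertices of $C$ into $I$ independently. I would enumerate the subcases according to how many of $u_1,\ldots,u_k$ lie on $C$ and where the off-cycle $u_i$ attach to $C$, verifying in each subcase that a suitable $I$ exists and that $G'[F]$ is a forest.
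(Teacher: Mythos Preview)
Your approach differs substantially from the paper's. The paper does not use induction or reductions at all: it directly seeks an independent set $S' \subseteq S := N_{G'}(v)$ that hits $C$ (if present) and hits every cycle through $v$ in $G'$, and finds such $S'$ by a short case analysis on the graph $G[S]$ induced by the neighbors of $v$ (how many edges $S$ induces, and which small graph on at most four vertices arises). Your route via the reductions (R1) and (R2) and Steiner-tree shapes is a legitimate alternative for the tree case and is carried out reasonably there.

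There is, however, a genuine error in your reduction step for the unicyclic case. You claim that after iterating (R1) and (R2), ``every vertex of $G$ outside $N_{G'}(v)\cup V(C)$ has degree at least $3$, which together with $|N_{G'}(v)|\le 4$ bounds $|V(G)|$ by a small constant.'' This is false: (R2) explicitly excludes vertices on $C$, so degree-$2$ vertices on $C$ that are not in $N_{G'}(v)$ are never suppressed, and $|V(C)|$ remains unbounded. Thus your promised reduction to ``finitely many small configurations'' does not go through, and any plan to finish by brute enumeration fails. You could repair this by adding a third reduction that suppresses degree-$2$ vertices on $C$ not in $N_{G'}(v)$ whenever $|C|\ge 5$; this shrinks $C$ down to length~$4$ (or until every vertex of $C$ is in $N_{G'}(v)$ or has degree $\ge 3$), after which a genuine finite bound holds.

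Beyond that error, the heart of the lemma is the unicyclic case with $k\in\{3,4\}$, and here you give only an outline (facts (a)--(c) and a promise to enumerate subcases). This is precisely where the paper spends most of its effort, treating the possibilities $G[S]\in\{C_4,\,P_4,\,K_{1,3},\,P_3+K_1,\,2K_2\}$ one by one. Your structural facts are sound, but they do not by themselves produce the required $I$; you would still need to carry out a case analysis of comparable length to the paper's. In short: your reductions are valid and your tree-case treatment is fine, but the bounded-size claim is wrong as stated, and the unicyclic $k\ge 3$ case---the main content of the lemma---remains to be done.
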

\begin{proof}
First suppose that $G$ is a tree.  If at least $d_{G'}(v)-1$ neighbors of $v$
induce an independent set, then we color them with $I$ and color the rest of
$G'$ with $F$.  If this is not the case, then $d_{G'}(v)=4$ and the four
neighbors of $v$ induce either 2 or 3 edges.  In each case, we can color two
of these neighbors with $I$ and the rest of $G'$ with $F$.  

So assume instead that $G$ has a cycle, $C$.  Let $S=N_{G'}(v)$.  Our goal is
again to use color $I$ on some independent set $S'\subseteq S$. As before 
$S'$ must intersect every cycle in $G'$ through $v$, but now we also require
that some vertex in $S'$ lies on $C$.  If some independent $S'\subseteq S$ has
size at least $d_{G'}(v)-1$ and intersects $C$, then we are done.  This includes
the case when $S$ induces at most one edge, specifically when $d_{G'}(v)\le 2$.
If $d_{G'}(v)=3$, but the case above does not apply, then $S$ induces $P_3$
with only the center vertex on $C$; so we let $S'$ consist of this
center vertex.  Thus, we assume that $d_{G'}(v)=4$,  and that $S$
induces 2, 3, or 4 edges.  

First suppose that $S$ induces 4 edges.  Since $G$ has no 3-cycle, $G[S]=C_4$.  
Now all vertices of $S$ lie on $C$, so we take $S'$ to be either independent
subset of size 2.

Suppose instead that $S$ induces 3 edges; so $G[S]\in\{P_4,K_{1,3}\}$.  
When $G[S]=K_{1,3}$, let $S'$ be the independent subset of size 3 unless it
does not intersect $C$; in that case, let $S'$ be the other vertex.  
If $G[S]=P_4$, then denote the vertices of $S$ by $w_1,\ldots,w_4$ in order
along the path.  We either let $S'=\{w_1,w_3\}$ or let $S'=\{w_2,w_4\}$.
(If each choice for $S'$ misses some cycle in $G'$, then $G$ contains at least
two distinct cycles, contradicting the hypothesis.)

Finally, assume $S$ induces two edges; so $G[S]\in\{2K_2,P_3+K_1\}$.  
Suppose $G[S]=P_3+K_1$.  If the independent set $S'\subset S$ of size 3 has a
vertex on $C$, then we are done.  Otherwise, let $S'$ consist of the center
vertex of the $P_3$ and its nonneighbor.  So assume instead that $G[S]=2K_2$. Now
it is straightforward to check that we can use as $S'$ one of the independent
sets of size 2 (the general idea is to use one with as many vertices on $C$ as
possible, though not all such sets will work).
\end{proof}

\subsubsection{Coloring the Graph}
\label{coloring-simple-subsec}

Recall that $B=B_4$.
Let \emph{$\tB$} denote the subset of $B$ incident to edges in $G[B]$. (Since $e'(B)\le
3$, we have $|\tB|\le 6$.) We form \emph{$\tG$}\aaside{$\tB$, $\tG$}{-4mm} from $G$ by deleting all vertices of
$B\setminus \tB$ and suppressing all of their neighbors that were not leaves in
$G[L]$.  
(Later we also use the
notation $\tT$.  In each case, the reader should think of \Memph{$\mytilde$} as meaning
`shrinking down to the most important part'.) If $\tG$ has an nb-coloring $I,F$,
then we can extend this coloring to $G$ by adding the deleted vertices
of $B$ to $I$ and the suppressed vertices of $L$ to $F$.  Our goal is to color
$\tG$.  If we can't, then we try unshrinking a deleted vertex and its 4
suppressed neighbors.  If no vertex exists to unshrink, then we show that $G$
contains a forbidden subgraph, contradicting our hypothesis.

We often use Lemma~\ref{helper-lem} to extend an nb-coloring of $\tB$ to a
tree $T$ of $G[L]$, specifically when $F\cup V(T)$ induces a cycle.  The idea is
to find a vertex $x\in B\setminus \tB$ and add it to $F$.  This allows us to add
neighbors of $x$  in $T$ to $I$ (as long as they are not leaves in $T$).  When
we do this, we call $x$ the \Emph{helper} and say that we \emph{color
$T$ by Lemma~\ref{helper-lem}, with $x$ as helper}.

When we describe an nb-coloring of $B$, we often specify only the vertices
in $B\cap F$, implying that $B\setminus F$ is colored $I$.  We extend this
coloring to each component of $G[L]$ using Lemmas~\ref{coloring-lem} and~\ref{helper-lem}.

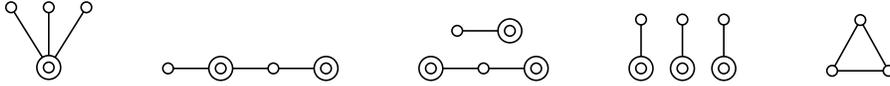
\begin{figure}[!htb]
\centering
\begin{tikzpicture}[semithick]  %
\tikzset{every node/.style=uStyle}
\begin{scope}[xscale=.5, yscale=.8,yshift=-.1in]
\draw (-1,0) node (v1) {} (0,0) node (v2) {}
(1,0)  node (v3) {} (0,-1) node[uBstyle] (v4) {} node {}; 
\draw (v1) -- (v4) -- (v2) (v4) -- (v3);
\end{scope}

\begin{scope}[xshift=.9in,xscale=.7,yshift=-.4in]
\draw (-1,0) node {} -- (0,0) node[uBstyle] {} node {} -- (1,0) node {} -- (2,0) node[uBstyle] {} node {}; 
\end{scope}

\begin{scope}[xshift=2.0in,xscale=.7,yshift=-.4in]
\draw (0,0) node[uBstyle] {} node {} -- (1,0) node {} -- (2,0) node[uBstyle] {} node {};
\draw (.5,.5) node {} -- (1.5,.5) node[uBstyle] {} node {};
\end{scope}

\begin{scope}[xshift=3.1in, yshift=-.4in, yscale=.65, xscale=.55]
\draw (0,1) node {} -- (0,0) node[uBstyle] {} node {} (1,1) node {} -- (1,0)
node[uBstyle] {} node {} (2,1) node {} -- (2,0) node[uBstyle] {} node {};
\end{scope}

\begin{scope}[xshift=4.1in, scale=.75, yshift=-.55in]
\draw (0,0) node {} -- (1,0) node {} -- (.5,.9) node {} -- (0,0);
\end{scope}

\end{tikzpicture}
\caption{We have five possibilities for $G[\tB]$ when
$e(B)=3$, in the proof of Lemma~\ref{lem4.37}.\label{lem4.37-fig}}
\end{figure}

\begin{lem}
If $e'(B)=3$, then $G$ is near-bipartite.
\label{lem4.37}
\end{lem}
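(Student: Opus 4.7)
The plan is a case analysis on the five possibilities for $G[\tB]$ shown in Figure~\ref{lem4.37-fig}. By Lemma~\ref{restrict-lem}, $e'(B)=3$ forces $\ell=1$, so $G[L]$ is a single tree $T$. Since every vertex of $B$ has degree $4$, each $v \in \tB$ has exactly $4-d_{G[\tB]}(v)$ incident edges into $L$, and each $v \in B\setminus\tB$ has all $4$ of its edges going into $L$. The five isomorphism types for $G[\tB]$ are $K_{1,3}$, $P_4$, $P_3+K_2$, $3K_2$, and $K_3$, and in each case I would first attempt to find an nb-coloring of $\tG$ (equivalently, of $\tB$), then extend it to all of $T$ via Lemma~\ref{coloring-lem}.

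For each choice of $G[\tB]$, I would enumerate the (few) nb-colorings of $\tB$. The target is that $T$ be $F$-odd (odd number of incoming $F$-edges) or $F$-leaf-good (some leaf has two neighbors colored $I$). For example, when $G[\tB]=K_3$, every nb-coloring forces one vertex in $I$ and two in $F$, yielding $2\cdot 2 + 1 \cdot 2 = 6$ neighbors in $T$ from $F$ and $2$ from $I$; a parity trick (swap which $K_3$-vertex is in $I$) gives three parities to play with. In cases $P_4$, $3K_2$, $P_3+K_2$, and $K_{1,3}$, there is much more freedom in the coloring of $\tB$, so I would expect one choice to satisfy the hypothesis of Lemma~\ref{coloring-lem} directly.

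When no coloring of $\tB$ extends via Lemma~\ref{coloring-lem}, the fallbacks are: (a) unshrink a vertex $x\in B\setminus \tB$ and use it as a helper via Lemma~\ref{helper-lem}, assigning $x$ to $F$ and thereby forcing its $T$-neighbors into $I$, which breaks bad $F$-paths in $T$; (b) unshrink a whole suppressed path, so that what was a short forbidden cycle in $\tG$ is actually a longer, colorable cycle in $G$. These techniques mirror the strategy already used throughout Section~\ref{color-simple-sec} for coloring trees under Lemma~\ref{coloring-lem}.

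The main obstacle is the scenario in which none of these options succeeds: all helpers are exhausted (so $B = \tB$, meaning $G = \tG$) and every nb-coloring of $\tB$ makes $T$ both $F$-even and not $F$-leaf-good. In that regime, the distribution of $F$-edges along $T$ and the constraint $\delta(G) \geq 3$ leave $T$ with a rigid structure; combined with the no-3-cycle/no-short-cycle-of-3s results (Lemmas~\ref{no cycle of 3s of length at most 4} and \ref{no-5cycle-of-3s}) and the specially-linked machinery of Corollary~\ref{how to show H is not present}, I expect this to force either an explicit construction of an nb-coloring (e.g., by recoloring a single path in $T$ between two neighbors of $\tB$) or the appearance of a subgraph in $\HH$, contradicting the hypothesis of the Main Theorem.
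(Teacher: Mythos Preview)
Your overall plan matches the paper's proof: the four cases $K_{1,3}$, $P_4$, $P_3+K_2$, $3K_2$ are dispatched exactly as you suggest, by choosing an nb-coloring of $\tB$ that makes $T$ $F$-odd, and the $K_3$ case is where the real work lies, handled via $F$-leaf-good colorings and helper vertices.

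However, your $K_3$ analysis contains a concrete error and a genuine gap. The error: each vertex of the $K_3$ has $4-2=2$ edges into $T$, so any nb-coloring (two vertices in $F$, one in $I$) gives exactly $4$ $F$-edges, not $6$; in particular there are \emph{no} parities to play with, since every coloring of $\tB$ leaves $T$ $F$-even. This is precisely why the $K_3$ case is hard. The paper instead shows that if $T$ has $\ge 4$ leaves one can force $F$-leaf-good, and otherwise reduces to $\tT\in\{K_{1,3},P_4\}$.

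The gap: in the terminal subcases you must identify \emph{which} forbidden subgraphs arise. When $\tT=K_{1,3}$ one has $\tG=J_7$, and when $\tT=P_4$ one of the three possible attachments yields a copy of $M_7$. Both are base graphs of $\HH$, so the hypothesis forces $G\ne\tG$, which guarantees a helper vertex $x\in B\setminus\tB$ exists; Lemma~\ref{helper-lem} then finishes. Your proposal gestures at ``a subgraph in $\HH$'' but does not pin down $J_7$ and $M_7$, and without naming them you cannot conclude that $B\setminus\tB\ne\emptyset$ in those cases. The references to Lemmas~\ref{no cycle of 3s of length at most 4}, \ref{no-5cycle-of-3s}, and Corollary~\ref{how to show H is not present} are not what is used here.
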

\begin{proof}
Suppose that $e'(B)=3$.  Now Lemma~\ref{restrict-lem} implies $\ell=1$; hence
we write $T$ for $G[L]$.  Note that $G[\tB]\in\{K_{1,3},P_4,P_2+P_3, 3K_2,
K_3\}$; see Figure~\ref{lem4.37-fig}.  (Here $K_{1,3}$ denotes a tree on 4
vertices with three leaves, $P_t$
denotes a path on $t$ vertices, $P_2+P_3$ denotes the disjoint union of $P_2$
and $P_3$, and $3K_2$ denotes $K_2+K_2+K_2$.)  All cases but the last can be
handled quickly (as we show below) by coloring $\tB$ so that we can extend the
coloring to $T$ using Lemma~\ref{coloring-lem}.

In each case we describe $F$ and implicitly let $I=B\setminus F$.
If $G[\tB]=K_{1,3}$, then let $F$ consist of the leaves in the $K_{1,3}$.  
Since $T$ has 9 $F$-edges, it is $F$-odd, so we can extend the coloring
by Lemma~\ref{coloring-lem}.  If $G[\tB]=P_4$, then let $F=\{v,w\}$, where $v$
and $w$ are at distance two along the $P_4$.  Now $T$ has 5 $F$-edges.
If $G[\tB]=P_2+P_3$, then let $F=\{v,w\}$, where $v$ is a leaf of the $P_2$ and
$w$ is the center vertex of the $P_3$.  Now, $T$ has 5 $F$-edges, so is
$F$-odd.  Finally, suppose $G[\tB]=3K_2$.  Let $F$ consist of one vertex from
each $K_2$.
Again, $T$ has 9 $F$-edges, so is $F$-odd.

Now assume $G[\tB]=K_3$.  If $T$ has at least 4 leaves, then $G[B]$ also has
some isolated vertices, one of which is adjacent to a leaf $w$ of $T$.  Let
$F=\{v_1,v_2\}$, where the $v_i$ are two vertices of
$G[\tB]$ not adjacent to $w$.  Now we can extend the coloring to $T$, since it
is $F$-leaf-good.  So assume that $T$ has at most 3 leaves.  
Further, we assume that each leaf has two neighbors in $\tB$, since otherwise
the argument above still works.  Form \Memph{$\tT$} from $T$ by suppressing
each vertex $w$ with $d_T(w)=2$ that has a neighbor in $B\setminus \tB$.  Now
$\tT$ has six incident edges to $\tB$, so $\tT\in \{K_{1,3},P_4\}$; see
Figure~\ref{lem4.37b-fig}.

Suppose that $\tT=K_{1,3}$, and let $v_1,v_2,v_3$ denote the vertices of $\tB$.  
So $\tG=J_7$, as shown in Figure~\ref{examples fig}.  Let $w$
denote a leaf of $T$ that is not adjacent to $v_3$, and pick $x\in B\setminus
\tB$; vertex $x$ exists since $J_7$ is forbidden as a subgraph, so $G\ne \tG$.  
Let $F=\{v_1,v_2,x\}$, and color $w$ with $I$.  The subgraph induced by
$(V(T)\setminus\{w\})\cup\{v_1,v_2\}$ has a single cycle.  We assume that $x$
has a neighbor on this cycle; if not, then we repeat the argument with $v_1$ or
$v_2$ in place of $v_3$.  Thus, we can extend the coloring to
$V(T)\setminus\{w\}$ by Lemma~\ref{helper-lem}, using $x$ as helper.

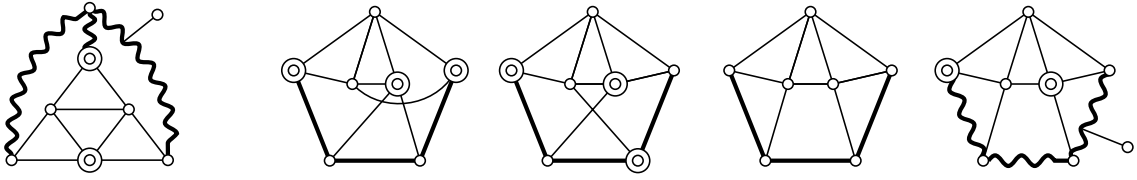
\begin{figure}[!h]
\centering
\begin{tikzpicture}[semithick, scale=.6]
\tikzset{every node/.style=uStyle}

\begin{scope}[yscale=.75, xshift=-1.7in, yshift=.4in]
\draw (90:2cm) node (w1) {} (210:2cm) node (w2) {} (330:2cm) node (w3) {}
(90:3.5cm) node (w4) {};
\draw node (v1) at (barycentric cs:w1=1,w2=1) {};
\draw node (v2) at (barycentric cs:w2=1,w3=1) {};
\draw node (v3) at (barycentric cs:w3=1,w1=1) {};
\draw (w2) -- (v2) -- (w3) (v2) -- (v3) -- (w1) -- (v1) -- (v2);
\draw (w2) -- (v1) -- (v3) -- (w3); 
\draw[decorate,decoration=snake, ultra thick,segment aspect=0, segment amplitude=2pt, segment length=7pt] (w1) -- (w4);
\draw[decorate,decoration=snake, ultra thick,segment aspect=0, segment amplitude=2pt, segment length=9pt] 
(w2) to [out= 90, in = 225](w4); 
\draw[decorate,decoration=snake, ultra thick,segment aspect=0, segment amplitude=2pt, segment length=9pt] 
(w4) to [in= 90, out = -45](w3);
\draw (w1) node[uBstyle] {} node {} (v2) node[uBstyle] {} node {};
\draw (1.5,3.3) node {} -- (.75,2.5) {};
\end{scope}

\draw[ultra thick]  (.2,2) node (w1) {} (w1) -- (1,0) node (w2) {} -- (3,0) node (w3) {} -- (3.8,2) node (w4) {};
\draw (2,3.3) node (z1) {} -- (1.5,1.7) node (z2) {} -- (2.5,1.7) node (z3) {};
\draw (z1) -- (z3) -- (w2) (z3) -- (w3) (z1) -- (z2) -- (w1) (w1) -- (z1) -- (w4) (z2) edge[bend right=50] (w4);
\draw (w1) node[uBstyle] {} node {} (z3) node[uBstyle] {} node {} (w4) node[uBstyle] {} node {};

\begin{scope}[xshift=1.9in]
\draw[ultra thick]  (.2,2) node (w1) {} (w1) -- (1,0) node (w2) {} -- (3,0) node (w3) {} -- (3.8,2) node (w4) {};
\draw (2,3.3) node (z1) {} -- (1.5,1.7) node (z2) {} -- (2.5,1.7) node (z3) {} -- (w4);
\draw (z1) -- (z3) -- (w2) (z1) -- (z2) -- (w3) (w1) -- (z2) -- (z3) -- (w4) -- (z1) -- (w1);
\draw (w1) node[uBstyle] {} node {} (z3) node[uBstyle] {} node {} (w3) node[uBstyle] {} node {};
\end{scope}

\begin{scope}[xshift=3.8in]
\draw[ultra thick]  (.2,2) node (w1) {} (w1) -- (1,0) node (w2) {} -- (3,0) node (w3) {} -- (3.8,2) node (w4) {};
\draw (2,3.3) node (z1) {} -- (1.5,1.7) node (z2) {} -- (2.5,1.7) node (z3) {} -- (w4);
\draw (z1) -- (z3) -- (w3) (z1) -- (z2) -- (w2) (w1) -- (z2) -- (z3) -- (w4) -- (z1) -- (w1);
\end{scope}

\begin{scope}[xshift=5.7in]
\draw[ultra thick,snake=coil,segment aspect=0, segment amplitude=2pt,segment length=10pt] 
(.2,2) node (w1) {} -- (1,0) node (w2) {} (w2) -- (3,0) node (w3) {} -- (3.8,2) node (w4) {};
\draw (w4) -- (2,3.3) node (z1) {} -- (1.5,1.7) node (z2) {} -- (w2);
\draw (w4) -- (2.5,1.7) node (z3) {} -- (z2) -- (w1) -- (z1) -- (z3) -- (w3);
\draw (w1) node[uBstyle] {}  node {} (z3) node[uBstyle] {} node {};
\draw (4.2,.3) node {} -- (3.2,.7); 
\end{scope}
\end{tikzpicture}
\caption{When $G[\tB]=K_3$, in the proof of Lemma~\ref{lem4.37}, we have two
cases.  Left: $\tT=K_{1,3}$. Right: $\tT=P_4$.\label{lem4.37b-fig}}
\end{figure}

Assume instead that $\tT=P_4$.  Suppose that $T=P_4$.  
By Pigeonhole at least one vertex in $\tB$ is adjacent to both leaves of the
$P_4$.  Now we have three ways for the remaining two vertices of $\tB$ to
attach.  Thus, we have three possibilities
for $G$, each with 7 vertices.  Two of these are non-planar (one has a
$K_{3,3}$-minor and the other a $K_5$-minor).  Each non-planar case has an
independent set of size 3, which we take as $I$.
In fact, this approach works whenever
$\tG$ is either of these non-planar graphs; since $\tG$ has an $I,F$ coloring, so does $G$.
So assume instead that $\tG$ is the other possibility; it is planar and contains
$M_7$ as a (non-induced) subgraph.  This implies that $G\ne \tG$, so $T\ne \tT$.
Let $w$ denote a leaf of $T$ and $v_1,v_2$ its neighbors in $\tB$.  Since $T\ne
\tT$, tree $T$ has a helper vertex $x$.  Note that
$G[(L\setminus\{w\})\cup\{v_1,v_2\}]$ is unicyclic, and let $C$ denote its
cycle.  We assume that $x$ has neighbors on $C$, since if not, then we repeat
the argument with $w$ replaced by the other leaf of $T$.
Let $F=\{v_1,v_2,x\}$.  Now we can extend the coloring to $G$ by
Lemma~\ref{helper-lem}, using $x$ as the helper.
This finishes the case $e'(B)=3$.
\end{proof}

\begin{lem}
If $e'(B)=2$, then $G$ is near-bipartite.
\label{lem4.38}
\end{lem}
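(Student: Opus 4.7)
The plan is to mirror the proof of Lemma~\ref{lem4.37}. Since $e'(B)=2$, Lemma~\ref{restrict-lem} gives $\ell\in\{1,2\}$, and $G[\tB]$ has exactly two edges with no isolated vertices, so $G[\tB]\in\{P_3,2K_2\}$ (where $2K_2$ is $K_2+K_2$). For each pair $(\ell,G[\tB])$ I will first describe an nb-coloring of $G[B]$ by choosing $F\cap \tB$ and setting $I=B\setminus F$, then extend to each tree component of $G[L]$ via Lemma~\ref{coloring-lem}. Recall that in Lemma~\ref{coloring-lem} it suffices for every component $T$ of $G[L]$ to be $F$-odd or $F$-leaf-good; if a direct choice of $F\cap \tB$ cannot achieve this for some $T$, I will invoke Lemma~\ref{helper-lem} with a helper vertex $x\in B\setminus\tB$ colored $F$.

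First I would dispatch the ``easy'' subcases where a tree $T_i\subseteq G[L]$ has enough leaves: if $T_i$ has at least three leaves then some leaf has both its $B$-neighbors outside a chosen $F\cap\tB$, making $T_i$ automatically $F$-leaf-good. Second, I would exploit parity: for $G[\tB]=2K_2$, taking $F$ to be one endpoint from each $K_2$ yields $|F|=2$; for $G[\tB]=P_3$, taking $F$ to be the center (resp.\ the two leaves, resp.\ a single leaf) gives $|F|\in\{1,2\}$. Since the total number of $L$-to-$B$ edges from each tree component of size $k_i$ equals $k_i+2$, toggling $F\cap\tB$ between these choices toggles the parity of the number of $F$-edges incident to $T_i$, and in at least one choice each $T_i$ becomes $F$-odd. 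When $\ell=2$, I check that a single choice can make both $T_1$ and $T_2$ simultaneously $F$-odd, using the freedom in which endpoint of each $K_2$ (or which leaf of $P_3$) lies in $F$.

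The harder cases will be when every $T_i$ is a path whose leaves are each doubly attached to $\tB$, so that the parity argument above is forced and fails. Mimicking the $G[\tB]=K_3$ treatment in Lemma~\ref{lem4.37}, I would then form $\tT$ from each $T_i$ by suppressing interior $2$-valent vertices having a neighbor in $B\setminus\tB$, yielding a small graph $\tG$. I would enumerate the finitely many possibilities for $\tG$ and show that each one either (a) admits an explicit nb-coloring of $G[B]$ that extends back to $G$, sometimes by routing an $F$-colored helper $x\in B\setminus\tB$ through Lemma~\ref{helper-lem} on the unicyclic subgraph induced by $V(T_i)\cup(F\cap\tB)$ minus a chosen leaf; or (b) contains a subgraph from $\HH$ (e.g., $M_7$, $J_7$, $J_8$, or $J_{12}$), contradicting the hypothesis.

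The main obstacle is the terminal enumeration in step three: we must verify, across all small reduced graphs $\tG$ arising when $\tB\in\{P_3,2K_2\}$ and the leaves of every $T_i$ are doubly attached to $\tB$, that the only $\tG$'s without an explicit coloring are exactly the graphs in $\HH$. This parallels the $K_3$ case of Lemma~\ref{lem4.37} but with more configurations to check, since $\ell=2$ opens up products of small trees, and $G[\tB]=P_3$ breaks the symmetry between $\tB$-vertices (the center has higher ``local demand'' than the leaves, forcing finer case splits on which tree-leaves attach to the center).
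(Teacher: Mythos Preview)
Your outline captures the right high-level strategy (the same as the paper's): case-split on $G[\tB]\in\{P_3,2K_2\}$ and $\ell\in\{1,2\}$, color $\tB$ so each tree becomes $F$-odd or $F$-leaf-good, and fall back on Lemma~\ref{helper-lem} when that fails. But two of your intermediate claims are too optimistic, and the omissions are exactly where the paper spends most of its effort.

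First, the parity toggle argument for $\ell=2$ does not work as stated. For $G[\tB]=2K_2$, swapping which endpoint of a $K_2$ lies in $F$ changes the $F$-edge counts at $T_1$ and $T_2$ \emph{simultaneously}, so you cannot independently fix both parities. The paper's actual reduction is: if $a_i$ denotes the parity of edges from $v_i$ to $T_1$, then unless $a_1=a_2=a_3=a_4$ some choice of $F$ makes both trees $F$-odd; the hard case is precisely when all four parities agree, and this forces every $v_i$ to send an even number of edges to $T_1$. Similarly for $G[\tB]=P_3$, the analysis is driven not by tree-leaf attachments but by the center vertex $w$: one first shows $w$ has both its $L$-neighbors in the same tree $T_1$, then splits into cases $(0,0),(1,1),(2,2),\ldots$ according to how many edges each leaf $v_1,v_2$ of the $P_3$ sends to $T_1$. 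Your proposal does not isolate these structural pivots.

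Second, the reduction to $\tT$ and a finite $\tG$ is only part of the story. For $G[\tB]=P_3$ the paper never passes to $\tG$; instead it proves a key intermediate claim (no vertex of $B\setminus\tB$ has a neighbor in $T_1$), which immediately kills several cases and forces $T_1$ to be very small. For $G[\tB]=2K_2$ the $\tT_1$ reduction is used, but the enumeration runs over $\tT_1\in\{P_6,\text{tree with 3 leaves},\text{tree with 4 leaves}\}$ with further subcases on how $\tB$ attaches, and the terminal obstruction is $J_{12}$ (not $J_7$ or $J_8$, which do not arise here; $M_7$ arises only in the $P_3$ branch). Your step three correctly anticipates a finite enumeration, but the cases you would actually face are organized quite differently from what you describe, and the two auxiliary claims above are what make the enumeration finite and tractable.
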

\begin{proof}
If $e'(B)=2$, then $G[\tB]\in \{P_3,2K_2\}$ and $\ell\le 2$.  Suppose
$G[\tB]=P_3$.  If $\ell=1$, then we color $G$ as follows. Let $v_1$ denote a 
leaf of $G[\tB]$ and let $F=\tB\setminus\{v_1\}$.  We are done by
Lemma~\ref{coloring-lem}(i), since $\tT$
(and therefore also $T$) has exactly 5 $F$-edges.  Thus, we assume $\ell=2$. 

We denote the two trees of $G[L]$ by $T_1$ and $T_2$.  Let $v_1$ and $v_2$
denote the leaves of $G[\tB]$, and let $w$ denote its non-leaf vertex.
If $w$ has a single neighbor in each of $T_1$ and $T_2$, then let 
$F=\{w\}$.  Each $T_i$ is $F$-odd, so we are done.
Thus, we assume $w$ has two neighbors in $T_1$ (by
symmetry).  Further, $T_1$ is a path with $w$ adjacent to both endpoints, since
otherwise letting $F=\{w\}$ makes both $T_1$ and $T_2$ be $F$-leaf-good.
Note that the numbers of edges incident to $v_1$ and $v_2$ that lead to $T_1$
must have the same parity.  If not, then we let $F=\{v_1,v_2,w\}$ and both $T_1$
and $T_2$ are $F$-odd.  So the possibilities for the numbers of edges from
$v_1$ and $v_2$ to $T_1$ are 0,0; 0,2; 2,0; 2,2; 1,1; 1,3; 3,1; and 3,3.
We refer to these as Case 0,0; Case 0,2; etc.

The easiest to handle are Cases 3,3 and 1,3 (and 3,1, by symmetry).
Let $F=\{w,v_2\}$, which makes $T_1$ to be $F$-odd and $T_2$ to be $F$-null.
So now assume that $v_1$ and $v_2$ each have at least one neighbor in $T_2$.

Before considering the other cases, we prove the following claim.

\begin{clm}
No vertex in $B\setminus \tB$ has a neighbor in $T_1$.
\label{clm4.39}
\end{clm}
\begin{clmproof}
Suppose, to the contrary, that such a vertex exists; call it $x$.
If $x$ has an odd number of edges to $T_1$ and $T_2$, the we let $F=\{w,x\}$.  
Both $T_1$ and $T_2$ are $F$-odd, so we are done.
If $N(x) \subseteq V(T_1)$, then let $F = \{w,x\}$;
since $T_2$ is $F$-null, we color it by Lemma~\ref{coloring-lem}, 
and we color $T_1 \cup \{w\}$ by Lemma~\ref{helper-lem}, using $x$ as helper.
So assume that $x$ has two edges to each of $T_1$ and $T_2$.
If a leaf of $T_2$ is not incident to $x$, then let $F=\{w,x\}$ so that $T_2$ is 
$F$-leaf-good and colorable by Lemma~\ref{coloring-lem}, while $T_1$ is
colorable by Lemma~\ref{helper-lem}, using $x$ as helper.
Assume instead that $T_2$ is a path whose endpoints are adjacent to $x$.
If $v_1$ has no neighbors in $T_1$, then let $F=\{w,v_1,x\}$; now $T_2$ is 
$F$-odd, so colorable by Lemma~\ref{coloring-lem}, and $T_1$ is colorable
by Lemma~\ref{helper-lem} using $x$ as helper.  If $v_1$ has one neighbor in
$T_1$, then let $F = \{w,v_1,x\}$ so that $T_1$ is $F$-odd, and thus colorable
by Lemma~\ref{coloring-lem}, while $T_2$ is colorable by
Lemma~\ref{helper-lem}, using $v_1$ as helper.  Because we have already ruled
out cases 3,1 and 3,3; it follows that $v_1$ must have exactly two neighbors in
$T_1$.  By symmetry, $v_2$ also has exactly two neighbors in $T_1$.

Let $y_1, \ldots, y_\ell$ be the vertices of $T_1$ in order.
Let $z_1, z_2, z_3, z_4$ be the four neighbors of $v_1$ or $x$ in $T_1$ in
order; note that these $z_i$ are distinct, since $w\adj\{y_1,y_\ell\}$.
If $x \adj \{y_1,y_\ell\}$, then let $F = \{w,x,v_1\}$ and color $T_2$ with
Lemma~\ref{coloring-lem} since $T_2$ is $F$-odd.  To color $T_1$, contract
$wv_1$ into a vertex $z$, and then color $T_1 \cup \{x\}$ by
Lemma~\ref{helper-lem} using $z$ as helper.  By symmetry, we assume that $x \not\adj y_1$.
By symmetry between $v_1$ and $v_2$, let us assume that $v_1 \not\adj y_1$, and
thus $y_1 \neq z_1$.  Under these assumptions, color $G \setminus T_2$ with 
$F = \{w,x,v_1\} \cup (T_1 \setminus \{y_1,z_2,z_4\})$ and $I = \VG \setminus
(F \cup V(T_2))$ and extend this coloring to all of $G$ via 
Lemma~\ref{coloring-lem} since $T_2$ is $F$-odd.  Therefore $N(T_1) \subseteq \tB$.
\end{clmproof}

This claim shows that Case 0,0 is impossible.
Since $G$ contains no copy of $K_4$, Cases 2,0 and 0,2 are also impossible.
So all that remain are Case 1,1 and Case 2,2.

Suppose we are in Case 1,1.  That is, $v_1$ and $v_2$ each send a single edge
to $T_1$.  By the claim, this implies that $T_1=K_2$.  
This is shown on the left in Figure~\ref{lem4.38-fig}, where $T_1$ is on top,
$T_2$ is on bottom, and $v_1, w, v_2$ are in the center.  Now $T_2$ must be a
path with $v_1$ and $v_2$ each adjacent to both endpoints of $T_2$ (otherwise
we let $F=\{v_1,w\}$ or $F=\{v_2,w\}$, so $T_1$ is $F$-odd and $T_2$ is
$F$-leaf-good).
If $T_2\ne K_2$, then let $F=\{v_1,v_2\}$.  
We extend this coloring to $G$ as
follows.  Color the endpoints of $T_2$ and $w$ with $I$ and the rest of $T_2$ with $F$,
and color all of $T_1$ with $F$.  (Now $T_1$ has a $v_1,v_2$-path in $F$, but it
does not extend to a cycle in $F$.)  
But if $T_2=K_2$, then $G$ contains the Moser Spindle (in fact $G-v_1w$ is the Moser
Spindle), which is a contradiction.  This completes Case 1,1.

\begin{figure}[!h]
\centering
\begin{tikzpicture}[semithick]  
\tikzset{every node/.style=uStyle}
\begin{scope}[xshift=-1.5in, yshift=.055in, xscale=1.0, yscale=.65]
\draw (0,0) node (v1) {}  (1,0) node (w) {} (2,0) node (v2) {};
\draw[ultra thick] (.4, 1.3) node (x1) {} -- (1.6,1.3) node (x2) {}
(.4,-1.3) node (y1) {} (1.6,-1.3) node (y2) {};
\draw (v2) -- (y2) -- (v1) -- (w) -- (v2) -- (x2) (x1) -- (v1) -- (y1) --
(v2) (x1) -- (w) -- (x2);
\draw[ultra thick,snake=coil,segment aspect=0, segment amplitude=2pt,segment length=10pt] (y2) -- (y1);
\draw (y1) node[uBstyle] {} node {} (w) node[uBstyle] {} node {} (y2) node[uBstyle] {} node {};
\end{scope}

\begin{scope}[scale=.87,yshift=.07in]
\draw[ultra thick] (-.5,1) node (x1) {} -- (.5,1) node (x2) {} -- (1.5,1) node (x3) {} -- (2.5,1) node (x4) {};
\draw (x1) -- (0,0) node (v1) {} -- (1,0) node (w) {} -- (2,0) node (v2) {};
\draw (x1) -- (w) -- (x4) -- (v1) (x2) -- (v2) -- (x3);
\draw (x1) node[uBstyle] {} node {} (x4) node[uBstyle] {} node {} (v2) node[uBstyle] {} node {};

\begin{scope}[xshift=1.5in]
\draw[ultra thick] (-.5,1) node (x1) {} -- (.5,1) node (x2) {} -- (1.5,1) node (x3) {} -- (2.5,1) node (x4) {};
\draw (0,0) node (v1) {} -- (1,0) node (w) {} -- (2,0) node (v2) {};
\draw (x2) -- (v1) -- (x1) -- (w) -- (x4) -- (v2) -- (x3);
\end{scope}

\begin{scope}[xshift=3.0in]
\draw[ultra thick] (-.5,1) node (x1) {} -- (.5,1) node (x2) {} -- (1.5,1) node (x3) {} -- (2.5,1) node (x4) {};
\draw (0,0) node (v1) {} -- (1,0) node (w) {} -- (2,0) node (v2) {};
\draw (x3) -- (v1) -- (x1) -- (w) -- (x4) -- (v2) -- (x2);
\draw (v1) node[uBstyle] {} node {} (x2) node[uBstyle] {} node {} (x4) node[uBstyle] {} node {};
\end{scope}
\end{scope}
\end{tikzpicture}
\caption{When $G[\tB]=P_3$, in the proof of Lemma~\ref{lem4.38}, we have two cases.  
Left: Case 1,1. Right: Case 2,2 (with $T_2$ undrawn).\label{lem4.38-fig}}
\end{figure}
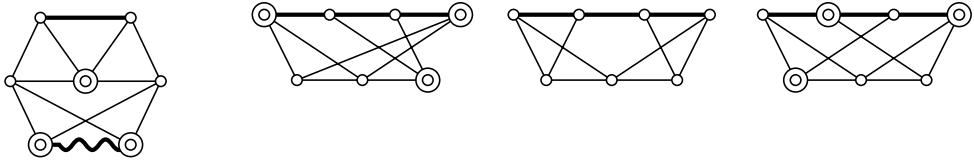

Suppose we are in Case 2,2.  That is, both $v_1$ and $v_2$ have two edges to
$T_1$ and so $T_1$ is a path on four vertices; see the right of
Figure~\ref{lem4.38-fig}.  Let $z_1,z_2,z_3,z_4$ denote
the vertices of $T_1$ in order.  If $z_1$ and $z_4$ are neighbors of
the same $v_i$, then by symmetry assume it is $v_1$.  Now let $F=\{v_1,w\}$. 
To color $T_1$, use $I$ on $z_1$ and $z_4$ and use $F$ on the rest
of $T_1$.  Finally, we can color $T_2$, since it is $F$-odd.  So
instead $z_1$ and $z_4$ must be neighbors of distinct $v_i$.  
By symmetry, we have only two cases:
either (a) $v_1\adj\{z_1,z_2\}$ and $v_2\adj\{z_3,z_4\}$ or (b)
$v_1\adj\{z_1,z_3\}$ and $v_2\adj\{z_2,z_4\}$.
In (a) subset $\tB\cup V(T_1)$ induces the Moser Spindle $M_7$, a contradiction.
In (b), let $F=\{w,v_2\}$ and color $z_1,z_2,z_3,z_4$ as $F,I,F,I$.  Finally,
color $T_2$ by Lemma~\ref{coloring-lem}, since it has exactly 1 $F$-edge.
This completes the case $G[\tB]=P_3$.
\bigskip

Now suppose that $G[\tB]=2K_2$.  Denote the vertices of $\tB$ by
$v_1,v_2,v_3,v_4$, where $v_1\adj v_2$ and $v_3\adj v_4$.  

\begin{clm}
$G[L]$ consists of two trees, $T_1$ and $T_2$.  We may assume that $|T_1|\ge 2$
and each leaf of $T_1$ has both neighbors (outside of $T_1$) in the same
component of $G[\tB]$.  So $T_1$ has at most 4 leaves.
\label{B-component-clm}
\end{clm}
\begin{clmproof}
If $G[L]$ has only a single component, then let $F$ consist of three vertices
in $\tB$.  Now the tree has 9 $F$-edges, so it is $F$-odd, and we are done by
Lemma~\ref{coloring-lem}(i).  Instead assume the forest has two trees,
$T_1$ and $T_2$.  For each $i\in[4]$, let $a_i$ denote the parity of the number
of edges from $v_i$ to $T_1$.  Suppose $a_1\ne a_3$.  Now let $F=\{v_1,v_3\}$.
We are done, since each $T_i$ is $F$-odd.  Thus $a_1=a_3$.  By swapping the
roles of $v_1$ and $v_2$, and also $v_3$ and $v_4$, we get $a_1=a_3=a_2=a_4$. 
By symmetry between $T_1$ and $T_2$, we assume that each $v_i$ has an even
number of edges to $T_1$.
Suppose there exists a leaf $w$ of $T_1$ with at most one neighbor in $\tB$.
(This includes the case that $|T_1|=1$, since $a_1=a_3=a_2=a_4$.)
Let $F$ consist of three vertices in $\tB$, excluding any neighbor of $w$.  
Now we are done, since $T_1$ is $F$-leaf-good, by $w$, and $T_2$ is $F$-odd.
Thus each leaf $w$ of $T_1$ must have both neighbors (outside of $T_1$) in $\tB$.  
Since $\tB$ sends at most 8 edges to $T_1$, we conclude that $T_1$ has at most
four leaves.
If some leaf $w$ of
$T_1$ has neighbors in two components of $G[\tB]$, then we are also done, as
follows.  Let $F$ consist of three vertices in $\tB$, including both neighbors
of $w$.  Again $T_2$ is $F$-odd, so we can color it by
Lemma~\ref{coloring-lem}(i). We can also color $T_1$, by treating $w$ like a
vertex with its two neighbors in $\tB$ colored $I$. Now $T_1$ may contain a path
colored $F$ linking these neighbors of $w$, but it will not extend to a cycle
colored $F$, since the neighbors of $w$ are in different components of $G[\tB]$.
\end{clmproof}

It suffices to color $\tT_1$, since we can
extend the coloring to $T_1$ by coloring each suppressed vertex with $F$.
We show that each vertex of $\tB$ has 2 edges to $T_1$.  (This number is always
either 0 or 2, as we showed just prior to Claim~\ref{clm4.39}.) Recall that
each leaf of $T_1$ has both
neighbors (outside $T_1$) in the same component of $G[\tB]$.
Since $T_1$ has a leaf, its two neighbors in $\tB$ each send two edges to $T_1$.
First suppose they are the only two such vertices in $\tB$.
Recall that each leaf of $T_1$ has its two neighbors in $\tB$ in the same component of
$G[\tB]$; so assume that $v_1$ and $v_2$ both have two edges to $T_1$ and $v_3$
and $v_4$ have none. Note that $T_1\ne K_2$, since $K_4\not\subset G$.  So there
exists $x\in B\setminus \tB$ with a neighbor in $T_1$.  If $x$ sends an odd
number of edges to each $T_i$, then we let $F=\{v_1,v_3,x\}$, and we are done
since each $T_i$ is $F$-odd.  So assume $x$ sends an even number of edges to
each $T_i$.  Now let $F=\{v_2,v_3,v_4,x\}$.  Again $T_2$ is $F$-odd.  And we
can color $T_1$ by Lemma~\ref{helper-lem}, with $x$ as 
helper.  

Now instead suppose that exactly three vertices in $\tB$ each have two edges to
$T_1$; by symmetry, say $v_1,v_2,v_3$.  
Form $\tT_1$ from $T_1$ by suppressing each vertex $w$ such that $d_{T_1}(w)=2$ and
$w$ has no neighbor in $\tB$.  It suffices to color $\tT_1$, since we can
extend the coloring to $T_1$ by coloring each suppressed vertex with $F$.
As is true for $T_1$, each leaf of
$\tT_1$ has both neighbors in the same component of $G[\tB]$, so $\tT_1$ has
only two leaves (that is, $T_1$ and $\tT_1$ are paths).  Denote the vertices of
$\tT_1$ by $z_1,z_2,z_3,z_4$.  So $\{z_1,z_4\}\adj\{v_1,v_2\}$ and
$\{z_2,z_3\}\adj v_3$.  Let $F=\{v_2,v_3,v_4\}$.  Now $T_2$ has five $F$-edges and
so it is $F$-odd.  To
color $T_1$, use $I$ on $z_3$ and use $F$ on $V(T_1)\setminus\{z_3\}$.  Thus,
we conclude that each of the four vertices of $\tB$ sends two edges to $T_1$,
so $|\tT_1|=6$.

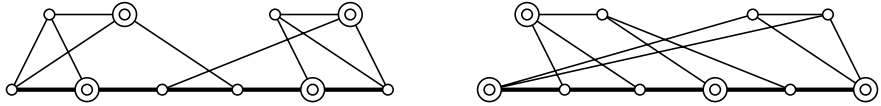
\begin{figure}[!h]
\centering
\begin{tikzpicture}[semithick]  
\tikzset{every node/.style=uStyle}
\draw[ultra thick] (0,0) node (z1) {} -- (1,0) node (z2) {} -- (2,0) node (z3) {} -- (3,0)
node (z4) {} -- (4,0) node (z5) {} -- (5,0) node (z6) {};
\draw (.5,1) node (v1) {} -- (1.5,1) node (v2) {} (3.5,1) node (v4) {} -- (4.5,1) node (v3) {};
\draw (z2) -- (v1) -- (z1) -- (v2) -- (z4) (z3) -- (v3) -- (z6) -- (v4) -- (z5);
\draw (v2) node[uBstyle] {} node {} (v3) node[uBstyle] {} node {} 
(z2) node[uBstyle] {} node {} (z5) node[uBstyle] {} node {};

\begin{scope}[xshift=2.5in]
\draw[ultra thick] (0,0) node (z1) {} -- (1,0) node (z2) {} -- (2,0) node (z3) {} -- (3,0)
node (z4) {} -- (4,0) node (z5) {} -- (5,0) node (z6) {};
\draw (.5,1) node (v1) {} -- (1.5,1) node (v2) {} (3.5,1) node (v3) {} -- (4.5,1) node (v4) {};
\draw (z1) -- (v3) -- (z6) -- (v4) -- (z1) (z5) -- (v2) -- (z4) (z3) -- (v1) -- (z2);
\draw (v1) node[uBstyle] {} node {} (z1) node[uBstyle] {} node {} 
(z4) node[uBstyle] {} node {} (z6) node[uBstyle] {} node {};
\end{scope}

\end{tikzpicture}
\caption{Two of the cases when $\tT_1$ is a 6-vertex path (in the proof of
Lemma~\ref{lem4.38}). Left: $w_2\ne w_3$.
Right: $w_2=w_3$. \label{lem4.38b-fig}}
\end{figure}

Suppose that $\tT_1$ is a path; see Figure~\ref{lem4.38b-fig}). Label its
vertices $z_1,\ldots,z_6$ (from left to right) and let $w_i$ denote the neighbor of $z_i$ in $B$,
for each $i\in\{2,\ldots,5\}$ (possibly the $w_i$ are not distinct).
If $w_2\ne w_3$, then color $\tB$ so that $w_2$ uses $F$, $w_3$ uses $I$, and in
each component of $G[\tB]$ one vertex uses $F$ and the other one uses $I$.  This
implies that $|F\cap \{w_4,w_5\}|=|I\cap \{w_4,w_5\}|=1$, since each leaf has
both neighbors in $\tB$ in the same component of $G[\tB]$.  To extend the
coloring to $\tT_1$, we use $I$ on the vertices $z_i$ and $z_j$ such that
$w_i,w_j\in F$ (and color the other $z_t$ with $F$).  
By symmetry, assume that $v_1,v_3 \in F$.  Because the neighbors of $z_1$ and
$z_6$ are in the same component of $\tB$, the above coloring of $T_1$ satisfies
the conclusion of Lemma~\ref{coloring-lem}; in particular there is no path
between $v_1$ and $v_3$ in $F$.  Thus we can color all of $V(T_2)$ with $F$.

So assume $w_2=w_3$ and (by symmetry) $w_4=w_5$.  Since $z_1$ and $z_6$ have
both neighbors in the same component of $G[\tB]$ (and $G$ is simple), we have
$w_2=w_3\adj w_4=w_5$.  So say $v_1=w_2=w_3$, $v_2=w_4=w_5$, and
$\{z_1,z_6\}\adj\{v_3, v_4\}$.  Let $F=\{v_2,v_3,v_4\}$.  To extend this
coloring to $T_1$, color $z_1,z_4,z_6$ with $I$ and color $z_2,z_3,z_5$ with $F$.
Since $T_2$ is $F$-odd, we can extend the coloring to $T_2$ by 
Lemma~\ref{coloring-lem}.  Thus, we conclude that $T_1$ is not a path.

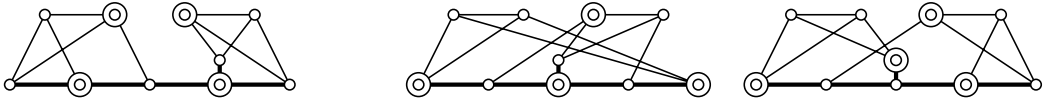
\begin{figure}[!h]
\centering
\begin{tikzpicture}[semithick, scale=.93]  
\tikzset{every node/.style=uStyle}
\begin{scope}[xshift=-.3in]
\draw[ultra thick] (0,0) node (z1) {} -- (1,0) node (z2) {} -- (2,0) node (z3) {} -- (3,0)
node (z4) {} -- (4,0) node (z5) {} (z4) -- (3,.35) node (z*) {};
\draw (.5,1) node (v1) {} -- (1.5,1) node (v2) {} (2.5,1) node (v3) {} -- (3.5,1) node (v4) {};
\draw (z2) -- (v1) -- (z1) -- (v2) -- (z3) (z*) -- (v4) -- (z5) -- (v3) -- (z*);
\draw (v2) node[uBstyle] {} node {} (z2) node[uBstyle] {} node {} 
(z4) node[uBstyle] {} node {} (v3) node[uBstyle] {} node {};
\end{scope}

\begin{scope}[xshift=2in]
\draw[ultra thick] (0,0) node (z1) {} -- (1,0) node (z2) {} -- (2,0) node (z3) {} -- (3,0)
node (z4) {} -- (4,0) node (z5) {} (z3) -- (2,.35) node (z*) {};
\draw (.5,1) node (v1) {} -- (1.5,1) node (v2) {} (2.5,1) node (v3) {} -- (3.5,1) node (v4) {};
\draw (z1) -- (v1) -- (z5) -- (v2) -- (z1) (z2) -- (v3) -- (z*) -- (v4) -- (z4) (z*) -- (z3);
\draw (v3) node[uBstyle] {} node {} (z1) node[uBstyle] {} node {} 
(z3) node[uBstyle] {} node {} (z5) node[uBstyle] {} node {};
\end{scope}

\begin{scope}[xshift=3.9in]
\draw[ultra thick] (0,0) node (z1) {} -- (1,0) node (z2) {} -- (2,0) node (z3) {} -- (3,0)
node (z4) {} -- (4,0) node (z5) {} (z3) -- (2,.35) node (z*) {};
\draw (.5,1) node (v1) {} -- (1.5,1) node (v2) {} (2.5,1) node (v3) {} -- (3.5,1) node (v4) {};
\draw (z1) -- (v1) -- (z*) -- (v2) -- (z1) (z2) -- (v3) -- (z5) -- (v4) -- (z4);
\draw (v3) node[uBstyle] {} node {} (z*) node[uBstyle] {} node {} 
(z1) node[uBstyle] {} node {} (z4) node[uBstyle] {} node {};
\end{scope}

\end{tikzpicture}

\caption{Three examples when $\tT_1$ is a tree with 3 leaves (in the proof of
Lemma~\ref{lem4.38}).  Left: $z^*\adj z_4$.
Right: $z^*\adj z_3$.\label{lem4.38c-fig}}
\end{figure}

Suppose $\tT_1$ has exactly 3 leaves; see Figure~\ref{lem4.38c-fig}.  Now $\tT_1$ is formed from a 5-vertex path by
adding a pendant edge at one internal vertex.  Denote the vertices of the path
by $z_1,\ldots,z_5$ and the new leaf by $z^*$.  By symmetry, we assume either
$z^*\adj z_4$ or $z^*\adj z_3$.  In the first case, color one vertex in each
component of $G[\tB]$ with $I$ and the other with $F$, so that the neighbor of
$z_3$ is colored $I$.  Now each leaf of $\tT_1$ has one neighbor colored $I$ and
one colored $F$, so $z_2$ has a neighbor colored $F$.  
To extend the coloring to $\tT_1$, color $z_2,z_4$ with $I$ and color $z_1,z_3,z_5,z^*$
 with $F$.  Because the neighbors of the leaves are 
in the same component of $\tB$, the above coloring of $T_1$ satisfies the 
conclusion of Lemma~\ref{coloring-lem}; in particular there is no path between 
vertices of $\tB$ in $F$.  Thus, we can color all of $V(T_2)$ with 
$F$.  This finishes the
case when $z^*\adj z_4$.  So instead assume $z^*\adj z_3$.  Since each leaf has
both neighbors in the same component of $G[\tB]$, also $z_2$ and $z_4$ have
their neighbors in the same component of $G[\tB]$.  By symmetry between $z_1$
and $z_5$, assume this is not the component with vertices adjacent to $z_1$.
Now color the neighbor of $z_2$ in $\tB$ with $I$ and the rest of $\tB$ with
$F$.  We extend this coloring to $T_2$ using Lemma~\ref{coloring-lem},
since $T_2$ is $F$-odd. If $z^*$ has a neighbor colored $I$, then we extend the
coloring to the $z_i$'s by coloring $z_1,z_3,z_5$ with $I$ and coloring
$z_2,z_4,z^*$ with $F$.
Otherwise, only $z_2$ and $z_5$ have neighbors colored $I$, so we color
$z_1,z^*,z_4$ with $I$ and color $z_2,z_3,z_5$ with $F$.  This completes the
case that $\tT_1$ has three leaves.

Finally, suppose $\tT_1$ has exactly 4 leaves; see Figures~\ref{lem4.38d-fig}
and~\ref{lem4.38e-fig}.  Recall that all internal
vertices of $\tT_1$ have degree 3, so $\tT_1$ has two adjacent 3-vertices.
Let $z_1,z_2,z_3,z_4$ denote the leaves of $\tT_1$ with $\{z_1,z_2\}\adj
\{v_1,v_2\}$ and $\{z_3,z_4\}\adj\{v_3,v_4\}$; this follows from
Claim~\ref{B-component-clm}.  In Figures~\ref{lem4.38d-fig}
and~\ref{lem4.38e-fig}, vertices $v_1,v_2,v_3,v_4$ are drawn at top from
left to right.
By symmetry between $v_3$ and
$v_4$, we assume $\dist_{\tT_1}(z_1,z_4)=3$.  Let $z_5$ and $z_6$ denote
(respectively) the neighbors in $\tT_1$ of $z_1$ and $z_4$.  Either
$z_5\adj\{z_1,z_2\}$ (left) or else $z_5\adj\{z_1,z_3\}$ (center and right).  In the first case, let
$F=\{v_2,v_3,v_4\}$.  To extend the coloring to $\tT_1$, use $F$ on $z_1,z_2,z_6$
and use $I$ on $z_3,z_4,z_5$.  (Again $T_2$ is $F$-odd.) So assume we are in
the second case: $z_5\adj\{z_1,z_3\}$.  Suppose some pendant edge of $\tT_1$
corresponds to a path of length at least 2 in $T_1$; by symmetry, say it is
$z_1z_5$ (center).  Let $F=\{v_1,v_2,v_3\}$.  To color $T_1$, use $I$ on $z_1,z_2,z_5$
and use $F$ on $z_3,z_4,z_6$. (Again $T_2$ is $F$-odd.) 
Similarly, suppose $z_5z_6$ corresponds to a path of length at least 2 (right).  
Now let $F=\{v_1,v_3\}$.  Color $z_5,z_6$ with $I$
and color $z_1,z_2,z_3,z_4$ with $F$.  Because there is no
path in $F$ from $v_1$ to $v_3$, we may color $V(T_2)$ with $F$.
Thus, we conclude that $\tT_1=T_1$; see Figure~\ref{lem4.38e-fig}.

\begin{figure}[h!]
\centering
\begin{tikzpicture} [semithick,xscale = .75, yscale=.75*.85]
\tikzset{every node/.style=uStyle}

\draw (-2.5,2.5) node (v1) {} -- (-1.5,2.5) node (v2) {} (1.5,2.5) node (v3) {} -- (2.5,2.5) node (v4) {};
\draw[ultra thick] (-1,1) node (z1) {} -- (-.5,0) node (z5) {} -- (-1,-1) node (z2) {}
 (1,1) node (z3) {} -- (.5,0) node (z6) {} -- (1,-1) node (z4) {} (z5) -- (z6);
\draw (v1) -- (z1) -- (v2) -- (z2) -- (v1);
\draw (v3) -- (z3) -- (v4) -- (z4) -- (v3);
\draw (v1) node[uBstyle] {} node {} (z3) node[uBstyle] {} node {} (z4)
node[uBstyle] {} node {} (z5) node[uBstyle] {} node {};

\begin{scope}[xshift=2.5in]
\draw (-2.5,2.5) node (v1) {} -- (-1.5,2.5) node (v2) {} (1.5,2.5) node (v3) {} -- (2.5,2.5) node (v4) {};
\draw[ultra thick] (-1,1) node (z1) {}  (0,.5) node (z5) {} -- (1,1) node (z3) {}
 (-1,-1) node (z2) {} -- (0,-.5) node (z6) {} -- (1,-1) node (z4) {} (z5) -- (z6);
\draw (v1) -- (z1) -- (v2) -- (z2) -- (v1);
\draw (v3) -- (z3) -- (v4) -- (z4) -- (v3);
\draw[ultra thick,snake=coil,segment aspect=0, segment amplitude=1.6pt,segment length=7pt] 
(z1) -- (z5); 
\draw (v4) node[uBstyle] {} node {} (z1) node[uBstyle] {} node {} (z2)
node[uBstyle] {} node {} (z5) node[uBstyle] {} node {}; 
\end{scope}

\begin{scope}[xshift=5in]
\draw (-2.5,2.5) node (v1) {} -- (-1.5,2.5) node (v2) {} (1.5,2.5) node (v3) {} -- (2.5,2.5) node (v4) {};
\draw[ultra thick] (-1,1) node (z1) {} -- (0,.5) node (z5) {} -- (1,1) node (z3) {}
 (-1,-1) node (z2) {} -- (0,-.5) node (z6) {} -- (1,-1) node (z4) {};
\draw (v1) -- (z1) -- (v2) -- (z2) -- (v1);
\draw (v3) -- (z3) -- (v4) -- (z4) -- (v3);
\draw[ultra thick,snake=coil,segment aspect=0, segment amplitude=1.6pt,segment length=7pt] 
(z5) -- (z6); 
\draw (v2) node[uBstyle] {} node {} (v4) node[uBstyle] {} node {} (z5)
node[uBstyle] {} node {} (z6) node[uBstyle] {} node {}; 
\end{scope}

\end{tikzpicture}
\caption{If $\tT_1$ has 4 leaves (in the proof of Lemma~\ref{lem4.38}), then
$\tT_1=T_1$ with $z_5\adj\{z_1,z_3\}$ and $z_6\adj\{z_2,z_4\}$.\label{lem4.38d-fig}}
\end{figure}
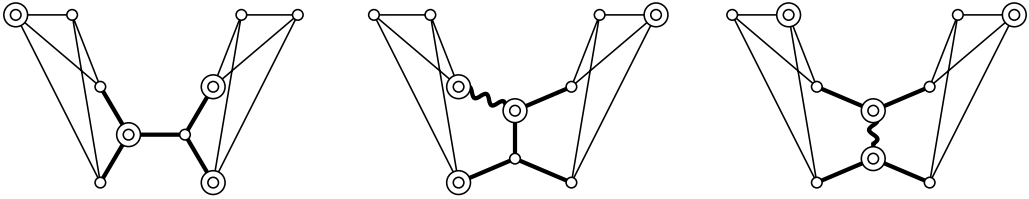

Suppose some leaf $w$ of $T_2$ has a neighbor in $B\setminus \tB$.  In each
component of $G[\tB]$, color one vertex $F$ and the other $I$; do this so that
any neighbor of $w$ in $\tB$ is colored $I$.  Now $T_2$ is $F$-leaf-good.
By symmetry, we assume that $v_1,v_3\in F$ and
$v_2,v_4\in I$.  For $T_1$, color $z_6$ with $I$ and $z_1,\ldots,z_5$ with $F$.
This does create a $v_1,v_3$-path in $F$ through $T_1$, but this is okay, since
no such path exists in $T_2$.  Thus, each leaf of $T_2$ has no neighbors 
in $B\setminus\tB$.  Since $\tB$ has only 4 edges to $T_2$, we see that $T_2$ is a path.
Suppose a leaf $w$ of $T_2$ has neighbors in distinct components of $G[\tB]$, by
symmetry say $v_1$ and $v_3$.  Now we color $\tB\cup V(T_1)$ as in the
immediately previous case.  We color $w$ with $I$ and $T_2\setminus\{w\}$ with
$F$.  Thus, no such $w$ exists.  Suppose $T_2\ne K_2$.  Color all of $\tB$ with
$F$, color $N(\tB)\cap (T_1\cup T_2)$ with $I$, and color $(T_1\cup
T_2)\setminus N(\tB)$ with $F$.
Thus, we conclude that $T_2=K_2$.  So $G$ is the 12-vertex graph below, which is
nb-critical.  It is forbidden by the hypothesis, which is a contradiction.
This completes the case that $G[\tB]=2K_2$.
\end{proof}

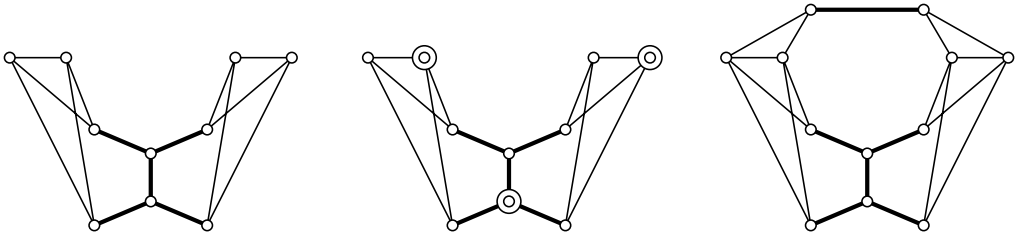
\begin{figure}[!h]
\centering
\begin{tikzpicture} [semithick,xscale = .75, yscale=.75*.85]
\tikzset{every node/.style=uStyle}

\begin{scope}
\draw (-2.5,2.5) node (v1) {} -- (-1.5,2.5) node (v2) {} (1.5,2.5) node (v3) {} -- (2.5,2.5) node (v4) {};
\draw[ultra thick] (-1,1) node (z1) {} -- (0,.5) node (z5) {} -- (1,1) node (z3) {}
 (-1,-1) node (z2) {} -- (0,-.5) node (z6) {} -- (1,-1) node (z4) {} (z5) -- (z6);
\draw (v1) -- (z1) -- (v2) -- (z2) -- (v1);
\draw (v3) -- (z3) -- (v4) -- (z4) -- (v3);
\end{scope}

\begin{scope}[xshift=2.5in]
\draw (-2.5,2.5) node (v1) {} -- (-1.5,2.5) node (v2) {} (1.5,2.5) node (v3) {} -- (2.5,2.5) node (v4) {};
\draw[ultra thick] (-1,1) node (z1) {} -- (0,.5) node (z5) {} -- (1,1) node (z3) {}
 (-1,-1) node (z2) {} -- (0,-.5) node (z6) {} -- (1,-1) node (z4) {} (z5) -- (z6);
\draw (v1) -- (z1) -- (v2) -- (z2) -- (v1);
\draw (v3) -- (z3) -- (v4) -- (z4) -- (v3);
\draw (v2) node[uBstyle] {} node {} (v4) node[uBstyle] {} node {} (z6) node[uBstyle] {} node {}; 
\end{scope}

\begin{scope}[xshift=5in]
\draw[ultra thick] (-1,3.5) node (w1) {}  -- (1,3.5) node (w2) {};
\draw (-2.5,2.5) node (v1) {} -- (-1.5,2.5) node (v2) {} (1.5,2.5) node (v3) {} -- (2.5,2.5) node (v4) {};
\draw[ultra thick] (-1,1) node (z1) {} -- (0,.5) node (z5) {} -- (1,1) node (z3) {}
 (-1,-1) node (z2) {} -- (0,-.5) node (z6) {} -- (1,-1) node (z4) {} (z5) -- (z6);
\draw (v1) -- (z1) -- (v2) -- (z2) -- (v1);
\draw (v3) -- (z3) -- (v4) -- (z4) -- (v3);
\draw (v1) -- (w1) -- (v2) (v3) -- (w2) -- (v4);
\end{scope}

\end{tikzpicture}
\caption{If $\tT_1$ has 4 leaves (in the proof of Lemma~\ref{lem4.38}) and $G$
has no nb-coloring, then $G=J_{12}$.\label{lem4.38e-fig}}
\end{figure}

\begin{lem}
If $e'(B)=1$, then $G$ is near bipartite.
\label{lem4.41}
\end{lem}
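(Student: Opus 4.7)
Since $e'(B)=1$, we have $G[\tB]=K_2$; write $v_1v_2$ for its unique edge. By Lemma~\ref{restrict-lem} we have $V(G)=L\cup B_4$ and $\ell\in\{1,2,3\}$. Each $v_i$ has three neighbors outside $\tB$, all lying in $L\cup(B\setminus\tB)$; moreover every $x\in B\setminus\tB$ has all four of its neighbors in $L$, since otherwise $e'(B)\ge 2$. For each tree $T$ of $G[L]$ and each $u\in B$, let $a(u,T)$ denote the number of edges from $u$ to $T$. Mirroring the proofs of Lemmas~\ref{lem4.37} and~\ref{lem4.38}, the strategy is to choose an nb-coloring of $\tB$ so that every component $T$ of $G[L]$ is either $F$-odd or $F$-leaf-good, and then extend via Lemma~\ref{coloring-lem}. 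Because $v_1v_2\in E(G)$, the only admissible colorings of $\tB$ are $(v_1,v_2)\in\{(I,F),(F,I),(F,F)\}$; when none suffices, we will promote a helper $x\in B\setminus\tB$ to $F$ and extend the affected tree via Lemma~\ref{helper-lem}.

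\textbf{Case split on $\ell$.} In the subcase $\ell=3$, the three trees are small and the three candidate colorings of $\tB$ give three distinct parity vectors of $F$-edges to $(T_1,T_2,T_3)$. A pigeonhole argument on these parities, combined with possibly moving one helper from $B\setminus\tB$ into $F$, leaves only rigid configurations; each such configuration forces $G$ to contain $K_4$, $W_5$, $M_7$, or $J_7$, contradicting the hypothesis. In the subcase $\ell=2$, the same parity argument together with the leaf-good reduction used in the $G[\tB]=P_3$ portion of Lemma~\ref{lem4.38} handles all but a small number of obstructions, which one verifies to contain $M_7$ or $J_8$.

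\textbf{The main case $\ell=1$.} Let $T=G[L]$. As in Lemma~\ref{lem4.37}, form $\tT$ from $T$ by suppressing every degree-$2$ internal vertex having no neighbor in $B$. Since $v_1,v_2$ send at most $6$ edges into $T$ (and each $x\in B\setminus\tB$ sends exactly $4$), the number of leaves of $\tT$ and the total ``helper count'' $|B\setminus\tB|$ are tightly constrained by the potential inequality $|L|\ge 4|B\setminus\tB|$ derived from $\rho_{s,G}(V(G))\ge -4$. One then enumerates the possible shapes of $\tT$ (path, subdivided $K_{1,3}$, subdivided $K_{1,4}$, double-broom, etc.) together with the attachment patterns of $\tB$ and of each helper $x$. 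For each pattern, attempt first the three colorings of $\tB$; if all fail, toggle a helper $x$ into $F$ and re-extend via Lemma~\ref{helper-lem}. Finally, any residual obstruction is shown to contain $J_7$, $J_8$, $J_{12}$ or a larger graph of $\HH'$ as a subgraph, which contradicts our hypothesis that $G$ has no subgraph from $\HH$.

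\textbf{Expected main obstacle.} The bulk of the work, and the main difficulty, is the $\ell=1$ case with $\tT$ having $3$ or $4$ leaves and one or two helpers in $B\setminus\tB$. The symmetric ``paired leaf'' configurations (each leaf of $\tT$ has both its non-tree neighbors equal to $\{v_1,v_2\}$) are the ones where no coloring of $\tB$ alone works; there, one must carefully identify the correct helper and verify in each sub-configuration that the induced subgraph is a forbidden member of $\HH$. Enumerating these small configurations cleanly, and separating the ``colorable'' ones from the ``forbidden'' ones without missing cases, is the technical crux, as in the analogous endgame of Lemma~\ref{lem4.38}.
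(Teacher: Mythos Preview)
You have the difficulty of the cases exactly inverted, and this is a genuine gap. The case $\ell=1$ is \emph{trivial}: since $d(v_1)=4$, the edge $v_1v_2$ accounts for one of $v_1$'s incidences, and the other three go to $L$ (any edge to $B\setminus\tB$ would give $e'(B)\ge 2$). With a single tree $T=G[L]$, setting $F\cap B=\{v_1\}$ makes $T$ have exactly three $F$-edges, hence $F$-odd, and Lemma~\ref{coloring-lem}(i) finishes immediately. Your elaborate program for $\ell=1$ (shapes of $\tT$, helper counts, the inequality ``$|L|\ge 4|B\setminus\tB|$'', residual obstructions in $\HH'$) is aimed at a problem that does not exist.

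Conversely, your treatment of $\ell=3$ is far too thin. The paper's proof shows that $\ell=3$ is the main case and occupies most of the argument. A parity/pigeonhole observation on the three colorings of $\tB$ does \emph{not} dispose of it: once one rules out $v_1$ splitting as $1/1/1$ or a permutation of $3/0/0$, both $v_1$ and $v_2$ split as permutations of $2/1/0$, and one must analyze, up to symmetry, six relative configurations (the paper's cases (a)--(f)). Several of these require Claim~\ref{tree-path-clm} (forcing certain $T_j$ to be paths with both endpoints adjacent to some $v_i$), careful choices of helper $x\in B\setminus\tB$ classified by how $x$ ``splits'' among $T_1,T_2,T_3$, and explicit detection of a Moser spindle in terminal subcases. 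None of this is captured by ``three parity vectors plus one helper''; in particular, the obstruction list you cite ($K_4,W_5,M_7,J_7$) is not what actually arises. The $\ell=2$ case also needs more than you indicate, though it is intermediate in difficulty. As written, your proposal would not produce a proof.
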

\begin{proof}
Suppose $G[\tB]=K_2$.  Denote $\tB$ by $\{v_1,v_2\}$.  If $G[L]$
has only a single component, then color $v_1$ with $F$ and color $v_2$ with $I$.
We can color $G[L]$, since it is $F$-odd.  Suppose instead that $G[L]$ has
two components; call them $T_1$ and $T_2$.  Suppose $v_1$ has 3 edges to $T_1$
(and none to $T_2$).  Let $F=\{v_1\}$.  Now $T_1$ is $F$-odd and $T_2$ is
$F$-null, so we are done.
Thus, by symmetry we assume $v_1$ and $v_2$ each have 1 edge to one tree and 2
edges to the other.  If $v_1$ and $v_2$ have (respectively) 1 and 2 edges to
$T_1$, then let $F=\{v_1,v_2\}$; now both $T_1$ and $T_2$ are $F$-odd.
So assume that $v_1$ and $v_2$ each have 1 edge to $T_1$ and 2 edges to $T_2$.
Suppose some leaf $w$ of $T_2$ has a neighbor $x\in B\setminus \tB$.  Let
$F$ consist of a single vertex of $\tB$ that is not adjacent to $w$.  Now $T_2$
is $F$-leaf-good and $T_1$ is $F$-odd.  Thus, all leaves of $T_2$
have no neighbors in $B\setminus\tB$.  So $T_2$ is a path.  Since
$K_4\not\subset G$, we know $T_2\ne K_2$.  So there exists $x\in
B\setminus\tB$ with a neighbor in $T_2$.  If $x$ sends an odd number of edges to
both $T_1$ and $T_2$, then we let $F=\{v_1,v_2,x\}$, and both $T_1$ and $T_2$
are $F$-odd.  Otherwise, let $F=\{v_1,x\}$.  Again, $T_1$ is
$F$-odd.  Also, we can color $T_2$ by Lemma~\ref{helper-lem}, with $x$ as the
helper.  Thus, we conclude that $G[L]$ has three components; we call these
$T_1$, $T_2$, $T_3$.

We say that $x\in B$ \Emph{splits~as $a_1/a_2/a_3$} if $x$ has $a_i$ edges to
$T_i$, for each $i\in[3]$.  
For $x\in \tB$ we have $a_1+a_2+a_3=3$ and for $x\in B\setminus \tB$, we have
$a_1+a_2+a_3=4$.  If we care only about the parities of
the $a_i$, we say, for example, that $x$ splits as e/o/o\aside{e/o/o} (to denote that $a_1$
is even, while $a_2$ and $a_3$ are odd).  If $v_1$ splits as $1/1/1$ or
as some permutation of $3/0/0$, then let $F=\{v_1\}$.  Now we are done, since $T_1$ is
$F$-odd, while $T_2$ and $T_3$ are both either $F$-odd or $F$-null.  So assume
that $v_1$ (and $v_2$, by symmetry) splits
as some permutation of $2/1/0$.  By symmetry between the $T_i$, we assume that
$v_1$ splits as $2/1/0$.  A priori we have 6 cases for how $v_2$ splits (in increasing
order of difficulty): 
(a) 1/2/0, 
(b) 2/0/1, 
(c) 0/1/2, 
(d) 1/0/2, 
(e) 0/2/1, 
(f) 2/1/0.  
Before considering these cases, we prove an easy claim.

\begin{clm}
If $v_i$ has 2 edges to $T_j$, then $T_j$ is a path with each endpoint adjacent
to $v_i$.
\label{tree-path-clm}
\end{clm}
\begin{clmproof}
Suppose not.  By symmetry we assume that $v_1$ has 2 edges to $T_1$, 1 edge to
$T_2$, and 0 edges to $T_3$, but $T_1$ has a leaf $w$ such that $w\nonadj v_1$.
Let $F=\{v_1\}$.  Now $T_1$ is $F$-leaf-good (by $w$), $T_2$ is $F$-odd, and
$T_3$ is $F$-null.  So we can extend the coloring of $B$ to all of $G$, a
contradiction.
\end{clmproof}

Now we consider cases (a)--(f).
For (a), let
$F=\{v_1,v_2\}$.  Now $T_1$ and $T_2$ are $F$-odd, while $T_3$ is $F$-null.
For (b), Claim~\ref{tree-path-clm} implies that $T_1$ is a path with each
endpoint adjacent to both $v_1$ and $v_2$.  Note that $T_1\ne K_2$, since
$K_4\not\subset G$.  Let $F=\{v_1,v_2\}$ and note that $T_2$ and $T_3$ are both
$F$-odd.  To color $T_1$, use $I$ on both leaves and $F$ everywhere else.  This
finishes (b).  Note that (d) and (e) are the same case, by symmetry
between both the $v_i$'s and the $T_j$'s.  Thus, we must consider cases (c),
(d), and (f).  In all figures for this proof, $v_1$ and $v_2$ are drawn on top;
$T_1$, $T_2$, and $T_3$ are drawn in the middle (from left to right); any
vertices drawn at bottom are in $B\setminus \tB$.

\textbf{Case (c): $\bbs{v_1}$ splits as 2/1/0 and $\bbs{v_2}$ splits as 0/1/2.}
By Claim~\ref{tree-path-clm}, $T_1$ is a path with both endpoints adjacent to
$v_1$; similarly, $T_3$ is a path with both endpoints adjacent to $v_2$.  
See Figure~\ref{lem4.41-fig}.
If
some $x\in B\setminus \tB$ splits as o/e/o, then let $F=\{v_1,x\}$.  Now each
$T_i$ is $F$-odd, so we are done.  
Suppose some $x\in B\setminus\tB$ splits as e/e/e; we consider the possibilities.
If $x$ splits as
0/0/4, then let $F=\{v_2,x\}$.  Now $T_1$ is $F$-null, $T_2$ is $F$-odd, and we
can color $T_3$ by Lemma~\ref{helper-lem}, with $x$ as helper.  So $x$
cannot split as 0/0/4; similarly, $x$ cannot split as 4/0/0.  If $x$ splits as
0/2/2, then let $F=\{v_2,x\}$. Now $T_1$ is $F$-null and $T_2$ is $F$-odd.  To
color $T_3$, use $I$ on one neighbor of $x$ and color the rest of $T_3$ with
$F$.  So assume no vertex splits as 0/2/2; similarly, no vertex splits as
2/2/0.  Thus each vertex that splits as e/e/e splits as 2/0/2 or 0/4/0.
If instead there exist $x,y\in B\setminus\tB$ that split (respectively)
as o/o/e and e/o/o, then let $F=\{v_1,x,y\}$.  Again, each $T_i$ is $F$-odd, so
we are done.  By symmetry (between $T_1$ and $T_3$) we assume that no vertex in
$B\setminus\tB$ splits as e/o/o.  Hence, every vertex splits as o/o/e or 2/0/2
or 0/4/0.

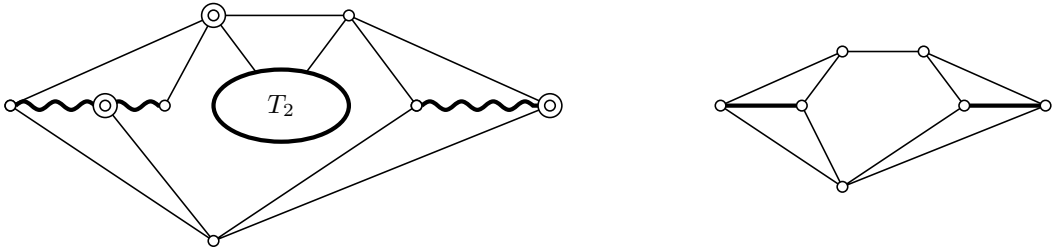
\begin{figure}[!h]
\centering
\begin{tikzpicture} [semithick,scale=1.2, xscale = .75]
\tikzset{every node/.style=uStyle}

\draw (-4,0) node (v1) {} (-1.72,0) node (v2) {} (2,0) node (v3) {} (3.97,0) node (v4) {}
(-1,1) node (z1) {} (1,1) node (z2) {} (-1,-1.5) node (w) {} (-2.6,0) node (y) {};
\draw (v1) -- (z1) -- (v2) (v3) -- (z2) -- (v4) (0,0) -- (z1) -- (z2) -- (0,0)
(v1) -- (w) -- (y) (v3) -- (w) -- (v4);
\draw[fill=white,ultra thick] (0,0) ellipse (1 cm and .4cm);
\draw (0,0) node[lStyle] {$T_2$};
\draw[ultra thick,snake=coil,segment aspect=0, segment amplitude=1.6pt,segment length=12pt] 
(v1) -- (v2) (v3) -- (v4); 
\draw (y) node[uBstyle] {} node {} (z1) node[uBstyle] {} node {} (v4) node[uBstyle] {} node {} (v1) node {} (v3) node {};

\begin{scope}[xshift=3.5in,scale=.6]
\draw (-4,0) node (v1) {} (-2,0) node (v2) {} (2,0) node (v3) {} (4,0) node (v4) {} 
(-1,1) node (z1) {} (1,1) node (z2) {} (-1,-1.5) node (w) {}; 
\draw (v1) -- (z1) -- (v2) (v3) -- (z2) -- (v4) (z1) -- (z2) (w) -- (v2)
(v1) -- (w) (v3) -- (w) -- (v4);
\draw[ultra thick] (v1) -- (v2) (v3) -- (v4);
\end{scope}

\end{tikzpicture}
\caption{Case (c) in the proof of Lemma~\ref{lem4.41}.\label{lem4.41-fig}}
\end{figure}

We consider the possibilities for a vertex $x\in B\setminus \tB$ that splits as o/o/e.  
If $x$ splits as 1/1/2, then let $F=\{v_1,v_2,x\}$.  Trees $T_1$ and $T_2$ are
both $F$-odd, and we can color $T_3$ by
Lemma~\ref{helper-lem}, with $x$ as helper.  So each $x\in B\setminus\tB$ must
split as 1/3/0, 3/1/0, 0/4/0, or 2/0/2.  Since $T_3$ has a
neighbor in $B\setminus\tB$, some $x\in B\setminus\tB$ splits as 2/0/2.
Suppose some $y$ splits as 1/3/0 or 3/1/0.  Let $F=\{v_1,v_2,x,y\}$.  Trees
$T_1$ and $T_2$ are $F$-odd, and we can color $T_3$ by Lemma~\ref{helper-lem},
with $x$ as helper.
So assume no such $y$ exists.  That is, each vertex splits as 2/0/2 or 0/4/0.
Recall that $x$ splits as 2/0/2, and suppose that $x$ has a neighbor $z$ that is
not a leaf of $T_1$ or $T_3$.  By symmetry, say $z\in T_1$.  Let $F=\{v_2,x\}$.
To color $T_1$, use $I$ on $z$ and $F$ on the rest of $T_1$.  To color $T_3$,
use $I$ on a neighbor of $x$ (and $F$ on the rest of $T_3$).  Finally, $T_2$ is
$F$-odd.  So assume that no such $z$ exists.  This implies that $x$ is unique.
So $T_1=K_2$ and $T_3=K_2$.  But now $\{v_1,v_2,x\}\cup V(T_1)\cup V(T_2)$
induces a Moser spindle, which is a contradiction.  This finishes Case (c).

\textbf{Case (d): $\bbs{v_1}$ splits as 2/1/0 and $\bbs{v_2}$ splits as 1/0/2.}  
By Claim~\ref{tree-path-clm} $T_1$ is a path with both endpoints adjacent to
$v_1$ and $T_3$ is a path with both endpoints adjacent to $v_2$.
Consider some vertex $x\in B\setminus \tB$ and the parities of edges
that $x$ has to $T_1$, $T_2$, and $T_3$.  A priori, the options are o/o/e,
o/e/o, e/o/o, and e/e/e.  If $x$ splits as e/o/o, then let $F=\{v_2,x\}$.
Now each $T_i$ is $F$-odd, so we are done.  Similarly, if $x$
splits as o/e/o, then let $F=\{v_1,x\}$.  So assume each vertex in
$B\setminus \tB$ splits as o/o/e or e/e/e.  

Suppose $T_3\ne K_2$, as on the left of Figure~\ref{lem4.41b-fig}.  
Let $x\in B\setminus\tB$ be a neighbor of some internal vertex $y$ of
$T_3$.  Suppose $x$ splits as e/e/e.  Let $F=\{v_1,v_2,x\}$.  Note that
$T_1$ and $T_2$ are $F$-odd.  To color $T_3$, we use Lemma~\ref{helper-lem},
with $x$ as helper.  So assume instead that $x$ splits as o/o/e.  (Since $x$
sends edges to $T_3$, it splits as 1/1/2.)  Let $F=\{v_1,x\}$, and note that
$T_1$ is $F$-odd.  Color $y$ with $I$ and color the
rest of $T_3$ with $F$. Finally, $T_2$ is $F$-even.
We color all of $T_2$ with $F$.  This creates a single $v_1,x$-path colored $F$
in $T_2$, but this is okay since neither $T_1$ nor $T_3$ has such a path.
This implies that $T_3=K_2$, as on the right of Figure~\ref{lem4.41b-fig}.  

\begin{figure}[!h]
\centering

\begin{tikzpicture} [semithick,scale=1.2, xscale = .75, yscale=.9]
\tikzset{every node/.style=uStyle}

\draw (-4,0) node (v1) {} (-1.72,0) node (v2) {} (1.8,0) node (v3) {} (4.12,0) node (v4) {}
(-1,1) node (z1) {} (1,1) node (z2) {} (-1,-1.5) node (w) {} 
(-3.4,0) node (y1) {} (-2.85,0) node (y2) {} (2.54,0) node (y3) {} (3.55,0) node (y4) {}; 
\draw (v1) -- (z1) -- (v2) (v3) -- (z2) -- (v4) (0,0) -- (z1) -- (z2) -- (y2)
(y1) -- (w) -- (0,0) (y3) -- (w) -- (y4);
\draw[fill=white,ultra thick] (0,0) ellipse (1 cm and .4cm);
\draw (0,0) node[lStyle] {$T_2$};
\draw[ultra thick,snake=coil,segment aspect=0, segment amplitude=1.6pt,segment
length=12pt] 
(v1) -- (v2) (v3) -- (v4); 
\draw (y2) node[uBstyle] {} node {} (z1) node[uBstyle] {} node {} (v4) node[uBstyle] {} node {} (y1) 
node {} (v1) node {} (v3) node {} (y3) node {} (y4) node {};

\begin{scope}[xshift=3.5in]
\draw (-4,0) node (v1) {} (-1.72,0) node (v2) {} (2.5,0) node (v3) {} (4,0) node (v4) {}
(-1,1) node (z1) {} (1,1) node (z2) {} (-1,-1.5) node (w) {} (-3.4,0) node (y1) {} (-2.85,0) node (y2) {};
\draw (v1) -- (z1) -- (v2) (v3) -- (z2) -- (v4) (0,0) -- (z1) -- (z2) -- (y2)
(y1) -- (w) -- (0,0) (v3) -- (w) -- (v4);
\draw[fill=white,ultra thick] (0,0) ellipse (1 cm and .4cm);
\draw (0,0) node[lStyle] {$T_2$};
\draw[ultra thick,snake=coil,segment aspect=0, segment amplitude=1.6pt,segment length=12pt] 
(v1) -- (v2); 
\draw[ultra thick] (v3) -- (v4);
\draw (y2) node[uBstyle] {} node {} (z1) node[uBstyle] {} node {} (v4) node[uBstyle] {} node {} (y1) node {}
(v1) node {};
\end{scope}

\end{tikzpicture}
\caption{Case (d), part 1, in the proof of Lemma~\ref{lem4.41}.
Left: $T_3\ne K_2$.  Right: $T_3=K_2$.\label{lem4.41b-fig}}
\end{figure}

Let $x$ be a neighbor of $T_3$ other than $v_2$.  If $x$ splits as e/e/e,
then the argument in the previous paragraph still works.  So assume $x$ splits
as o/o/e, that is, as 1/1/2.

Suppose either $x$ or $v_2$ has a neighbor $z$ in $T_1$ that is not a leaf of
$T_1$.  Let $F=\{v_2,x\}$.  Note that $T_2$ is $F$-odd.  To color $T_1$, we use
$I$ on $z$ and use $F$ on the rest of $T_1$.  (Note that $x$ and $v_2$ each
have only a single neighbor in $T_1$, and one of these neighbors, $z$, is
colored $I$, so $T_1$ has
no $v_2,x$-path in $F$.)  To extend to $T_3$, we color one of its vertices with
$I$ and the other with $F$.  Thus, no such $z$ exists.  That is, $N_{T_1}(v_2,x)$
is simply the two leaves of $T_1$; see Figure~\ref{lem4.41c-fig}.

\begin{figure}[!hb]
\centering
\begin{tikzpicture} [semithick,scale=1.2, xscale = .75]
\tikzset{every node/.style=uStyle}
\tikzstyle{lStyle}=[shape = rectangle, minimum size = 0pt, inner sep =
0pt, outer sep = 0pt, draw=none]

\draw (-4,0) node (v1) {} (-1.25,0) node (v2) {} (2.5,0) node (v3) {} (4,0) node (v4) {} 
(-1,1) node (z1) {} (1,1) node (z2) {} (-1,-1.5) node (w) {} (-1.9,-.7) node (x) {}
(-3.4,0) node[lStyle] (y1) {} (-2.9,0) node[lStyle] (y2) {} (-2.4,0)
node[lStyle] (y3) {} (-1.9,0) node[lStyle] (y4) {};
\draw (v1) -- (z1) -- (v2) (v3) -- (z2) -- (v4) (0,0) -- (z1) -- (z2) -- (v2)
(v1) -- (w) -- (0,0) (v3) -- (w) -- (v4) (y1) -- (x) -- (y2) (y3) -- (x) -- (y4);
\draw[fill=white,ultra thick] (0.5,0) ellipse (1 cm and .4cm);
\draw (.5,0) node[lStyle] {$T_2$};
\draw[ultra thick,snake=coil,segment aspect=0, segment amplitude=1.6pt,segment
length=12pt] 
(v1) -- (v2); 
\draw[ultra thick] (v3) -- (v4);
\draw (y2) node {} (z1) node[uBstyle] {} node {} (v4) node[uBstyle] {} node {} (y1) node {}
(y2) node {} (y3) node {} (y4) node {} (v1) node {};

\begin{scope}[xshift=3.5in,scale=.6]
\draw (-4,0) node (v1) {} (-1.5,0) node (v2) {} (2.5,0) node (v3) {} (4,0) node (v4) {} 
(-1,1) node (z1) {} (1,1) node (z2) {} (-1,-1.5) node (w) {}; 
\draw (v1) -- (z1) -- (v2) (v3) -- (z2) -- (v4) (z1) -- (z2) -- (v2)
(v1) -- (w) (v3) -- (w) -- (v4);
\draw[ultra thick] (v1) -- (v2) (v3) -- (v4);
\end{scope}

\end{tikzpicture}
\caption{Case (d), part 2, in the proof of Lemma~\ref{lem4.41}.\label{lem4.41c-fig}}
\end{figure}

Suppose that $T_1\ne K_2$, and let $y$ be a neighbor of $T_1$ in $B\setminus
(\tB \cup \{x\})$.
Recall that each vertex in $B\setminus \tB$ splits as o/o/e or e/e/e.  If $y$
splits as o/o/e, then let $F=\{v_1,v_2,x,y\}$.  Note that $T_1$ and $T_2$ are
both $F$-odd.  Although $T_3$ is $F$-even, we simply color one of its vertices
with $I$ and the other with $F$.  So instead assume that $y$ splits as e/e/e.
Since $T_3$ is $K_2$, vertex $y$ sends no edges to $T_3$.  We let
$F=\{v_2,x, y\}$.  Now $T_2$ is $F$-odd, and $T_3$ is again easy to color. Since
$T_1$ is $F$-even, we color it by Lemma~\ref{helper-lem}, with $y$ as helper.
So we conclude that no such $y$ exists.  That is, $T_1=K_2$.  Now
$\{v_1,v_2,x\}\cup V(T_1)\cup V(T_3)$ induces a Moser spindle, which is a
contradiction.  This finishes case (e).

\textbf{Case (f): $\bbs{v_1}$ splits as 2/1/0 and $\bbs{v_2}$ splits as 2/1/0.}
By Claim~\ref{tree-path-clm}, $T_1$ is a path with each endpoint adjacent to
both $v_1$ and $v_2$.  Note that $T_1\ne K_2$, since $K_4\not\subset G$. 
If $T_2$ has a leaf adjacent to
neither $v_1$ nor $v_2$, then let $F=\{v_1,v_2\}$.  Now $T_2$ is $F$-leaf-good
and $T_3$ is $F$-null.  Since $T_1\ne K_2$, we can color both
leaves of $T_1$ with $I$ and its internal vertices with $F$.
So $T_2$ is a path with each leaf adjacent to one of $\{v_1,v_2\}$.  We consider
a vertex $x\in B\setminus \tB$ and the possible ways it splits.
If $x$ splits as
o/e/o, then let $F=\{v_1,x\}$.  Now each $T_i$ is $F$-odd, so we are done.
The other possibilities for the way that $x$ splits are 
1/3/0, 3/1/0, 0/1/3, 0/3/1, 1/1/2, 2/1/1, 4/0/0, 0/4/0, 0/0/4, 2/2/0, 2/0/2, 0/2/2.  
If $x$ splits as
1/3/0 or 3/1/0, then let $F=\{v_1,v_2,x\}$.  Now $T_1$ and $T_2$ are $F$-odd,
and $T_3$ is $F$-null.
If $x$ splits as 0/1/3 or 0/3/1, then let $F=\{v_1,v_2,x\}$.  Now $T_2$ and
$T_3$ are $F$-odd.  To color $T_1$, use $I$ on its two leaves and use $F$
elsewhere.  If $x$ splits as 4/0/0, then let $F=\{x,v_1\}$.  Now $T_2$ is
$F$-odd and $T_3$ is $F$-null.  We color $T_1$ by Lemma~\ref{helper-lem}, with
$x$ as helper.  If $x$ splits as 0/4/0, then let $F=\{v_1,v_2,x\}$.  Now $T_3$
is $F$-null.  To color $T_1$, use color $I$ on its leaves and use $F$
elsewhere.  To color $T_2$, use Lemma~\ref{helper-lem}, with $x$ as
helper.  If $x$ splits as 2/2/0, then let $F=\{v_1,x\}$.  Note that $T_3$ is
$F$-null and $T_2$ is $F$-odd.  To color $T_1$, use $I$ on one neighbor of $x$
in $T_1$, and use $F$ on the rest of $T_1$.  
Suppose that $x$ splits as 2/1/1.  By symmetry between $v_1$ and $v_2$, assume that
$v_1$ and $x$ do not dominate all leaves in $T_2$.  Now let $F=\{v_1,x\}$.
Clearly, $T_3$ is $F$-odd, and $T_2$ is $F$-leaf-good.  For $T_1$, color one
neighbor of $x$ in $T_1$ with $I$ and color the rest of $T_1$ with $F$.
We have handled all possibilities for the way $x$ splits except 1/1/2, 2/0/2,
0/2/2, and 0/0/4. 

Suppose $T_3$ is not a path (so it has at least three leaves).  Since $T_1\ne
K_2$, there exists $x\in B\setminus \tB$ that splits as either 1/1/2 or else 2/0/2. 
In the first case, let $F=\{v_1,v_2,x\}$.  Trees $T_1$ and $T_2$ are both
$F$-odd.  And $T_3$ is $F$-leaf-good, so we are done.  In the second case, let
$F=\{v_1,x\}$.  Again $T_3$ is $F$-leaf good, and $T_2$ is $F$-odd.  We color
$T_1$ by Lemma~\ref{helper-lem}, with $x$ as helper.  

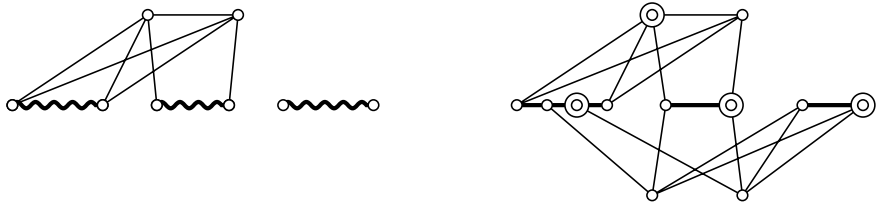
\begin{figure}[!hb]
\centering
\begin{tikzpicture} [semithick,scale=1.2, xscale = .5]
\tikzset{every node/.style=uStyle}

\draw (-4,0) node (v1) {} (-2,0) node (v2) {} (-.8,0) node (v3) {} (.8,0) node (v4) {}
(-1,1) node (z1) {} (1,1) node (z2) {};
\draw (v1) -- (z1) -- (v2) (v1) -- (z2) -- (v2) (v3) -- (z1) -- (z2) -- (v4);
\draw[ultra thick,snake=coil,segment aspect=0, segment amplitude=1.2pt,segment length=7pt] 
(v1.center) -- (v2.center) (v3.center) -- (v4.center) (2,0) node {} -- (4,0) node{}; 
\draw (v1) node {} (v2) node {} (v3) node {} (v4) node {};

\begin{scope}[xshift=4.4in]
\draw (-4,0) node (v1) {} (-2,0) node (v2) {} (-.7,0) node (v3) {} (.75,0) node (v4) {}
(2.33,0) node (v5) {} (3.67,0) node (v6) {}
(-1,1) node (z1) {} (1,1) node (z2) {} (-1,-1) node (x1) {} (1,-1) node (x2) {}
(-3.33,0) node (v1a) {} (-2.67,0) node (v2a) {} ;
\draw (v1) -- (z1) -- (v2) (v1) -- (z2) -- (v2) (v3) -- (z1) -- (z2) -- (v4)
(v1a) -- (x1) -- (v5) -- (x2) -- (v2a) (x1) -- (v6) -- (x2) (x1) -- (v3) (x2) -- (v4);
\draw[ultra thick] (v1) -- (v1a) -- (v2a) -- (v2) (v3) -- (v4) (v5) -- (v6); 
\draw (v2a) node[uBstyle] {} node {} (v4) node[uBstyle] {} node {} (v6)
node[uBstyle] {} node{} (z1) node[uBstyle] {} node {};
\end{scope}

\end{tikzpicture}
\caption{Case (f), in the proof of Lemma~\ref{lem4.41}.\label{lem4.41d-fig}}
\end{figure}

So assume $T_3$ is a path; see Figure~\ref{lem4.41d-fig}. 
Suppose some $y$ splits as 0/0/4.  Since $T_1\ne K_2$, some $x$ splits as 1/1/2
or 2/0/2.  If $x$ is not adjacent to both leaves of $T_3$, then we
can ignore $y$ and repeat the argument that starts this paragraph.  
If $x$ splits as 1/1/2, then let $F=\{v_1,v_2,x,y\}$, so that $T_1$ and $T_2$
are each $F$-odd, and color $T_3$ by Lemma~\ref{helper-lem}, with $y$ as helper.  
If $x$ splits as 2/0/2, then let $F =\{v_1,x,y\}$, so that
$T_2$ is $F$-odd, $T_1$ can be handled by coloring one neighbor of $x$ with $I$
(and the rest with $F$), and $T_3$ can be colored by Lemma~\ref{helper-lem},
with $y$ as helper.  Thus, no such $y$ exists.
Now we are down to three ways that vertices in $B\setminus\tB$ split: 1/1/2, 2/0/2, 0/2/2.

Suppose some $x$ splits as 2/0/2 and some $y$ splits as 1/1/2.  By the previous
paragraph, they must both be adjacent to both leaves of $T_3$.  Now let
$F=\{v_1,v_2,x,y\}$.  Trees $T_1$ and $T_2$ are both $F$-odd.  For $T_3$, we
color one leaf with $I$ and the rest of $T_3$ with $F$.  This implies that
vertices split as exactly one of the ways 2/0/2 and 1/1/2 (since $T_1\ne K_2$).
 Suppose $x$ splits as 2/0/2.  Since $T_2$ has more than two incident edges,
some $y$ splits as 0/2/2.  Let $F=\{v_1,x,y\}$.  Note that $T_2$ is $F$-odd.
Use $I$ to color a neighbor of $x$ in $T_1$ and a neighbor of
$y$ in $T_3$.  So no vertex splits as 2/0/2.

Since $T_1\ne K_2$, some vertex $x$ splits as 1/1/2.  If $x$ is not
adjacent to both leaves of $T_3$, then let $F=\{v_1,v_2,x\}$.  Now $T_1$ and
$T_2$ are $F$-odd, and $T_3$ is $F$-leaf-good.  So assume $x$ is adjacent to
both leaves of $T_3$.
Suppose there exists $y$ of type 0/2/2.  Let $F=\{v_1,v_2,x,y\}$.  Again, $T_1$
and $T_2$ are $F$-odd.  To color $T_3$, use $I$ on a neighbor of $y$, and use
$F$ elsewhere.  So no such $y$ exists.  That is, all vertices in $B\setminus
\tB$ are type 1/1/2.  Further, each is adjacent to both leaves of $T_3$, so
exactly two such vertices exist.  Thus, $T_3=K_2$ and $T_2=K_2$ and $T_1=P_4$. 
This implies that
$|G|=|T_1|+|T_2|+|T_3|+|\tB|+2=4+2+2+2+2=12$.  There is exactly one possibility
for $G$.  It is shown on the right in Figure~\ref{lem4.41d-fig}, along with an
nb-coloring.  This finishes Case (f), finishes the larger case that
$G[\tB]=K_2$, and completes the proof of (B) in our Main Theorem.
\end{proof}

\section{Algorithmic Details}
\label{algorithm section}
\label{alg-sec}

Section~\ref{proof-sec} contains two types of assertions: (i) graphs of a
certain form are near-bipartite and (ii) graphs of a certain form do not
satisfy the assumptions of the Main Theorem.  To prove each assertion of type
(i), we find an nb-coloring.  So our proof is constructive, and 
naturally yields an algorithm.  In this section we detail the efficiency
of this algorithm.
We assume the graph is stored as a list of vertices, and that each vertex stores
a list of incident edges, multiedges, edge-gadgets, and its precoloring (if this
exists).

Let $T_m(n)$ denote the maximum running time of the algorithm on a multigraph with $n$
vertices, and let $T_s(n)$ denote the corresponding function for simple graphs.
As before we write $T_*(n)$ in statements that hold for both $T_m(n)$ and $T_s(n)$.
Our algorithm is recursive, so our upper bound on $T_*(n)$ is in terms of $T_*(n-1)$.
We use the crude estimate $\sum_{i = 1}^{n} i^d \leq \int_{1}^{n+1} x^d dx \leq
n^{d+1}$ for sufficiently large $n$ and $d>1$.
Thus, to prove $T_*(n) \leq O(n^{d+1})$ it suffices to show that $T_*(n) \leq T_*(n-1) + O(n^d)$.
When $G$ contains a vertex set $W$ with $\rho_*(W)$ small, we first color $G[W]$
and second color $G'$, formed from $G$ by contracting $W$ down to two vertices.
That is, the algorithm recurses on two graphs $G[W]$ and $G'$, which satisfy $|V(G[W])|
+ |V(G')| = |\VG| +2$. (This case arises in the proofs of our gap lemmas.)
Simple calculus shows that $(n+2-k)^d + k^d$, with $3 \le k \le n-1$, is maximized when $k \in \{3,n-1\}$.
So if $T_*(j) \leq c j^{d+1}$ for all $j<n$ and some fixed $c$, then $\max_{3
\leq k \leq n-1} \{T_*(n+2-k) + T_*(k)\} \leq c (n-1)^{d+1} + O(n^d)$.
Hence, to prove $T_*(n) \leq O(n^{d+1})$ it also suffices to prove that $T_*(n)
\leq O(n^d) + \max_{3 \leq k \leq n-1} \{T_*(n+2-k) + T_*(k)\}$.
So in the individual steps below we focus on the time to construct the recursive
calls, and extend the colorings afterward.  Only after listing all steps do we
account for the time spent on the recursive calls.

We assume that every graph with at most $30$ vertices can be nb-colored in
time $O(1)$, if it has an nb-coloring.  We also assume that we can iterate
through each graph in $\HH$ in time $O(1)$.  Since each graph in $\HH$ has
at most $22$ vertices, we can determine whether a given pair of vertices is
linked in a graph with order $n$ by a graph in $\HH$ in time $O(n^{20})$.
In practice this can be done much faster, since we only need to consider connected subgraphs.

We start with Part (A) of the Main Theorem.  Let $G$ be an input graph with $n$ vertices.
We assume that $G$ satisfies the hypotheses of the Main Theorem, so $|E(G)|=O(n)$.
We list in order the steps of the algorithm.
Each step except the last describes how to color the graph if it satisfies certain conditions.
Each step assumes that the conditions of the previous steps fail to hold.
We will show that $T_m(n)=O(n^6)$.

\begin{enumerate}
\item \emph{$G$ is disconnected.}  We recurse on each component.  Determining
the components of a graph can be done by breadth first search in time $O(n\log(n))$ since $|E(G)|=O(n)$.
\item \emph{$G$ contains a vertex $v$ satisfying at least one of the following conditions:
$d(v)=1$, $v$ is precolored $I$, $|N(v)|=1$, or $d(v)=2$ and $v$ is uncolored.}
Each of these criteria can be tested in time $O(n)$.  If any criterion is satisfied,
then we apply the proof of Lemma~\ref{easy cases}, \ref{remove I}, \ref{min degree
2}, or \ref{uncolored is min degree 3}.  Constructing the graph to recurse on
takes time $O(n)$; extending the coloring takes time $O(1)$.
\item \emph{$G$ contains a proper non-trivial vertex subset $W$ with 
$\rho_{*,G}(W) \leq 0$.}  We find a subset $W$ with smallest potential, and
among them choose one with largest order (so $W\ne \emptyset$).  
By Corollary~\ref{find small potential 3} with $m_1 = 0$ and $m_2 = 1$, this takes
time $O(n^3\log(n))$.  We recurse on $G[W]$, and then construct $G'$ as
in the proof of Lemma~\ref{small subgraphs part 1}.  Constructing $G'$ takes time $O(n)$.  
Merging the two colorings takes time $O(n)$.  So the total time for
these steps is $O(n) + O(n) + O(n^3\log(n)) \leq O(n^4)$. 
%
\item \emph{$G$ contains a vertex subset $W$ with $\rho_{m,G}(W) = 1$ and $1
\leq |W| \leq n-1$.}  We use the same operations as in the previous step, but
apply Corollary~\ref{find small potential 3} with $m_1 = 1, m_2 = 1$.  Our
running time is now $O(n^5)$.
\item \emph{$G$ contains a vertex $v$ satisfying at least one of the following
conditions: $d(v)=2$, $v$ is precolored $F$, $v$ is incident to a multiedge, or
$v$ has neighbors that are adjacent.}
The first three criteria can be tested in time $O(n)$; the last in time
$O(n^3)$.  We apply the proof of Lemma~\ref{min degree 3 multi}, \ref{no precolor
multi}, \ref{no multi edges}, or \ref{no3-cycle-lem}.  Constructing the graph
to recurse on takes time $O(1)$; extending the coloring also takes time $O(1)$.
\item \emph{We apply the proof of Lemma~\ref{finish of multi}.}  
Constructing the graph to recurse on takes time $O(1)$; extending the coloring
also takes time $O(1)$.
\end{enumerate}

In each step above, the time spent on pre- and post-processing the recursive
calls is $O(n^5)$, and the time for the recursion is
$\max\{T_m(n-1), \max_{3 \leq k \leq n-1} \{T_m(n+2-k) + T_m(k)\}\}$. 
Thus, we have $T_m(n) \leq O(n^5) + 
\max\{T_m(n-1), \max_{3 \leq k \leq n-1}
\{T_m(n+2-k) + T_m(k)\}\}$. 
So $T_m(n) \leq O(n^6)$.
\smallskip

We now consider Part (B) of the Main Theorem.
Since we merged arguments in Section~\ref{common work}, the first three steps are the same;
so we omit them below. 
Before we list the algorithm's steps, we note that by
the start of Section~\ref{simple proof sec 1} (where we begin after skipping
the common three steps), we have proved Lemma~\ref{links are special}:
If two vertices in $G$ are linked, then they are specially-linked (and the
linking graph is in $\HH$).
So we can decide if a given pair of vertices is linked in time $O(n^{20})$.
Also note that $G(C, z_1, z_2)$ can be constructed in time $O(|C|)$.
Let $L$ denote the set of uncolored vertices of degree $3$ with no incident edge-gadgets.
Note that applying the arguments of Section~\ref{coloring-lems-sec} takes time $O(n)$.
As above, for each step we focus on the pre- and post-processing time.  Only at
the end do we consider the time for the recursion.
We will show that $T_s(n)\le O(n^{22})$.

\begin{enumerate}
\item[4.] \emph{$G[L]$ contains an induced cycle $C$ of length $3$ or $4$.}  
We can find $C$ in time $O(|L|^4) \leq O(n^4)$.  If $C$ has length $4$, then we
apply Lemma~\ref{no-short-cycle-of-3s}.  Constructing the graph to recurse on takes time $O(1)$; 
extending the coloring also takes time $O(1)$.  If $C$ has length $3$, then we
must find a pair of vertices $z_1,z_2$ in $N(C)$ that are not linked.  We
check ${3 \choose 2}$ pairs, which takes total time $O(n^{20})$. 
Constructing $G(C, z_1, z_2)$ as in Lemma~\ref{no-short-cycle-of-3s} (the graph we
recurse on) takes time $O(1)$; extending the coloring also takes time $O(1)$. 
\item[5.] \emph{$G$ contains a vertex subset $W$ with $\rho_{m,G}(W) <
4$ and $1 \leq |W| \leq n-2$.}
We perform the same operations as in step 3 above, but apply
Corollary~\ref{find small potential 3} with $m_1 = 1, m_2 = 2$.  
The running time is now $O(n^6)$.
\item[6.] \emph{$G[L]$ contains an induced cycle $C$ of length $5$.}  
We can find $C$ in time $O(|L|^5) \leq O(n^5)$. We perform the same operations
as in step 4 above, but now we check ${5 \choose 2}$ vertex pairs.
\item[7.] \emph{$G$ contains a vertex $v$ with $d(v)=2$.}  We can
find $v$ in time $O(n)$. We apply the proof of Lemma \ref{min degree 3 simple}. 
Note that Case 3 of Lemma~\ref{min degree 3 simple} (where the neighbors
of $v$ are linked) implies that $V(G)=V(H)\cup\{v\}$.  So $|V(G)|\le 23$.  
We assumed above that $n \geq 30$, so we can construct the graph to recurse on
in time $O(1)$; extending the coloring also takes time $O(1)$.
\item[8.] \emph{$G[L]$ contains an induced cycle $C$.}  Now $C$ can be found in
time $O(|L|) \leq O(n)$.  We perform the same operations as in step 4 above,
but with Lemma~\ref{no cycle of 3s} instead of
Lemma~\ref{no-short-cycle-of-3s}.  We only need to check for non-linked pairs
of vertices among neighbors of consecutive members of $C$, so we only check
$|C|-1$ pairs.  Since $|C| \leq n-1$, this step runs in time
$O(n*n^{20})=O(n^{21})$.  
\item[9.] \emph{$G$ contains a vertex $v$ that satisfies at least one of the
following: $d(v)=5$, $v$ is precolored, or $v$ is incident to an edge-gadget.}
Each of these criteria can be tested in time $O(n)$.  We apply the proof of
Lemma~\ref{restrict-lem}; finding the coloring takes time $O(n)$.
\item[10.] \emph{We apply the arguments of
Section~\ref{coloring-simple-subsec}.}
Finding the coloring takes time $O(n)$.
	
\end{enumerate}

Thus, $T_s(n) \leq O(n^{21}) + \max\{T_s(n-1), \max_{3 \leq k
\leq n-1}\{ T_s(n+2-k) + T_s(k)\}\}$, so $T_s(n) \leq O(n^{22})$.

\section*{Acknowledgments}
Thanks to Jiaao Li for numerous helpful suggestions that improved the clarity of
our exposition.  Thanks to Richard Hammack for ideas to improve the figures, and
to Marthe Bonamy for helpful discussions early on in this project.  Finally,
thanks to two anonymous referees.  One provided extensive comments to help
improve our presentation.

{\footnotesize

}

\end{document}